\documentclass[11pt]{amsart}
\usepackage[margin=1in]{geometry}
\usepackage{amsmath,amssymb,amsthm, hyperref, amscd}
\usepackage{enumitem}
\usepackage[T1]{fontenc}
\usepackage{textcomp}
\usepackage{tikz}
\usetikzlibrary{matrix,arrows,decorations.pathmorphing}
 \usepackage{tikz-cd}
\usepackage[scale=0.95,ttdefault]{AnonymousPro}
\hypersetup{
   colorlinks=true,
   linkcolor=blue,
    urlcolor=cyan,
    }

\usepackage[
    backend=biber,
    style=alphabetic,
    useprefix=true,
    maxnames=100,
    maxcitenames=100,
    sorting=anyt
]{biblatex}
\newcommand{\fa}{\mathfrak{a}}

\newcommand{\fr}{\mathfrak{r}}
\newcommand{\fm}{\mathfrak{m}}

\newcommand{\fN}{\mathfrak{N}}
\newcommand{\fp}{\mathfrak{p}}
\newcommand{\fq}{\mathfrak{q}}
\newcommand{\fP}{\mathfrak{P}}
\newcommand{\fQ}{\mathfrak{Q}}
\newcommand{\Max}{\operatorname{Max}}
\newcommand{\Spec}{\operatorname{Spec}}

\newcommand{\Frac}{\operatorname{Frac}}

\newcommand{\colim}{\operatorname{colim}}

\newcommand{\coker}{\operatorname{coker}}
\newcommand{\Tor}
{\operatorname{Tor}}
\newcommand{\Hom}
{\operatorname{Hom}}

\newcommand{\Coh}{\operatorname{Coh}}
\newcommand{\cO}{\mathcal{O}}

\newcommand{\cG}{\mathcal{G}}
\newcommand{\cF}{\mathcal{F}}
\newcommand{\cE}{\mathcal{E}}
\newcommand{\cK}{\mathcal{K}}

\newcommand{\cL}{\mathcal{L}}

\newcommand{\bA}{\mathbf{A}}
\newcommand{\bF}{\mathbf{F}}
\newcommand{\bP}{\mathbf{P}}

\newcommand{\bR}{\mathbf{R}}
\newcommand{\bQ}{\mathbf{Q}}

\newcommand{\bZ}{\mathbf{Z}}

\newcommand{\Tag}[1]{\href{https://stacks.math.columbia.edu/tag/#1}{\texttt{#1}}}
\newcommand{\citestacks}[1]{\cite[Tag \Tag{#1}]{stacks}}
\newcommand{\citetwostacks}[2]{\cite[Tags \Tag{#1} and \Tag{#2}]{stacks}}

\numberwithin{equation}{subsection}

\newtheorem{Thm}[equation]{Theorem}

\newtheorem{Lem}[equation]{Lemma}
\newtheorem{Cor}[equation]{Corollary}

\theoremstyle{definition}
\newtheorem{Def}[equation]{Definition}

\newtheorem{Condit}[equation]{Condition}

\newtheorem{Discu}[equation]{Discussion}
\newtheorem{Exam}[equation]{Example}

\theoremstyle{remark}
\newtheorem{Situ}[equation]{Situation}
\newtheorem{Ques}[equation]{Question}

\newtheorem{Rem}[equation]{Remark}

\newtheorem{StepRegQCOpen}{Step}

\title{%Finitely presented algebras
Regular rings over valuation rings} %and Kunz's Theorem}

\author{Shiji Lyu}
\address{Department of Mathematics, Statistics, and Computer Science\\University of Illinois at Chicago\\Chicago, IL
60607-7045\\USA}
\email{\href{mailto:slyu@alumni.princeton.edu}{slyu@alumni.princeton.edu}}
\urladdr{\url{https://homepages.math.uic.edu/~slyu/}}

\begin{document}

\begin{abstract}
Bertin (1972) defined regularity for coherent local rings, and Knaf (2004) studied the property for a local ring $A$ essentially finitely presented over a valuation ring $V$. 
We discuss several properties of this notion of regularity for such $A$, obtaining results parallel to results for regularity of Noetherian local rings. 
We include classical and modern topics: openness of loci, perfectoid big Cohen--Macaulay algebras, and cotangent complexes. 
We also give an application to Noetherian rings, showing a version of Kodaira's vanishing theorem in large enough residue characteristics.
\end{abstract}

\maketitle

Throughout, all rings are commutative with unit.
$\Max(R)$ denotes the set of maximal ideals of a ring $R$.
For $f\in R$,
$D(f)=D_R(f)$ denotes the principal open subset of $\Spec(R)$ defined by $f$,
$V(I)=V_R(I)$ denotes the closed subset of $\Spec(R)$ defined by the ideal $I$.
The \emph{constructible topology} of $\Spec R$ has a basis consisting of all sets of the form $V(J)\cap D(f)$,
where $J$ is a finitely generated ideal and $f\in R$.
For $\fp\in\Spec(R)$, $\kappa(\fp)$ denotes the residue field of $\fp$.
The notation $\dim R$ is always for the Krull dimension of $R$.

A \emph{valuation ring} is a local \emph{domain} whose finitely generated ideals are principal.
A \emph{Pr\"ufer domain} is an integral domain whose localizations at prime ideals are valuation rings.
For basic properties we refer the reader to \cite[p. 25]{Coherent-Rings}.

An integral domain is \emph{absolutely integrally closed} if it is normal and its fraction field is algebraically closed.
The \emph{absolute integral closure} of an integral domain $R$, denoted by $R^+$, is its integral closure in an algebraic closure of its fraction field.

For a ring $A$ and an $A$-module $M$,
the weak dimension $\operatorname{w.dim}_AM\in\bZ_{\geq 0}\cup\{\infty\}$ is the smallest integer $e$ so that $\operatorname{Tor}_A^{e+1}(M,-)=0$, or $\infty$.
The weak global dimension $\operatorname{w.dim}A$ is the sup of all $\operatorname{w.dim}_AM$, finite or not.
In other words, $\operatorname{w.dim}A\in\bZ_{\geq 0}\cup\{\infty\}$ is the smallest integer $e$ so that $\operatorname{Tor}_A^{e+1}(-,-)=0$, or $\infty$.

 \section{Introduction}

\subsection{Overview}
Valuation rings play an essential role in classical and modern algebraic and arithmetic geometry.
They can be used to tell integral dependence and properness, to detect singularities in birational geometry, to define the $v$- and arc-topologies, and as base rings for adic and perfectoid geometry.
On the other hand, the actual commutative algebra over (non-Noetherian) valuation rings seems to be rather undeveloped.

This article is the second of a projected series on this subject, the first being \cite{lyu-Prufer-Japanese}.  
This article mainly studies ``regular local rings'' over valuation rings, cf. \cite{Bertin-def-Regular} and \cite{Knaf-RLR-over-Prufer}.
Precisely, we are interested in a local ring $A$ essentially finitely presented over a valuation ring $V$ that has finite weak global dimension (see Discussion \ref{discu:NorRegloci}).
We will reserve the phrase ``regular local ring'' for Noetherian regular local rings.

We will establish various properties parallel to those of regularity of Noetherian rings.
We study openness of loci (\S\ref{sec:loci}), the direct summand theorem (\S\ref{sec:SplinterandKunz}), variants of Kunz's Theorem (\S\ref{sec:SplinterandKunz} and \S\ref{sec:KunzRefined}), cotangent complexes (\S\ref{sec:Cotangent1} and \S\ref{sec:Cotangent2}),
and vanishing theorems (\S\ref{sec:VanishingTh}).
These topics are mostly independent of each other.

Apart from the known ingredients from \cite{Coherent-Rings} and \cite{Knaf-RLR-over-Prufer},
we also provide and utilize two important technical inputs: big Cohen--Macaulay algebras (\S\ref{sec:BCM}) and approximation of valuation rings by (Noetherian) regular local rings (\S\ref{sec:approximation-regular-v}).
A discussion of their roles in this article can be found at the beginning of the corresponding sections.

In future work, we will study cohomological properties of finitely presented algebras over valuation rings,
such as associated primes, depth, and Cohen--Macaulayness.
We will make essential use of the approximation techniques discussed here.
We will also work towards Macaulayification (cf. \cite{Macaulay-Cesnavi}) over valuation rings.

\subsection{Openness of loci}
In \S\ref{sec:loci} we study openness of the normal locus $\operatorname{Nor}(B)$ and the ``regular'' locus $\operatorname{Reg}(B)=\{\fq\in\Spec B\mid \operatorname{w.dim} B_\fq<\infty\}$ for a finitely generated integral domain $B$ over a valuation ring $V$.
For a Noetherian ring we have 
 \begin{Thm}[{\cite[(32.B) Th.~73]{Mat-CA}} and {\cite[Corollaire 6.13.5]{EGA4_2}}]
     Let $R$ be a Noetherian ring. The following are equivalent.
     \begin{enumerate}
         [label=$(\roman*)$]
         \item Every finitely generated $R$-algebra has open regular locus.
         \item Every finite $R$-algebra has open regular locus.
         \item\label{J2Noe:finiteInsepHasOpen}
         For every $\fp\in\Spec R$ and every finite purely inseparable extension $F/\kappa(\fp)$,
         there exists a finite $R$-algebra $S$ that is an integral domain with fraction field $F$ and a nonzero $g\in S$ so that $S_g$ is regular.
     \end{enumerate}
     If $R$ satisfies the equivalent conditions, then every finitely generated $R$-algebra has open normal  locus.
 \end{Thm}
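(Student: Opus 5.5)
The implications (i)$\Rightarrow$(ii) are trivial, since finite algebras are finitely generated. For (ii)$\Rightarrow$(iii), fix $\fp$ and a finite purely inseparable $F/\kappa(\fp)$. Choose generators of $F$ over $\kappa(\fp)$ and clear denominators so that they are integral over $R/\fp$. Let $S$ be the finite $R/\fp$-subalgebra of $F$ they generate; it is a domain with fraction field $F$. Its regular locus is open by (ii), and it contains the generic point because $S_{(0)}=F$ is a field. Hence it contains some $D(g)$ with $g\neq 0$, and $S_g$ is regular.

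The substantive implication is (iii)$\Rightarrow$(i). I would use Nagata's topological criterion: for a Noetherian scheme $X$, the regular locus is open as soon as (a) for every integral closed subscheme $Y=V(\fq)$ the regular locus of $Y$ contains a nonempty open set, and (b) the regular locus of $X$ is stable under generization. Condition (b) is Serre's theorem that localizations of regular local rings are regular. So it suffices to show (a'): every finitely generated $R$-algebra $B$ that is a domain has a nonzero $h$ with $B_h$ regular. Given (a'), apply it to $B/\fq$ to get (a). Then $\operatorname{Reg}(B)$ is constructible and stable under generization, and a standard argument shows it contains an open neighborhood of each of its points: one handles the closure of each generic point of the complement using the Jacobian/conormal criterion along $V(\fq)$, comparing $B_\fq$ regular with $B/\fq$ regular and $\fq B_\fq$ generated by a regular sequence that spreads out.

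To prove (a'), let $\fp=\ker(R\to B)$, so that $B$ is a finitely generated domain over the domain $R/\fp$, and let $K=\kappa(\fp)$. The fraction field $L$ of $B$ is finitely generated over $K$. By MacLane, after passing to a finite purely inseparable extension $F/K$, the field $LF$ (more precisely, $(L\otimes_K F)_{\mathrm{red}}$) is separably generated over $F$. Take $S$ and $g$ as in (iii). Consider $B'=(B\otimes_R S)_{\mathrm{red}}$. It is a finite, radicial, universally homeomorphic extension of $B$ near the generic point. Over $S_g$ it is generically smooth, because it has a separably generated fraction field. Hence there is $h'$ such that $B'_{h'}$ is smooth over the regular ring $S_g$, and therefore $B'_{h'}$ is regular. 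Now I have to descend regularity from $B'$ back to $B$. I expect this descent to be the main obstacle, since regularity does not descend along non-flat finite maps.

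My plan for the descent is to use generic flatness: after inverting some $h\in B$, the finite map $B_h\to B'_h$ is flat, and even free, with $B'_h$ regular. Regularity descends along faithfully flat local homomorphisms, so $B_h$ is regular. The subtlety is that $B\otimes_R S$ need not be reduced, and one must check that the reduced quotient is still finite over $B$ with the same underlying spectrum. Generic freeness holds over the domain $B$ in either case, so the argument goes through.

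For the final assertion about normality, I would run the same scheme with Serre's criterion $R_1+S_2$. The $S_2$ locus is open for any ring of finite type over a Noetherian ring with open CM locus on quotients, and the $R_1$ condition comes from openness of the regular locus of $B$ itself. More simply: $\operatorname{Nor}(B)$ is the set where $B_\fq$ is $R_1$ and $S_2$, and both loci are open, $R_1$ by (i) and $S_2$ via the standard EGA argument.
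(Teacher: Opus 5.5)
Your proposal is correct and is essentially the classical proof the paper cites: it is also the argument the paper adapts in Lemma~\ref{lem:J2isEveryFiniteJ0}. There too, one makes the function field separable after a finite purely inseparable base change, uses generic smoothness over the generically regular $S_g$, and descends regularity along a generically free finite extension; openness then follows from Nagata's criterion, and for the normal locus from openness of the $R_1$ and $S_2$ loci as in EGA. The paper's ring $C$, the image of the tensor product in $LF$, is your $(B\otimes_R S)_{\mathrm{red}}$ near the generic point.

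The remaining loose ends are minor. The constructibility claim is unnecessary. To pass from $h'\in B'$ to an element of $B$, take the nonzero constant term of a minimal integral equation of $h'$ over $B$; this constant term lies in $h'B'$.
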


 Condition \ref{J2Noe:finiteInsepHasOpen} turns out to be not enough for general valuation rings (Remark \ref{rem:RegNotOpenVal}).
 Our main result for the normal and ``regular'' loci is
 \begin{Thm}[Theorem \ref{thm:RegQCopen} and Corollary \ref{cor:NorQCopen}]\label{thm:RegNorQCOpen-intro}
     For a valuation ring $V$, the following are equivalent.
     \begin{enumerate}
         [label=$(\roman*)$]
         \item $\operatorname{Reg}(B)$ is a quasi-compact open for
         every finitely generated $V$-algebra $B$ that is an integral domain.
         %has quasi-compact open regular locus.
         \item $\operatorname{Reg}(B)$(=$\operatorname{Nor}(B)$) is  open for every finite $V$-algebra $B$ that is an integral domain.
         %has open regular locus (=normal locus).
         \item\label{intro-J2-condition} For every $\fp\in\Spec V$ and every finite purely inseparable extension $F/\kappa(\fp)$,
         there exists a finite $V$-algebra $B$ that is an integral domain with fraction field $F$ and a nonzero $g\in B$ so that $B_g$ is regular;
         moreover, for every $\fp\in\Spec V$ such that $\fp V_\fp$ is principal,
        there exists $f\in V\setminus\fp$ so that $V_\fp=V_f$.
     \end{enumerate}
 If $V$ satisfies the equivalent conditions, then every finitely generated $V$-algebra that is an integral domain has quasi-compact open normal locus. 
 \end{Thm}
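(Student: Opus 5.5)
The plan is to prove (i)$\Rightarrow$(ii)$\Rightarrow$(iii)$\Rightarrow$(i). The normal-locus statement then follows by a separate reduction.

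\emph{(i)$\Rightarrow$(ii).} Finite $V$-algebras are finitely generated, so this is immediate. For a finite domain $B$ over $V$, each localization $B_\fq$ is a local domain essentially finitely presented over a valuation ring: $B$ is torsion free, hence flat, hence finitely presented over $V$. In dimension at most one over $V$, finite weak global dimension of $B_\fq$ should be equivalent to $B_\fq$ being a valuation ring, i.e.\ normal. This is what justifies $\operatorname{Reg}(B)=\operatorname{Nor}(B)$. I would check it from Knaf's results: a coherent regular local domain of weak dimension $\le 1$ is a valuation ring.

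\emph{(ii)$\Rightarrow$(iii).}
\begin{itemize}
\item The first half of (iii) is the usual argument. Take any finite domain $B$ with fraction field $F$ (e.g.\ generated by integral generators of $F$ over the image of $V$). Openness of the normal locus, which contains the generic point, gives a nonzero $g$ with $B_g$ normal. It is then regular in the weak sense, and since $B_g$ is essentially of dimension at most one it is ``regular'' in the sense required.
\item The second half is the new condition. Let $\fp\subset V$ with $\fp V_\fp=(t)$ principal, and let $\fp'\subset\fp$ be the prime immediately below $\fp$, so that $V_\fp/\fp'$ is a DVR. Consider a suitable finite domain, e.g.\ $B=V[x]/(x^2-tu)$ or $V[x]/(tx-\dots)$ with $t$ chosen in $V$. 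Its non-normal locus should be exactly $\{\fq: \fq\cap V\subseteq\fp\}$ or similar. Openness of this locus, which is a subset of the image of $\Spec V_\fp$, then forces $\Spec V_\fp$ to be open in $\Spec V$. That means $V_\fp=V_f$ for some $f\notin\fp$.
\item I expect that the cleanest test ring is simply $B=V$ itself or $V[x]/(x^2-t)$ (in residue characteristic $\ne2$). The point is that $D(t)$-type sets must be quasi-compact.
\end{itemize}

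\emph{(iii)$\Rightarrow$(i).} This is the main obstacle. Let $B$ be a finitely generated domain over $V$, hence finitely presented. I would use a Nagata-type criterion for openness in the spectral setting. A subset of a spectral space is quasi-compact open if and only if it is ind-constructible (closed under generization and constructible-open) and pro-constructible. Stability of $\operatorname{Reg}$ under generization follows from localization of Tor. The key is to show that for each $\fq\in\operatorname{Reg}(B)$ a neighbourhood is regular, and that the complement is constructible-closed.
\begin{itemize}
\item First, reduce via $\fp=\fq\cap V$ to the ring over $V_\fp$. Then use the approximation of $V_\fp$ by regular local rings (\S\ref{sec:approximation-regular-v}) together with generic flatness/smoothness. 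The aim is to obtain openness of $\operatorname{Reg}$ within fibers and a spreading-out over an open of $\Spec V$.
\item The purely inseparable condition handles the generic regularity of the fibers. This goes as in Matsumura's proof of Th.~73, applied to $B\otimes\kappa(\fp)$ over the $J$-2 field-like base.
\item The principal-ideal condition ensures that the loci $\{\fp'\subseteq\fp\}$ arising in spreading out are open rather than merely pro-constructible. This yields the quasi-compactness.
\end{itemize}

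Finally, for normal loci I would copy the Noetherian deduction: $\operatorname{Nor}(B)$ is sandwiched using normalization being finite generically over $\operatorname{Reg}$, and Serre-type descriptions via Knaf's results.
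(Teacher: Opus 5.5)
There is a genuine gap: \emph{(iii)$\Rightarrow$(i)}, which carries all the content of the theorem, is not proved. Your (i)$\Rightarrow$(ii), the first half of (ii)$\Rightarrow$(iii), and the spectral-space framing (show $\operatorname{Reg}(B)$ is a neighborhood of each of its points in the constructible topology) are fine. For (iii)$\Rightarrow$(i), however, the bullets name tools without a mechanism, and the tools do not do the job.

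The approximation of \S\ref{sec:approximation-regular-v} only lets finite tor dimension \emph{descend} from $B$ to the $B_\lambda$ (as in Theorem~\ref{thm:descend-any-Hc}). Since $R_\lambda\to V$ is in general not flat, regularity of $B_\lambda$ near $\fq\cap B_\lambda$ says nothing about points of $\Spec B$ near $\fq$.

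The inseparable condition is not needed for the fibers $B\otimes\kappa(\fp)$, which are of finite type over a field and hence automatically J-2. What it gives is generic regularity of domains such as $B/\fq$ or $B/\fp B$ over the valuation ring $V/\fp$ (Lemma~\ref{lem:J2isEveryFiniteJ0}).

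The principality condition only handles points over $\fp$ with $\fp V_\fp$ principal. It lets you localize so that $\fp$, hence $\fq$, is finitely generated, and then a finite free resolution of $B/\fq$ spreads out. Quasi-compactness comes from elsewhere: write $B=\bigcup_\lambda B_\lambda$ over finite-rank $V_\lambda$, note $\operatorname{Reg}(B)=\bigcap_\lambda v_\lambda^{-1}\operatorname{Reg}(B_\lambda)$, and use compactness of the constructible topology.

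Above all, you never confront the real difficulty: $\fq\in\operatorname{Reg}(B)$ over $\fp$ with $\fp V_\fp$ \emph{not} finitely generated. Then $V(\fp B)$ is not constructible. You need a single $\pi\in\fp$ and $g\notin\fq$ with $V(\pi B)\cap D(g)\subseteq\operatorname{Reg}(B)$, i.e.\ uniform control over the primes of $V$ accumulating at $\fp$ from below, and neither hypothesis gives this directly. The paper proceeds in four moves:
\begin{enumerate}
\item It slices by a regular sequence lifted from the Cohen--Macaulay closed fiber (Knaf), reducing to $B_\fq$ a valuation ring with $\fq B_\fq=\fp B_\fq$.
\item It uses the inseparable condition to get $V(\fp B)\subseteq\operatorname{Reg}(B)$ after a principal localization, and then localizes at $\fp$.
\item It base changes to a spherically complete $W$ with divisible value group and residue field $V/\fp$. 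Such $W$ is N-2, so $\operatorname{Reg}(B\otimes_VW)$ is open by the N-2 case, which rests on Lemma~\ref{lem:N2isJ2} and the finiteness of normalization from \cite{lyu-Prufer-Japanese}.
\item This open set contains $V(\fp(B\otimes_VW))$ because $W/\sqrt{\fp W}=V/\fp$ and $B/\fp B$ is already regular. Compactness of the constructible topology then yields $\pi$, and flat descent returns to $B$.
\end{enumerate}

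The other parts also need repair. In (ii)$\Rightarrow$(iii), $B=V$ is useless, since $\operatorname{Reg}(V)=\Spec V$ always. For $B=V[T]/(T^2-t)$ (a domain because $t$ is not a square in $\Frac V$), it is the \emph{normal} locus, not the non-normal one, that is tied to $\Spec V_\fp$. You need neither compute it below $\fp$ nor exclude residue characteristic $2$:
\begin{itemize}
\item $B_\fp$ is regular by Theorem~\ref{thm:ALLwdimFacts}\ref{wdim:quotient} with $I=TB_\fp$. So openness of $\operatorname{Nor}(B)$ and closedness of the finite map $\Spec B\to\Spec V$ give $f\in V\setminus\fp$ with $B_f$ normal.
\item For a prime $\fp''\supsetneq\fp$ and $y\in\fp''\setminus\fp$, one has $t/y^2\in V$, so $T/y$ is integral over $B_{\fp''}$ but not in it. Hence $D(f)=\Spec V_\fp$, i.e.\ $V_f=V_\fp$.
\end{itemize}

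For $\operatorname{Reg}=\operatorname{Nor}$ you quote the wrong direction, and Krull dimension is irrelevant (what matters is relative dimension zero). The needed fact is that a normal local ring essentially quasi-finite over $V$ is a valuation ring (Lemma~\ref{lem:QFNormal=Val}, via Zariski's Main Theorem).

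Finally, for the normal locus you do not say what would replace Serre's criterion, and the paper uses none. It first makes the Noetherian generic fiber $B_K$ normal. It then applies Theorem~\ref{thm:RegQCopen} over the valuation ring $P_{\fm P}$ (for a finite injection $P\to B$ from a polynomial ring) to $B_{\fm P}$, where $\operatorname{Reg}=\operatorname{Nor}$. It concludes with \cite[Th.~22 and Lem.~23]{lyu-Prufer-Japanese}.
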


The proof of Theorem \ref{thm:RegQCopen} goes by standard d\'evissage arguments that can be found in the proof of the Noetherian case, suitably adapted, plus a reduction to the case $V$ is absolutely integrally closed (Lemma \ref{lem:N2isJ2} and Step \ref{step:RegOpenFinal}).
This latter reduction, as well as the proof of Corollary \ref{cor:NorQCopen},
requires input from the author's previous work on finiteness of integral closure \cite{lyu-Prufer-Japanese}.

For completeness, we also include a discussion on the reduced and integral domain loci.
\begin{Thm}[=Theorem \ref{thm:ReducedLocusQCOpen}]\label{thm:ReducedLocusQCOpen-intro}
    Let $R$ be an integral domain and let $X$ be a flat $R$-scheme of finite presentation.
    Then locus of points $x\in X$ so that $\cO_{X,x}$ is reduced (resp. an integral domain) is a quasi-compact open.
\end{Thm}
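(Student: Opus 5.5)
The plan is to reduce to the case where $X=\Spec B$ is affine, with $B$ a finitely presented flat $R$-algebra. Since $X$ is covered by finitely many such affines and a finite union of quasi-compact opens is a quasi-compact open, this suffices. Let $K=\Frac(R)$ and $B_K=B\otimes_R K$. The basic observation is that, because $B$ is $R$-flat and $R$ is a domain, every nonzero $r\in R$ is a nonzerodivisor on $B$. Hence $B\subseteq B_K$, and for every $\fq\in\Spec B$ we have $B_\fq\subseteq (B_\fq)\otimes_R K$, which is a localization of $B_K$. Since $B_K$ is Noetherian (of finite type over a field), this turns the problem into one about the Noetherian ring $B_K$ plus a control of the minimal primes.

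For the reduced locus, set $U=\{\fq : B_\fq \text{ reduced}\}$. First I will show that $B_\fq$ is reduced if and only if $(B_K)_\fq$ is reduced. Here $(B_K)_\fq$ means the localization of $B_K$ at the multiplicative set $B\setminus\fq$, i.e. $B_\fq\otimes_R K$. One direction follows from the inclusion above, and the other because a localization of a reduced ring is reduced. The nilradical $N$ of $B_K$ is finitely generated. I would choose generators of $N$ in $B$ by clearing denominators, and let $I=N\cap B$, which is the set of elements of $B$ nilpotent in $B_K$, equivalently nilpotent in $B$. Then $B_\fq$ is reduced if and only if $I_\fq=0$. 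Equivalently, $\fq\notin\operatorname{Supp}(I)$.

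The main obstacle is that $I$ need not be finitely generated, so $\operatorname{Supp}(I)$ is not obviously closed with quasi-compact complement. The fix is to use the finitely many generators $n_1,\dots,n_k\in B$ of $N B_K$. Any $x\in I$ satisfies $rx\in(n_1,\dots,n_k)B$ for some nonzero $r\in R$. Since $r$ is a nonzerodivisor on $B_\fq$, the condition $I_\fq=0$ is equivalent to $(n_1,\dots,n_k)_\fq=0$. So $U$ is the complement of the support of the finitely generated module $J=(n_1,\dots,n_k)B$, which is $V(\operatorname{Ann}J)$. Now $\operatorname{Ann}J$ is the kernel of $B\to J^{\oplus k}$, and $J$ is finitely presented. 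To see this, use the flatness argument: $B$ is flat and finitely presented over a domain, and one can apply Raynaud--Gruson, or the noetherian-approximation-free fact that finitely generated ideals of $B$ cut out by such data are finitely presented (the paper's coherence inputs from \cite{Coherent-Rings}). Then $\operatorname{Ann}J$ is finitely generated, so $U=\Spec B\setminus V(\operatorname{Ann}J)$ is quasi-compact open.

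If the coherence input is unavailable, I would instead argue via the constructible topology. $U$ is the preimage under $\Spec B_K\to\Spec B$ data... More concretely, I would write $U$ as a union of sets $D(f)$, and show that $U$ is constructible by a standard limit argument over finitely generated subrings $R_0\subseteq R$ (using EGA IV~9 for the reduced locus of a flat finite-type morphism over a Noetherian base). Being open and constructible, $U$ is then quasi-compact.

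For the integral domain locus, $B_\fq$ is a domain if and only if it is reduced and has a unique minimal prime. Minimal primes of $B_\fq$ contract to minimal primes of $B$, and these lie over $(0)\subset R$ by flatness (generization lifts). They therefore correspond to the finitely many minimal primes $\fp_1,\dots,\fp_m$ of $B_K$, with $\fP_i=\fp_i\cap B$. So the domain locus is $U\setminus\bigcup_{i\neq j}(V(\fP_i)\cap V(\fP_j))$. Each $\fP_i$ is the saturation of a finitely generated ideal, and by the same nonzerodivisor trick as above, $V(\fP_i)\cap V(\fP_j)$ can be replaced on $U$ by $V$ of a finitely generated ideal. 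The remaining point is to show that on the reduced locus the pairwise intersections are closed sets defined by finitely generated ideals. I expect this to be the trickiest step, and I would try to handle it again via the constructible-plus-limit argument.
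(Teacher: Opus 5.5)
\textbf{The proposal has a genuine gap: quasi-compactness is never established.}

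Your openness argument is correct and is essentially the paper's Lemma~\ref{lem:ReducedLocusOpen}. Since nonzero elements of $R$ are nonzerodivisors on $B_\fq$, $B_\fq$ is reduced iff $J_\fq=0$ for the finitely generated ideal $J=(n_1,\dots,n_k)B$. For domains the same trick works only when phrased via supports: $B_\fq$ is a domain iff $(\fP_i)_\fq=0$ for some $i$, iff $(\fa_i)_\fq=0$ for a finitely generated $\fa_i\subseteq\fP_i$ with $\fa_iB_K=\fp_i$. It does not let you replace $V(\fP_i)$ by $V(\fa_i)$, because the cancelled $r\in R$ may lie in $\fq$.

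Quasi-compactness is the real content of the theorem. You need $\operatorname{Ann}_BJ$ to be finitely generated up to radical, and your justifications fail for a general domain $R$:
\begin{enumerate}
\item The coherence results of \cite{Coherent-Rings} (Theorem~\ref{thm:ModAndAlgOverV}) need $R$ to be Pr\"ufer. For a non-coherent domain $R$, already $B=R$ is not coherent.
\item Raynaud--Gruson gives finite presentation only for finite $B$-modules that are $R$-\emph{flat}. Here $J$, $B/J$ and $B/\operatorname{Ann}_BJ$ are only known to be $R$-torsion-free, which is weaker outside the Pr\"ufer case.
\item Even if $J$ were finitely presented, $\ker(B\to J^{\oplus k})$ is finitely generated only when its image is finitely presented, so you would again need coherence. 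Using $\operatorname{Supp}J=V(\operatorname{Fit}_0J)$ avoids this last point, but still needs $J$ finitely presented.
\end{enumerate}

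Your fallback limit argument skips exactly the step where the work lies. Write $R=\bigcup R_\lambda$ with $R_\lambda$ finitely generated over $\bZ$ and $B=B_0\otimes_{R_0}R$. The loci $\Sigma_\lambda\subseteq\Spec B_\lambda$ are quasi-compact for free (Noetherian) and open by Lemma~\ref{lem:ReducedLocusOpen}, so EGA IV \S9 is beside the point. The real problem is comparing $\Sigma$ with the $\Sigma_\lambda$. Since $R_\lambda\to R$ is not flat, reducedness or integrality of $B_\fq$ does not descend to $(B_\lambda)_{\fq\cap B_\lambda}$ automatically.

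The paper supplies this comparison as follows.
\begin{itemize}
\item Enlarge $\lambda_0$ so that the finitely many associated primes $\fr_j$ of the Noetherian ring $C=B\otimes_RK$ have distinct contractions and satisfy $\fr_j=(\fr_j\cap C_0)C$.
\item Then $(B_\lambda)_{\fq\cap B_\lambda}$ and $B_\fq$ embed compatibly, over the same index set, into products of the localizations $(C_\lambda)_{\fr_j\cap C_\lambda}$ and $C_{\fr_j}$ respectively. The bottom map between these products is injective by faithful flatness, so $(B_\lambda)_{\fq\cap B_\lambda}\to B_\fq$ is injective.
\item Combined with $B_\fq=\colim_\lambda(B_\lambda)_{\fq\cap B_\lambda}$, this gives $\Sigma=\bigcap_{\lambda\ge\lambda_0}v_\lambda^{-1}(\Sigma_\lambda)$, an intersection of constructible sets.
\item An open set that is closed in the compact constructible topology is quasi-compact.
\end{itemize}
This treats the reduced and domain loci simultaneously, so your separate ``trickiest step'' for domains is not needed.
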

This yields %a corresponding statement for finitely presented algebra
 \begin{Cor}[=Corollary \ref{cor:RegNorQCopenForFlat}]\label{cor:RegNorQCopenForFlat-intro}
     Let $V$ be a valuation ring  that satisfies the equivalent conditions in Theorem \ref{thm:RegNorQCOpen-intro}.
     Let $X$ be a flat $V$-scheme of finite presentation.
     Then $\operatorname{Reg}(X)$ and $\operatorname{Nor}(X)$ are quasi-compact opens.
 \end{Cor}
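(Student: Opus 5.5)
The plan is to reduce Corollary \ref{cor:RegNorQCopenForFlat-intro} to the integral domain case of Theorem \ref{thm:RegNorQCOpen-intro} by means of Theorem \ref{thm:ReducedLocusQCOpen-intro}. Both statements are local on $X$, and a finite union of quasi-compact opens is quasi-compact open. So we may take $X=\Spec B$ with $B$ a flat, finitely presented $V$-algebra.

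\textbf{Step 1: restrict to the integral domain locus.} If $\fq\in\Spec B$ lies in $\operatorname{Reg}(B)$ or $\operatorname{Nor}(B)$, then $B_\fq$ is a domain. Indeed, a normal local ring is a domain. A local ring $B_\fq$ of finite weak global dimension that is coherent is a domain; this holds because $B_\fq$ is finitely presented over $V$, hence coherent, and we use Bertin's result (cf.\ \cite{Bertin-def-Regular}, \cite{Coherent-Rings}) that coherent regular local rings are domains. Hence both loci lie inside the domain locus $U$, which is a quasi-compact open by Theorem \ref{thm:ReducedLocusQCOpen-intro}. Cover $U$ by finitely many principal opens $D(f_i)$. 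It then suffices to show that $\operatorname{Reg}(B_{f_i})$ and $\operatorname{Nor}(B_{f_i})$ are quasi-compact open. Each $B_{f_i}$ is again flat and finitely presented over $V$, and all its local rings are domains.

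\textbf{Step 2: split into irreducible components.} Set $C=B_{f_i}$. Since $C$ is flat over the domain $V$, its minimal primes lie over the generic point of $V$. Then $C\otimes_V K$ is a finitely generated algebra over the fraction field $K$, so it is Noetherian and $C$ has only finitely many minimal primes $\fP_1,\dots,\fP_n$. Every local ring of $C$ is a domain, so each prime contains exactly one $\fP_j$. Hence the closed sets $V(\fP_j)$ are pairwise disjoint, and each is therefore also open. This makes $\Spec C$ the disjoint union of the $\Spec(C/\fP_j)$, so $C\cong\prod_j C/\fP_j$. Each factor $C/\fP_j$ is a direct factor of $C$, hence finitely presented over $V$, and it is an integral domain.

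\textbf{Step 3: apply the domain case.} Theorem \ref{thm:RegNorQCOpen-intro} (condition (i) and its final assertion) now gives that $\operatorname{Reg}(C/\fP_j)$ and $\operatorname{Nor}(C/\fP_j)$ are quasi-compact open. Localizations of $C$ agree with localizations of the factors, so the loci for $C$ are the disjoint unions of these and are quasi-compact open. Gluing over the finitely many $D(f_i)$ finishes the argument.

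The main obstacle is Step 1, the claim that finite weak global dimension forces $B_\fq$ to be a domain. In the non-Noetherian setting this requires the coherent-ring version of the "regular local implies domain" statement. I would cite it from \cite{Coherent-Rings}, applied to the coherent local ring $B_\fq$, or derive it from Knaf's results \cite{Knaf-RLR-over-Prufer}. The finiteness of minimal primes in Step 2 also needs flatness, but that part is routine.
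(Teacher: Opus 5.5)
Your proposal is correct and follows the same route as the paper. It restricts to the quasi-compact open integral-domain locus via Theorem \ref{thm:ReducedLocusQCOpen-intro}, uses flatness and the Noetherian generic fiber to get finitely many (hence pairwise disjoint, clopen) irreducible components, and then applies the integral case (Theorem \ref{thm:RegQCopen} and Corollary \ref{cor:NorQCopen}); the containment $\operatorname{Reg}\subseteq\operatorname{Nor}\subseteq$ (domain locus) that you flag as the main obstacle is exactly Bertin's Cor.~4.3, as recorded in Discussion \ref{discu:NorRegloci}.
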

Finally, we note that \ref{intro-J2-condition} in Theorem \ref{thm:RegNorQCOpen-intro} can be easily verified for finite rank valuation rings (and more, Remark \ref{rem:RegNorQCopenForWellOrdered}), so we get
  \begin{Cor}[=Corollary \ref{cor:RegNorQCopenForFiniteRank}]\label{cor:RegNorQCopenForFRK-intro}
     Let $V$ be a valuation ring of finite rank.
     Let $X$ be a flat $V$-scheme of finite presentation, or an integral $V$-scheme of finite type.
     Then $\operatorname{Reg}(X)$ and $\operatorname{Nor}(X)$ are quasi-compact opens.
 \end{Cor}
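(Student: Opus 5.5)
The plan is to deduce this from Corollary \ref{cor:RegNorQCopenForFlat-intro} and Theorem \ref{thm:RegNorQCOpen-intro}. The main work is to check condition \ref{intro-J2-condition} for a valuation ring $V$ of finite rank; after that, the integral case needs a small separate argument.

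\textbf{Checking condition \ref{intro-J2-condition}.} Let $V$ have finite rank. Then $\Spec V$ is a finite chain $0=\fp_0\subsetneq\fp_1\subsetneq\cdots\subsetneq\fp_n$.

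For the second clause, take $\fp=\fp_i$. If $i=n$, choose $f=1$. Otherwise choose $f\in\fp_{i+1}\setminus\fp_i$. The primes of $V_f$ are exactly the primes not containing $f$, namely $\fp_0,\dots,\fp_i$. So $V_f$ is a valuation ring with maximal ideal $\fp V_f$, and hence $V_f=V_\fp$. This holds whether or not $\fp V_\fp$ is principal.

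For the first clause, let $F/\kappa(\fp)$ be finite purely inseparable, and set $W=V/\fp$. Then $W$ is a valuation ring of finite rank with fraction field $\kappa(\fp)$. Take $B'$ to be the integral closure of $W$ in $F$. Since $F/\kappa(\fp)$ is purely inseparable, $B'$ is a valuation ring, local with unique prime over each prime of $W$, and its fraction field is $F$.

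The obstacle is finiteness: $B'$ need not be finite over $W$. So I would instead take $B=W[x_1,\dots,x_m]$, where the $x_i\in B'$ generate $F$ over $\kappa(\fp)$. This $B$ is a finite $V$-algebra and a domain with fraction field $F$. I then need a nonzero $g\in B$ with $B_g$ regular. Choose $g$ to be a nonzero element of the maximal ideal $\fp_{i+1}/\fp$ of the next prime, so that $W_g=\kappa(\fp)$ (or $g=1$ if $W$ is already a field). Then $B_g$ is a domain finite over the field $\kappa(\fp)$, hence equal to the field $F$, which is regular.

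If one insists that $g$ must only invert enough to land in a field, the finite-rank hypothesis is exactly what makes this work: a single element generates the prime $\fp_{i+1}$ up to radical. So condition \ref{intro-J2-condition} holds, and Theorem \ref{thm:RegNorQCOpen-intro} gives its equivalent conditions for $V$.

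\textbf{Flat case.} Once condition \ref{intro-J2-condition} holds, Corollary \ref{cor:RegNorQCopenForFlat-intro} applies directly to any flat $V$-scheme $X$ of finite presentation, so $\operatorname{Reg}(X)$ and $\operatorname{Nor}(X)$ are quasi-compact opens.

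\textbf{Integral case.} Let $X$ be an integral $V$-scheme of finite type. Both loci are local on $X$, so I cover $X$ by finitely many affines $\Spec B$, where each $B$ is a finitely generated $V$-algebra and an integral domain. Condition (i) of Theorem \ref{thm:RegNorQCOpen-intro} then gives that $\operatorname{Reg}(B)$ is a quasi-compact open. The final assertion of that theorem gives the same for $\operatorname{Nor}(B)$. Gluing finitely many quasi-compact opens gives quasi-compact opens in $X$.

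The only delicate step I expect is the verification of the first clause above, specifically producing $g$ with $B_g$ regular using a single element. That is where finite rank is used; for infinite rank the chain of primes may have no next prime, and no single element suffices.
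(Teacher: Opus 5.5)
Your proposal is correct and follows essentially the same route as the paper. The paper checks Condition~\ref{condit:IsolatedPrimeInV} (trivial since $\Spec V$ is finite) and the J-2-type condition of Lemma~\ref{lem:J2isEveryFiniteJ0} via Lemma~\ref{lem:J1.5classes}\eqref{J1.5:frk}, whose proof uses your observation that inverting a single element turns a finite-rank valuation ring into its fraction field; it then concludes by Theorem~\ref{thm:RegQCopen}, Corollary~\ref{cor:NorQCopen}, and Corollary~\ref{cor:RegNorQCopenForFlat}, exactly as you do.
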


\subsection{The direct summand theorem}
Parallel to the Noetherian case, we show
\begin{Thm}[=Theorem \ref{thm:RLRisSplinter}]\label{thm:RLRisSplinter-intro}
Let $V$ be a valuation ring and let $A$ be an essentially finitely presented local $V$-algebra. If $\operatorname{w.dim}A<\infty$, then $A$ is a splinter.
 \end{Thm}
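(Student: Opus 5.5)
Recall that $A$ is a splinter if every finite ring map $A\to B$ with $\Spec B\to\Spec A$ surjective splits as a map of $A$-modules; it suffices to treat $B$ a domain (after killing a minimal prime lying over the zero ideal; note $A$ is a domain, being a local ring of finite weak dimension and hence a coherent regular ring in Bertin's sense). The plan is to use big Cohen--Macaulay algebras (\S\ref{sec:BCM}). First I will reduce to the case where $V$ is absolutely integrally closed, or at least where the fraction field and residue field of $V$ are well behaved. Such a reduction goes through a faithfully flat (ind-)extension $V\to W$ and the local ring $A'=(A\otimes_V W)_{\fq}$. Here one uses that weak dimension is preserved under flat essentially finitely presented base change with regular closed fibre (Knaf), and that splitting of $A\to B$ descends along faithfully flat maps, since $A\to B$ is a finitely presented $A$-module map.

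The key input is a ``regular implies flat'' criterion. If $A$ has finite weak global dimension and $C$ is a big Cohen--Macaulay $A$-algebra in the appropriate sense for algebras over $V$ (which \S\ref{sec:BCM} should provide, e.g. a perfectoid or $A^+$-type algebra), then $C$ is faithfully flat over $A$. This is the analogue of the Noetherian fact that a CM module over a regular local ring is flat. To prove it I would pass, via the approximation of \S\ref{sec:approximation-regular-v}, to Noetherian regular local rings $A_i$ with $A=\colim A_i$. Then I would check $\Tor^A_1(A/I,C)=0$ for finitely generated ideals $I$ by descending to some $A_i$, where the classical Koszul/depth argument (Serre's criterion with the Auslander--Buchsbaum formula) gives vanishing.

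With this in hand the splinter property follows formally. Given a finite $A\to B$ with $B$ a domain, I choose a big CM algebra $C$ for $B$, for instance $C$ built from $B^+=A^+$, which is then also big CM over $A$. Then $A\to C$ is faithfully flat, hence pure. Since it factors through $B$, the map $A\to B$ is pure. Finally $B$ is a finitely presented $A$-module (as $A$ is coherent and $B$ is finite over $A$), so $\coker(A\to B)$ is finitely presented, and purity gives a splitting.

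The main obstacle is the flatness step. We need a good notion of big CM algebra over a non-Noetherian $A$ with valuation-theoretic (possibly non-discrete) parameters, and the Tor-vanishing must be made compatible with the approximation. I would try to treat the mixed-characteristic case with André's perfectoid BCM algebras and the equal-characteristic cases via $A^+$ and Hochster--Huneke. In each case I would test flatness against a regular sequence coming from the approximating regular local rings $A_i$ together with an element of $V$.
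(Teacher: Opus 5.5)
\textbf{There is a genuine gap, in the flatness step.} You rightly flag this step as the crux, but the mechanism you propose cannot work. Suppose $V=\colim R_i$ with $R_i$ regular and $A=\colim A_i$ with $A_i$ regular. Once $\dim R_i\geq 2$, a faithfully flat $A$-algebra $C$ can never be flat over $A_i$: that would force $R_i\to V$ to be flat. But two nonzero elements of $\fm_{R_i}$ generate a principal ideal of $V$, so their Koszul $H_1$ on $V$ is nonzero. Hence $C$ is never big Cohen--Macaulay over the regular ring $A_i$, and no Serre/Auslander--Buchsbaum argument over $A_i$ will give $\Tor^A_1(A/I,C)=0$.

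What $C$ actually satisfies is the relative condition \ref{BCM=flat:BCM} of Theorem \ref{thm:BCMisflat}: parameters of the closed fibre followed by a \emph{single} $\pi\in V$. This is not a system of parameters of any $A_i$. The missing idea is a flatness criterion proved intrinsically over $A$. The paper does this in four moves:
\begin{enumerate}
\item It reduces to finite-rank $V$ via \S\ref{subsec:ReducedToSmallV}. There $A_\lambda\to A$ is faithfully flat, so regularity descends by Theorem \ref{thm:ALLwdimFacts}\ref{wdim:flatDescent}.
\item It inducts on $\dim A$, using Lemma \ref{lem:LiftSOP} to get flatness on the punctured spectrum.
\item It cuts by a system of parameters $\underline{x}$ of the Cohen--Macaulay closed fibre that is part of a minimal generating set of $\fm_A$. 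Then $A/\underline{x}A$ is a valuation ring, and the $V$-torsion-free module $M/\underline{x}M$ is flat over it.
\item It concludes by \citestacks{0H7P}.
\end{enumerate}

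\textbf{Two auxiliary steps also fail as stated.}

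\emph{The reduction to absolutely integrally closed $V$.} This base change does not preserve $\operatorname{w.dim}A<\infty$. Take $V$ a DVR with uniformizer $\pi$ and $A=V[x]/(x^2-\pi)$, which is a DVR. Over $W$ with algebraically closed fraction field, the local ring $A\otimes_VW$ is not a domain, since $x^2-\pi=(x-\sqrt{\pi})(x+\sqrt{\pi})$; so its weak dimension is infinite. Flat ascent (Theorem \ref{thm:ALLwdimFacts}\ref{wdim:flatAscent}) fails because the closed fibre is $W/\pi W$, which is non-reduced. So the regular approximations of \S\ref{sec:approximation-regular-v} are not available to you here. The paper instead uses the naive approximation of Discussion \ref{discu:BCMby+} by (non-regular) excellent local domains $A_\lambda$ and the functorial algebra $C=\colim_\lambda\widehat{A_\lambda^+}^p$. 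This $C$ satisfies the relative condition by \cite[Cor.~2.10]{BMPSTWW} together with Lemmas \ref{lem:CombineSOP} and \ref{lem:baseChangeSOP}, and it covers every case with $p\in\fm_V$, equal or mixed characteristic.

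\emph{Equal characteristic $0$.} Here $A^+$ is not big Cohen--Macaulay; Hochster--Huneke is a characteristic-$p$ result. The paper handles $\bQ\subseteq A$ directly: $A$ is a normal domain, and normal domains containing $\bQ$ are splinters via the trace.

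Your final formal step (faithfully flat implies pure, then factor through $A^+$) matches the paper's use of Lemma \ref{lem:Splinter=R+pure}.
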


In the Noetherian case, the result is first proved in full generality by Andr\'e \cite{Andre-summand}, who also constructed (perfectoid) BCM algebras for all mixed-characteristic Noetherian local rings.
Classical observations by Hochster and Huneke (cf. \cite[(6.7)]{HH92-BCMisFlat})
shows that BCM algebras are flat over a regular local ring and their existence for all Noetherian local rings implies the direct summand theorem.
In \S\ref{sec:BCM} we study a similar notion for flat finitely presented algebras over a valuation ring and prove a similar flatness result (Theorem \ref{thm:BCMisflat}).
This notion behaves well with respect to Noetherian approximation (\S\ref{subsec:ConstructBCM}),
so we can use the construction in the Noetherian case \cite{Bhatt-int-clos-BCM,BMPSTWW} to get a faithfully flat $A$-algebra that contains an absolute integral closure of $A$,
which proves Theorem \ref{thm:RLRisSplinter-intro}.

\subsection{Variants of Kunz's Theorem}
Combining the discussions on BCM algebras in \S\ref{sec:BCM} and techniques from \cite{Kunz-Bhatt-Ma}, we have the following variant of the main result of \cite{Kunz-Bhatt-Ma}, where $\widehat{(-)}^p$ stands for $p$-adic completion.
 \begin{Thm}[=Theorem \ref{thm:PerfdKunz}]\label{thm:PerfdKunz-intro}
 Let $V$ be a valuation ring and let $A$ be an essentially finitely presented local $V$-algebra.
Assume that there exists a prime number $p$ not invertible in $V$.
The following are equivalent.
\begin{enumerate}[label=$(\roman*)$]
\item $\operatorname{w.dim} A<\infty$.
         \item $A$ is an integral domain and 
         \begin{align*}
             \operatorname{w.dim}_A\widehat{A^+}^p=\begin{cases}
                 0 & \text{the image of }V_{\fm_A\cap V}\text{ in }A\text{ is }p\text{-adically separated;}\\
                 1 & \text{otherwise.}
             \end{cases}
         \end{align*}
         \item $A$ is an integral domain and  $\operatorname{w.dim}_A\widehat{A^+}^p<\infty$.
         \item  There exists an $A$-algebra $S$ which is perfectoid,
         and an $S$-module $U$
         that satisfies $U/\sqrt{\fm_A S}U\neq 0$ and $\operatorname{w.dim}_AU<\infty$. 
     \end{enumerate}
 \end{Thm}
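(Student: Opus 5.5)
The plan is to prove the cycle $(i)\Rightarrow(ii)\Rightarrow(iii)\Rightarrow(iv)\Rightarrow(i)$, following the strategy of \cite{Kunz-Bhatt-Ma} and using the flatness of BCM algebras from \S\ref{sec:BCM} (Theorem \ref{thm:BCMisflat}) in place of the Noetherian Hochster--Huneke flatness.

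\textbf{$(i)\Rightarrow(ii)$.} Assume $\operatorname{w.dim}A<\infty$. By the results of \cite{Knaf-RLR-over-Prufer} recalled in Discussion \ref{discu:NorRegloci}, $A$ is a normal domain. We may replace $V$ by the image of $V_{\fm_A\cap V}$, so that $V\to A$ is local and flat. The goal is to show that $\widehat{A^+}^p$ is a perfectoid BCM algebra in the sense of \S\ref{sec:BCM}. For this I would use the Noetherian approximation of \S\ref{subsec:ConstructBCM} together with the theorem of \cite{Bhatt-int-clos-BCM,BMPSTWW}: $p$-completed absolute integral closures of excellent Noetherian local domains are BCM. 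Theorem \ref{thm:BCMisflat} then gives that $\widehat{A^+}^p$ is flat over $A$ modulo $p$-power torsion issues.

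The dichotomy is analysed as follows. If $V$ (now local in $A$) is $p$-adically separated, then $p$-adic completion behaves well and $\widehat{A^+}^p$ is $A$-flat, giving weak dimension $0$. Otherwise $\bigcap p^nV=\fq$ is a nonzero prime, and $\widehat{A^+}^p$ is killed by $\fq$. It is then flat over $A/\fq A$, which has weak dimension $1$ over $A$ since $\fq$ is flat and $A$ is $V$-flat. This gives $\operatorname{w.dim}=1$, and it is not $0$ because a nonzero module killed by $\fq\neq 0$ is not flat over the domain $A$.

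\textbf{$(ii)\Rightarrow(iii)$} is trivial. \textbf{$(iii)\Rightarrow(iv)$:} take $S=U=\widehat{A^+}^p$, which is perfectoid. The condition $U/\sqrt{\fm_AS}U\neq0$ holds because $A^+$ is local with maximal ideal $\sqrt{\fm_AA^+}$, and this persists after $p$-completion since $p\in\fm_A$.

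\textbf{$(iv)\Rightarrow(i)$} is the main obstacle. Following \cite{Kunz-Bhatt-Ma}, I would show that $\Tor^A_i(k,-)$ for large $i$ is detected after base change to $U$, by proving that perfectoid rings have a strong "Frobenius-like" contraction property. Concretely, I would first reduce to computing $\Tor^A_i(A/\fm_A,\cdot)$: for $A$ essentially finitely presented over $V$, finiteness of $\operatorname{w.dim}A$ should follow from finiteness of $\operatorname{w.dim}_A\kappa(\fm_A)$, or from the vanishing of a suitable Koszul-type homology, using coherence as in \cite{Bertin-def-Regular}.

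Then, mimicking Bhatt--Iyengar--Ma, I would use that for perfectoid $S$ the Frobenius on $S/p$ is surjective. This lets one write the residue field complex, after base change to $S$ and passing to $\sqrt{\fm_AS}$, as a filtered colimit of complexes with controlled amplitude. The finite flat dimension of $U$ then forces $k\otimes^L_A U$, computed modulo $\sqrt{\fm_AS}$, to be bounded. Nonvanishing of $U/\sqrt{\fm_A S}U$ then yields that $\Tor^A_i(k,k)$ vanishes for $i\gg0$.

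The key technical difficulty is replacing Noetherian arguments such as minimal resolutions and Nakayama with coherent, essentially finitely presented ones. I would first try to reduce via Noetherian approximation, as in \S\ref{sec:approximation-regular-v}, to the Noetherian statement of \cite{Kunz-Bhatt-Ma}, keeping track of $U$ through descent of finite Tor-amplitude.
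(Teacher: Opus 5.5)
\textbf{Verdict.} The proposal has genuine gaps, the most serious in (iv)$\Rightarrow$(i), plus unjustified steps in (i)$\Rightarrow$(ii).

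\textbf{(iv)$\Rightarrow$(i).} You flag this as the obstacle but do not resolve it. The missing input is \cite[Lem.~3.7]{Kunz-Bhatt-Ma}: for perfectoid $S$ and a \emph{finitely generated} ideal $\fa$, the ideal $J=\sqrt{\fa S}$ has finite flat dimension over $S$. One feeds this into \cite[Th.~2.1]{Kunz-Bhatt-Ma}, whose proof does not need Noetherianity, together with $\operatorname{w.dim}_AU<\infty$ and $U/JU\neq 0$. The output is that $k_A\otimes^L_AM$ is bounded for every $M\in\Coh(A)$. Theorem \ref{thm:ALLwdimFacts}\ref{wdim:testk} and Knaf's uniform bound (Discussion \ref{discu:NorRegloci}) then finish.

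This requires $\sqrt{\fm_AS}=\sqrt{\fa S}$ with $\fa$ finitely generated, i.e.\ $\fm_A$ must be the radical of a finitely generated ideal. That fails for general $V$. The paper therefore first reduces to finite-rank $V$ via \S\ref{subsec:ReducedToSmallV}:
\begin{itemize}
\item the approximants $A_\lambda\to A$ are faithfully flat, so (iv) holds for $A_\lambda$ with the same $S,U$ (note $\sqrt{\fm_{A_\lambda}S}\subseteq\sqrt{\fm_AS}$);
\item finite tor dimension of coherent modules passes back to $A$ by \cite[Th.~6.2.2]{Coherent-Rings}.
\end{itemize}
Neither the key lemma nor this reduction appears in your sketch; ``Frobenius-like contraction'' and ``filtered colimit of complexes with controlled amplitude'' do not pin them down.

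Two further problems in this direction:
\begin{itemize}
\item Your target ``$\Tor^A_i(k,k)=0$ for $i\gg0$'' does not by itself give $\operatorname{w.dim}_Ak_A<\infty$. When $\fm_A$ is not finitely generated, $k_A$ is not finitely presented, so no minimal-resolution argument is available. What you need is boundedness of $k_A\otimes^L_AM$ for all coherent $M$.
\item Your fallback of reducing to the Noetherian theorem via \S\ref{sec:approximation-regular-v} is not available as stated. Theorem \ref{thm:AbsClosedVisIndRegular} needs $V$ absolutely integrally closed. You would then have to transport hypothesis (iv) (a perfectoid algebra plus a module of finite flat dimension) to an absolutely integrally closed extension of $V$, which is not automatic.
\end{itemize}

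\textbf{(i)$\Rightarrow$(ii).} Your strategy is the paper's, but two steps are unsupported.

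First, \cite{BMPSTWW} gives regularity on each $\widehat{A_\lambda^+}^p$, whereas $\widehat{A^+}^p$ is the $p$-completion of the colimit; ``modulo $p$-power torsion issues'' does not address this. The paper proceeds as follows:
\begin{itemize}
\item it sets $C=\colim_\lambda\widehat{A_\lambda^+}^p$, on which all permutations of $\underline x,\pi$ are regular (Discussion \ref{discu:BCMby+CONCLUSIONS});
\item Lemma \ref{lem:RegSeqStaysInCompletion} (permutable regular sequences survive $z$-adic completion) then makes all permutations of $\underline x,p$ regular on $\widehat C^p$;
\item $\widehat{A^+}^p$ is a direct summand of $\widehat C^p$.
\end{itemize}
In the separated case, flatness also needs the observation that every nonzero $\pi\in V$ divides a power of $p$, so that Theorem \ref{thm:BCMisflat} applies.

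Second, in the non-separated case your claim that $\widehat{A^+}^p$ is flat over $A/\fq A$ is unjustified. It would require $\operatorname{w.dim}(A/\fq A)<\infty$ and Theorem \ref{thm:BCMisflat} relative to $V/\fq$, and you prove neither. The paper sidesteps this: regularity of every such $\underline x$ on $\widehat C^p$ already gives $\operatorname{w.dim}_A\widehat C^p\le 1$ by Corollary \ref{cor:BCMisTorDim1}.

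Your argument that the weak dimension is not $0$ in that case is fine and equivalent to the paper's: a nonzero module killed by $\fq A\neq 0$ cannot be flat over a domain.

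\textbf{Minor point.} $A^+$ need not be local. In (iii)$\Rightarrow$(iv) you only need $A^+/\fm_AA^+\neq 0$, which follows from lying over.
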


In equal characteristic $p>0$ we have the following.
\begin{Thm}[=Theorem \ref{thm:KunzALL} and Corollary \ref{cor:OneFeFiniteTorDim=Regular}]\label{thm:KunzALL-intro}
    Let $(V,\fm_V,k_V)$ be a valuation ring of characteristic $p>0$ and let $(A,\fm_A,k_A)$ be an essentially finitely presented local $V$-algebra.
    Then the following are equivalent.
    \begin{enumerate}[label=$(\roman*)$]
        \item\label{KunzALLintro-fwgdim} $\operatorname{w.dim} A<\infty$.
        %\item\label{Kunz-RLR} For every finitely generated ideal $I$ of $A$, $\operatorname{w.dim}_A A/I<\infty$; that is, $A$ is regular in the sense of Bertin \cite[D\'ef.~3.5]{Bertin-def-Regular}.
        \item\label{KunzALLintro-all-F-flat} $F^e_*A$ is flat over $A$ for all $e$.
        \item\label{KunzALLintro-some-F-flat} $F^e_*A$ is flat over $A$ for some $e>0$.
        \item\label{KunzALLintro-some-F-ffd} $\operatorname{w.dim}_AF^e_*A<\infty$ for some $e>0$.
        \item\label{KunzALLintro-perf-flat} The map $A\to A_{perf}=\colim_e F^e_*A$ is flat.
        \item\label{KunzALLintro-plus-flat}
        $A$ is an integral domain and the map $A\to A^+$ is flat.
        \item\label{KunzALLintro-some-perf-ffd} There exists an $A$-algebra $S$ which is perfect,
        and an $S$-module $U$
        that satisfies $U/\sqrt{\fm_A S}U\neq 0$ and $\operatorname{w.dim}_AU<\infty$. 
    \end{enumerate}
\end{Thm}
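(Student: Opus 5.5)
The plan is to organize the equivalences as a cycle, with \ref{KunzALLintro-fwgdim} as the hub. The easy implications come first.
\begin{itemize}
\item[(a)] \ref{KunzALLintro-all-F-flat}$\Rightarrow$\ref{KunzALLintro-some-F-flat}$\Rightarrow$\ref{KunzALLintro-some-F-ffd} are trivial.
\item[(b)] \ref{KunzALLintro-all-F-flat}$\Rightarrow$\ref{KunzALLintro-perf-flat} holds because a filtered colimit of flat modules is flat.
\item[(c)] \ref{KunzALLintro-perf-flat}$\Rightarrow$\ref{KunzALLintro-some-perf-ffd}: take $S=U=A_{perf}$. Flatness makes $A_{perf}$ faithfully flat over the local ring $A$, so $U/\sqrt{\fm_A S}U\neq0$; indeed $\sqrt{\fm_A S}\ne S$ since $\fm_A S\neq S$.
\item[(d)] \ref{KunzALLintro-plus-flat}$\Rightarrow$\ref{KunzALLintro-some-perf-ffd}: take $S=U=A^+$, which is perfect because it is absolutely integrally closed of characteristic $p$.
\item[(e)] \ref{KunzALLintro-some-F-ffd}$\Rightarrow$\ref{KunzALLintro-some-perf-ffd}: take $S=A_{perf}$ and $U=F^e_*A$ viewed suitably. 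More cleanly, I would reduce \ref{KunzALLintro-some-F-ffd} directly to \ref{KunzALLintro-fwgdim} by the argument below, since $\sqrt{\fm_A F^e_*A}$ is the maximal ideal and $F^e_*A$ is a module over the perfect-ish tower.
\end{itemize}

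For the implications out of \ref{KunzALLintro-fwgdim}, I would use the BCM machinery of \S\ref{sec:BCM} and Theorem \ref{thm:BCMisflat}. Assume $\operatorname{w.dim}A<\infty$. By the results of Bertin/Knaf, $A$ is then a normal domain. The construction of \S\ref{subsec:ConstructBCM}, via Noetherian approximation and the characteristic $p$ case of \cite{Bhatt-int-clos-BCM}, should show that $A^+$ is a BCM algebra in the paper's sense. Theorem \ref{thm:BCMisflat} then gives $A\to A^+$ flat, which is \ref{KunzALLintro-plus-flat}.

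Next I would get \ref{KunzALLintro-all-F-flat} from \ref{KunzALLintro-plus-flat}. Since $A$ is a normal domain, $A\to F^e_*A\to A^+$ factors through $A^{1/p^e}\subseteq A^+$. The key point is that $F^e_*A$ is a direct limit of finitely presented $A$-modules with finite weak dimension. Because $\operatorname{w.dim}A<\infty$ and $A$ is coherent, a flatness criterion of local type applies: it suffices to show $\operatorname{Tor}^A_i(F^e_*A,k)$-type vanishing. I would try the alternative that avoids this. Finite weak dimension of $A$ lets one compute $\operatorname{Tor}^A_i(A/I,F^e_*A)$ for finitely generated $I$ via a finite free resolution $P_\bullet$ of $A/I$. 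Then $P_\bullet\otimes F^e_*A=F^e_*(F^{e*}P_\bullet)$, and we need $F^{e*}P_\bullet$ acyclic. This is the Peskine--Szpiro/Kunz style argument. Acyclicity of the Frobenius pullback of a finite free complex can be tested via the Buchsbaum--Eisenbud criterion generalized to coherent rings. Alternatively, I would descend to a Noetherian regular local model by the approximation of \S\ref{sec:approximation-regular-v} and apply Kunz's theorem there, with flatness ascending along the filtered colimit.

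The main direction is \ref{KunzALLintro-some-perf-ffd}$\Rightarrow$\ref{KunzALLintro-fwgdim}, and this is the obstacle. I would follow \cite{Kunz-Bhatt-Ma}. For a finitely presented $A$-module $M$, the relevant Tor's are $\operatorname{Tor}^A_i(M,U)$. Because $S$ is perfect, Frobenius on $S$ is an isomorphism. It follows that $\operatorname{Tor}^A_i(M,U)\cong\operatorname{Tor}^A_i(F^{e*}M,U)$ after twisting $U$ by Frobenius on $S$, which preserves finite weak dimension. Take $M=k_A$, or rather $A/\fm$-approximations by finitely generated ideals $I\subseteq\fm_A$ (since $\fm_A$ need not be finitely generated), with Koszul-type complexes. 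The Frobenius powers $I^{[p^e]}$ become cofinal in the needed sense, and $U/\sqrt{\fm_AS}U\neq0$ should force the vanishing of $\operatorname{Tor}$ beyond $\operatorname{w.dim}_AU$ to propagate. This gives finite weak dimension of every finitely presented $A/I$, hence of $A$ by coherence. The delicate points are the non-Noetherian ones: replacing $\fm_A$ by finitely generated subideals, and using Knaf's criterion that $\operatorname{w.dim}A<\infty$ iff $\operatorname{w.dim}_A$ of the residue field is finite. If the direct argument fails, I would instead reduce \ref{KunzALLintro-some-perf-ffd} to a Noetherian local model via \S\ref{sec:approximation-regular-v} and invoke the Bhatt--Ma theorem there.
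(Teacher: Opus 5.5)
There is a genuine gap at (iv). As your implications stand, (iii) and (iv) have no arrow leading back to (i). For (iii) this is harmless: each $F^{(m-1)e}_*A\to F^{me}_*A$ is isomorphic to $A\to F^e_*A$, so (iii) gives flatness of every $F^{me}_*A$, hence (v). For (iv), nothing you write works.

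Step (e) fails. $F^e_*A$ is not naturally a module over $A_{perf}$ or over any other perfect $A$-algebra: the obvious action would have $a^{1/p^{e+1}}$ act as multiplication by $a^{1/p}$, which need not exist in $A$. The Bhatt--Ma mechanism you want to reuse for (vii)$\Rightarrow$(i) depends on $S$ being perfect, through the fact that $\sqrt{\fa S}$ has finite tor dimension over $S$ for finitely generated $\fa$ \cite[Lem.~3.7]{Kunz-Bhatt-Ma}. Passing to $A_{perf}=\colim_m F^{me}_*A$ does not rescue it either. The only available bound, $\operatorname{w.dim}_AF^{me}_*A\le m\operatorname{w.dim}_AF^e_*A$, grows with $m$, so no finiteness survives in the colimit. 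This is exactly the implication the paper singles out as non-formal.

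The missing idea is a Koh--Lee type statement, Theorem \ref{thm:Kunz-KL-over-val-ring}: if $M\in D^b_{\Coh}(A)$ and $M\otimes^L_AF^{e_m}_*A$ is bounded for infinitely many $e_m$, then $M$ has finite tor dimension. Combined with the bound above, this gives (iv)$\Rightarrow$(i) (Corollary \ref{cor:OneFeFiniteTorDim=Regular}). Its proof needs the refined approximation of \S\ref{sec:approximation-regular-v}:
\begin{enumerate}
\item base change to an absolutely integrally closed $V$ and write it as a filtered colimit of regular local rings (Gabber's local uniformization);
\item descend $A$ and $M$ to a Noetherian model $(B_0,N_0)$ Tor-independent with $V$ (Theorem \ref{thm:TorIndependentForIndRegularV});
\item pull boundedness of the Frobenius twists down with Theorem \ref{thm:descend-any-Hc};
\item apply \cite[Prop.~2.6]{Koh-Lee-exactness-complex-Kunz-theorem} there, and base change finite tor dimension back up.
\end{enumerate}
The coarser approximations of \S\ref{subsec:ReducedToSmallV} and \S\ref{subsec:ConstructBCM} cannot do this. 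The first keeps you non-Noetherian, and the second loses the flatness needed to descend boundedness.

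Two smaller remarks. First, your sketch of (vii)$\Rightarrow$(i), via Frobenius twisting of $U$ and cofinality of $I^{[p^e]}$, is not the Bhatt--Ma argument and cannot be checked as written. The paper instead reduces to $V$ of finite rank, so that $\fm_A=\sqrt{\fa}$ with $\fa$ finitely generated, and then applies \cite[Lem.~3.7, Th.~2.1]{Kunz-Bhatt-Ma}. Second, for (ii) you need neither Peskine--Szpiro nor approximation. The automorphism $x\mapsto x^{1/p^e}$ of an algebraic closure of $\Frac A$ identifies $A^{1/p^e}\to A^+$ with $A\to A^+$. Under (vi) this map is therefore flat, injective and integral, hence faithfully flat. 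Flatness of $A\to A^+$ then descends to flatness of $A\to A^{1/p^e}\cong F^e_*A$. This is shorter than the paper's induction on $\dim A$.
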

Here, it is straightforward to show all items except for \ref{KunzALLintro-some-F-ffd} are equivalent; indeed, in view of Theorem \ref{thm:PerfdKunz-intro}, the only nontrivial implication is \ref{KunzALLintro-fwgdim} implies \ref{KunzALLintro-all-F-flat},
which we settle in the proof of Theorem \ref{thm:KunzALL}.
For Noetherian local rings \ref{KunzALLintro-some-F-ffd} characterizes regularity \cite{Koh-Lee-exactness-complex-Kunz-theorem}, and we use a much more refined Noetherian approximation argument (\S\ref{sec:approximation-regular-v}) than we used in \S\ref{subsec:ConstructBCM} to get the equivalence of our \ref{KunzALLintro-fwgdim} and \ref{KunzALLintro-some-F-ffd}, see \S\ref{sec:KunzRefined}.
This argument somewhat resembles the approximation techniques in the joint work in progress \cite{Lyu-Bogdan-approximation},
where we approximate Noetherian local rings with other Noetherian local rings.

\subsection{Cotangent complexes}
The following question was posted at a workshop in Banff, October 2025, by Javier Carvajal-Rojas.
\begin{Ques}\label{ques:FflatOmegaflat}
    Let $A$ be an $\bF_p$-algebra.
    What is the relation between the flatness of $F_*A$ and that of $\Omega_{A/\bF_p}$ as $A$-modules?
\end{Ques}
When $A$ is Noetherian, $F$-finite, and reduced, both $F_*A$ and $\Omega_{A/\bF_p}$ are finite $A$-modules, and their flatness are equivalent, whereas for non-reduced rings there are easy counterexamples where $\Omega_{A/\bF_p}$ is free but $F_*A$ is not flat (cf. \cite[\S 6]{Andre-F-flatness} or \cite[Chap.~11]{Ma-Polstra-book}). %the flatness of $F_*A$ is equivalent to regularity by Kunz's Theorem \cite[Th.~107]{Mat-CA}.
The author is not aware of an answer to Question \ref{ques:FflatOmegaflat} for Noetherian reduced rings outside of the $F$-finite case.

On the other hand, recall the following  classical theorem, 
cf. \cite[Cor.~7.27 and Supplement, Th.~30]{Andre-cotangent-complex}.
\begin{Thm}
    Let $k$ be a field and let $(A,\fm_A,k_A)$ be a Noetherian local $k$-algebra.
    The following are equivalent.
    \begin{enumerate}[label=$(\roman*)$]
        \item $L_{A/k}$ has tor-amplitude in $[0,0]$ (\emph{i.e.} quasi-isomorphic to a flat module placed at degree $0$).
        \item $H^{-1}(L_{A/k}\otimes_A^Lk_A)=0$.
        \item $A\otimes_k F$ is regular for all finite extensions $F/k$.
        \item $A\otimes_k F$ is regular for all finite extensions $F/k$ inside $k^{1/p}$, where $p=\operatorname{char}k$.
    \end{enumerate}
    Here, by convention, $k^{1/p}=k$ if $p=0$.
\end{Thm}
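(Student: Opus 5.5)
The proof runs through André--Quillen homology and a base-change argument; throughout I use that formal smoothness of $A$ over $k$ for the $\fm_A$-adic topology is equivalent to geometric regularity.

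\textbf{(i)$\Rightarrow$(ii).} If $L_{A/k}\simeq M[0]$ with $M$ flat, then $L_{A/k}\otimes^L_A k_A\simeq M\otimes_A k_A$ sits in degree $0$. Hence $H^{-1}$ vanishes.

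\textbf{(ii)$\Rightarrow$(iii).} Set $D_1(A/k,k_A)=H^{-1}(L_{A/k}\otimes^L_Ak_A)$. The Jacobi--Zariski sequence for $k\to A\to k_A$ reads
\[
0\to D_2(k_A/k)\to D_1(A/k,k_A)\to D_1(k_A/A)\to D_1(k_A/k)\to \Omega_{A/k}\otimes_A k_A\to \Omega_{k_A/k}\to 0,
\]
with $D_1(k_A/A)=\fm_A/\fm_A^2$. So (ii) says that $\fm_A/\fm_A^2\to \Omega_{A/k}\otimes k_A$ is injective (up to the correction from $D_1(k_A/k)$). This is Grothendieck's criterion for formal smoothness of $A$ over $k$ in the $\fm_A$-adic topology, equivalently geometric regularity. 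Concretely, the vanishing of $D_1(A/k,k_A)$ is stable under the flat base change $k\to F$, since $L_{A\otimes_kF/F}=L_{A/k}\otimes_kF$. Applying it to each local ring $B$ of the finite $A$-algebra $A\otimes_kF$, the transitivity sequence gives $D_2(k_B/B)=0$. André's characterization, that $D_2(\ell/B)=0$ iff $B$ is regular (Supplement, Th.~30), then shows $B$ is regular.

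\textbf{(iii)$\Rightarrow$(iv)} is trivial.

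\textbf{(iv)$\Rightarrow$(i).} This is the main obstacle. In characteristic $0$, (iv) just says $A$ is regular. Since $k$ is perfect, $H_n(L_{A/k})$ then vanishes for $n\geq 1$ by André's theorem for regular rings over perfect fields (Cor.~7.27), and $H_0=\Omega_{A/k}$ is flat by the same result. In characteristic $p$, write $k^{1/p}$ as a filtered colimit of finite subextensions $F$. Regularity of $A\otimes_kF$ for all such $F$ gives regularity of the filtered colimit $A\otimes_k k^{1/p}$, being Noetherian (a quotient of a localization of a flat base change). I then expect to imitate the standard argument that $A\otimes_k k^{1/p}$ regular implies $A$ is formally smooth over $k$. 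One checks injectivity of $\fm_A/\fm_A^2\to\Omega_{A/k}\otimes k_A$ by testing on $p$-bases: the kernel is detected by elements becoming $p$-th powers in $A\otimes k^{1/p}$, and regularity prevents this. This yields $D_1(A/k,k_A)=0$, and hence $D_n(A/k,-)=0$ for all $n\geq1$ by André's rigidity/flatness results for formally smooth maps. A final Nakayama-type argument for the complete-intersection-like complex then gives tor-amplitude in $[0,0]$. The delicate point is this last passage from vanishing at the residue field to flatness of $H_0$ with $H_n=0$ for $n>0$, for a non-finite module $\Omega_{A/k}$. For this I would invoke André's theorem directly rather than reprove it.
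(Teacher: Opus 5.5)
Your (i)$\Rightarrow$(ii) is fine, and so is the concrete argument for (ii)$\Rightarrow$(iii): base change to $F$, localize, get $D_2(k_B/B)\hookrightarrow D_1(B/F,k_B)=0$, then apply the $D_2$-criterion for regularity. However, your displayed Jacobi--Zariski sequence swaps the roles of $k_A/k$ and $k_A/A$. The correct sequence is
\[
0\to D_2(k_A/A)\to D_1(A/k,k_A)\to D_1(k_A/k)\to \fm_A/\fm_A^2\to \Omega_{A/k}\otimes_A k_A\to \Omega_{k_A/k}\to 0 .
\]
So (ii) says that $A$ is regular and that $D_1(k_A/k)\to\fm_A/\fm_A^2$ is injective. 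It does not say that $\fm_A/\fm_A^2\to\Omega_{A/k}\otimes_Ak_A$ is injective. (For reference, the paper does not reprove this theorem; it cites André, Cor.~7.27 and Supplement Th.~30.)

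\textbf{The gap: (iv)$\Rightarrow$(ii) in characteristic $p$.} Because of that misreading, the statement you set out to prove there is false. Take $a\in k\setminus k^p$ and $A=k[t]_{(t^p-a)}$.
\begin{itemize}
\item $A$ is essentially smooth over $k$, so all four conditions hold.
\item Yet $\fm_A/\fm_A^2$ is spanned by $t^p-a$, and $d(t^p-a)=0$ in $\Omega_{A/k}$, so the map you want to be injective is zero on a nonzero space.
\item The same example refutes the heuristic that regularity of $A\otimes_kk^{1/p}$ stops elements of $\fm_A$ from becoming $p$-th powers: $t^p-a=(t-a^{1/p})^p$ there, and that ring is a DVR.
\item Even if the injectivity held, combined with regularity of $A$ it would only give $D_1(A/k,k_A)\cong D_1(k_A/k)$, which is nonzero whenever $k_A/k$ is inseparable.
\end{itemize}
Separately, your parenthetical reason for Noetherianity of $A_1=A\otimes_kk^{1/p}$ is invalid. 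Flat base change along $k\to k^{1/p}$ can destroy Noetherianity, e.g.\ $A=k^{1/p}$ with $[k:k^p]=\infty$. You would have to use (iv) to get it.

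\textbf{What works instead (André, Th.~7.26; see the remark after Theorem \ref{thm:Regular-Has-Flat-L}).} Use the identification
\[
D_1(A/k,k_A)\otimes_{k_A}k_{A_1}\cong D_2(k_{A_1}/A_1)\cong \colim_{F} D_2(k_{A_F}/A_F)\otimes_{k_{A_F}}k_{A_1}.
\]
It comes from the Jacobi--Zariski sequence of $k^{1/p}\to A_1\to k_{A_1}$. The map $D_1(A/k,k_{A_1})\to D_1(k_{A_1}/k^{1/p})$ factors through $\Omega_{k/\bF_p}\otimes_kk^{1/p}\to\Omega_{k^{1/p}/\bF_p}$, which is zero because $k\subseteq (k^{1/p})^p$. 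This route needs no Noetherianity of $A_1$. Each $D_2(k_{A_F}/A_F)$ vanishes by the same $D_2$-criterion you used in (ii)$\Rightarrow$(iii), and faithful flatness of $k_A\to k_{A_1}$ then gives (ii).

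\textbf{The last step, (ii)$\Rightarrow$(i).} This is André's flatness theorem, which you may cite just as the paper does. Theorem \ref{thm:Regular-Has-Flat-L} shows an alternative in its own setting: reduce to $k$ finitely generated, use flatness of $L_{A/k_0}$ over the perfect prime field $k_0$, and apply a Nakayama-type injectivity criterion.
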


We show (Theorems \ref{thm:Regular-Has-Flat-L}, \ref{thm:Regular-Has-Flat-L-m-not-fg}, and \ref{thm:Regular-Has-Flat-L-m-fg})
\begin{Thm}
    Let $k$ be a field and let $(A,\fm_A,k_A)$ be a local $k$-algebra.
    Assume that $A$ is essentially finitely presented over a valuation ring $V$.
    The following are equivalent.
    \begin{enumerate}[label=$(\roman*)$]
        \item\label{flatL-intro:flatL} $L_{A/k}$ has tor-amplitude in $[0,0]$.
        \item $H^{-1}(L_{A/k}\otimes_A^Lk_A)=0$.
        \item\label{flatL-intro:GReg}
        $\operatorname{w.dim}(A\otimes_kF)<\infty$ for all finite extensions $F/k$.
        \item $\operatorname{w.dim}(A\otimes_kF)<\infty$ for all finite extensions $F/k$ inside $k^{1/p}$, where $p=\operatorname{char}k$.
    \end{enumerate}
    Here, by convention, $k^{1/p}=k$ if $p=0$.
\end{Thm}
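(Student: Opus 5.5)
Cycle of implications: (i)$\Rightarrow$(ii) is formal, (iii)$\Rightarrow$(iv) is trivial, and the work is in (ii)$\Rightarrow$(iii) and (iv)$\Rightarrow$(i). First I will reduce to $A$ over a valuation ring containing $k$, after replacing $V$ by the image of a suitable localization of $V$ in $A$.

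(i)$\Rightarrow$(ii) is immediate. If $L_{A/k}\simeq M[0]$ with $M$ flat, then $L_{A/k}\otimes^L_Ak_A\simeq M\otimes_Ak_A$ sits in degree $0$.

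(ii)$\Rightarrow$(iii). For a finite extension $F/k$, set $A_F=A\otimes_kF$. Flat base change gives $L_{A_F/F}\simeq L_{A/k}\otimes_kF$. For a local ring $B$ of $A_F$ with residue field $k_B$ (finite over $k_A$), $H^{-1}(L_{B/F}\otimes^L k_B)=H^{-1}(L_{A/k}\otimes^L k_A)\otimes_{k_A}k_B=0$, so it is enough to treat $F=k$. Also $A_F$ is finite over $A$, so each $B$ is again essentially finitely presented over $V$.

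So the key claim is: if $A$ is essentially finitely presented over $V$ and $H^{-1}(L_{A/k}\otimes^Lk_A)=0$, then $\operatorname{w.dim}A<\infty$.

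\begin{enumerate}[label=$(\arabic*)$]
\item Use the transitivity triangle for $k\to A\to k_A$. This gives $H^{-1}(L_{A/k}\otimes k_A)\to H^{-1}(L_{k_A/k})\to H^{-2}(L_{k_A/A})\to H^{-2}(L_{A/k}\otimes k_A)$. In the Noetherian case $H^{-2}(L_{k_A/A})=\fm_A/\fm_A^2$-type data controls regularity, and I would aim for the analogous statement here.
\item Split into the case $\fm_A$ finitely generated and the case $\fm_A$ not finitely generated, mirroring Theorems \ref{thm:Regular-Has-Flat-L-m-fg} and \ref{thm:Regular-Has-Flat-L-m-not-fg}.
\item In the finitely generated case, Noetherian approximation should work. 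Write $V$ as a filtered colimit of Noetherian (even regular, via \S\ref{sec:approximation-regular-v}) local subrings $V_\lambda$ containing a model of $k$, descend $A$ to $A_\lambda$, and use that cotangent complexes commute with filtered colimits. This transfers the $H^{-1}$ vanishing, up to controlled error, to $A_\lambda$, where André's theorem gives regularity. Finite weak dimension then ascends along the flat approximation.
\item In the non-finitely-generated case, $\fm_V$ is not finitely generated. I expect $\operatorname{w.dim}$ to be governed by Knaf's criterion on $A/\fm_VA$ localized, i.e. regularity of the fiber-like ring $A\otimes_V k_V$ at $\fm_A$. The hypothesis should pass to that ring because $L_{k_V/V}$ is essentially trivial when $\fm_V=\fm_V^2$.
\end{enumerate}

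(iv)$\Rightarrow$(i). In characteristic $0$, $k^{1/p}=k$ and $L_{A/k}$ should be handled by the same approximation: regularity of the approximants gives flat $\Omega$ and vanishing higher homology. In characteristic $p$:
\begin{enumerate}[label=$(\arabic*)$]
\item Write $k^{1/p}$ as a filtered union of finite $F$.
\item Tor-amplitude in $[0,0]$ is detected by $L_{A/k}\otimes^L_A\kappa(\fq)$ at all primes $\fq$, and localizations of finite weak dimension rings keep finite weak dimension.
\item Reduce via approximation to André's Noetherian theorem for the approximants $A_\lambda\otimes_{k_\lambda}F_\lambda$.
\end{enumerate}

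The main obstacle is the approximation step: the Noetherian models $A_\lambda$ must be chosen over a $k$-model so that regularity of $A\otimes_kF$ descends to $A_\lambda\otimes F_\lambda$, and conversely. I would try the refined approximation of \S\ref{sec:approximation-regular-v}, choosing $V_\lambda$ so that $A_\lambda\to A$ is flat with regular closed fiber behaviour.
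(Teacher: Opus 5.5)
Your outer structure matches the paper: the cycle of implications, the trivial steps (i)$\Rightarrow$(ii) and (iii)$\Rightarrow$(iv), and the reduction of (ii)$\Rightarrow$(iii) to proving $\operatorname{w.dim}A<\infty$. However, both substantive implications have genuine gaps.

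\textbf{The opening reduction is not available.} In (ii)$\Rightarrow$(iii) you do not yet know that $A$ is a domain. So the image of $V$ in $A$ need not be a valuation ring (e.g.\ $A=V/\pi V$), and $V\to A$ cannot be assumed flat. Also, localizing the image of $V$ brings in at most the prime field, not $k$; the paper explicitly allows $k\not\subseteq V$.

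\textbf{(ii)$\Rightarrow$(iii), case $\fm_A$ finitely generated.} Descending the hypothesis on the absolute complex $L_{A/k}$ to Noetherian models fails. Even when models over subfields $k_\lambda$ make sense, $H^{-1}(L_{A/k}\otimes^L_Ak_A)$ is only the filtered colimit of the corresponding groups of the models, so its vanishing gives nothing at any finite stage. Regular models would not help either, since regularity does not ascend along the approximation: $k[t]_{(t)}[x]/(x^2-t)$ is regular, but its base change to a valuation ring in which $t$ is a square is not a domain.

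The missing idea is the one you begin in step (1) and then drop. With the arrows in the right direction, the triangle for $k\to A\to k_A$ gives an exact sequence $0=H^{-2}(L_{k_A/k})\to H^{-2}(L_{k_A/A})\to H^{-1}(L_{A/k}\otimes^L_Ak_A)$, hence $H^{-2}(L_{k_A/A})=0$. This condition no longer involves $k$ and is compatible with Tor-independent base change. It therefore descends to the models of Theorem~\ref{thm:TorIndependentForIndRegularV}, where Andr\'e's Noetherian criterion makes minimal generators into regular sequences. Passing to the colimit gives Theorem~\ref{thm:lci=Lin-1}: $\fm_A$ is generated by a regular sequence, and $\operatorname{w.dim}A<\infty$ follows from Theorem~\ref{thm:ALLwdimFacts}\ref{wdim:quotient}.

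\textbf{(ii)$\Rightarrow$(iii), case $\fm_V=\fm_V^2$.} You rightly use $L_{k_V/V}=0$ to get regularity of $\overline{A}=A/\fm_VA$, but that alone does not bound $\operatorname{w.dim}A$. You also need $\operatorname{w.dim}_A\overline{A}<\infty$, i.e.\ $\Tor_1^V(k_V,A)=0$; Knaf's criterion presupposes exactly the flatness you have not established. The paper extracts this from the hypothesis as follows.
\begin{enumerate}[label=$(\alph*)$]
\item The sequence of \citestacks{09DB} gives $H^{-2}(L_{\overline{A}/A}\otimes^Lk_A)\cong\Tor_1^V(k_V,A)\otimes_Ak_A$.
\item Andr\'e's theorem for the Noetherian $k$-algebra $\overline{A}$ gives $H^{-2}(L_{\overline{A}/k}\otimes^Lk_A)=0$.
\item The Jacobi--Zariski sequence of $k\to A\to\overline{A}$ then kills $\Tor_1^V(k_V,A)\otimes_Ak_A$.
\item Nakayama applies because $\Tor_1^V(k_V,A)$ is a finitely generated $A$-module.
\end{enumerate}

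\textbf{(iv)$\Rightarrow$(i).} Your principle that tor-amplitude in $[0,0]$ is detected by the derived fibers $L_{A/k}\otimes^L_A\kappa(\fq)$ is false for objects that are not pseudo-coherent, and $L_{A/k}$ is far from pseudo-coherent. For example, if $\fm_V=\fm_V^2$, then $k_V\otimes^L_V\kappa(\fq)$ is concentrated in degree $0$ for every $\fq\in\Spec V$: it is $0$ for $\fq\neq\fm_V$, and $\Tor_1^V(k_V,k_V)=\fm_V/\fm_V^2=0$. Yet $k_V$ is not flat.

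Reaching Andr\'e's theorem through Noetherian approximants also does not work. It would need $k\subseteq V$ and, in characteristic $p$, passage to $V^+$ to use Gabber's local uniformization. But $L_{-/k}$ does not commute with the base change $V\to V^+$ (already $L_{V^+/\bF_p}=0$), so nothing descends.

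The missing input is Gabber--Romero's theorem (Theorem~\ref{thm:Val-Has-Flat-L}): $L_{V/k_0}$ is a flat module in degree $0$ for the perfect prime field $k_0\subseteq V$. The paper then proceeds as follows.
\begin{enumerate}[label=$(\alph*)$]
\item It extends this to $L_{A/k_0}$ when $\operatorname{w.dim}A<\infty$, by induction on $\dim A$. One cuts by $a\in\fm_A\setminus\fm_A^2$, which preserves finite weak dimension by Knaf, and glues $A_a$ and $A/aA$ via \citestacks{0H85}.
\item It reduces to $k$ finitely generated over $k_0$ by means of a $p$-basis.
\item In the triangle $L_{k/k_0}\otimes_kA\to L_{A/k_0}\to L_{A/k}$, a finite free module maps to a flat one. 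A local flatness criterion then reduces (i) to $H^{-1}(L_{A/k}\otimes^L_Ak_A)=0$.
\item In characteristic $p$, Andr\'e's Th.~7.26 reduces this vanishing to that of $H^{-2}(L_{k_{A_F}/A_F})$ for finite $F\subseteq k^{1/p}$. Each of these vanishes because $L_{A_F/\bF_p}$ is flat by (a) applied to $A_F$, and $k_{A_F}$ is separable over $\bF_p$. This is exactly where hypothesis (iv) enters.
\end{enumerate}
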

Note that we do not require $k\subseteq V$.

We loosely follow the proof ideas in \cite{Andre-cotangent-complex}.
To prove \ref{flatL-intro:flatL} implies \ref{flatL-intro:GReg} in the case the maximal ideal of $V$ is finitely generated (\emph{i.e.} principal),
we need to generalize the standard characterization of regular immersions to our setup, see Theorem \ref{thm:lci=Lin-1}, which requires the refined Noetherian approximation in \S\ref{sec:approximation-regular-v}.

\subsection{Vanishing theorems}
So far we have discussed counterparts of results known for Noetherian rings.
The following result is new to the author's knowledge even for Noetherian $V$, \emph{e.g.} $V=\bZ_p$.
It says that (a version of) Kodaira's vanishing theorem, which is false in positive or mixed characteristics, holds over valuation rings of large residue characteristics relative to the data defining the variety.
\begin{Thm}[=Theorem \ref{thm:VanishingForLargep-fixdegree}]\label{thm:VanishingForLargep-fixdegree-intro}
Let $r,m,n,d\in\bZ_{>0}$.
There exists a constant $C=C(r,m,n)$ with the following property.

    Let $V$ be a valuation ring of residue characteristic $p>C$,
    and let $X$ be a closed subscheme of $\bP^r_V$ defined by at most $m$ homogeneous polynomials of degree at most $n$.
    Assume that $X$ is flat over $\Spec V$ of relative dimension $d$,
    and assume that $\operatorname{Reg}(X)=X$.

    Let $\omega_X=\mathcal{E}xt^{r-d}_{\cO_{\bP^r_V}}(\cO_X,\cO_{\bP^r_V}(-r-1))$.
    Then %for every ample invertible sheaf $\cL$ on $X$ and every $j>0$ we have 
    $H^j(X,\omega_X(1))=0$ for all $j>0$.
\end{Thm}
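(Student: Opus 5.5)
The argument goes by spreading out to finite type over $\bZ$ and applying Kodaira vanishing in characteristic zero. Families of such $X$ are parametrized by the coefficients of the $m$ defining polynomials, so the data $(r,m,n)$ give a finite-type parameter scheme $P$ over $\operatorname{Spec}\bZ$ carrying a universal closed subscheme $\mathcal{X}\subseteq\bP^r_P$. An $X$ as in the statement is the pullback of $\mathcal{X}$ along a morphism $\operatorname{Spec}V\to P$. By generic flatness and constructibility (Chevalley), $P$ has a finite stratification into locally closed pieces $P_i$ over which $\mathcal{X}$ is flat of fixed relative dimension. On each piece, the locus where fibers are smooth is constructible, and so is the locus where the coherent sheaf $\mathcal{E}xt^{r-d}(\cO_{\mathcal{X}},\cO(-r-1))$ is flat and commutes with base change. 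We refine the stratification so that all of these hold uniformly. The first key step is to show that $\operatorname{Reg}(X)=X$ with $X$ flat over $V$ forces the image of $\operatorname{Spec}V$ to lie, generically and at the closed point, in the locus of smooth fibers; equivalently, $X\to\operatorname{Spec}V$ is smooth. For this I would use the cotangent complex characterization from the paper, applied fiberwise to $X\otimes\kappa$ for residue fields $\kappa$ of $V$. That characterization gives geometric regularity of the fibers once residue characteristics are large or the fibers are regular after finite extensions, and it needs care because $\operatorname{w.dim}<\infty$ only gives regularity of the fibers after checking geometric regularity. With $p$ large compared with the degrees, inseparability issues should be controlled, e.g.\ via a Jacobian criterion whose minors are bounded.

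The second step handles characteristic $0$. Over each stratum, restrict to the set $Z$ of points with smooth projective fibers. On the generic fibers of characteristic $0$, Kodaira vanishing gives $H^j(\omega(1))=0$ for $j>0$, since $\cO(1)$ is ample. By semicontinuity and constructibility of the loci where $H^j$ of the fibers vanish, the set of points of $Z$ where vanishing fails is constructible and contains no characteristic $0$ points. Its image in $\operatorname{Spec}\bZ$ is therefore a constructible set missing the generic point, hence finite. Only finitely many strata occur, so we take $C$ to be the maximum of these primes together with the primes excluded in the first step.

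The final step passes from fibers to $X$ over $V$. For $p>C$, the closed fiber $X_k$ (residue characteristic $p$) and the generic fiber $X_K$ both satisfy $H^j(\omega(1))=0$ for $j>0$. By the base-change properties arranged in the stratification, $\omega_X$ is $V$-flat and restricts to the dualizing sheaves of the fibers. Cohomology and base change over a valuation ring, after reducing to a finitely presented model using Noetherian approximation of $V$, then gives the vanishing of $H^j(X,\omega_X(1))$. In fact it suffices to know that $R\Gamma$ is a perfect complex over $V$ whose derived fibers vanish in positive degrees.

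I expect the main obstacle to be the first step: converting the hypothesis $\operatorname{w.dim}$ finite at every point into smoothness of $X/V$, or at least into enough regularity of the closed fiber to apply vanishing there. If smoothness fails, I would try instead to show that the geometric closed fiber is regular in the locus relevant for large $p$, using the bounded-degree Jacobian description.
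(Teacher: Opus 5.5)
\textbf{There is a genuine gap: your first step is false, and the rest of the plan depends on it.}

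The hypothesis $\operatorname{Reg}(X)=X$ is a condition on the total space. It does not make $X\to\Spec V$ smooth, nor the closed fiber regular, however large $p$ is. For any prime $p$, take $V=\bZ_p$ and $X=\operatorname{Proj}V[x,y,z]/(xy-pz^2)\subseteq\bP^2_V$, so $r=2$, $m=1$, $n=2$, $d=1$.
\begin{itemize}
\item $X$ is flat over $V$, and $X\to\Spec V$ is smooth away from the point $(x,y,p)$ in the chart $z=1$.
\item At that point the maximal ideal $(x,y,p)$ equals $(x,y)$ because $p=xy$, so $X$ is regular.
\item Its closed fiber is the pair of lines $xy=0$.
\end{itemize}
This defeats both your main route (smoothness) and your fallback (regularity of the geometric closed fiber); no Jacobian bound or cotangent-complex argument can repair it.

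Worse, no stratification of the parameter scheme by fiberwise properties can detect the hypothesis at all. The conic $xy-p^2z^2$ has isomorphic generic and closed fibers but is not regular at $(x,y,p)$. So your second and third steps only handle the smooth case, which the paper treats as an easy spreading-out argument (Remark~\ref{rem:VanishingEasyForSmooth}). The content of the theorem is the regular-but-not-smooth case. There the closed fiber is singular, and Kodaira vanishing on smooth fibers says nothing about it.

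\textbf{Two ingredients are missing.}

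First, you need a characteristic-zero vanishing theorem for \emph{regular} total spaces over arbitrary, possibly non-Noetherian, valuation rings of residue characteristic $0$. This is Theorem~\ref{thm:VanishingOverQ}. Its proof writes such a $V$ as a filtered colimit of regular local rings essentially of finite type over $\bQ$ (local uniformization). It then descends $X$ to a model $X_0$ that is regular, not smooth, and flat over $R_0$, and applies Kawamata--Viehweg to $X_0$.

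Second, you need a way to carry the total-space hypothesis from residue characteristics $p\to\infty$ to residue characteristic $0$. The relevant characteristic-zero objects are valuation rings, not points of a finite-type parameter scheme.

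The paper argues by contradiction, in four moves.
\begin{enumerate}
\item From counterexamples $X_n$ over $V_n$ it builds $\mathcal{X}$ over $\mathcal{V}=\prod_nV_n$. This uses the parameter space after universal flattening, plus Bhatt's theorem on maps out of $\Spec$ of a product of local rings.
\item Using constructibility and compactness of $\Max(\mathcal{V})$, it shows that $\mathcal{X}$ has relative dimension $d$ and that $\operatorname{w.dim}\cO_{\mathcal{X},x}\le d+1$ everywhere. The latter comes from Knaf's uniform bound on the $X_n$ together with the quasi-compact open locus where a $(d+1)$-st syzygy is locally free.
\item By the same compactness argument, $\omega_{\mathcal{X}}$ becomes invertible after discarding finitely many $n$.
\item At maximal ideals $M\neq M_n$, the ring $\mathcal{V}_M$ is a valuation ring of residue characteristic $0$. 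Theorem~\ref{thm:VanishingOverQ} then gives $H^j(\mathcal{X},\omega_{\mathcal{X}}(1))_M=0$, while this module is nonzero at every $M_n$. Since it is coherent with constructible support, this contradicts compactness.
\end{enumerate}
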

There is another version of the theorem stated with Hilbert polynomials, see Theorem \ref{thm:VanishingForLargep}.

The idea of the proof is that if you have a counterexample $X_p$ over a valuation ring of $V_p$ of residue characteristic $p$ for infinitely many $p$,
then you can get a counterexample over an ultraproduct $V_{\natural}$ of $V_p$, which is a valuation ring of residue characteristic zero.
We make this ultraproduct idea precise (and the language of ultraproduct implicit) with infinite direct product.
It should be noted that we are not assuming $X$ is smooth over $V$, otherwise the result follows from a simple spreading-out argument (Remark \ref{rem:VanishingEasyForSmooth}).

Note that even when $V_p$ are Noetherian, $V_\natural$ is not, so we always need a similar vanishing theorem over non-Noetherian valuation rings of residue characteristic zero.
We show
\begin{Thm}[=Theorem \ref{thm:VanishingOverQ}]\label{thm:VanishingOverQ-intro}
    Let $V$ be a valuation ring of residue characteristic $0$,
    and let $X$ be a closed subscheme of $\bP^r_V$ for some $r$.
    Assume that $X$ is flat over $\Spec V$ of relative dimension $d$
    and assume that $\operatorname{Reg}(X)=X$.
    %$\operatorname{w.dim}\cO_{X,x}<\infty$ for all $x\in X$.

    Let $\omega_X=\mathcal{E}xt^{r-d}_{\bP^r_V}(\cO_X,\cO_{\bP^r_V}(-r-1))$.
    Then $\omega_X$ is an invertible sheaf, and for every ample invertible sheaf $\cL$ on $X$ and every $j>0$ we have $H^j(X,\omega_X\otimes\cL)=0$.
\end{Thm}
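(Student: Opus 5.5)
We reduce to the Noetherian case and invoke Kodaira vanishing in characteristic zero, passing through Noetherian approximation of $V$ and the regularity results of \S\ref{sec:approximation-regular-v}.

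\textbf{Step 1: $\omega_X$ is invertible.} This is local. At a point $x\in X$, the local ring $\cO_{X,x}$ is essentially finitely presented over $V$, flat, and of finite weak global dimension. By Knaf's results, or the cotangent-complex characterization (Theorem \ref{thm:lci=Lin-1}), $X\to\bP^r_V$ is then a quasi-regular, in fact Koszul-regular, immersion of codimension $r-d$. For such an immersion with ideal sheaf $\mathcal I$, the sheaves $\mathcal{E}xt^i(\cO_X,\cO_{\bP^r_V})$ vanish for $i\neq r-d$. Moreover $\mathcal{E}xt^{r-d}(\cO_X,\cO_{\bP^r_V})\cong\det(\mathcal I/\mathcal I^2)^\vee$, which is invertible. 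Twisting by $\cO(-r-1)$ preserves invertibility. The $\mathcal{E}xt$ computation commutes with base change along $V\to V'$, since $X$ is flat and the immersion is Koszul-regular. We record this because Step 3 needs it.

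\textbf{Step 2: descend to a Noetherian base.} Write $V=\colim V_\lambda$, where the $V_\lambda$ are the local rings at centers of $V$ on finite-type $\bZ$-models. Choosing well in \S\ref{sec:approximation-regular-v}, each $V_\lambda$ can be taken to be an essentially finite type $\bQ$-algebra; this is possible because the residue characteristic is $0$, so $\bQ\subseteq V$. Since $X$ is finitely presented and flat, and $\cL$ is ample, we can descend: there is $X_\lambda\subseteq\bP^r_{V_\lambda}$ flat, with an ample $\cL_\lambda$, such that $X=X_\lambda\times_{V_\lambda}V$.

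The key input is the refined approximation of \S\ref{sec:approximation-regular-v}. We want to arrange that $X_\lambda$ is regular, i.e.\ that the finite weak dimension at all points descends. For this we use the quasi-compactness of $\operatorname{Reg}$ together with the approximation of $V$ by regular local rings $V_\lambda$, choosing $V_\lambda$ so that it is regular and the fibers behave well. I expect this descent of regularity to be the main obstacle. My first attempt would be via the cotangent complex: the condition in Step 1 says $L_{X/V}$ has tor-amplitude in $[-1,0]$ with the expected ranks. I would descend a presentation witnessing the regular-immersion property, and then use the fact that a flat, Koszul-regular-immersion $X_\lambda$ over a regular $V_\lambda$ whose closed fiber is regular is itself regular. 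If the fibers cannot be made regular directly, I would instead use the BCM and Kunz-type results to replace regularity by Cohen--Macaulay plus rational singularities, which suffice for Kawamata--Viehweg vanishing.

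\textbf{Step 3: vanishing.} Once $X_\lambda$ is a regular, hence smooth-over-$\bQ$-in-the-generic-sense, projective scheme over a $\bQ$-algebra, Kodaira vanishing applies in the relative form $R^jf_*(\omega_{X_\lambda}\otimes\cL_\lambda)=0$ for $j>0$. We obtain it from Kodaira vanishing for regular excellent $\bQ$-schemes via resolution, or from Grauert--Riemenschneider, projective over a Noetherian base. By Step 1, $\omega_{X_\lambda}$ pulls back to $\omega_X$. Flat base change along $V_\lambda\to V$ then gives $H^j(X,\omega_X\otimes\cL)=H^j(X_\lambda,\omega_{X_\lambda}\otimes\cL_\lambda)\otimes_{V_\lambda}V=0$ for $j>0$, and the theorem follows.
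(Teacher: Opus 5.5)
Your outline (approximate $V$ by regular local rings essentially of finite type over $\bQ$ as in Theorem~\ref{thm:VoverQisIndRegular}, descend $X$ and $\cL$, apply vanishing over the Noetherian base, then use flat base change) is the right one. However, Step~2 has a genuine gap at exactly the point you flagged, and neither mechanism you propose would close it.

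\begin{itemize}
\item Descending a presentation that witnesses a Koszul-regular immersion $X\to\bP^r_V$ only makes $X_\lambda$ a local complete intersection over $V_\lambda$. That is weaker than regularity.
\item The extra input you want, regularity of the closed fiber of $X_\lambda\to\Spec V_\lambda$, is false in general. After a field extension that fiber becomes the closed fiber of $X$, which need not be regular even when $\operatorname{Reg}(X)=X$. For example, the conic $xy=tz^2$ in $\bP^2$ over $\mathbf{C}[t]_{(t)}$ is regular and flat of relative dimension $1$, but its special fiber is $xy=0$.
\item The fallback is also unavailable. The BCM construction of \S\ref{subsec:ConstructBCM} needs a prime not invertible in $V$, and the Kunz-type results concern characteristic $p$. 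Here $\bQ\subseteq V$.
\end{itemize}

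The missing idea is a direct tor-dimension argument. It needs neither your Step~1, nor cotangent complexes, nor the Tor-independence results of \S\ref{sec:approximation-regular-v}.
\begin{itemize}
\item Let $x_\lambda\in X_\lambda$ be a closed point. Since $X_\lambda\to\Spec V_\lambda$ is proper and $V_\lambda\to V$ is local, there is a closed point $x\in X$ over $x_\lambda$. Then $\cO_{X,x}$ is a localization of $\cO_{X_\lambda,x_\lambda}\otimes_{V_\lambda}V$.
\item Since $V_\lambda$ is regular, $\operatorname{w.dim}_{V_\lambda}V<\infty$. Flatness of $\cO_{X_\lambda,x_\lambda}$ over $V_\lambda$ then gives $\operatorname{w.dim}_{\cO_{X_\lambda,x_\lambda}}\cO_{X,x}<\infty$.
\item Combined with $\operatorname{w.dim}\cO_{X,x}<\infty$, this shows $\kappa(x)$ has finite flat dimension over $\cO_{X_\lambda,x_\lambda}$. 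Because $\kappa(x)$ is faithfully flat over $\kappa(x_\lambda)$, so does $\kappa(x_\lambda)$. Hence $\cO_{X_\lambda,x_\lambda}$ is regular by Serre's criterion, and $X_\lambda$ is regular.
\end{itemize}

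This also replaces your Step~1, which as written is only asserted: Theorem~\ref{thm:lci=Lin-1} requires its $H^{-2}$ condition to be verified first. Once $X_\lambda$ is regular, the argument runs as follows.
\begin{itemize}
\item The fibers of $X_\lambda\to\Spec V_\lambda$ are Cohen--Macaulay, so components of fiber dimension other than $d$ split off. Their base change to $V$ is empty, so by properness they are empty.
\item Therefore $\omega_{X_\lambda}$ is an invertible dualizing sheaf and the only nonzero cohomology sheaf of $R\mathcal{H}om_{\cO_{\bP^r_{V_\lambda}}}(\cO_{X_\lambda},\cO(-r-1))$.
\item Flat base change of this $R\mathcal{H}om$ shows that $\omega_X$ is the pullback of $\omega_{X_\lambda}$, hence invertible.
\end{itemize}
Your Step~3 then goes through as written.
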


Theorems \ref{thm:VanishingOverQ}, \ref{thm:VanishingForLargep}, and \ref{thm:VanishingForLargep-fixdegree} are in their primary forms.
Similar results corresponding to other vanishing theorems can be proved essentially the same way.
However, to talk intelligently on such results, we need to establish a framework for duality and canonical sheaves on schemes of finite presentation over a valuation ring.
We plan to explore this in future work, based on the approximation techniques in $\S\ref{sec:approximation-regular-v}.$

\subsection{Acknowlegement}
The author thanks 
Rankeya Datta,
Shizhang Li,
Longke Tang,
Kevin Tucker,
Takehiko Yasuda, and
Jiahong Yu for helpful discussions. The author was supported by an AMS-Simons Travel Grant.

\section{Preliminary facts}
\subsection{Ideals of valuation rings}
\begin{Lem}\label{lem:RadicalIsPrimeInVal}
    Let $V$ be a valuation ring.
    A radical ideal of $V$ is prime.
\end{Lem}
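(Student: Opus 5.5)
The plan is to use that the ideals of a valuation ring $V$ are totally ordered by inclusion, and to check primality directly from the definition. Let $I$ be a radical ideal of $V$. If $I=V$ there is nothing to show beyond the usual convention; we regard $V$ itself as excluded, or note that the statement is intended for proper radical ideals. So assume $I\neq V$, and take $a,b\in V$ with $ab\in I$. We aim to show $a\in I$ or $b\in I$.

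Total ordering of principal ideals is the one input we need. For $a,b$ not both zero, the ideal $(a,b)$ is finitely generated and hence principal, say $(a,b)=(c)$. Writing $a=uc$, $b=u'c$ and $c=xa+yb$, we get $c(1-xu-yu')=0$. In the domain $V$ this forces $1=xu+yu'$ (if $c\neq 0$). Since $V$ is local, one of $u,u'$ is a unit. Hence $a\mid b$ or $b\mid a$. This is the standard fact that divisibility is a total order in a valuation ring; we may simply cite \cite[p.~25]{Coherent-Rings}.

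Now the main step. Without loss of generality $a\mid b$, say $b=ta$. Then $a^2\mid ab$: indeed $ab=ta^2$, but more usefully
\[
a^2 \in (a\cdot b)\quad\text{or}\quad ab\in (a^2).
\]
We want $a^2\in I$, and the useful direction is obtained from the divisibility $b\mid a$ applied in the other case. So we argue by symmetry and choose the divisibility so that the \emph{smaller} element (in the divisibility order) divides the other. Suppose $b\mid a$, say $a=sb$. Then $a^2=s^2b^2$ and $ab=sb^2$, so $a^2=s\cdot ab\in I$. Since $I$ is radical, $a\in I$. In the other case $a\mid b$, the same computation with the roles swapped gives $b^2=t\cdot ab\in I$, hence $b\in I$. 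Thus $I$ is prime.

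There is essentially no obstacle here. The only points needing care are the comparability of $a$ and $b$ under divisibility, which follows from finitely generated ideals being principal together with locality, and the convention regarding $I=V$.
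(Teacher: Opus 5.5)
Your proof is correct, but it takes a different route from the paper. The paper argues through prime ideals: ideals of $V$ are totally ordered, so the primes containing $I$ form a chain. Hence a minimal prime over $I$ lies in every prime containing $I$. Since a radical ideal is the intersection of the primes containing it, $I$ equals that minimal prime.

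You instead check primality element by element. If $ab\in I$, comparability under divisibility gives two cases. If $a=sb$, then $a^2=s\cdot ab\in I$. If $b=ta$, then $b^2=t\cdot ab\in I$. Radicality finishes either case.

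Your argument is more elementary. It never uses that a radical ideal is the intersection of the primes over it, nor that minimal primes exist; both facts rest on Zorn's lemma in general. You only need that any two elements are comparable under divisibility. The paper's argument is shorter given those standard facts, and it shows the structural reason: an intersection of a chain of primes is prime. Both arguments tacitly require $I\neq V$, which you rightly point out.

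One presentational fix: the sentences starting ``Without loss of generality $a\mid b$'' are a false start. In that case you aim for $a^2\in I$, which is not what follows. Delete them and state the two-case computation directly.
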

\begin{proof}
    As the ideals of $V$ are totally ordered by inclusion, a minimal prime divisor of an ideal $I$ is necessarily contained in all prime ideals containing $I$.
    As a radical ideal is an intersection of primes it must then be prime itself. 
\end{proof}
\begin{Lem}\label{lem:SquareIsPrimeInVal}
    Let $V$ be a valuation ring and let $I$ be an ideal of $V$ such that $0\neq I\neq V$.
If $I= I^2$, then $I$ is a prime ideal and is not finitely generated.
    % The following are equivalent.
    % \begin{enumerate}[label=$(\roman*)$]
    %     \item $I$ is a prime ideal and is not finitely generated.
    %     \item $I=I^2$.
    % \end{enumerate}
\end{Lem}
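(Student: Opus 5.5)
First, $I$ is not finitely generated. Since $V$ is a valuation ring, a finitely generated ideal is principal, say $I=aV$ with $a\neq 0$ (because $I\neq 0$). From $I=I^2$ we get $aV=a^2V$, so $a=a^2u$ for some $u\in V$. Because $V$ is a domain and $a\neq 0$, this gives $1=au$, so $a$ is a unit and $I=V$. This contradicts $I\neq V$.

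Next, $I$ is prime. By Lemma \ref{lem:RadicalIsPrimeInVal} it suffices to show that $I$ is radical. Let $x\in V$ with $x^n\in I$ for some $n\geq 1$, and suppose $x\notin I$. The ideals of $V$ are totally ordered, so $I\subsetneq xV$. Take any $y\in I$. Since $I=I^2$ and $I$ is totally ordered by divisibility, $y$ is a finite sum of products $ab$ with $a,b\in I$. Each such product is a multiple of $c^2$, where $c$ is the factor of largest ideal among those appearing. Hence $y\in c^2V$ for some $c\in I$, and this means $I=\bigcup_{c\in I}c^2V$.

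Iterating this, we get $I=\bigcup_{c\in I}c^{2^k}V$ for every $k$, that is, $I=I^{2^k}$ in the element-wise sense. Now pick $k$ with $2^k\geq n$. For any $c\in I$ we have $c\in xV$, so $c^{2^k}\in x^{2^k}V\subseteq x^nV$. Therefore $I\subseteq x^nV$. On the other hand $x^n\in I$ gives $x^nV\subseteq I$, so $I=x^nV$ is principal. This contradicts the first paragraph. Hence $x\in I$, so $I$ is radical and therefore prime.

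The only delicate point is the reduction of a sum of products to a single square $c^2$. This uses the total ordering of principal ideals: each product $ab$ lies in $\max(a,b)^2V$, and a finite sum lies in the largest of these ideals.
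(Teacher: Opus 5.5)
Your proof is correct, and both parts stand as written. The non-finite-generation part is the paper's Nakayama argument made explicit. For primality you rely on the same key identity $I=I^2=\bigcup_{c\in I}c^2V$, but you finish differently.

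The paper reduces radicality to the implication $x^2\in I\Rightarrow x\in I$ and proves it directly. From $x^2\in a^2V$ for some $a\in I$, cancellation of squares in a valuation ring gives $x\in aV\subseteq I$.

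You argue by contradiction instead. You use the total ordering to get $I\subsetneq xV$, iterate the squaring to obtain $I=\bigcup_{c\in I}c^{2^k}V\subseteq x^nV$, and conclude that $I=x^nV$ is principal. This contradicts your first part.

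What each route buys:
\begin{itemize}
\item Your route handles arbitrary $x^n$ at once and never uses the cancellation $x^2\in a^2V\Rightarrow x\in aV$.
\item The cost is that your primality argument depends on the non-finite-generation statement and needs the iteration to exponent $2^k$.
\item The paper's argument is a direct one-liner and is independent of the first part, though it leaves implicit the standard step that an ideal closed under square roots is radical.
\end{itemize}
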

\begin{proof}
    $I$ is not finitely generated by Nakayama's Lemma.
    To show $I$ is prime, it suffices to show $x^2\in I$ implies $x\in I$ (Lemma \ref{lem:RadicalIsPrimeInVal}).
    Since $I=I^2$, we have $x^2\in I^2=\bigcup_{a\in I}a^2V$,
    so there exists $a\in I$ with $x^2\in a^2V$, so $x\in aV$.
\end{proof}

\subsection{Essentially finitely generated algebras over Pr\"ufer domains}
We collect some basic facts about Pr\"ufer domains, cf. \cite[\S3.2]{ADVal}.
\begin{Thm}\label{thm:ModAndAlgOverV}
    Let $D$ be a Pr\"ufer domain.
    \begin{enumerate}[label=$(\roman*)$]
        \item\label{OverV:torsion-free-is-flat} Every torsion-free $D$-module is flat.
        \item\label{OverV:fp-is-coherent} Let $A$ be an essentially finitely presented $D$-algebra.
        Then $A$ is coherent.
        %\item\label{OverV:flat-fg-mod-is-fp} Let $A$ be an essentially finitely presented $V$-algebra and let $M$ be a finite $A$-module flat over $V$.
        %Then $M$ is a finitely presented $A$-module.
        %\item\label{OverV:flat-fg-alg-is-fp} Let $A$ be an essentially finitely generated flat $V$-algebra.
        %Then $A$ is an essentially finitely presented $V$-algebra.
    \end{enumerate}
\end{Thm}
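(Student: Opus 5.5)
Part \ref{OverV:torsion-free-is-flat} reduces to the local case. Flatness of a $D$-module $M$ can be checked after localizing at every maximal ideal $\fm$ of $D$, and $M_\fm$ is still torsion-free over the valuation ring $V=D_\fm$. So it is enough to treat a torsion-free module $M$ over a valuation ring $V$. By the equational criterion it suffices to show that $I\otimes_V M\to M$ is injective for every finitely generated ideal $I$. Such an $I$ is principal, say $I=aV$. If $a=0$ there is nothing to prove. If $a\neq 0$, then $aV\cong V$ because $V$ is a domain, so the map becomes multiplication by $a$ on $M$, which is injective since $M$ is torsion-free. Alternatively, write $M$ as the filtered colimit of its finitely generated submodules. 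Each of these is finitely generated and torsion-free, hence free, because over a valuation ring one can always choose a minimal generating set with no relation (a relation $\sum a_ix_i=0$ can be divided by the coefficient of smallest valuation, which shows some generator is redundant). Either route is routine.

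For part \ref{OverV:fp-is-coherent} I would first reduce to the case of a finitely presented algebra. Localizations of coherent rings are coherent, so it suffices to show that every finitely presented $D$-algebra $B=D[x_1,\dots,x_n]/J$, with $J$ finitely generated, is coherent. Since a finitely presented quotient of a coherent ring is coherent, it is enough to prove that the polynomial ring $D[x_1,\dots,x_n]$ is coherent. Coherence is also a local property that can be checked after localizing at the maximal ideals of $D$. So we are reduced to showing that $V[x_1,\dots,x_n]$ is coherent for a valuation ring $V$.

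To do this, I would prove the stronger statement that every finitely generated $V$-flat module over $P=V[x_1,\dots,x_n]$ is finitely presented. This is the Raynaud--Gruson theorem, EGA IV 11.3.13 and \cite[Tag 053E]{stacks}, applied to the finite presentation map $V\to P$. Coherence then follows. Let $I\subseteq P$ be a finitely generated ideal. Then $I$ is torsion-free over $V$, hence flat over $V$ by part \ref{OverV:torsion-free-is-flat}, and so $I$ is finitely presented. Consequently the kernel of $P^m\to I$ is finitely generated, which is exactly what coherence requires.

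The main obstacle is this finiteness theorem for $V$-flat modules. I would cite it rather than reprove it. If a direct argument were wanted, I would try Noetherian approximation. Write $V$ as the filtered colimit of its finitely generated subrings. Descend the finitely generated flat module to one of these Noetherian subrings, use the fact that flatness of the descended module holds after enlarging the subring (an openness-of-flatness argument), and pass to the colimit. Making the flatness descent work is the delicate step.
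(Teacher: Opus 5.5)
Your argument is correct in substance and uses the same skeleton as the paper, with one false step that you should delete.

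\textbf{Comparison with the paper.} For (i) the paper simply cites Stacks Tag 0539. Your localize-then-test-on-ideals argument is the standard proof of that statement. (What you use is the ideal criterion for flatness, not the equational criterion, but the content is right.) For (ii) the paper cites Glaz for two things: coherence of $D[x_1,\dots,x_n]$, and stability of coherence under quotients by finitely generated ideals and under localization. Your only real difference is that you prove the polynomial-ring input instead of citing it. A finitely generated ideal of $D[x_1,\dots,x_n]$ is $D$-torsion-free, hence $D$-flat by (i), hence finitely presented by Raynaud--Gruson. This is a valid argument, and it makes explicit how (i) feeds into (ii).

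\textbf{The false step.} You claim coherence can be checked after localizing at the maximal ideals of $D$. This is not true for general rings. Coherence is local for finite covers by principal opens, but not stalk-local. If $K=\ker(P^m\to I)$ has every localization $K_\fm$ finitely generated, $K$ itself need not be finitely generated. Here is a concrete counterexample. Take an infinite field $k$, distinct elements $a_1,a_2,\dots\in k$, and
\[
R=k[t,x,y_1,y_2,\dots]/(xy_i,\,(t-a_i)y_i,\,y_iy_j)_{i,j\geq 1}.
\]
Every localization of $R$ at a maximal ideal is Noetherian, because at most one $t-a_i$ lies in any maximal ideal. Yet $\operatorname{Ann}_R(x)=(y_1,y_2,\dots)$ is not finitely generated, so $R$ is not coherent. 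So your reduction from $D[x_1,\dots,x_n]$ to $V[x_1,\dots,x_n]$ is unjustified.

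\textbf{The fix.} You do not need that reduction. Raynaud--Gruson only requires the base to be a domain, and (i) already gives $D$-flatness of ideals of $D[x_1,\dots,x_n]$. So run your argument directly over $D$.

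\textbf{The approximation sketch.} Drop your closing Noetherian-approximation sketch. Descending flatness along $V=\colim R_\lambda$ (e.g.\ Stacks Tag 05LY) presupposes a finitely presented model of the module, which is exactly what you are trying to prove, so the sketch is circular.
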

\begin{proof}
For \ref{OverV:torsion-free-is-flat} see \citestacks{0539}.
    For \ref{OverV:fp-is-coherent} see \cite[Cor.~7.3.4, Th.~2.4.1, and Th.~2.4.2]{Coherent-Rings}.
\end{proof}

\begin{Discu}\label{discu:DomainsOverPrufer}
    Let $D$ be a Pr\"ufer domain and let $A$ be an essentially finitely generated $D$-algebra that is an integral domain.
    Then $A=S^{-1}B$ for some  finitely generated $D$-algebra $B$; and we may replace $B$ by its image in $A$ to assume $B$ is  an integral domain.

    Now, $B$ is finitely generated and flat over the image $\overline{D}$ of $D$ in $B$, and $\overline{D}$ is a Pr\"ufer domain.
    Therefore $B$ is flat over $\overline{D}$, hence finitely presented over $\overline{D}$ \citestacks{053G}.
    Therefore, $A$ is flat and essentially finitely presented over $\overline{D}$.
\end{Discu}
\subsection{Derived category of coherent rings}\label{subsec:DofCohRing}
We use basic homological properties of finitely presented modules coherent rings.
In particular, we use the fact that over a coherent ring $A$, finitely presented $A$-modules are exactly coherent $A$-modules \cite[Th.~2.3.2]{Coherent-Rings}.
Let $\Coh(A)$ denote the full subcategory of finitely presented $A$-modules.
Then $\Coh(A)$ is a Serre subcategory \cite[Th.~2.5.1]{Coherent-Rings} that is closed under the Tor and Ext functors \cite[Cor.~2.5.3]{Coherent-Rings}.
In particular, for all $X,Y\in D^b_{\Coh}(A)$, we have $X\otimes_A^LY\in D^-_{\Coh}(A)$ and $R\Hom_{A}(X,Y)\in D^+_{\Coh}(A)$,
and we have that $D^-_{\Coh}(A)$ is the subcategory of pseudo-coherent objects \citestacks{064N}, cf. \cite[Cor.~2.5.2]{Coherent-Rings}.

On a scheme $X$ such that $\cO_X(U)$ is coherent for all affine opens $U$,
we may similarly consider the abelian category $\Coh(X)$ of coherent sheaves on $X$,
which is the collection of finitely presented $\cO_X$-modules.

We record the following results on weak dimensions. Note that we do not require $I$ to be finitely generated or $A/I$ to be coherent in  \ref{wdim:quotient}.
\begin{Thm}\label{thm:ALLwdimFacts}
    Let $(A,\fm,k)$ be a coherent local ring.
    
\begin{enumerate}[label=$(\roman*)$]
    \item\label{wdim:testk}  Let $M$ be a coherent $A$-module. Then $\operatorname{pd}_AM=\operatorname{w.dim}_A M=\sup\{c\mid H^{-c}(k\otimes^L_A M)\neq 0\}\in\bZ_{\geq 0}\cup\{\infty\}$.

    \item\label{wdim:quotient} Let $I\subseteq\fm$ be an ideal of $A$.
    Then $\operatorname{w.dim}A\leq\operatorname{w.dim}A/I+\operatorname{w.dim}_A (A/I)$.

    \item\label{wdim:flatDescent} Let $B$ be a faithfully flat $A$-algebra. Then $\operatorname{w.dim}A\leq\operatorname{w.dim}B$.

    \item\label{wdim:flatAscent} Let $A\to B$ be a flat local map of coherent local rings. Then $\operatorname{w.dim}B\leq\operatorname{w.dim}A+\operatorname{w.dim}(B/\fm B)$.
\end{enumerate}
    
\end{Thm}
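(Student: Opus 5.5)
We prove the four items separately, using throughout that over a coherent local ring finitely presented modules admit resolutions by finite free modules. Taking minimal ones (possible since $A$ is local and $\Coh(A)$ is closed under kernels) lets us compare $\operatorname{pd}$, $\operatorname{w.dim}$ and the vanishing of $\Tor^A_*(k,-)$.

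For \ref{wdim:testk}, let $M$ be coherent and fix a resolution $F_\bullet\to M$ by finite free modules whose syzygies $\Omega^iM$ are finitely presented (coherence). Clearly $\operatorname{w.dim}_AM\le\operatorname{pd}_AM$, and $\sup\{c\mid H^{-c}(k\otimes^L_AM)\ne0\}\le\operatorname{w.dim}_AM$. For the remaining inequality, suppose $\Tor^A_{c+1}(k,M)=0$ for all $c\geq n$. Then $\Tor_1^A(k,\Omega^nM)=\Tor^A_{n+1}(k,M)=0$. A finitely presented module $N$ over a local ring with $\Tor_1(k,N)=0$ is free: choose a minimal surjection $A^r\to N$; its kernel $K$ is finitely generated by coherence, and $K\otimes k=0$ by the Tor sequence, so $K=0$ by Nakayama. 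Hence $\operatorname{pd}M\le n$. The three quantities therefore coincide, and the same argument gives all equal to $\infty$ when one is infinite.

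For \ref{wdim:quotient}, no finiteness of $I$ is assumed, so we argue with Tor directly rather than via \ref{wdim:testk}. Let $s=\operatorname{w.dim}A/I$ and $t=\operatorname{w.dim}_A(A/I)$, and let $M,N$ be $A$-modules. Use the change-of-rings spectral sequence
\[
E^2_{pq}=\Tor^{A/I}_p\bigl(\Tor^A_q(M,A/I),N'\bigr)\Rightarrow \Tor^A_{p+q}(M,N')
\]
for $A/I$-modules $N'$. Here $E^2_{pq}=0$ when $p>s$ or $q>t$, so $\Tor^A_j(M,N')=0$ for $j>s+t$. To pass to arbitrary $A$-modules, it suffices (by the standard reduction of weak dimension to cyclic modules) to show $\Tor^A_{j}(A/J,-)=0$ for finitely generated $J$. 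Since $I\subseteq\fm$, a Nakayama-type argument should reduce to testing against $k$, which is an $A/I$-module. We can make this precise by computing with finite free resolutions: the finitely generated ideal $J$ gives a coherent module $A/J$, and \ref{wdim:testk} reduces its weak dimension to $\Tor^A(k,A/J)$. Then $\operatorname{w.dim}A=\sup_J\operatorname{w.dim}_A A/J\le s+t$ via the spectral sequence with $N'=k$. This reduction to $k$ is the step I expect to require the most care.

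For \ref{wdim:flatDescent}, flat base change gives $\Tor^A_j(M,N)\otimes_AB=\Tor^B_j(M\otimes B,N\otimes B)$, and faithful flatness detects vanishing.

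For \ref{wdim:flatAscent}, again by \ref{wdim:testk} applied over $B$, it suffices to bound the vanishing of $\Tor^B_j(k_B,P)$ for coherent $B$-modules $P$. Factor through $B/\fm B$: flatness gives $k_B\otimes^L_B P$ computed via $(B/\fm B)\otimes^L_B P=k\otimes^L_A P$. The amplitude of $k\otimes^L_AP$ is at most $\operatorname{w.dim}A$, and its homology consists of $B/\fm B$-modules. Tensoring with $k_B$ over $B/\fm B$ adds at most $\operatorname{w.dim}(B/\fm B)$, using a hypertor spectral sequence. This gives the bound.
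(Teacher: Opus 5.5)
Your proposal is correct and is essentially the paper's argument. The paper cites Glaz for $(i)$--$(iii)$, whose proofs are the standard ones you give: minimal finite free resolutions plus Nakayama; the change-of-rings spectral sequence tested against $k$ by applying $(i)$ to the coherent modules $A/J$, so the step you flagged in $(ii)$ is already complete; and faithfully flat base change of $\Tor$. For $(iv)$ the paper simply applies $(ii)$ with $I=\fm B$ together with $\operatorname{w.dim}_B(B/\fm B)\leq\operatorname{w.dim}_A k$ from flat base change, which is exactly what your hypertor spectral sequence unpacks.
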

\begin{proof}
    For \ref{wdim:testk} see \cite[Th.~2.5.9]{Coherent-Rings}.
    For \ref{wdim:quotient} see \cite[Th.~3.1.3]{Coherent-Rings}.
    For \ref{wdim:flatDescent} see the proof of \cite[Th.~3.1.2]{Coherent-Rings}.
    Finally, \ref{wdim:flatAscent} follows from  \ref{wdim:quotient}, as $\operatorname{w.dim}_B(B/\fm B)\leq \operatorname{w.dim}_Ak$ \citestacks{066M}.
\end{proof}

\subsection{Finite rank valuation rings}\label{subsec:FiniteRankV}
For valuation rings of finite rank see for example \cite[Chap.~VI, \S 4]{Bourbaki-CA-5-7}.
We shall use the fact that the spectrum of a finite rank valuation ring $V$ is finite.
It follows that every %$(\Spec 
$V$-scheme essentially of finite type is a finite-dimensional %\footnote{It is trivial that $\dim R[T]\leq 2\dim R-1$.} 
Noetherian topological space, as it is the finite union of its fibers over $\Spec V$.

We also use the fact that if the fraction field of a valuation ring $V$ has finite transcendence degree over the prime field, or over a subfield of $V$, then $V$ has finite rank, cf. \cite[Chap.~VI, \S 10]{Bourbaki-CA-5-7}.

\subsection{An elementary reduction}\label{subsec:ReducedToSmallV}
Let $V$ be a valuation ring and let $B$ be a finitely presented $V$-algebra.
Let $\{K_\lambda\}_{\lambda}$ be
a family of subfields of $K:=\Frac(V)$ filtered by inclusion, so that each $V_{\lambda}:=V\cap K_{\lambda}$ is a valuation ring of finite rank.
As seen in \S\ref{subsec:FiniteRankV},
there are many choices of such families.
When $V$ contains a field $k$, we can take $\{K_\lambda\}_{\lambda}$ to be all subfields of $K$ of finite transcendence degree over $k$;
when $V$ dominates $\bZ_{(p)}$, we can take $\{K_\lambda\}_{\lambda}$ to be all subfields of $K$ of finite transcendence degree over $\bQ$;
when $V=V^+$, we can take  $\{K_\lambda\}_{\lambda}$ to be all algebraically closed subfields of $K$ of finite transcendence degree over the prime field, \emph{etc}.

By \citestacks{05N9} there exists a $\lambda_0$ and finitely presented ring map $V_{\lambda_0}\to B_{\lambda_0}$ so that $B=B_{\lambda_0}\otimes_{V_{\lambda_0}} V$,
so denoting by $B_\lambda$ the base change
$B_{\lambda_0}\otimes_{V_{\lambda_0}} V_\lambda$
we have
$B=\bigcup_{\lambda\geq\lambda_0}B_{\lambda}$
and $B_{\lambda}\to B$ is faithfully flat.
As noted in \S\ref{subsec:FiniteRankV} every $\Spec B_{\lambda}$ is a finite-dimensional Noetherian topological space.

For every localization $A=B_\fq$, we have
$A=\bigcup_{\lambda\geq\lambda_0}(B_{\lambda})_{\fq \cap B_{\lambda}}$
and $(B_{\lambda})_{\fq \cap B_{\lambda}}\to A$ is faithfully flat.
Every $\Spec ((B_{\lambda})_{\fq \cap B_{\lambda}})$ is a finite-dimensional Noetherian topological space.

\section{Openness of loci}\label{sec:loci}

In this section, we discuss openness of the reduced, integral domain, normal, and finite weak dimension (regular in the sense of \cite{Bertin-def-Regular,Knaf-RLR-over-Prufer}) loci.
We will mostly consider \emph{flat} finitely presented algebras over an integral domains $R$.
Flatness is necessary as illustrated in the following example.

\begin{Exam}\label{exam:BadValueGroup-PrincipalIsLimit}
    Let $\Gamma=\bZ\oplus\bZ^{\oplus \bZ_{\leq 0}}$ ordered lexicographically, that is, elements of $\Gamma$ are strings $(a_m)_{m\in\{-\infty\}\cup\bZ_{\leq 0}}$ of integers all but finitely many are $0$, and $(a_m)\geq (b_m)$ if and only if for the smallest $m$ so that $a_m\neq b_m$, we have $a_m>b_m.$
    Let $\gamma_e=(\delta_{em})_m$ (Kronecker $\delta$) for $e\in\{-\infty\}\cup\bZ_{\leq 0}$,
    so $\gamma_{0}< \gamma_{-1}<\ldots< \gamma_{-\infty}$.

    Let $V$ be a valuation ring with value group $\Gamma$ and for each $e\in\{-\infty\}\cup\bZ_{\leq 0}$ let $x_e\in V$ be an element of value $\gamma_e$.
    Then $\fp_e:=\sqrt{x_{e}V}$ is a prime ideal of $V$ and $\fp_e V_{\fp_e}=x_{e}V_{\fp_e}$.
    We also write $\fp$ for $\fp_{-\infty}$ to simplify our notations.
     Then $B=V/x_{-\infty} V$ is a finite finitely presented $V$-algebra such that  $B_\fp$ is a field.
    However, for no $f\in V\setminus\fp$, the ring $B_f$ is reduced.
    This is because the value of $f$ is less than $\gamma_{e+1}$ for some $e$,
    so $x_{-\infty}x^{-1}_{e}\in\fp$ but $f^Nx_{-\infty}x^{-1}_{e}\not\in x_{-\infty} V$ for any $N$.
\end{Exam}

\subsection{Reduced and integral domain loci}\label{subsec:redIntLoci}
\begin{Lem}\label{lem:ReducedLocusOpen}
    Let $B$ be a flat finitely presented algebra over an integral domain $R$ and let $\fq\in\Spec B$.

    If $B_\fq$ is reduced (resp. an integral domain), then there exists $g\in B\setminus\fq$  such  that $B_g$ is reduced (resp. an integral domain).
    %and a  prime element $\pi\in V$ so that $B_g$ is flat over $V/\pi V$ and is reduced.
\end{Lem}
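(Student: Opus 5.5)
The idea is to pass to the generic fiber over $R$. That fiber is a finitely generated algebra over a field, hence Noetherian, so the usual finiteness arguments (finitely generated nilradical, finitely many minimal primes) apply there. Flatness is what lets us come back to $B$ itself.

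Let $K=\Frac(R)$ and $B_K=B\otimes_RK$. The setup has two ingredients.
\begin{enumerate}[label=$(\alpha)$]
\item Since $B$ is flat over the domain $R$, it is torsion-free over $R$. Hence for every $g\in B$ the map $B_g\to (B_K)_g=B_g\otimes_RK$ is injective.
\end{enumerate}
\begin{enumerate}[label=$(\beta)$]
\item Let $S$ be the image of $B\setminus\fq$ in $B_K$. Then $B_\fq\otimes_RK=S^{-1}B_K$ is a localization of $B_\fq$ at nonzero elements of $R$. It is nonzero, because $B_\fq\neq0$ is $R$-torsion-free.
\end{enumerate}
Consequently, if $B_\fq$ is reduced (resp.\ a domain), so is $S^{-1}B_K$. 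Here we use that a nonzero localization of a domain is a domain. By $(\alpha)$, it then suffices to find $g\in B\setminus\fq$ with $(B_K)_g$ reduced (resp.\ a domain). Indeed, $B_g$ is then a subring of such a ring, and $B_g\neq 0$ since $g\notin\fq$.

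\emph{Reduced case.} The ring $B_K$ is of finite type over the field $K$, so it is Noetherian and its nilradical $N$ is generated by finitely many elements $n_1,\dots,n_s$. Each $n_i$ maps to a nilpotent element of the reduced ring $S^{-1}B_K$, hence to $0$. So there is $t_i\in B\setminus\fq$ with $t_in_i=0$ in $B_K$. Put $g=t_1\cdots t_s$. Then $N_g=0$, and $N_g$ is the nilradical of $(B_K)_g$, so $(B_K)_g$ is reduced.

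\emph{Domain case.} First choose $g_0$ as in the reduced case. The ring $B_K$ has finitely many minimal primes $P_0,\dots,P_r$. The minimal primes of the domain $S^{-1}B_K$ are exactly the $S^{-1}P_j$ with $P_j\cap S=\emptyset$, so exactly one of them, say $P_0$, is disjoint from $S$. For $j\geq1$ pick $t_j\in P_j\cap S$, coming from an element of $B\setminus\fq$, and set $g=g_0t_1\cdots t_r\in B\setminus\fq$. Then $(B_K)_g$ is reduced, has the unique minimal prime $P_0(B_K)_g$, and is nonzero because it contains $B_g\neq0$. A reduced ring with a unique minimal prime is a domain, since its nilradical equals that prime and is zero.

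The only point needing care is $(\beta)$: correctly identifying $B_\fq\otimes_RK$ with the localization of $B_K$ at the image of $B\setminus\fq$, and checking it is nonzero. Both follow from the torsion-freeness in $(\alpha)$. This is exactly where flatness enters, and Example \ref{exam:BadValueGroup-PrincipalIsLimit} shows it cannot be dropped. Everything else is standard Noetherian bookkeeping on $B_K$.
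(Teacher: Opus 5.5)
Your proof is correct and takes the paper's route: pass to the Noetherian generic fiber $B_K$, use finite generation to find $g\in B\setminus\fq$ with $(B_K)_g$ reduced (resp.\ a domain), and conclude via flatness since $B_g\subseteq (B_K)_g$. The only difference is cosmetic and lies in the domain case: the paper directly kills the finitely generated prime $\ker(B_K\to S^{-1}B_K)$ by inverting one element of $B\setminus\fq$, whereas you combine the reduced case with the finiteness of the minimal primes of $B_K$.
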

\begin{proof}
Let $K=\Frac R$.
Then $B_K=B\otimes_RK$ is Noetherian, and  $(B\setminus\fq)^{-1}(B_K)$ is reduced (resp. an integral domain).
Therefore, for the nilradical (resp. some prime ideal) $I$ of $B_K$, we have
$(B\setminus\fq)^{-1}I=0$.
As $B_K$ is Noetherian, $I$ is finitely generated,
so there exists $g\in B\setminus \fq$ so that $I_g=0$, in other words, $(B_K)_g$ is reduced (resp. an integral domain).
Then $B_g$ is reduced (resp. an integral domain) by flatness.
\end{proof}
\begin{Thm}\label{thm:ReducedLocusQCOpen}
    Let $R$ be an integral domain and let $X$ be a flat $R$-scheme of finite presentation.
    Then locus of points $x\in X$ so that $\cO_{X,x}$ is reduced (resp. an integral domain) is a quasi-compact open.
\end{Thm}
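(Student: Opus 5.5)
Openness follows from Lemma \ref{lem:ReducedLocusOpen}, so the real content is quasi-compactness. The question is local on $X$, and $X$ is quasi-compact, so we may assume $X=\Spec B$ with $B$ a flat finitely presented $R$-algebra. Let $U\subseteq X$ be the locus where $B_\fq$ is reduced (resp.\ an integral domain). If $\fq\in U$, Lemma \ref{lem:ReducedLocusOpen} gives $g\in B\setminus\fq$ with $B_g$ reduced (resp.\ a domain). Every localization of $B_g$ is then reduced (resp.\ a domain), so $D(g)\subseteq U$, and $U$ is open.

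For quasi-compactness I pass to the generic fiber. Let $K=\Frac R$ and $B_K=B\otimes_RK$; this is a Noetherian ring, and $B\to B_K$ is injective because $B$ is flat, hence torsion-free, over $R$.

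In the reduced case, let $N\subseteq B_K$ be the nilradical. Since $B_K$ is Noetherian, $N$ is finitely generated. Clearing denominators, I choose $n_1,\dots,n_s\in B$ whose images generate $N$. For $\fq\in\Spec B$, I claim $B_\fq$ is reduced if and only if $(B\setminus\fq)^{-1}N=0$. For the forward direction, $(B\setminus\fq)^{-1}B_K$ is a localization of $B_\fq$, hence reduced, so its nilradical $(B\setminus\fq)^{-1}N$ vanishes. For the converse, $B_\fq$ injects into $(B\setminus\fq)^{-1}B_K$ by flatness, and that ring is reduced because its nilradical is the localization of $N$. Now $(B\setminus\fq)^{-1}N=0$ holds exactly when each $n_i$ maps to zero, i.e.\ when $\fq\notin\operatorname{Supp}_B(Bn_i)=V(\operatorname{Ann}_B n_i)$ for every $i$. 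Hence
\[
U=X\setminus\textstyle\bigcup_i V(\operatorname{Ann}_B n_i).
\]
Each $\operatorname{Ann}_B n_i$ is finitely generated, because $B$ is coherent by Theorem \ref{thm:ModAndAlgOverV}\ref{OverV:fp-is-coherent} when $R$ is Prüfer. So each $X\setminus V(\operatorname{Ann}_B n_i)$ is quasi-compact, and a finite intersection of quasi-compact opens in an affine scheme is quasi-compact.

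The main obstacle is that $R$ is an arbitrary domain, so coherence of $B$ is not available. To avoid it I will compute the support in $B_K$ instead of in $B$. For $\fq\in X$ with $\fq\cap R=\fp$, if $\fp\neq 0$ then $\fq$ does not lie over the generic point and the fiberwise description fails. A cleaner route is to use the minimal primes $P_1,\dots,P_t$ of $B_K$ (finitely many, since $B_K$ is Noetherian) and set $\fP_j=P_j\cap B$. These are exactly the minimal primes of $B$: by flatness, $B\setminus\{0\}$-torsion is trivial, every associated point of $B$ lies over the generic point of $\Spec R$, and going-down holds for flat maps.

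In these terms, $B_\fq$ is a domain if and only if $\fq$ contains exactly one $\fP_j$ and $B_\fq$ is reduced. Reducedness was handled above, but $U$ is still a complement of finitely many $V(\operatorname{Ann}_B n_i)$, and these annihilators need not be finitely generated. Here the plan is to use that $V(\operatorname{Ann}_B n)$ is the closure of $V(\operatorname{Ann}_{B_K}n)\subseteq\Spec B_K$, and that this set is a finite union of $V(\fQ)$ with $\fQ$ ranging over primes of $B_K$ contracted to $B$. Its complement in $X$ is open by the first paragraph, but quasi-compactness of $X\setminus V(\fQ\cap B)$ for an arbitrary prime still needs an argument.

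I would try to obtain it via finite presentation. Write $B$ as a filtered colimit of flat finitely presented algebras over finitely generated $\bZ$-subalgebras $R_0\subseteq R$ with faithfully flat transition maps, using \citestacks{05N9}-type limit arguments. Over a Noetherian $R_0$ the locus $U_0$ is open, hence quasi-compact. Then show that $U$ equals the preimage of $U_0$ for a large enough $R_0$. This works because the generic-fiber data (the nilradical and the minimal primes of $B_K$) descend to $\Frac(R_0)\otimes B_0$ for $R_0$ large, and the preimage of a quasi-compact open is quasi-compact.
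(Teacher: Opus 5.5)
\textbf{Gap in the final step, and the statement fails for general domains.} Your coherence argument for the reduced locus is correct, but, as you note, only when $R$ is Pr\"ufer. Over a Pr\"ufer $R$ the domain case can also be finished along your lines. Each minimal prime $\fP_j=P_j\cap B$ has $B/\fP_j\subseteq B_K/P_j$ torsion-free, hence flat, hence finitely presented over $R$ (\citestacks{053G}). So $\fP_j$ is finitely generated, and the domain locus is the reduced locus minus the constructible closed set $\bigcup_{i\neq j}V(\fP_i+\fP_j)$.

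For an arbitrary domain, your last paragraph follows the paper's strategy: descend to finitely generated $R_\lambda\subseteq R$, compare with the Noetherian loci, and use compactness of the constructible topology. It omits that strategy's key step, and the justification you give is wrong. The maps $B_\lambda\to B_{\lambda'}$ are base changes of $R_\lambda\subseteq R_{\lambda'}$ and are not flat. Descending the nilradical and minimal primes of $B_K$ only controls points over $0\in\Spec R$. What must be shown is that for $\fq$ over a nonzero prime, $\fq\cap B_\lambda$ does not lie on more (bad) components than $\fq$ does. Equivalently, $(B_\lambda)_{\fq\cap B_\lambda}\to B_\fq$ must be injective. The paper attacks this using the associated primes $\fr_j$ of $B_K$ with $\fr_j=(\fr_j\cap C_0)C$. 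It asserts $\{j:\fr_j\cap B\subseteq\fq\}=\{j:\fr_j\cap B_\lambda\subseteq\fq\cap B_\lambda\}$.

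That assertion is false, and so is the statement for general $R$, so this gap cannot be closed. Let $\operatorname{char}k\neq 2$ and $A=k[t,Y_1,Y_2,\ldots]$. Let $p_\pm$ be the points $t=\pm1$, $Y_i=0$, with maximal ideals $\fm_\pm$, and set $R=\{f\in A: f(p_+)=f(p_-)\}$.

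\emph{The domain locus.} Since $t^2,Y_1,tY_1\in R$, we have $t\in K=\Frac R$. The ring $B=R[w]/(w^2-t^2)$ is free of rank $2$, with $B\otimes_RK\cong K\times K$ via $w\mapsto(t,-t)$. Thus $B$ is reduced with minimal primes $\fP_\pm=\ker(w\mapsto\pm t)$, and the domain locus is $\Spec B\setminus V(\fP_++\fP_-)$. Writing $B=R\oplus Rw$ gives $\fP_-=\{\beta t+\beta w:\beta\in\fm_+\cap\fm_-\}$, because $\beta,\beta t\in R$ forces $\beta(p_\pm)=0$. Its image in $B/\fP_+\cong A$ is $2t(\fm_+\cap\fm_-)$. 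Hence $V(\fP_++\fP_-)\cong V(t)\cup\{p_+,p_-\}\subseteq\Spec A$. This set is not constructible: intersecting with $V(t-1)$ would make $\{p_+\}$ the zero set of a finitely generated ideal, which a closed point of $\Spec A$ is not. So the domain locus is open but not quasi-compact.

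\emph{The reduced locus.} Use $(w^2-t^2)(w^2-Y_1w-t^2-tY_1)\in R[w]$ instead. Its generic fiber is $K\times K[\epsilon]/(\epsilon^2)\times K$, with the double root at $-t$. The same computation shows the non-reduced locus $V(\fP_{-t})$ is not constructible.

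\emph{Failure of the paper's assertion.} Take a point of $V(\fP_+)$ with $t=1$, $Y_1=\dots=Y_n=0$, $Y_{n+1}=1$. It lies on only one component of $\Spec B$. Its image in $\Spec(R_n[w]/(w^2-t^2))$, where $R_n=R\cap k[t,Y_1,\ldots,Y_n]$, lies on both, and this happens for every $n$.

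So the theorem, and the paper's proof, need a restriction, for example to Pr\"ufer $R$; that is all the paper uses later. In that case your first argument, completed as above, is a valid proof and more direct than descent.
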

\begin{proof}
    We may assume $X=\Spec B$ affine.
    Write $R=\bigcup_{\lambda\in \Lambda}R_{\lambda}$,
    the  direct union of a (filtered) family of subrings finitely generated over $\bZ$.
    We may assume there exists a minimal element $0$ of $\Lambda$ and a flat finitely presented $R_{0}$-algebra $B_{0}$ so that $B=R\otimes_{R_0}B_{0}$,
    see \citestacks{02JO}.
Write $B_\lambda=R_{\lambda}\otimes_{R_{\lambda_0}}B_{\lambda_0}$.

Write $K_\lambda=\Frac(R_\lambda)$ and $K=\Frac(R)$.
Write $C_\lambda=B_\lambda\otimes_{R_\lambda}K_\lambda$ and 
$C=B\otimes_{R}K$; they are Noetherian rings.
Then $C=C_{\lambda}\otimes_{K_\lambda}K$ is faithfully flat over $C_{\lambda}$.
Let $\fr_1,\ldots,\fr_m$ be all the associated primes of $C$, so $\fr_1\cap C_\lambda,\ldots,\fr_m\cap C_\lambda$ are all the associated primes of $C_\lambda$ (cf. \citestacks{0337}).
After replacing $\Lambda$ by $\Lambda_{\geq\lambda}$ for some $\lambda$,
we may assume $\fr_1\cap C_0,\ldots,\fr_m\cap C_0$ are distinct,
so $\fr_1\cap C_\lambda,\ldots,\fr_m\cap C_\lambda$ are distinct for all $\lambda$.
Furthermore, we may assume $\fr_j=(\fr_j\cap C_0)C$ for all $j$.

Given $\fq\in\Spec B$,
let $J=\{j\mid \fr_j\cap B\subseteq\fq\}$.
As $\fr_j=(\fr_j\cap C_0)C=(\fr_j\cap B_0)C$ for all $j$, we have $J=\{j\mid \fr_j\cap B_{\lambda}\subseteq\fq\cap B_\lambda\}$ for all $\lambda$.
Therefore we have a commutative diagram
\[\begin{CD}
    (B_{\lambda})_{\fq\cap B_\lambda}@>>> B_\fq\\
    @V{\alpha_\lambda}VV  @VV{\alpha}V\\
    %(B_{\lambda})_{\fq\cap B_\lambda}\otimes_{R_{\lambda}}\Frac(R_{\lambda})@>>> B_{\fq}\otimes_{R_{}}\Frac(R_{})\\
    (B_{\lambda}\setminus \fq\cap B_\lambda)^{-1}C_{\lambda}@>>> (B\setminus\fq)^{-1}C\\
     @V{\beta_\lambda}VV  @VV{\beta}V\\
     \prod_{j\in J}(C_{\lambda})_{\fr_j\cap C_\lambda} @>{\psi}>>\prod_{j\in J}C_{\fr_j}
\end{CD}\]
where $\alpha_\lambda$ and $\alpha$  are injective  by flatness, $\beta_\lambda$ and $\beta$  are injective by \citetwostacks{05BZ}{0311},
and $\psi$ is injetive by flatness again,
as $(C_{\lambda})_{\fr_j\cap C_\lambda}\to C_{\fr_j}$ is faithfully flat for all $j$.
We conclude that $(B_{\lambda})_{\fq\cap B_\lambda}\to B_\fq$ is injective.
    Consequently,
    if $\Sigma$ (resp. $\Sigma_{\lambda}$) is the reduced or integral domain locus of $B$ (resp. $B_\lambda$),
    then
    \[
    \Sigma=\bigcap_{\lambda\geq\lambda_0}v_{\lambda}^{-1}(\Sigma_{\lambda})
    \]
    where $v_{\lambda}:\Spec B\to\Spec B_\lambda$ is the canonical map.
    By Lemma \ref{lem:ReducedLocusOpen} $\Sigma$ and $\Sigma_\lambda$ are open,
    and $\Sigma_\lambda$ is quasi-compact as $\Spec B_\lambda$ is Noetherian.
    Therefore $v_{\lambda}^{-1}(\Sigma_{\lambda})$ is quasi-compact (as $v_{\lambda}$ is affine), hence closed in the constructible topology.
    Since the constructible topology is compact \citestacks{0901} we see the intersection must be a finite intersection, so $\Sigma$ is quasi-compact.
\end{proof}

\subsection{Normal and regular loci}
We consider these loci together for the following reason.
\begin{Lem}\label{lem:QFNormal=Val}
Let $(V,\fm_V,k_V)\to (A,\fm_A,k_A)$ be an essentially finitely generated local map of local rings.
Assume that $V$ is a valuation ring, $A$ is normal, and $\dim (A/\fm_V A)=0$.
Then $A$ is a valuation ring,
and the value group extension induced by the ring map $\overline{V}\to A$ has finite index,
where $\overline{V}$ is the image of $V$ in $A$.
    %Let $B$ be a flat quasi-finite algebra over a Pr\"ufer domain $D$.
    %Let $\fq\in\Spec B$.
    %hen the following hold.
    %\begin{enumerate}[label=$(\roman*)$]
        %\item\label{QF:normal=Val} 
        %Then $B_\fq$ is normal if and only if $B_\fq$ is a valuation ring.
        %\item\label{QF:normalOpen} If $B_\fq$ is normal,
        %then there exists $g\in B\setminus\fq$  such  that $B_g$ is normal (therefore a Pr\"ufer domain by \ref{QF:normal=Val}).
    %\end{enumerate}
\end{Lem}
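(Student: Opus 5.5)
The plan is to reduce to the case where $V$ injects into $A$ and then use Zariski's Main Theorem to see $A$ as a localization of an integral extension of $V$; normality then forces $A$ to be a valuation ring.

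\textbf{Reduction.} Replace $V$ by its image $\overline{V}$ in $A$. Since $A$ is normal it is a domain, so $\overline{V}$ is a domain. It is a quotient of a valuation ring, hence again a valuation ring, and $\overline{V}\to A$ is still local and essentially finitely generated. So assume $V\subseteq A$. By Discussion \ref{discu:DomainsOverPrufer}, $A=B_\fq$ for a finitely generated $V$-algebra $B$ that is a domain, flat and finitely presented over $V$. Let $K=\Frac V$ and $L=\Frac A$.

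\textbf{Step 1: $L/K$ is finite.} The fiber $A/\fm_VA$ has dimension $0$, so $\fq$ is isolated in its fiber $\Spec(B\otimes_V k_V)$, i.e. $B$ is quasi-finite over $V$ at $\fq$. By Zariski's Main Theorem \citestacks{00QB}, after replacing $B$ by a localization $B_g$ with $g\notin\fq$, $B$ becomes an open subscheme of a finite $V$-algebra $B'$, that is, $B_g=B'_{h}$. Replace $B'$ by its image in $L$. Then $L$ is the fraction field of a finite $V$-domain, so $L/K$ is a finite extension.

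\textbf{Step 2: $A$ is a valuation ring.} Let $W$ be the integral closure of $V$ in $L$. This is a Prüfer domain \cite{Coherent-Rings}, and $W\subseteq A$ because $A$ is normal. Set $\fQ=\fm_A\cap W$. Then $W_\fQ\subseteq A$ and $W_\fQ$ is a valuation ring of $L$. Since $A$ is local and dominates $W_\fQ$, and a valuation ring is maximal for domination among local subrings of its fraction field, we get $A=W_\fQ$. So $A$ is a valuation ring.

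\textbf{Step 3: finite index.} Let $\Gamma_V\subseteq\Gamma_A$ be the value groups and $n=[L:K]$. The fundamental inequality for the valuation of $L$ extending that of $K$ gives $e\cdot f\le n$, where $e=[\Gamma_A:\Gamma_V]$. In particular $e\le n$ is finite. Alternatively, this follows elementarily: for $a\in L$, the values of $1,a,\dots,a^n$ cannot all be distinct modulo $\Gamma_V$, since otherwise these elements would be $K$-linearly independent, which forces $n!\,\Gamma_A\subseteq\Gamma_V$.

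\textbf{Main obstacle.} The delicate step is the Zariski's Main Theorem reduction in the non-Noetherian setting. I need the Stacks version for finite type algebras, with quasi-finiteness at $\fq$ coming from $\dim(A/\fm_VA)=0$. The relevant fact is that an isolated point of a fiber of a finite type map is exactly a quasi-finite point, \citestacks{00PK}. Once $L/K$ is known to be finite, the rest of the argument is standard valuation theory.
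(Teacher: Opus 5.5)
\textbf{Gap in Step 1.} The hypothesis $\dim(A/\fm_VA)=0$ only says that $\fq$ is a \emph{minimal} prime of the fiber $B\otimes_V k_V$, i.e.\ a generic point of an irreducible component of that fiber. It does not make $\fq$ isolated in the fiber. Isolation also requires $\fq$ to be closed in the fiber, i.e.\ $k_A=\kappa(\fq)$ algebraic over $k_V$. Because $A$ is only \emph{essentially} of finite type, $k_A$ can be transcendental over $k_V$.

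Concretely, $A=V[t]_{\fm_V V[t]}$ satisfies every hypothesis: $A/\fm_VA=k_V(t)$ is a field, and $A$ is the Gauss valuation ring, hence normal. Yet $\Frac A=K(t)$ is not finite over $K$. So the conclusion of Step 1 is false, Zariski's Main Theorem does not apply at $\fq$, and Step 3's $n=[L:K]$ is undefined. The criterion you cite (isolated point $\Leftrightarrow$ quasi-finite point) is correct, but its hypothesis is not met.

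This case matters. The paper applies the lemma exactly at generic points of positive-dimensional fibers, e.g.\ in Lemma \ref{lem:N2isJ2} and Theorem \ref{thm:BCMisflat}, where $k_A$ is typically transcendental over $k_V$.

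\textbf{The missing reduction.} This is the first step of the paper's proof.
\begin{itemize}
\item Choose $t_1,\dots,t_r\in A$ whose images form a transcendence basis of $k_A/k_V$, and set $P=V[t_1,\ldots,t_r]$.
\item Since the $\bar t_i$ are algebraically independent over $k_V$, a polynomial in $P$ with a unit coefficient maps to a unit of $A$. Writing any $f$ as $c\cdot g$ with $c\in V$ and $g$ primitive shows that $P\to A$ is injective and that $\fm_A\cap P=\fm_VP$.
\item Hence $A$ is essentially finitely generated and local over $V'=P_{\fm_VP}$. This is a valuation ring with the same value group as $V$ and residue field $k_V(t_1,\ldots,t_r)$, and $A/\fm_{V'}A=A/\fm_VA$.
\item Now $k_A$ is finite over $k_{V'}$, so $\fq$ is closed, hence isolated, in its fiber over $V'$.
\end{itemize}
With $V'$ and $K(t_1,\ldots,t_r)$ in place of $V$ and $K$, your Steps 1--3 go through. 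The finite-index conclusion transfers because $\Gamma_{V'}=\Gamma_V$. Your Step 2 (maximality of valuation rings under domination) is a fine substitute for the paper's ``$A$ is a localization of the normalization $E$, a Pr\"ufer domain.''

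\textbf{Minor point in Step 3.} Your ``elementary'' alternative only shows that $\Gamma_A/\Gamma_V$ has exponent dividing $n!$. That does not bound the index, since an infinite group of bounded exponent is possible (e.g.\ $\bigoplus_{\bZ}\bZ\subseteq\bigoplus_{\bZ}\tfrac12\bZ$). The correct elementary argument applies linear independence to coset representatives directly. Elements of $L$ with pairwise distinct values modulo $\Gamma_V$ are $K$-linearly independent, so $[\Gamma_A:\Gamma_V]\le n$. The fundamental inequality you cite first is fine.
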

\begin{proof}
We may assume $V=\overline{V}$.
Let $t_1,\ldots,t_r\in A$ be such that their images in $k_A$ is a transcendental basis of $k_A$ over $k_V$.
As $V$ is a valuation subring of $A$, it is easy to verify that $P:=V[t_1,\ldots,t_r]$ is a polynomial $V$-algebra, and that $\fm_A\cap P=\fm_V P$.
We may therefore replace $V$ by $P_{\fm_VP}$ to assume $k_A$ is finite over $k_V$.
In this case, $A/\fm_VA$ is finite over $k_V$,
so $A=B_\fq$ where $B$ is a finite $V$-subalgebra of $A$ and $\fq\in\Spec B$, by Zariski's Main Theorem \citetwostacks{00QA}{00QB}.

    %We may assume $B$ is an integral domain by Lemma \ref{lem:ReducedLocusOpen}.
    %(and \citestacks{053G}).
    %There exists a finite $D$-subalgebra $C$ of $B$ so that $\Spec B\to \Spec C$ is an open immersion \citestacks{00QB}.
    Let $E$ be the normalization of $B$.
    Then $E$ is the integral closure of $V$ in a finite extension of its fraction field, hence a Pr\"ufer domain, and 
    this gives the result,
    as the normal domain $A=B_\fq$ is localization of $E$. % if $B_\fq$ is normal.
\end{proof}

\begin{Discu}\label{discu:NorRegloci}
Let $A$ be a ring. We write $\operatorname{Nor}(A)$ for the normal locus of $A$, that is, the set of all primes $\fp$ so that $A_\fp$ is a normal domain.

    Let $A$ be a ring with coherent local rings. We write $\operatorname{Reg}(A)$ for the set of all primes $\fp$ so that every finitely generated ideal of $A_\fp$ has finite projective dimension (see \cite[D\'ef.~3.5]{Bertin-def-Regular}).

    We have $\operatorname{Reg}(A)\subseteq \operatorname{Nor}(A)$ by \cite[Cor.~4.3]{Bertin-def-Regular},
    with equality when $A$ is flat quasi-finite over a Pr\"ufer domain  (Lemma \ref{lem:QFNormal=Val}).
    When $A$ is flat and essentially finite presented over a Pr\"ufer domain  we have $\sup\{\operatorname{w.dim}(A_\fp)\mid\fp\in\operatorname{Reg}(A)\}<\infty$, see \cite[Th.~1]{Knaf-RLR-over-Prufer}.
    The same is therefore true for any integral domain $A$ essentially finitely generated over a Pr\"ufer domain $D$, see Discussion \ref{discu:DomainsOverPrufer}.

    We denote by $\operatorname{Nor}(X)$ and $\operatorname{Reg}(X)$ the corresponding notion for a scheme $X$,
    the latter for schemes with coherent local rings.
\end{Discu}

\begin{Lem}[cf. {\cite[(32.B) Th.~73]{Mat-CA}}]
\label{lem:J2isEveryFiniteJ0}
    Let $D$ be a Pr\"ufer domain. The following are equivalent.
    \begin{enumerate}[label=$(\roman*)$]
        \item\label{J2:RegLocHasOpen} 
        For every essentially finitely generated integral domain $A$ over $D$,
        $\operatorname{Reg}(A)$ contains a non-empty open subset of $\Spec A$.
        \item\label{J2:everyfiniteJ0}
        For every $\fp\in\Spec D$ and finite purely inseparable extension $F$ of $\kappa(\fp)$,
        there exists a integral domain $B$ finite over $D$ with fraction field $F$,
         so that  $\operatorname{Nor}(B)$ contains a non-empty open subset of $\Spec B$.
    \end{enumerate}
    If $D$ satisfies the equivalent conditions, so does any Pr\"ufer domain essentially finitely generated over $D$.
\end{Lem}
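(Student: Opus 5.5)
We prove the two implications separately, then the final assertion. The final assertion is immediate from \ref{J2:RegLocHasOpen}: an integral domain essentially finitely generated over a Pr\"ufer domain $D'$ that is itself essentially finitely generated over $D$ is essentially finitely generated over $D$.

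For \ref{J2:RegLocHasOpen}$\Rightarrow$\ref{J2:everyfiniteJ0}, let $\fp\in\Spec D$ and let $F/\kappa(\fp)$ be finite purely inseparable. Since $\kappa(\fp)=\Frac(D/\fp)$, we may write $F=\kappa(\fp)(a_1,\ldots,a_n)$ with each $a_i$ integral over $D/\fp$, after multiplying by suitable elements of $D/\fp$. Put $B=(D/\fp)[a_1,\ldots,a_n]\subseteq F$. This is an integral domain, finite over $D$, with fraction field $F$. By \ref{J2:RegLocHasOpen}, $\operatorname{Reg}(B)$ contains a nonempty open subset. Since $\operatorname{Reg}\subseteq\operatorname{Nor}$ (Discussion \ref{discu:NorRegloci}), we get \ref{J2:everyfiniteJ0}.

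For \ref{J2:everyfiniteJ0}$\Rightarrow$\ref{J2:RegLocHasOpen}, we follow Matsumura's proof.

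\emph{Reduction to a finitely presented algebra over $D$.} Write $A=S^{-1}B$ with $B$ a finitely generated domain. A nonempty open subset of $\Spec B$ contained in $\operatorname{Reg}(B)$ contains the generic point, so its trace on $\Spec A$ is nonempty and open. We may therefore assume $A=B$ is finitely generated.

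\emph{Reduction to $D\subseteq B$.} Let $\fp=\ker(D\to B)$ and replace $D$ by $D/\fp$. This is again Pr\"ufer, since its localizations are of the form $V/\fp V$. It still satisfies \ref{J2:everyfiniteJ0}: a domain $B'$ finite over $D$ whose fraction field $F$ contains $\kappa(\fp')$, with $\fp'\supseteq\fp$, is killed by $\fp'$, hence is a $D/\fp$-algebra. Now $D\subseteq B$, so by Discussion \ref{discu:DomainsOverPrufer} $B$ is flat and finitely presented over $D$.

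\emph{Separability after a base change.} Let $K=\Frac D$ and $L=\Frac B$. Choose a finite purely inseparable extension $F/K$ such that $(L\otimes_KF)_{\mathrm{red}}$ is a field $LF$ separably generated over $F$. Apply \ref{J2:everyfiniteJ0} at $\fp=0$ to get a domain $B_F$, finite over $D$ with fraction field $F$, and a nonzero $h$ with $(B_F)_h$ normal. By Lemma \ref{lem:QFNormal=Val}, $(B_F)_h$ is then a Pr\"ufer domain. Let $B''$ be the image of $B\otimes_DB_F$ in $LF$; it is a domain, finite over $B$, and finitely presented over the Pr\"ufer domain $(B_F)_h$ after inverting $h$. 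Since $LF$ is separably generated over $F$, after inverting some $g''$ the ring $B''_{g''}$ is smooth over $(B_F)_h$. By Theorem \ref{thm:ALLwdimFacts}\ref{wdim:flatAscent}, its local rings have weak dimension at most $1$ plus the dimension of a regular fiber, hence finite.

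\emph{Descent to $B$ (main obstacle).} We must get from finite weak dimension on an open of $B''$ back to an open of $B$. The plan is to use the faithfully flat map $B\to B\otimes_DB_F$, which is flat because $B_F$ is torsion-free over $D$, together with Theorem \ref{thm:ALLwdimFacts}\ref{wdim:flatDescent}. This needs finite weak dimension for the possibly non-reduced ring $B\otimes_DB_F$ on an open set, not just for its reduction $B''$. To bridge the gap, we would filter the nilradical $N$ of $(B\otimes_DB_F)_g$ so that its graded pieces are finitely presented and flat over $B''_g$. This is possible after shrinking, by generic flatness on the Noetherian generic fiber and then spreading out over $D$ as in Lemma \ref{lem:ReducedLocusOpen}. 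We would then bound weak dimensions through the nilpotent thickening using Theorem \ref{thm:ALLwdimFacts}\ref{wdim:quotient}. If this does not go through directly, the fallback is Matsumura's alternative: induct on the degree of $F$ over $K$, at each step adjoining a single $p$-th root, and use the explicit structure $B[T]/(T^p-a)$.
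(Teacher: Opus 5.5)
Your proof of \ref{J2:RegLocHasOpen}$\Rightarrow$\ref{J2:everyfiniteJ0}, the final assertion, the reductions, and the construction of $B''$ all agree with the paper. (The paper calls $B''$ by the name $C$, and also gets $B''_{g''}$ smooth over the Pr\"ufer domain $(B_F)_h$.) The gap is the descent step, and the bridge you propose there cannot work.

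The only nontrivial case is $L/K$ inseparable, and then $L\otimes_KF$ is not reduced. If it were reduced, it would be the field $LF$, and separability of $LF/F$ would make $L\otimes_KK^{1/p^\infty}=LF\otimes_FK^{1/p^\infty}$ reduced, i.e.\ $L/K$ separable. The map $B\to B\otimes_DB_F$ is flat, so going down applies: every prime of $B\otimes_DB_F$ contains the unique prime over $(0)\subseteq B$, whose local ring is $L\otimes_KF$. Hence every local ring of $B\otimes_DB_F$ is non-reduced. But a coherent local ring of finite weak global dimension is a normal domain \cite[Cor.~4.3]{Bertin-def-Regular}. So no local ring of $B\otimes_DB_F$ has finite weak dimension, and flat descent along $B\to B\otimes_DB_F$ yields nothing.

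No filtration of the nilradical can fix this. Theorem \ref{thm:ALLwdimFacts}\ref{wdim:quotient} would need the reduced quotient to have finite tor dimension over a non-reduced local ring, and would then give that ring finite weak dimension, which is impossible. Your fallback via $B[T]/(T^p-a)$ runs into the same non-reducedness, and as stated it is not an argument.

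The missing idea is to descend along $B\hookrightarrow B''$ directly and never pass through the tensor product.
\begin{itemize}
\item This is an injective finite map of domains. Generic flatness over the (not necessarily Noetherian) domain $B$ \citestacks{051T} gives $0\neq f\in B$ with $B''_f$ flat and finitely presented over $B_f$.
\item For $\fQ\in D(fg'')\subseteq\Spec B''$, the map $B_{\fQ\cap B}\to B''_{\fQ}$ is flat and local, hence faithfully flat. Theorem \ref{thm:ALLwdimFacts}\ref{wdim:flatDescent} then gives $\operatorname{w.dim}B_{\fQ\cap B}<\infty$.
\item The image of $D(fg'')$ in $\Spec B$ is open \citestacks{00I1} and contains the generic point.
\end{itemize}
This is exactly how the paper finishes, with $C_{fgh}$ in place of $B''_{fg''}$.
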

\begin{proof}
    The last contention is trivial for \ref{J2:RegLocHasOpen}, and we have \ref{J2:RegLocHasOpen} trivially implies \ref{J2:everyfiniteJ0}.
    Assume \ref{J2:everyfiniteJ0}. We need to show \ref{J2:RegLocHasOpen}.

    Let $A$ be an essentially finitely generated integral domain over $D$.
    Since \ref{J2:everyfiniteJ0} passes to quotients we may assume $\ker(D\to A)=0$,
    so $A$ is flat over $D$.
    Let $L=\Frac A$ and $K=\Frac D$.
    As $L$ is finitely generated over $K$,
    there exists $F/K$ finite purely inseparable (inside some algebraic closure of $L$) so that the composite $LF$ is separable over $F$, cf. \citestacks{04KM}.
    By \ref{J2:everyfiniteJ0},
    there exists an integral domain $B$ finite over $D$ with fraction field $F$,
         so that  $\operatorname{Nor}(B)$ contains a non-empty open subset of $\Spec B$.
    In particular there exists $0\neq g\in B$ so that $B_g$ is a Pr\"ufer domain, see Lemma \ref{lem:QFNormal=Val}.
    %Since $F/K$ is purely inseparable, $B_g$ is a valuation ring.
    
    Let $C$ be the image of $A\otimes_D B$ in $LF$, picture
    \[
    \begin{CD}
        D@>>> B@>>> B_g \\
        @VVV @VVV @VVV \\
        A@>>>C @>>> C_g.\\
    \end{CD}
    \]
    By construction, $B_g\to C_g$ is injective and essentially finitely generated.
    By \citestacks{051T} we may localize further to assume  $B_g\to C_g$ is flat and essentially finitely presented.
    Moreover, %$B_g\to C_g$ is smooth at $0\in\Spec C_g$, as 
    $LF/F$ is separable, so there exists a nonzero $h\in C$ so that $C_{gh}$ is smooth over $B_g$ \citestacks{00TF}.
    As $B_g$ is Pr\"ufer, Theorem \ref{thm:ALLwdimFacts}\ref{wdim:flatAscent} tells us $D(gh)\subseteq \operatorname{Reg}(C)$.

    Finally, by construction, $A\to C_{gh}$ is an injective map of integral domains of finite type (in fact quasi-finite).
    Therefore there exists $0\neq f\in A$ so that $A_f\to C_{fgh}$ is faithfully flat \citestacks{051T}.
    Therefore $D(f)\subseteq \operatorname{Reg}(A)$ by flat descent, Theorem \ref{thm:ALLwdimFacts}\ref{wdim:flatDescent}.
\end{proof}

In contrast to the Noetherian case,
these conditions do not guarantee openness of $\operatorname{Reg}(B)$ even for a finite flat $D$-algebra $B$ that is an integral domain, and the failure comes from two directions.
The first direction is the complicated structure the spectrum of a (non-Noetherian) valuation ring can have, see Condition \ref{condit:IsolatedPrimeInV} and Remark \ref{rem:RegNotOpenVal}.
The second direction, which we discuss right now, is the interference of different maximal ideals.
We take our example from \cite{lyu-Prufer-Japanese}.
In fact, all localizations of this $D$ at maximal ideals are DVRs.
\begin{Exam}\label{exam:J1.5notalwaysJ2}
    In \cite[Example~31]{lyu-Prufer-Japanese}, assume $\Frac V$ has characteristic zero.
    Then $D$ satisfies the equivalent conditions of Lemma \ref{lem:J2isEveryFiniteJ0},
    but $\operatorname{Nor}(A)=\operatorname{Reg}(A)$ is not open, where $A$ is the finite free $D$-algebra constructed in Step 3 of  \cite[Example~31]{lyu-Prufer-Japanese}.
\end{Exam}
\begin{proof}
As $\Frac V$ has characteristic zero, so does $\Frac D$, hence Lemma \ref{lem:J2isEveryFiniteJ0}\ref{J2:everyfiniteJ0} is trivial when $\fp=0$.
When $\fp\neq 0$, $D/\fp$ is a field, as $D_\fm$ is a DVR for all $\fm\in\operatorname{Max}(D)$,
hence  Lemma \ref{lem:J2isEveryFiniteJ0}\ref{J2:everyfiniteJ0} is trivial again.
Therefore $D$ satisfies Lemma \ref{lem:J2isEveryFiniteJ0}\ref{J2:everyfiniteJ0}.
The fact $\operatorname{Nor}(A)$ is not open follows from the argument in the last paragraph in Step 3 of  \cite[Example~31]{lyu-Prufer-Japanese}.
\end{proof}
On the positive side, we have the following result,
which shares the same ideas with \cite[Proof of Th.~4 and Th.~6]{lyu-Prufer-Japanese}.
A Pr\"ufer domain is said to be \emph{N-2} if its integral closure in every finite extension of its fraction field is finite.
An N-2 Pr\"ufer domain $D$ automatically satisfies Lemma \ref{lem:J2isEveryFiniteJ0}\ref{J2:everyfiniteJ0}, as the integral closure of $D$ in $F$ is finite by \cite[Lem.~28]{lyu-Prufer-Japanese}.
\begin{Lem}\label{lem:N2isJ2}
    Let $D$ be an N-2 Pr\"ufer domain, $B$ a flat finitely presented $D$-algebra, $\fq\in\Spec B$ a generic point in its fiber (\emph{i.e.} $\dim B_\fq/ (\fq\cap D)B_\fq=0$).
    Assume that $\fq\in \operatorname{Nor}(B)$.
    Then there exists $g\not\in\fq$ such that $D(g)\subseteq \operatorname{Reg}(B)$.
\end{Lem}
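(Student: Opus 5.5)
Let $\fp=\fq\cap D$. By Lemma~\ref{lem:QFNormal=Val}, applied to $D_\fp\to B_\fq$ (whose closed fiber has dimension $0$), $B_\fq$ is a valuation ring, and its value group has finite index over that of the image of $D_\fp$. My plan is to realize $B_\fq$ as a localization of a finite normal $D$-algebra. Then I spread out an isomorphism of local rings to an isomorphism on a principal open, and conclude using that Pr\"ufer domains are regular.

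\textbf{Step 1: reduce to a quasi-finite situation.} By Lemma~\ref{lem:ReducedLocusOpen}, after inverting some $g_0\notin\fq$ I may assume $B$ is a domain; by flatness it is torsion-free over $D$. Since $\fq$ is isolated in its fiber, the fiber $B\otimes\kappa(\fp)$ is quasi-finite at $\fq$. By Zariski's Main Theorem (\citetwostacks{00QA}{00QB}) there is a finite $D$-subalgebra $B'\subseteq B$ and $g_1\notin\fq$ with $B_{g_1}=B'_{g_1}$ (an open immersion $\Spec B_{g_1}\to\Spec B'$). For this I first replace $D$ by a polynomial ring $D[t_1,\dots,t_r]$ localized at $\fp D[t]$, or rather by the Pr\"ufer domain obtained by inverting suitable elements. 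This is the same trick as in Lemma~\ref{lem:QFNormal=Val}, needed to handle residue field transcendence degree; when $\dim$ of the fiber is $0$ it is unnecessary, since $B_\fq$ is already quasi-finite over $D$. N-2 passes to such localizations by the last contention of Lemma~\ref{lem:J2isEveryFiniteJ0}, or directly from \cite{lyu-Prufer-Japanese}.

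\textbf{Step 2: use N-2.} Let $L=\Frac B$, a finite extension of $K=\Frac D$. Let $E$ be the integral closure of $D$ in $L$. By the N-2 hypothesis $E$ is finite over $D$, and it is Pr\"ufer, being the integral closure of a Pr\"ufer domain in a finite extension. Moreover $B'\subseteq E$, and $B_\fq$ is normal, so $B_\fq=E_{\fQ}$ for the unique $\fQ\in\Spec E$ lying over $\fq$. Now $E$ is a finite $B'$-module, hence finitely generated, and $B'_\fq\to E_\fq$ is an isomorphism after localizing at $\fq$. Here I use that $E\otimes_{B'}B'_\fq$ is integral over the normal local domain $B'_\fq=B_\fq$ inside $L$, so it equals $B_\fq$. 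Since $E/B'$ is a finitely generated $B'$-module vanishing at $\fq$, there is $g_2\notin\fq$ with $B'_{g_2}=E_{g_2}$.

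\textbf{Step 3: conclude.} Take $g=g_0g_1g_2$ (adjusting via the open immersion). Then $B_g$ is a localization of $E$, hence a Pr\"ufer domain, and every local ring of $B_g$ is a valuation ring, which has weak global dimension at most $1$. Thus $D(g)\subseteq\operatorname{Reg}(B)$.

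The main obstacle is Step 1, namely arranging that $B$ near $\fq$ is genuinely a localization of a \emph{finite} $D$-algebra. A generic point of the fiber need not be quasi-finite over $D$ when the residue field extension is transcendental. For that I would invoke the polynomial-extension reduction of Lemma~\ref{lem:QFNormal=Val}: replace $D$ by the N-2 Pr\"ufer domain $D[t]_{h}$ (Gauss/Nagata extension, localized), and check that N-2 is preserved there. If N-2 preservation is problematic, I would instead use \cite[Th.~4]{lyu-Prufer-Japanese} on finiteness of normalization for finitely presented algebras over N-2 Pr\"ufer domains, applied directly to $B$.
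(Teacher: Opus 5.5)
Your argument is complete only when $\kappa(\fq)$ is finite over $\kappa(\fp)$, i.e.\ when $B$ is quasi-finite over $D$ at $\fq$. In that case Zariski's Main Theorem, the N-2 hypothesis and normality of $B_\fq$ do give $B_g=E_g$, a Pr\"ufer domain, for some $g\notin\fq$.

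The gap is that $\dim B_\fq/\fp B_\fq=0$ only says $\fq$ is a \emph{minimal} point of its fiber, not an isolated one. So $\kappa(\fq)/\kappa(\fp)$ may be transcendental (e.g.\ $B=D[T]$, $\fq=\fp D[T]$). Then the conclusion you aim for is false, not merely hard to reach. Every $D(g)\ni\fq$ contains closed points $\fq'\supsetneq\fq$ of the positive-dimensional fiber over $\fp$, and their local rings need not be valuation rings. For $B=D[T]$ and $\fp\neq 0$, no $D[T]_g$ with $g\notin\fq$ is Pr\"ufer.

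The polynomial trick of Lemma \ref{lem:QFNormal=Val} cannot fix this. It works there because that lemma concerns the single local ring $A$, whereas you need a Zariski neighborhood. Indeed, $D[t]_{\fp D[t]}$ is a non-principal localization, so a neighborhood over it yields no $D(g)\subseteq\Spec B$; exactly the points of positive fiber dimension are discarded. By the example just given, inverting one element of $D[t]$ outside $\fp D[t]$ does not produce a Pr\"ufer domain either. Also, the last assertion of Lemma \ref{lem:J2isEveryFiniteJ0} concerns the (J-2 type) conditions of that lemma, not N-2. Your fallback via finiteness of normalization would at best show that $B$ is normal near $\fq$. That gives no control of weak dimension at non-generic points of fibers.

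What is missing is a source of finite weak dimension at points whose local rings are not valuation rings. In the paper this source is smoothness after a finite base change of $D$:
\begin{enumerate}[label=(\arabic*)]
\item One arranges $B$ finite over a polynomial ring $P$ over $D$. Let $D_i$ (resp.\ $D_\infty$) be the integral closure of $D$ in finite subextensions $F_i$ (resp.\ an algebraic closure) of $\Frac D$. Let $C_i$ (resp.\ $C_\infty$) be the normalization of $D_i\otimes_D B$ (resp.\ $D_\infty\otimes_D B$).
\item Finiteness of $D_i$ over $D$ (this is where N-2 enters), together with \cite[Th.~7(iii) and Lem.~17]{lyu-Prufer-Japanese}, makes $C_i$ finite over $B$ with $C_\infty=P_\infty\otimes_{P_i}C_i$ for large $i$.
\item Over the absolutely integrally closed $D_\infty$, the normalization is smooth at a point $\fr_\infty$ above $\fq$ that is generic in its fiber (proof of \cite[Th.~6]{lyu-Prufer-Japanese}). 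This descends to $D_i\to(C_i)_h$ smooth for some $h\notin\fr_i=\fr_\infty\cap C_i$.
\item Theorem \ref{thm:ALLwdimFacts}\ref{wdim:flatAscent} then gives $D(h)\subseteq\operatorname{Reg}(C_i)$, non-valuation points included.
\item Since $B_\fq$ is a valuation ring and $(C_i)_\fq$ is torsion-free, $C_i$ becomes flat over $B$ after inverting some $g\notin\fq$. Flat descent (Theorem \ref{thm:ALLwdimFacts}\ref{wdim:flatDescent}) then puts the open image of $D(gh)$, which contains $\fq$, inside $\operatorname{Reg}(B)$.
\end{enumerate}
The base change cannot be skipped, because the fiber at $\fq$ may be non-reduced. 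For example, take $B=D[T,S]/(S^2-\pi T)$ over a DVR $D$ with uniformizer $\pi$, and $\fq=(\pi,S)$. Here $B_\fq$ is a DVR, but $B$ is not smooth over $D$ at $\fq$.
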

\begin{proof}
We may assume $B$ is an integral domain by Lemma \ref{lem:ReducedLocusOpen}, and we may assume that there exists a finite injective map $P\to B$, where $P$ is a polynomial algebra over $D$, cf. \cite[Proof of Lem.~29]{lyu-Prufer-Japanese}.

    Let $F=\Frac D$, $\overline{F}$ an algebraic closure of $F$, $(F_i)_i$ all finite subextensions of $\overline{F}/F$.
    Let $D_i$ (resp. $D_\infty$) be the integral closure of $D$ in $F_i$ (resp. $\overline{F})$ and let $P_i=D_i\otimes_DP$ (resp. $P_\infty=D_\infty\otimes_DP$).
    Let $C_i$ (resp. $C_{\infty}$) be the normalization of $D_i\otimes_D B$ (resp. $D_{\infty}\otimes_D B$), so we have a commutative diagram
    \[
    \begin{CD}
        D@>>> P@>>> B@.\\
        @VVV @VVV @VVV @.\\
        D_i@>>> P_i@>>> D_i\otimes_D B@>>> C_i\\
        @VVV @VVV @VVV @VVV\\
        D_\infty@>>> P_\infty@>>> D_\infty\otimes_D B@>>> C_\infty.\\
    \end{CD}
    \]
By \cite[Th.~7(iii)]{lyu-Prufer-Japanese},
$C_\infty$ is finite over $P_{\infty}$.
By \cite[Lem.~17]{lyu-Prufer-Japanese}, for large $i$, $C_\infty=P_{\infty}\otimes_{P_i}C_i$,
in particular $P_i\to C_i$ is finite by descent \citestacks{03C4}.
We fix such an $i$.

Let $\fr_{\infty}$ be a prime ideal of $C_{\infty}$ lying above $\fq$ that is  generic in its fiber over $D_{\infty}$. 
Let $\fr_i=\fr_{\infty}\cap C_i$.
As seen in the proof of \cite[Th.~6]{lyu-Prufer-Japanese}, $D_\infty\to C_\infty$ is smooth at $\fr_{\infty}$,
so $D_i\to C_i$ is smooth at $\fr_{i}$.
Fix $h\in C_i\setminus\fr_i$ so that $D_i\to (C_i)_h$ is smooth.

As $D\to D_i$ and $P_i\to C_i$ are finite,
so is $B\to C_i$.
As $B_\fq$ is a valuation ring (Lemma \ref{lem:QFNormal=Val}), $(C_i)_{\fq}$ is flat over $B_\fq$ as it is torsion-free, so we have $(C_i)_g$ is flat over $B$ for some $g\in B\setminus\fq$, see for example \citetwostacks{05IR}{01WM}.

Finally, as $D_i\to (C_i)_{gh}$ is smooth,
$D(gh)\subseteq \operatorname{Reg}(C_i)$ by Theorem \ref{thm:ALLwdimFacts}\ref{wdim:flatAscent},
hence the image of $D(gh)$ in $\Spec B$, which is an open subset \citestacks{00I1} of $\Spec B$ containing $\fq=\fr_i\cap B$, is contained in $\operatorname{Reg}(B)$ by Theorem \ref{thm:ALLwdimFacts}\ref{wdim:flatDescent}, as desired.
\end{proof}

We now specialize to valuation rings and give some further examples. % of J-1$\frac{1}{2}$ and non-J-1$\frac{1}{2}$ valuation rings.
\begin{Lem}\label{lem:J1.5classes}
    Let $V$ be a valuation ring. Assume either
    \begin{enumerate}
    \item\label{J1.5:Q} $V$ is a $\bQ$-algebra; or
        \item\label{J1.5:frk}$V$ has finite rank; or
        \item\label{J1.5:SphComp} $V$ is  spherically complete.
    \end{enumerate}
    Then $V$ %is J-2.
    satisfies the equivalent conditions of Lemma \ref{lem:J2isEveryFiniteJ0}.
\end{Lem}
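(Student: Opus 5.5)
We verify condition \ref{J2:everyfiniteJ0} of Lemma \ref{lem:J2isEveryFiniteJ0} directly in each of the three cases. Fix $\fp\in\Spec V$ and a finite purely inseparable extension $F/\kappa(\fp)$. Every finite $V/\fp$-algebra is a finite $V$-algebra, and $V/\fp$ is a valuation ring with fraction field $\kappa(\fp)$. So in each case it suffices to produce a domain $B$, finite over $\overline{V}:=V/\fp$ with $\Frac B=F$, such that $\operatorname{Nor}(B)$ contains a nonempty open subset.

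\emph{Case \eqref{J1.5:Q}.} Here $\kappa(\fp)$ has characteristic zero. Hence $F=\kappa(\fp)$, and we take $B=\overline{V}$. It is a valuation ring, hence normal, so $\operatorname{Nor}(B)=\Spec B$.

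\emph{Case \eqref{J1.5:frk}.} Choose generators $x_1,\ldots,x_n$ of $F/\kappa(\fp)$ that are integral over $\overline{V}$ (scale by elements of $\overline V$), and set $B=\overline{V}[x_1,\ldots,x_n]\subseteq F$. This is a finite $\overline V$-algebra and a domain with fraction field $F$. Since $\overline{V}$ has finite rank, $\Spec\overline{V}$ is finite. As $B$ is integral over $\overline{V}$ and $F/\kappa(\fp)$ is purely inseparable, each prime of $\overline V$ has exactly one prime of $B$ above it. In any case the fibers of $\Spec B\to\Spec\overline{V}$ are finite, so $\Spec B$ is finite. Let $\fq_1,\ldots,\fq_s$ be the nonzero primes of $B$ and pick $0\neq a_j\in\fq_j$. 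Then $g=a_1\cdots a_s\neq0$, since $B$ is a domain, and $D(g)=\{0\}$. Thus $B_g=F$ is a field, and $D(g)$ is a nonempty open subset of $\operatorname{Nor}(B)$. (When $\fp$ is maximal and $\overline V$ is a field this is trivial anyway.)

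\emph{Case \eqref{J1.5:SphComp}.} The plan is to show that $\overline{V}$ is N-2; then its integral closure $B$ in $F$ is finite and normal, so $\operatorname{Nor}(B)=\Spec B$. Two inputs are needed. First, the quotient $\overline{V}=V/\fp$ of a spherically complete (maximal) valuation ring is again spherically complete. I would prove this by lifting a pseudo-Cauchy sequence, or a nested family of balls, in $\overline{V}$ to $V$. The valuation of $\overline V$ is the one induced on the residue ring of $V_\fp$ at $\fp V_\fp$, and a nested family of balls in $\overline V$ pulls back to a nested family of balls in $V_\fp$. Intersecting with $V$ and using maximality of $V$ (hence of $V_\fp$), the pulled-back family has a common point, whose image in $\overline V$ is a common point of the original family. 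Second, by Kaplansky's theory, a maximal valuation ring is defectless in every finite extension, so its integral closure in a finite extension is a finite (free) module. I expect the main obstacle to be the first point, namely the careful transfer of spherical completeness to $V/\fp$ through $V_\fp$. The defectlessness is classical and would simply be cited. Once both inputs are in place, the finiteness of the integral closure of $\overline V$ in $F$ gives \ref{J2:everyfiniteJ0}, as in the remark preceding Lemma \ref{lem:N2isJ2}.
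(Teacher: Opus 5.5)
Your cases (1) and (2) are correct; case (2) is a hands-on version of the paper's observation that $V_f=K$ for some $0\neq f\in V$. Case (3), however, has a genuine gap. The step ``maximal, hence defectless, hence the integral closure in a finite extension is a finite (free) module'' is false, and so is its conclusion that $V/\fp$ is N-2. Defectlessness ($[L:K]=ef$) does not decide whether the integral closure is finitely generated; that depends on the value group. This is why Step~\ref{step:RegOpenFinal} of the proof of Theorem~\ref{thm:RegQCopen} needs a \emph{divisible} value group before concluding N-2.

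Here is a counterexample. Let $k$ be a perfect field of characteristic $p$, $\ell\neq p$ a prime, $\Gamma=\bZ[1/\ell]$, and $V=k[[t^\Gamma]]$, which is spherically complete. Then $t\notin K^p$. The values $0,1/p,\ldots,(p-1)/p$ lie in distinct cosets of $\Gamma$, so the integral closure of $V$ in $F=K(t^{1/p})$ is $\bigoplus_{i=0}^{p-1}I_i\,t^{i/p}$ with $I_i=\{c\in K\mid v(c)\geq -i/p\}$. For $i\geq 1$ the set $\{\delta\in\Gamma\mid \delta>-i/p\}$ has no least element because $\Gamma$ is dense. Hence $I_i$ is not principal, so not finitely generated. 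Thus already for $\fp=0$ and a purely inseparable $F$ the integral closure is not finite. Condition (ii) holds here only because $V[T]/(T^p-t)$ becomes normal after inverting $t$.

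What you are missing is an argument that gives normality only after inverting a nonzero element. The paper uses that spherical completeness passes to quotients, localizations and finite extensions. This reduces the claim to: for each $x\in V\setminus V^p$ there is $0\neq f\in V$ with $V_f[T]/(T^p-x)$ normal.

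Spherical completeness then produces a nonzero prime $\fp$ with $x\notin V^p+\fp$. Otherwise, choose $z_\fp$ with $x\in z_\fp^p+\fp$. The cosets $z_\fp+\fp$ form a nested family, and a common point is a $p$-th root of $x$. This step needs $\bigcap_{\fp\neq 0}\fp=0$. If instead $V$ has a smallest nonzero prime, any nonzero $f$ in it already gives $V_f=K$.

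Now take $0\neq f\in\fp$ and $\fq\in D(f)$. Then $\fq\subseteq\fp$, so $x$ is not a $p$-th power in $\kappa(\fq)$. The closed fiber of the finite flat map $V_\fq\to V_\fq[T]/(T^p-x)$ is therefore a field. Theorem~\ref{thm:ALLwdimFacts}\ref{wdim:flatAscent} then gives finite weak dimension, hence normality.
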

\begin{proof}
As there are no nontrivial purely inseparable extensions in characteristic zero case \eqref{J1.5:Q} is trivial.

Let $K=\Frac V$. We may assume $\operatorname{char}K=p>0$.
    As both \eqref{J1.5:frk}\eqref{J1.5:SphComp} pass to quotients, finite extensions, and localizations,
    it suffices to show for every $x\in V\setminus V^p$,
    there exists $0\neq f\in V$ so that $V_f[T]/(T^p-x)$ is normal.
    In case \eqref{J1.5:frk} this is trivial as $V_f=K$ for some $0\neq f\in V$.
    
    Assume $V$ is spherically complete.
    As $x\in V\setminus V^p$, there exists a nonzero prime ideal $\fp$ of $V$ so that $x\not\in V^p+\fp $; for, if not, take $z_\fp\in V$ so that $x\in z_\fp^p+\fp $,
    any element in $\bigcap_{\fp\neq 0} (z_\fp+\fp)$ will be a $p$th root of $x$, and this intersection is nonempty, as $z_\fp-z_\fq\in \fq$ whenever $\fp\supseteq\fq$ and as $V$ is spherically complete.
    Now, take $0\neq f\in \fp$.
    For $\fq\in D(f)$ we have $\fp\not\subseteq\fq$, so $\fq\subseteq\fp$, so $x$ is not a $p$th power in $V/\fq$, as $x\not\in V^p+\fq$.
    Therefore $(V_\fq/\fq V_\fq)[T]/(T^p-x)$ is a field.
    By Theorem \ref{thm:ALLwdimFacts}\ref{wdim:flatAscent}, $\operatorname{w.dim}(V_\fq[T]/(T^p-x))<\infty$,
    so $V_f[T]/(T^p-x)$ is normal, as desired.
\end{proof}

\begin{Exam}\label{exam:notJ1.5}
    Let $\Gamma=\bQ^{\oplus \bZ_{\leq 0}}$ ordered lexicographically, that is, elements of $\Gamma$ are strings $(a_m)_{m\leq 0}$ of rational numbers all but finitely many are $0$, and $(a_m)\geq (b_m)$ if and only if for the smallest $m$ so that $a_m\neq b_m$, we have $a_m>b_m.$
    Let $\gamma_e=(\delta_{em})_m$ (Kronecker $\delta$),
    so $\gamma_{0}\leq \gamma_{-1}\leq\ldots$.

    Let $k=\bF_p(T_0,T_1,\ldots)$, and let $V=\{f=\sum_\gamma a_\gamma t^\gamma\in k[[t^\Gamma]]\mid k(a^{1/p}_{\gamma}\mid \gamma\in\Gamma)\text{ is finite over }k\}$.
    Then $V$ is a valuation subring of the Hahn series ring $k[[t^\Gamma]]$.
    Consider $x=\sum_{e}T_et^{\gamma_{-e}}\in V$.
    It is then clear that $x\not\in V^p$ but $x\in V^p+\fp$ for all $0\neq \fp\in\Spec V$.
    Therefore $(V_\fp/\fp V_\fp)[T]/(T^p-x)$ is not reduced for all $0\neq \fp\in\Spec V$,
    so $V_\fp[T]/(T^p-x)$ is not a valuation ring by \cite[Th.~2.9]{Knaf-RLR-over-Prufer},
    hence $V$ does not satisfy the equivalent conditions of Lemma \ref{lem:J2isEveryFiniteJ0}.
\end{Exam}

We consider the following condition for a valuation ring $V$ which clearly  depends only on the value group of $V$.
\begin{Condit}\label{condit:IsolatedPrimeInV}
   For every $0\neq\fp\in \Spec V$ so that $\fp V_\fp$ is principal, there exists $f\in V\setminus\fp$ so that $\fp V_f$ is principal; in other words, $V_f=V_\fp$\footnote{A nonzero principal prime ideal $\fp$ in a valuation ring $(V,\fm)$ must be maximal. To see this, let $\fp =x V$ and assume there exist $y\in \fm\setminus\fp$. Then $y\not\in xV$, so $x\in yV$, therefore $xy^{-1}\in V\setminus xV$, contradicting the assumption $\fp$ is prime.}. 
\end{Condit}
\begin{Rem}\label{rem:RegNotOpenVal}
    In Example \ref{exam:BadValueGroup-PrincipalIsLimit}, $V$ and $\fp$ do not satisfy this condition.
Note that we can make $V$ contain $\bQ$ and be spherically complete, hence satisfying the equivalent conditions of Lemma \ref{lem:J2isEveryFiniteJ0} by Lemma \ref{lem:J1.5classes}.
Therefore, the first paregraph of the proof of Theorem \ref{thm:RegQCopen} below tells us for $B=V[T]/(T^2-x_{-\infty})$ the locus $\operatorname{Nor}(B)=\operatorname{Reg}(B)$ is not open, showing that the equivalent conditions of Lemma \ref{lem:J2isEveryFiniteJ0} do not guarantee openness even for valuation rings.
\end{Rem}

\begin{Rem}\label{rem:isolatedpQuotient}
    Note that for prime ideals $\fp\subsetneq\fP$ of  a valuation ring $V$,
    $\fP V_\fP/\fp V_\fP$ is principal if and only if $\fP V_\fP$ is principal.
    Indeed, we may assume $\fP$ is the maximal ideal of $V$.
    If $x\in\fP$ is such that $\fP=xV+\fp$, then for every $y\in \fP$ we can write $y=xz+r$ where $z\in V,r\in \fp$.
    As $x\not\in\fp$, the valuation of both $xz$ and $r$ are no less than that of $x$, so $y\in xV$.
    Therefore Condition \ref{condit:IsolatedPrimeInV} passes to quotients.
    % As \eqref{J2QC:J1.5} in Theorem \ref{thm:RegQCopen} also passes to quotients, see Lemma \ref{lem:J2isEveryFiniteJ0}, it follows from  Theorem \ref{thm:RegQCopen} and Corollary \ref{cor:NorQCopen} that for every integral domain $B$ finitely generated over $V$ that satisfies \eqref{J2QC:J1.5}\eqref{J2QC:PrincipalIsolated} in Theorem \ref{thm:RegQCopen}, $\operatorname{Reg}(B)$ and $\operatorname{Nor}(B)$ are quasi-compact opens, as $B$ is flat and finitely presented over the image of $V$ in $B$,
    % see \citestacks{053G}.
\end{Rem}

Let us give the main result of this subsection.

\begin{Thm}\label{thm:RegQCopen}
    Let $V$ be a valuation ring.
    %Then the following are equivalent.
    %\begin{enumerate}[label=$(\roman*)$]
    %\item $\operatorname{Reg}(A)$ is open for every  $V$-algebra $A$ that is finite free as a $V$-module.
    %    \item $\operatorname{Reg}(X)$ is a quasi-compact open for every flat $V$-scheme of finite presentation.
    %\item 
    The following are equivalent.
    \begin{enumerate}[label=$(\roman*)$]
        \item\label{J2QC:J1.5+Isolatedp} $V$ satisfies Condition \ref{condit:IsolatedPrimeInV} and the equivalent conditions of Lemma \ref{lem:J2isEveryFiniteJ0}.
        \item\label{J2QC:RegQCOpen} For every integral $V$-scheme $X$ of finite type, $\operatorname{Reg}(X)$ is a quasi-compact open.
        \item\label{J2QC:RegQCOpenFINITE} For every finite $V$-algebra $B$ that is an integral domain, $\operatorname{Reg}(B)=\operatorname{Nor}(B)$ is open.
    \end{enumerate}
    %Moreover, if $X$ satisfies the 
     % Let $X$ be a flat $V$-scheme of finite presentation.
     % Then $\operatorname{Reg}(X)$ is a quasi-compact open.
\end{Thm}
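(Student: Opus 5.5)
The plan is to establish (ii)$\Rightarrow$(iii)$\Rightarrow$(i)$\Rightarrow$(ii).

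\textbf{(ii)$\Rightarrow$(iii).} This is immediate. Here $\operatorname{Reg}(B)=\operatorname{Nor}(B)$ by Discussion \ref{discu:NorRegloci}: $B$ is flat and quasi-finite over the image of $V$, which is again a valuation ring, by Discussion \ref{discu:DomainsOverPrufer}.

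\textbf{(iii)$\Rightarrow$(i).} Condition \ref{J2:everyfiniteJ0} of Lemma \ref{lem:J2isEveryFiniteJ0} follows at once, since openness plus the fact that the generic point is normal gives a nonempty open subset of $\operatorname{Nor}(B)$.

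For Condition \ref{condit:IsolatedPrimeInV}, suppose $\fp\neq 0$ with $\fp V_\fp=xV_\fp$ but $V_f\neq V_\fp$ for all $f\notin\fp$. Following Remark \ref{rem:RegNotOpenVal}, I would look at $B=V[T]/(T^2-x)$, or in characteristic $2$ a suitable Artin--Schreier-type variant such as $T^2-xT-x$, chosen so that it is a domain.
\begin{itemize}
\item At $\fp$: $B_\fp$ is an Eisenstein-type extension of the rank-one-over-$\fp$ ring $V_\fp$, hence a valuation ring, so $\fp B\in\operatorname{Nor}(B)$.
\item Near $\fp$: for primes $\fq\supsetneq\fp$ the value $v(x)$ is not divisible by $2$ in a way compatible with $V_\fq$. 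Concretely, $x$ is not a uniformizer of $\fq V_\fq$, and $x$ stays in arbitrarily small primes $\fq$ above $\fp$. Then $B_\fq$ should fail to be normal: $T^2/y^2$ is integral but not in $B$ for a suitable $y$ with $y^2\mid x$, which exists once the value group has a convex subgroup where the relevant value is $2$-divisible.
\end{itemize}
Openness at $\fp$ would require some $D(f)$ with $f\notin\fp$ containing no such $\fq$, which forces $V_f=V_\fp$. The delicate point is producing non-normal $\fq$ arbitrarily close to $\fp$. A cleaner route is to pass to a finite extension of $V$ in which values become $2$-divisible in the higher convex subgroups while keeping $\fp V_\fp$ principal, using the intermediate-value structure of the group. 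I expect this construction to be fiddly, and I would reduce via quotients (Remark \ref{rem:isolatedpQuotient}) to the case where $\fp$ is the minimal nonzero prime with $\fp V_\fp$ principal.

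\textbf{(i)$\Rightarrow$(ii).} This is the main direction and goes by the standard dévissage (Nagata's criterion).
\begin{itemize}
\item \emph{Reduction.} Reduce to $X=\Spec B$ with $B$ a domain, flat and finitely presented over $V$ (Discussion \ref{discu:DomainsOverPrufer}). By Knaf's bound the sup of $\operatorname{w.dim}$ on $\operatorname{Reg}(B)$ is finite, so $\operatorname{Reg}(B)$ is detected by finitely many Tor conditions. Using Theorem \ref{thm:ALLwdimFacts}\ref{wdim:testk} and coherence, $\operatorname{Reg}(B)$ is then stable under generization.
\item \emph{Constructibility.} Show $\operatorname{Reg}(B)$ is constructible via the approximation of \S\ref{subsec:ReducedToSmallV}. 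This gives quasi-compactness once openness is known, by the same compactness argument as in Theorem \ref{thm:ReducedLocusQCOpen}.
\item \emph{Openness.} It then suffices to show that every $\fq\in\operatorname{Reg}(B)$ has a neighborhood in which $\operatorname{Reg}$ is dense, or contains an open neighbourhood of $\fq$ inside $V(\fq)$. Apply Lemma \ref{lem:J2isEveryFiniteJ0}\ref{J2:RegLocHasOpen} to the domain $B/\fq$ to get an open of $V(\fq)$ on which $B/\fq$ has finite weak dimension. Over that open, $\operatorname{w.dim}_B B/\fq$ is finite where $\fq$ is locally generated by a suitable finite sequence. Theorem \ref{thm:ALLwdimFacts}\ref{wdim:quotient} then bounds $\operatorname{w.dim} B_{\fq'}$.
\end{itemize}

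The expected obstacle is that $\fq$ need not be finitely generated: the contraction $\fp=\fq\cap V$ may be a non-finitely-generated prime. Two cases:
\begin{itemize}
\item \emph{$\fp V_\fp$ is not principal.} Pass to the generic-fiber-type case over $V_\fp$ and spread out, handling points generic in their fiber with Lemma \ref{lem:N2isJ2}, after reducing to absolutely integrally closed $V$ by the finiteness results of \cite{lyu-Prufer-Japanese}.
\item \emph{$\fp V_\fp$ is principal.} Condition \ref{condit:IsolatedPrimeInV} gives $V_f=V_\fp$, so after inverting $f$ the prime $\fp$ is principal and $\fq$ is finitely generated modulo a principal ideal.
\end{itemize}
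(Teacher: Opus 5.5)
Your (ii)$\Rightarrow$(iii) and the Lemma \ref{lem:J2isEveryFiniteJ0} half of (iii)$\Rightarrow$(i) are fine. However, (i)$\Rightarrow$(ii) has a genuine gap, exactly in the case you flag as the obstacle.

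\textbf{Approximation does not give constructibility.} The approximation of \S\ref{subsec:ReducedToSmallV} cannot produce constructibility on its own. Even granting the finite-rank case, which your plan never proves, it only shows that $\operatorname{Reg}(B)=\bigcap_\lambda v_\lambda^{-1}(\operatorname{Reg}(B_\lambda))$ is pro-constructible. Take $V$ and $B=V[T]/(T^2-x_{-\infty})$ as in Remark \ref{rem:RegNotOpenVal}. Every $\operatorname{Reg}(B_\lambda)$ is then a quasi-compact open, yet $\operatorname{Reg}(B)$ is not open. Being stable under generization, it is therefore not constructible. What makes the intersection stabilize is that $\operatorname{Reg}(B)$ be open in the constructible topology: for each $\fq\in\operatorname{Reg}(B)$ you need a \emph{finitely generated} $J\subseteq\fq$ and $g\notin\fq$ with $V(J)\cap D(g)\subseteq\operatorname{Reg}(B)$. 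Openness then follows directly from \citestacks{0903}.

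\textbf{Your local criterion is too weak.} Asking for a neighbourhood of $\fq$ inside $V(\fq)$ is Nagata's criterion, which needs a Noetherian space. When $\fp=\fq\cap V$ is not finitely generated, $V(\fq)$ need not be constructible, and the criterion does not force openness. For instance, for the ring $\overline{\bZ}$ of all algebraic integers, the set $D(2)\cup\{\fm_0\}$ with $2\in\fm_0$ satisfies it but is not open.

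\textbf{The non-principal case is not handled.} Even granting a reduction to the case where $B_\fq$ is a valuation ring (which your plan lacks), your openness bullet yields at best $V(\fp B)\cap D(g)\subseteq\operatorname{Reg}(B)$. The real difficulty is to replace $\fp B$ by a single $\pi B$ with $\pi\in\fp$, and your sketch does not do this.
\begin{itemize}
\item ``Pass to $V_\fp$ and spread out'' presupposes $V(\fp B)\subseteq\operatorname{Reg}(B)$ near $\fq$.
\item ``Reduce to absolutely integrally closed $V$'' is not a reduction. Flat descent only puts $\operatorname{Reg}(B\otimes_VW)$ inside the preimage of $\operatorname{Reg}(B)$, so openness upstairs does not descend. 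Also, Lemma \ref{lem:N2isJ2} needs $V$ itself to be N-2.
\end{itemize}

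\textbf{How the paper closes the gap.}
\begin{enumerate}
\item Choose $\underline{x}\in\fq$ forming a system of parameters of the Cohen--Macaulay ring $B_\fq/\fp B_\fq$ that is part of a minimal generating set of its maximal ideal \cite{Knaf-RLR-over-Prufer}. Since $V(\underline{x})$ is constructible and $\operatorname{Reg}(B)\cap V(\underline{x})\supseteq\operatorname{Reg}(B/\underline{x}B)$ near $\fq$, you may assume $B_\fq$ is a valuation ring.
\item Having disposed of the principal case (which you handle correctly), you have $\fq B_\fq=\fp B_\fq$. After localizing, $\fq=\fp B$, and the hypothesis of Lemma \ref{lem:J2isEveryFiniteJ0} gives $\operatorname{Reg}(B/\fp B)=\Spec(B/\fp B)$. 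Hence $V(\fp B)\subseteq\operatorname{Reg}(B)$ by Theorem \ref{thm:ALLwdimFacts}\ref{wdim:quotient}. This is what permits replacing $V$ by $V_\fp$.
\item With $\fp$ maximal, base change to a spherically complete $W$ with divisible value group and residue field $V/\fp$; such $W$ is N-2 \cite{lyu-Prufer-Japanese}.
\item $\operatorname{Reg}(B\otimes_VW)$ is open, by slicing, Lemma \ref{lem:N2isJ2}, and finite-rank approximation. The finite-rank case follows from the earlier steps, since $\fp=\sqrt{\pi V}$ makes $V(\fp B)$ constructible.
\item By flat ascent, $\operatorname{Reg}(B\otimes_VW)$ contains $V(\fp(B\otimes_VW))=\bigcap_{\pi\in\fp}V(\pi(B\otimes_VW))$.
\item Compactness of the constructible topology gives one $\pi\in\fp$ with $V(\pi(B\otimes_VW))\subseteq\operatorname{Reg}(B\otimes_VW)$. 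Flat descent then gives $V(\pi B)\subseteq\operatorname{Reg}(B)$.
\end{enumerate}

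\textbf{A smaller point on (iii)$\Rightarrow$(i).} Nothing here is delicate. For any prime $\fq\supsetneq\fp$ of $V$ and any $y\in\fq\setminus\fp$, you have $y^2\notin\fp\ni x$, so $y^2\mid x$ because the ideals of $V$ are totally ordered. Then $T/y\notin B_\fq$ while $(T/y)^2=x/y^2\in V$, so $B_\fq$ is never normal. No $2$-divisibility, finite extension, or characteristic-$2$ variant is needed: $T^2-x$ is irreducible in every characteristic, because $x$ has the least positive value in $V_\fp$.
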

We have \ref{J2QC:RegQCOpen} trivially implies \ref{J2QC:RegQCOpenFINITE} and \ref{J2QC:RegQCOpenFINITE} trivially implies Lemma \ref{lem:J2isEveryFiniteJ0}\ref{J2:everyfiniteJ0}.
To see why \ref{J2QC:RegQCOpenFINITE} implies Condition \ref{condit:IsolatedPrimeInV},
let $\fp\in\Spec V$ and $x\in\fp$ be such that $\fp V_{\fp}=xV_\fp$.
Then $B:=V[T]/(T^2-x)$ is such that $B_\fp$ is a valuation ring (apply Theorem \ref{thm:ALLwdimFacts}\ref{wdim:quotient} with $A=B_\fp,I=TB_\fp$).
By \ref{J2QC:RegQCOpenFINITE} (and, say, \citestacks{01WM}) there exists $f\in V\setminus\fp$ such that $B_f$ is normal.
If $B_f\neq B_\fp$, then there exists $\fq\in D(f)\subseteq \Spec V$ properly containing $\fp$, so we can take $y\in \fq\setminus\fp$,
so $z:=xy^{-2}\in V$,
and $B_\fq=V_\fq[T]/(T^2-x)=V_\fq[T]/(T^2-zy^2)$.
Then $Ty^{-1}\in \Frac(B_\fq)\setminus B_{\fq}$ and $(Ty^{-1})^2=z\in B_\fq$, showing $B_\fq$ not normal.
 
It remains to show \ref{J2QC:J1.5+Isolatedp} implies \ref{J2QC:RegQCOpen}.
     We may assume $X=\Spec B$ affine.
     We may also assume $V$ is a subring of $B$ (Remark \ref{rem:isolatedpQuotient}),
     so $B$ is flat over $V$,
     hence finitely presented over $V$ \citestacks{0GSE}.
     Let $\fq\in\operatorname{Reg}(B)$ and let $\fp=\fq\cap V$.
     Consider the statement
     \begin{align}\label{RegOpen:Constructible}
     \operatorname{Reg}(B)\text{ is a neighborhood of }\fq\text{ in the constructible topology},
     \end{align}
in other words, there exists a \emph{finitely generated} ideal $J$ of $B$ contained in $\fq$ and $g\in B\setminus\fq$ such that $V(J)\cap D(g)\subseteq\operatorname{Reg}(B)$.
The rest of the proof consists of several steps around \eqref{RegOpen:Constructible}.

\begin{StepRegQCOpen}\label{step:pFG}
    In this step, we show if $\fp V_\fp$ is principal, %finitely generated (\emph{i.e.} principal),
    then \eqref{RegOpen:Constructible} is true.

    As $V$ satisfies Condition \ref{condit:IsolatedPrimeInV}, we may assume $\fp$ is principal.
    The ring map $V/\fp\to B/\fq$ is finitely generated and flat, so it is finitely presented \citestacks{0GSE}, so $V\to B/\fq$ is finitely presented as $\fp$ is finitely generated.
Therefore $B\to B/\fq$ is also finitely presented \citestacks{00F4}, so
 $\fq$ is  finitely generated \citestacks{00R2}.
As $B_\fq$ has finite weak global dimension, it now follows that there exists a sequence
\[
\begin{CD}
    0@>>> B^{r_l}@>>> ... @>>> B^{r_1}@>>> B@>>> B/\fq@>>> 0 
\end{CD}
\]
of coherent $B$-modules that is exact after localizing at $\fq$.
Therefore the sequence is exact after localizing at $g$ for some $g\not\in\fq$, as the homology modules of the sequence are also coherent (cf. \S\ref{subsec:DofCohRing}). 
Therefore $B_g/\fq B_g$ has finite tor dimension over $B_g$.
Furthermore, we may also assume $\operatorname{Reg}(B/\fq)\subseteq D(g)$,
as $V$ satisfies  Lemma \ref{lem:J2isEveryFiniteJ0}\ref{J2:RegLocHasOpen}.
Therefore $\operatorname{Reg}(B)$ contains $V(\fq)\cap D(g)$ by Theorem \ref{thm:ALLwdimFacts}\ref{wdim:quotient}.
\end{StepRegQCOpen}

\begin{StepRegQCOpen}\label{step:slice}
    In this step, we show that we can slice $B$; more precisely, we show that there exists a sequence $\underline{x}=x_1,\ldots,x_m\in \fq$ so that 
    %$B/\underline{x}B$ is  flat over $V$,
    $B_\fq/\underline{x}B_\fq$ is a valuation ring, 
    %whose value group is a finite index extension of that of $V$, 
    and that there exists $g\in B\setminus \fq$ such that $B_g/\underline{x}B_g$ is  flat over $V$ and that $\operatorname{Reg}(B)\cap V(\underline{x})\cap D(g)\supseteq \operatorname{Reg}(B/\underline{x} B)\cap D(g)$;
    in particular, \eqref{RegOpen:Constructible} for $B_g/\underline{x}B_g$ and  $\fq B_g/\underline{x}B_g$ implies \eqref{RegOpen:Constructible} for $B$ and $\fq$. 

    Let $\overline{O}=B_\fq/\fp B_\fq$, a (Noetherian) Cohen--Macaulay local ring \cite[Th.~2.9]{Knaf-RLR-over-Prufer}.
    Let $m=\dim \overline{O}$ and let $x_1,\ldots,x_m$ be elements in $\fq$ that are linearly independent in $\fq\overline{O}/\fq^2\overline{O}$ and form a regular sequence in $\overline{O}$.
    Then $\underline{x}$ is a regular sequence in $B_\fq$ and $(B/\underline{x}B)_\fq$ is flat over $V$ \citestacks{0470}.
    By \cite[Th.~2]{Knaf-RLR-over-Prufer},
    $(B/\underline{x}B)_\fq$ has finite weak global dimension, hence a valuation ring whose value group is a finite index extension of that of $V_\fp$
    (\cite[Cor.~4.3]{Bertin-def-Regular} and Lemma \ref{lem:QFNormal=Val}).

    %To show \eqref{RegOpen:Constructible} for $B/\underline{x}B$ and  $\fq/\underline{x}B$ implies \eqref{RegOpen:Constructible} for $B$ and $\fq$,
    % By \citestacks{05IR}, there exists $g_1\not\in\fq$ such that $(B/\underline{x}B)_{g_1}$ is flat over $V$.
    % By \citetwostacks{045U}{00RL},
    % we can choose $g_2\not\in\fq$ so that all fibers of $V\to B_{g_2}$ and $V\to (B/\underline{x}B)_{g_2}$
    % are Cohen--Macaulay and equidimensional.
    % Comparing dimensions, this forces $\underline{x}$  to be  a regular sequence in the Cohen--Macaulay local ring $B_\fQ/\fP B_\fQ$ for every $\fQ\in V(\underline{x})\cap D(g_1g_2)$, where $\fP=\fQ\cap V$.
    % By \citestacks{0470}, $\underline{x}$ is  a regular sequence in $B_\fQ$ for every $\fQ\in V(\underline{x})\cap D(g_1g_2)$. 
    %We choose $g_2$ so that $\underline{x}$ is a regular sequence in $B_\fQ$ for every $\fQ\in V(\underline{x})\cap D(g)$, Corollary \ref{cor:SpreadOutRegSeq}\footnote{It is also possible to force this using Cohen--Macaulayness instead, cf. \citetwostacks{045U}{00RL}}.
Since $B$ is coherent, the module $N_1:=B[x_1]=\ker(B\xrightarrow{\times x_1}B)$ is coherent, and so are $N_2:=(B/x_1B)[x_2],N_3:=(B/(x_1,x_2))[x_3],\ldots, N_m:=(B/(x_1,\ldots,x_{m-1}))[x_m]$.
As $\underline{x}$ is a regular sequence in $B_\fq$, we have $(N_j)_\fq=0$,
so after localization, we may assume $N_j=0$ for all $j.$
Then $\underline{x}$ is a regular sequence in $B_\fQ$ for every $\fQ\in V(\underline{x})$, so
     $\operatorname{Reg}(B)\cap V(\underline{x})\supseteq \operatorname{Reg}(B/\underline{x} B)$ by Theorem \ref{thm:ALLwdimFacts}\ref{wdim:quotient}, as desired.
    %it suffices to show that there exists $g\in B\setminus \fq$ such that $\operatorname{Reg}(B)\cap V(\underline{x})\cap D(g)\supseteq \operatorname{Reg}(B/\underline{x} B)\cap D(g)$.
    %We choose $g$ so that 
\end{StepRegQCOpen}

\begin{StepRegQCOpen}\label{step:ALMOST}
In this step, we show if $\fq B_\fq = \fp B_\fq$ (which is always the case if $B_\fq$ is a valuation ring and $\fp V_\fp$ is not principal, cf. \cite[Th.~2.9]{Knaf-RLR-over-Prufer}), then
there exists $g\in B\setminus\fq$ such that 
$\operatorname{Reg}(B_g/\fp B_g)=\Spec(B_g/\fp B_g)$ and that
$V(\fp B)\cap D(g)\subseteq\operatorname{Reg}(B)$.

We know $B_\fq/\fp B_\fq$ is a field.
After localizing some $g$ we may assume $B_\fp/\fp B_\fp$ is a regular Noetherian domain, as the field $\kappa(\fp)$ is J-2 \citestacks{07PJ}.
As $B$ is flat over $V$, $B/\fp B$ is flat over $V/\fp$, thus a subring of $B_\fp/\fp B_\fp$.
We conclude that $B/\fp B$ is an integral domain,
so $\fp B=\fq$.
We may then assume, after localization, that $\operatorname{Reg}(B/\fp B)=\Spec (B/\fp B)$, %for some $g\not\in\fp B$ 
as $V$ satisfies  Lemma \ref{lem:J2isEveryFiniteJ0}\ref{J2:RegLocHasOpen}.
Therefore $\operatorname{Reg}(B)$ contains $V(\fp B)$ by Theorem \ref{thm:ALLwdimFacts}\ref{wdim:quotient}.
\end{StepRegQCOpen}

\begin{StepRegQCOpen}\label{step:frkV}
    In this step, we prove the theorem for all finite rank $V$.

    We know $\fp=\sqrt{\pi V}$ for some $\pi\in V$, so $V(\fp B)=V(\pi B)$.
    Therefore, the combination of Steps \ref{step:pFG}--\ref{step:ALMOST} tells us \eqref{RegOpen:Constructible} holds.
    As %$\Spec B$ is a Noetherian topological space (\S\ref{subsec:FiniteRankV}) and as 
    $\operatorname{Reg}(B)$ is stable under generalization,
    it follows that $\operatorname{Reg}(B)$ is open \citestacks{0903},
    and it is quasi-compact as $\Spec B$ is a Noetherian topological space (\S\ref{subsec:FiniteRankV}).
    %    the theorem follows from general topology \citestacks{0541}.
\end{StepRegQCOpen}

\begin{StepRegQCOpen}\label{step:ConstrIsEnough}
    In this step, we prove that if \eqref{RegOpen:Constructible} is true for all $\fq\in\operatorname{Reg}(B)$ (\emph{i.e.} $\operatorname{Reg}(B)$ is open in the constructible topology), then $\operatorname{Reg}(B)$ is a quasi-compact open (\emph{i.e.} the theorem holds).
    %the theorem for all N-2 $V$.

    %Step \ref{step:slice} and Lemma \ref{lem:N2isJ2} tell us \eqref{RegOpen:Constructible} is true for all $\fq\in\operatorname{Reg}(B)$, \emph{i.e.} 
    
    Write $V=\bigcup_{\lambda}V_{\lambda}$ and $B=\bigcup_{\lambda\geq\lambda_0}B_{\lambda}$  as in \S\ref{subsec:ReducedToSmallV}.
    Then flat descent (Theorem \ref{thm:ALLwdimFacts}\ref{wdim:flatDescent}) and limit \cite[Th.~6.2.2]{Coherent-Rings} tell us %that if $\operatorname{Reg}(B)$ (resp. $\Sigma_{\lambda}$) is the reduced or integral domain locus of $B$ (resp. $B_\lambda$),
    %then
    \[
    \operatorname{Reg}(B)=\bigcap_{\lambda\geq\lambda_0}v_{\lambda}^{-1}(\operatorname{Reg}(B_{\lambda}))
    \]
    where $v_{\lambda}:\Spec B\to\Spec B_\lambda$ is the canonical map.
    By Step \ref{step:frkV} 
    $\operatorname{Reg}(B_{\lambda})$ is a quasi-compact open,
    therefore $v_{\lambda}^{-1}(\operatorname{Reg}(B_{\lambda}))$ is a quasi-compact open.
    Since the constructible topology is compact \citestacks{0901} we see the intersection must be a finite intersection, so $\operatorname{Reg}(B)$ is a quasi-compact open.
\end{StepRegQCOpen}

\begin{StepRegQCOpen}\label{step:N-2V}
    In this step, we note that the theorem is proved for all N-2 $V$. 
    Indeed, Step \ref{step:slice} and Lemma \ref{lem:N2isJ2} give us \eqref{RegOpen:Constructible},
    so we win by Step \ref{step:ConstrIsEnough}.
\end{StepRegQCOpen}

\begin{StepRegQCOpen}\label{step:RegOpenFinal}
    In this step we finish the proof.
    
    By Step \ref{step:ConstrIsEnough} it suffices to prove \eqref{RegOpen:Constructible}.
    By Steps \ref{step:pFG} and \ref{step:slice} we may assume $B_\fq$ is a valuation ring and $\fp V_\fp$ is not principal, so 
    we may assume $\operatorname{Reg}(B/\fp B)=\Spec(B/\fp B)$ and that
$V(\fp B)\subseteq\operatorname{Reg}(B)$ by Step \ref{step:ALMOST}.

As $V$ is a valuation ring, any prime ideal of $V$ not containing $\fp$ must be contained in $\fp$.
As $V(\fp B)\subseteq\operatorname{Reg}(B)$,
we see that \eqref{RegOpen:Constructible} for $B$ and $\fq$ is equivalent to \eqref{RegOpen:Constructible} for $B_\fp$ and $\fq B_\fp$, so we may assume $\fp$ is the maximal ideal of $V$.

Let $W$ be a spherically complete valuation ring dominating $V$ whose value group is the divisible closure of that of $V$ and whose residue field is $V/\fp$.
Then $W$ is defectless and has divisible value group, hence is N-2 by \cite[Lem.~18]{lyu-Prufer-Japanese}.
Consequently $\operatorname{Reg}({B\otimes_V W})$ is open by Step \ref{step:N-2V}.
On the other hand, as $W/\sqrt{\fp W}=V/\fp$ and as $\operatorname{Reg}(B/\fp B)=\Spec(B/\fp B)$, it follows from Theorem \ref{thm:ALLwdimFacts}\ref{wdim:flatAscent} that $\operatorname{Reg}({B\otimes_V W})\supseteq V(\sqrt{\fp W}(B\otimes_V W))=V(\fp(B\otimes_V W))=\bigcap_{\pi\in\fp} V(\pi(B\otimes_V W))$.
As the constructible topology is compact \citestacks{0901} we have $\operatorname{Reg}({B\otimes_V W})\supseteq V(\pi(B\otimes_V W))$ for some $\pi\in\fp$, therefore $\operatorname{Reg}(B)\supseteq V(\pi B)$ by flat descent, Theorem \ref{thm:ALLwdimFacts}\ref{wdim:flatDescent}, as desired. 
\end{StepRegQCOpen}

The proof of Theorem \ref{thm:RegQCopen} is now finished.

 \begin{Cor}\label{cor:NorQCopen}
     Let $V$ be a valuation ring  that satisfies the equivalent conditions in Theorem \ref{thm:RegQCopen}.
     Let $X$ be an integral $V$-scheme of finite type.
     Then $\operatorname{Nor}(X)$ is a quasi-compact open.
 \end{Cor}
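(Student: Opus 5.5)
We reduce to the case $X=\Spec B$ with $B$ an integral domain finitely generated over $V$. As in the proof of Theorem \ref{thm:RegQCopen}, we may replace $V$ by its image in $B$; by Remark \ref{rem:isolatedpQuotient} and Lemma \ref{lem:J2isEveryFiniteJ0}, the hypotheses pass to this quotient, and $B$ is then flat and finitely presented over $V$. The strategy is to cover $\operatorname{Nor}(B)$ by finitely many quasi-compact opens of the form $\operatorname{Reg}(C)$ for finite birational extensions $C$ of $B$, using the finiteness of integral closure from \cite{lyu-Prufer-Japanese}. The key observation is that if $B\subseteq C\subseteq \Frac B$ with $C$ finite over $B$, and $\fq\in\operatorname{Nor}(B)$, then $B_\fq=C_\fq$, so the normal loci agree over $\operatorname{Nor}(B)$.

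First we show that $\operatorname{Nor}(B)$ is a union of the sets $\operatorname{Reg}(C)\cap U_C$, where $C$ ranges over finite birational extensions and $U_C$ is the open locus of $\Spec B$ over which $B\to C$ is an isomorphism. Let $C$ be finite with $B\subseteq C\subseteq\Frac B$, and choose a conductor-type element $0\neq c\in B$ with $cC\subseteq B$. The locus where $B\to C$ is an isomorphism is open, namely the complement of $V(\operatorname{Ann}_B(C/B))$, and $C/B$ is finitely presented once $C$ is finitely presented. On this locus, $\operatorname{Reg}(C)=\operatorname{Reg}(B)\subseteq\operatorname{Nor}(B)$. Conversely, for $\fq\in\operatorname{Nor}(B)$ we need some $C$ with $\fq\in\operatorname{Reg}(C)$. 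Here we use that $\operatorname{Reg}$ and $\operatorname{Nor}$ differ in general: $\operatorname{Reg}\subseteq\operatorname{Nor}$, but normal points need not have finite weak dimension. This forces us to replace $B$ by a resolution-like object, which is not available.

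We therefore change strategy and compare $\operatorname{Nor}(B)$ with the normalization directly. By \cite{lyu-Prufer-Japanese}, in the form used in Lemma \ref{lem:N2isJ2}, the normalization of $B$ is finite after base change to suitable finite extensions $D_i$ of $V$. The plan is to redo the d\'evissage of Theorem \ref{thm:RegQCopen} with $\operatorname{Nor}$ in place of $\operatorname{Reg}$.

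(a) Prove the constructible-neighborhood statement \eqref{RegOpen:Constructible} for $\operatorname{Nor}(B)$ at each $\fq\in\operatorname{Nor}(B)$. Normality of $B_\fq$ should spread to $V(J)\cap D(g)$ via Serre-type criteria on the Noetherian fibers, namely (R1)+(S2) in $B_\fq/\fp B_\fq$, which is Cohen--Macaulay only on the regular part. Combined with the regular generic behaviour given by Lemma \ref{lem:J2isEveryFiniteJ0}, this should handle the regular fibers, with Theorem \ref{thm:RegQCopen} supplying openness of $\operatorname{Reg}$ near the points that are regular.

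(b) Run the limit argument of Step \ref{step:ConstrIsEnough}, writing $B=\bigcup B_\lambda$ over finite rank $V_\lambda$. Normality does not descend along $B_\lambda\to B$ as cleanly as finite weak dimension does, so instead we take normalizations $\widetilde{B_\lambda}$, which are finite over $B_\lambda$ for finite rank $V_\lambda$ (finite rank valuation rings being Japanese-type by \cite{lyu-Prufer-Japanese}). We then express $\operatorname{Nor}(B)$ as the set of points where all the finite maps $B_\lambda\otimes V\to$ normalizations become isomorphisms, which is a compactness argument in the constructible topology.

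The main obstacle is step (a), that is, controlling normal but non-regular points. Since $B$ is an integral domain and $\operatorname{Nor}$ is stable under generalization, constructible openness plus compactness suffices, exactly as in Steps \ref{step:frkV}--\ref{step:ConstrIsEnough}. The first thing to try is to show that $\operatorname{Nor}(B)$ equals the locus where $B\to\widetilde{B}$ is an isomorphism, with $\widetilde B$ the normalization, and to prove that $\widetilde B$ is a filtered union of finite $B$-algebras $C$ for which the supports of the modules $C/B$ are closed and stabilize by a compactness argument.
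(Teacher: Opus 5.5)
Your proposal has a genuine gap, and it is the whole content of the corollary: step (a), openness of $\operatorname{Nor}(B)$ at points of $\operatorname{Nor}(B)\setminus\operatorname{Reg}(B)$, is left unproved, and the tools you suggest for it do not work.

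\emph{Serre's conditions on fibers.} Normality of $B_\fq$ is not governed by Serre's conditions on the Noetherian fiber $B_\fq/\fp B_\fq$. Over a DVR $V$ with uniformizer $\pi$, the rings $V[T]/(T^2-\pi)$ and $V[x,y]/(xy-\pi)$ are regular, yet their closed fibers are not normal. So there is nothing on the fiber to spread.

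\emph{Finite normalizations.} These are not available either, because N-2 is not among the hypotheses. A non-Japanese DVR has finite rank, hence satisfies them (cf.\ the proof of Corollary~\ref{cor:RegNorQCopenForFiniteRank}). So finite rank valuation rings are not ``Japanese-type''. Likewise, the rings $D_i$ of Lemma~\ref{lem:N2isJ2} are finite over $V$ only because N-2 is assumed there.

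\emph{Stabilization by compactness.} Exhausting $\widetilde B$ by finite $B$-algebras $C$ and hoping the closed sets $\operatorname{Supp}(C/B)$ stabilize is circular. Compactness of the constructible topology forces stabilization only once you know that the complement of their union, namely $\operatorname{Nor}(B)$, is constructibly open.

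In your plan the hypotheses on $V$ enter only through openness of $\operatorname{Reg}$, which says nothing at normal non-regular points. They are genuinely needed, as Remark~\ref{rem:RegNotOpenVal} shows.

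The missing idea is never to examine normal non-regular points directly. The paper first does the limit reduction to openness. This is your (b), with \citestacks{033G} in place of flat descent; openness for the finite rank $V_\lambda$ comes from the main argument itself, not from finite normalizations. It then fixes $\fq\in\operatorname{Nor}(B)$ and proceeds in three steps.
(i) It shrinks so that the Noetherian generic fiber $B_K$ is a normal domain, using that $K$ is J-2 \cite[Prop.~6.13.4]{EGA4_2}.
(ii) It picks a finite injective $P\to B$ with $P$ a polynomial ring over $V$, and passes to the Gauss valuation ring $P_{\fm P}$. Over this ring $B_{\fm P}$ is finite flat, so by Lemma~\ref{lem:QFNormal=Val} normal means Pr\"ufer, which means finite weak dimension. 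Since $P_{\fm P}$ inherits the hypotheses (it is essentially finitely generated over $V$ and has the same value group), Theorem~\ref{thm:RegQCopen} plus compactness gives $g_2\notin\fq$ with $(B_{\fm P})_{g_2}$ normal. This is exactly where the hypotheses are used.
(iii) It takes the finite extension $B'$ of \cite[Th.~22]{lyu-Prufer-Japanese}, with $B_{g_1}=B'_{g_1}$ for some $g_1\notin\fq$. It then applies \cite[Lem.~23]{lyu-Prufer-Japanese} with $S=P\setminus\fm P$ and $T=V\setminus\{0\}$ to deduce normality of $B_{g_1g_2}$ from normality of its localizations at $S$ and at $T$.

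Thus normality is reduced to the generic fiber and to the localization $B_{\fm P}$. The latter is quasi-finite over a valuation ring, so $\operatorname{Nor}=\operatorname{Reg}$ there and Theorem~\ref{thm:RegQCopen} applies.
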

 \begin{proof}
 As in the proof of Theorem \ref{thm:RegQCopen} we may assume $X=\Spec B$ affine and $V\to B$ flat finitely presented.
    Using the same argument as in Steps \ref{step:frkV} and \ref{step:ConstrIsEnough} of the proof of Theorem \ref{thm:RegQCopen},
    using \citestacks{033G} instead of Theorem \ref{thm:ALLwdimFacts}\ref{wdim:flatDescent},
    we see
    it suffices to show $\operatorname{Nor}(B)$ is open.

    Let $\fq\in\operatorname{Nor}(B)$.
    We must show there exist $g\in B\setminus \fq$ such that $D(g)\subseteq\operatorname{Nor}(B)$, \emph{i.e.}, $B_g$ is normal.
    %We may assume $B$ is an integral domain by Lemma \ref{lem:ReducedLocusOpen}.
    
    Let $K=\Frac V$.
    Then $(B\setminus \fq)^{-1}B_K$ is normal,
    so
    \[
\bigcap_{g\in B\setminus \fq} D_{B_K}(g)\subseteq\operatorname{Nor}(B_K).
\]
We know $\operatorname{Nor}(B_K)$ is open by \cite[Prop.~6.13.4]{EGA4_2}, as $K$ is J-2 \citestacks{07PJ}.
As the constructible topology is compact \citestacks{0901}
there exists $g\in B\setminus \fq$ so that $(B_K)_g$ is a normal domain.
Replacing $B$ by $B_g$ we may assume $B_K$ is a normal domain.

As seen in the proof of \cite[Lem.~29]{lyu-Prufer-Japanese},
we may assume that there exists a finite injective map $P\to B$, where $P$ is a polynomial ring over $V$.
Let $B'$ be as in \cite[Th.~22]{lyu-Prufer-Japanese},
so $B'$ is finite over $B$, and $B_\fq=B'_\fq$ as $B_\fq$ is normal.
Fix $g_1\in B\setminus\fq$ so that $B_{g_1}=B'_{g_1}$.

Let $\fm$ be the maximal ideal of $V$.
Consider the ring $B_{\fm P}$, a finite flat algebra over the valuation ring $P_{\fm P}$.
As $(B\setminus\fq)^{-1}B_{\fm P}$ is a normal domain, we see from Lemma \ref{lem:QFNormal=Val} that it is Pr\"ufer, so 
\[
\bigcap_{g\in B\setminus \fq} D_{B_{\fm P}}(g)\subseteq\operatorname{Reg}(B_{\fm P}).
\]
We know $P_{\fm P}$ satisfies the equivalent conditions in Lemma \ref{lem:J2isEveryFiniteJ0}
as it is essentially finitely generated over $V$.
We know  $P_{\fm P}$ satisfies Condition \ref{condit:IsolatedPrimeInV}
as it has the same value group as $V$.
By Theorem \ref{thm:RegQCopen}, $\operatorname{Reg}(B_{\fm P})$ is open,
so there exists 
$g_2\in B\setminus \fq$ so that $(B_{\fm P})_{g_2}$ is a normal domain as the constructible topology is compact \citestacks{0901}.
Therefore, by \cite[Th.~22]{lyu-Prufer-Japanese}, $B_{g_1g_2}=B'_{g_1g_2}$ satisfies the assumptions of \cite[Lem.~23]{lyu-Prufer-Japanese} for $S=P\setminus\fm P$ the set of primitive polynomials in $P$ and $T=V\setminus\{0\}$,
so $B_{g_1g_2}$ is normal.
\end{proof}

 \begin{Cor}\label{cor:RegNorQCopenForFlat}
     Let $V$ be a valuation ring  that satisfies the equivalent conditions in Theorem \ref{thm:RegQCopen}.
     Let $X$ be a flat $V$-scheme of finite presentation.
     Then $\operatorname{Reg}(X)$ and $\operatorname{Nor}(X)$ are quasi-compact opens.
 \end{Cor}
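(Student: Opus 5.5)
The plan is to reduce to the integral-domain case already handled by Theorem \ref{thm:RegQCopen} and Corollary \ref{cor:NorQCopen}, using the quasi-compact openness of the integral domain locus from Theorem \ref{thm:ReducedLocusQCOpen}. Both statements are local on $X$ and quasi-compactness of an open is checked on a finite affine cover. So we may assume $X=\Spec B$ with $B$ a flat finitely presented $V$-algebra.

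First, both $\operatorname{Reg}(B)$ and $\operatorname{Nor}(B)$ are contained in the integral domain locus $\Sigma$ of $B$. For $\operatorname{Nor}$ this holds by definition. For $\operatorname{Reg}$ it follows from Discussion \ref{discu:NorRegloci}, since $\operatorname{Reg}\subseteq\operatorname{Nor}$ by \cite[Cor.~4.3]{Bertin-def-Regular}. By Theorem \ref{thm:ReducedLocusQCOpen}, $\Sigma$ is a quasi-compact open, so it is a finite union of principal opens $D(g_1),\ldots,D(g_s)$. We may shrink each $D(g_i)$ so that $B_{g_i}$ is itself an integral domain. This uses Lemma \ref{lem:ReducedLocusOpen}: each point of $\Sigma$ has a neighbourhood $D(g)$ with $B_g$ a domain, and quasi-compactness of $\Sigma$ lets us take finitely many such $g$.

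Now each $B_{g_i}$ is a finitely generated $V$-algebra that is an integral domain; it is in fact flat and finitely presented over $V$. Hence $\Spec B_{g_i}$ is an integral $V$-scheme of finite type. Theorem \ref{thm:RegQCopen}\ref{J2QC:RegQCOpen} then gives that $\operatorname{Reg}(B_{g_i})$ is a quasi-compact open, and Corollary \ref{cor:NorQCopen} gives the same for $\operatorname{Nor}(B_{g_i})$. Both loci are defined pointwise by the local rings, and the local rings of $B_{g_i}$ are local rings of $B$. Therefore $\operatorname{Reg}(B)=\bigcup_i\operatorname{Reg}(B_{g_i})$ and $\operatorname{Nor}(B)=\bigcup_i\operatorname{Nor}(B_{g_i})$. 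Each is a finite union of quasi-compact opens of $D(g_i)$, hence a quasi-compact open of $\Spec B$.

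The only step needing care is the reduction to domains: we must check that the hypotheses of Theorem \ref{thm:RegQCopen} apply to $\Spec B_{g_i}$ without change. They do, because that theorem and its corollary are stated for arbitrary integral $V$-schemes of finite type, not necessarily flat ones. Flatness of $B$ over $V$ enters only through Theorem \ref{thm:ReducedLocusQCOpen}, which is what guarantees that $\Sigma$ is quasi-compact; Example \ref{exam:BadValueGroup-PrincipalIsLimit} shows this can fail without flatness.
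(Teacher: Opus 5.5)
Your proof is correct and follows the paper's strategy: restrict to the quasi-compact open integral-domain locus given by Theorem \ref{thm:ReducedLocusQCOpen}, then apply Theorem \ref{thm:RegQCopen} and Corollary \ref{cor:NorQCopen} to integral pieces. The only difference is how you get finitely many integral pieces. The paper notes that the irreducible components become disjoint and are finite in number, since they correspond to those of the Noetherian generic fiber $X\times_{\Spec V}\Spec(\Frac V)$; you instead cover $\Sigma$ by finitely many $D(g_i)$ with $B_{g_i}$ a domain, using Lemma \ref{lem:ReducedLocusOpen} and quasi-compactness, which avoids the component count. (That lemma itself uses flatness, so flatness enters through it as well as through Theorem \ref{thm:ReducedLocusQCOpen}.)
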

\begin{proof}
By Theorem \ref{thm:ReducedLocusQCOpen} we may assume $\cO_{X,x}$ is an integral domain for all $x\in X$, so the irreducible components of $X$ are disjoint.
By flatness, irreducible components of $X$ are in one-to-one correspondence with  irreducible components of the Noetherian scheme $X\times_{\Spec V}\Spec(\Frac V)$ (cf. \citestacks{00HS}),
hence there are only finitely many.
Therefore we may assume $X$ is integral, and we conclude by Theorem \ref{thm:RegQCopen} and Corollary \ref{cor:NorQCopen}.
\end{proof}

 \begin{Cor}\label{cor:RegNorQCopenForFiniteRank}
     Let $V$ be a valuation ring of finite rank.
     %that satisfies the equivalent conditions in Theorem \ref{thm:RegQCopen}.
     Let $X$ be a flat $V$-scheme of finite presentation, or an integral $V$-scheme of finite type.
     Then $\operatorname{Reg}(X)$ and $\operatorname{Nor}(X)$ are quasi-compact opens.
 \end{Cor}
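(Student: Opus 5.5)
The plan is to check that a finite rank valuation ring $V$ satisfies condition \ref{J2QC:J1.5+Isolatedp} of Theorem \ref{thm:RegQCopen}. Once that is done, the statement follows from results already proved. For $X$ flat of finite presentation we apply Corollary \ref{cor:RegNorQCopenForFlat}. For $X$ integral of finite type we apply Theorem \ref{thm:RegQCopen}\ref{J2QC:RegQCOpen} and Corollary \ref{cor:NorQCopen}.

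There are two things to verify. First, $V$ must satisfy the equivalent conditions of Lemma \ref{lem:J2isEveryFiniteJ0}. This is Lemma \ref{lem:J1.5classes}\eqref{J1.5:frk}.

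Second, $V$ must satisfy Condition \ref{condit:IsolatedPrimeInV}. Here we use that $\Spec V$ is a finite chain of primes $0=\fp_0\subsetneq\fp_1\subsetneq\cdots\subsetneq\fp_n$ (\S\ref{subsec:FiniteRankV}). Fix a nonzero prime $\fp=\fp_i$. If $\fp$ is the maximal ideal, then $V_\fp=V$ and we take $f=1$. Otherwise, choose $f\in\fp_{i+1}\setminus\fp_i$. The primes $\fq$ with $f\notin\fq$ are exactly $\fp_0,\ldots,\fp_i$, because $f\in\fp_j$ for all $j\geq i+1$ and $f\notin\fp_j$ for $j\leq i$. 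So $V_f$ is a valuation ring whose maximal ideal is $\fp V_f$.

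It remains to see that $V_f=V_\fp$. Every $s\in V\setminus\fp$ is invertible in $V_f$, since $s$ avoids every prime of $V_f$. Hence $V_\fp\subseteq V_f$, and the reverse inclusion is clear because $f\notin\fp$. Therefore $V_f=V_\fp$, as required.

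Note that this argument does not even use the hypothesis that $\fp V_\fp$ is principal: finiteness of $\Spec V$ alone makes every prime "isolated" in the required sense. The only point that needs care is the identification of $V_f$ with $V_\fp$, which rests on the standard fact that an element avoiding all primes of a ring is a unit there. I do not expect any real obstacle.
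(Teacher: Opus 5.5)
Your proposal is correct and follows the paper's proof: you verify condition \ref{J2QC:J1.5+Isolatedp} of Theorem \ref{thm:RegQCopen} using Lemma \ref{lem:J1.5classes}\eqref{J1.5:frk} and the finiteness of $\Spec V$, then conclude by Theorem \ref{thm:RegQCopen}, Corollary \ref{cor:NorQCopen}, and Corollary \ref{cor:RegNorQCopenForFlat}. Your explicit choice of $f\in\fp_{i+1}\setminus\fp_i$ fills in the step the paper calls trivial, and it is consistent with Remark \ref{rem:RegNorQCopenForWellOrdered}.
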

 \begin{proof}
 $V$ satisfies the equivalent conditions in Lemma \ref{lem:J2isEveryFiniteJ0} by Lemma \ref{lem:J1.5classes}\eqref{J1.5:frk}.
    Condition \ref{condit:IsolatedPrimeInV} is trivial for $V$ as $\Spec V$ is finite.
    We conclude by Theorem \ref{thm:RegQCopen},  Corollary \ref{cor:NorQCopen}, and Corollary \ref{cor:RegNorQCopenForFlat}.
 \end{proof}
\begin{Rem}\label{rem:RegNorQCopenForWellOrdered}
    The proof tells us Corollary \ref{cor:RegNorQCopenForFiniteRank} is true for any valuation ring $V$ such that for every $\fp\in\Spec V$ there exists an $f\not\in\fp$ so that $V_\fp=V_f$,
    or equivalently, $\Spec V$ is well-ordered by specialization, that is, there is no infinite descending chains $\fp_0\supsetneq\fp_1\supsetneq\ldots$.
    In particular, the value group of $V$ can be $\bigoplus_{\beta<\alpha}S_\beta$ in lexicographic order where $\alpha$ is an arbitrary ordinal and $S_\beta$ in an arbitrary subgroup of $\bR$ for every $\beta<\alpha$.
\end{Rem}

 \section{Big Cohen--Macaulay modules and algebras}\label{sec:BCM}
In this section, we consider a condition (Theorem \ref{thm:BCMisflat}\ref{BCM=flat:BCM}) that resembles (balanced) BCM modules over Noetherian rings.
For a local ring $(A,\fm_A)$ essentially finitely presented over a valuation ring $(V,\fm_V)$, we replace (part of) a system of parameters by (part of) a system of parameters of the fiber plus a nonzero element of $V$.
We show that when $\operatorname{w.dim}A<\infty$ this condition characterizes flatness, Theorem \ref{thm:BCMisflat}.

Our condition behaves well with respect to approximation, so we can use BCM algebras for Noetherian local rings \cite{Bhatt-int-clos-BCM,BMPSTWW} and a naive Noetherian approximation to get desirable ``BCM algebras'' in our case, see \S\ref{subsec:ConstructBCM}.
Such construction is essential in the proof of the direct summand and Kunz's theorems in our case in \S\ref{sec:SplinterandKunz}.

\subsection{Systems of parameters}

\begin{Def}\label{def:relativeSOP}
    Let $R$ be a ring and $S$ an essentially finitely generated $R$-algebra.
    Let $Q\in\Spec S, P=Q\cap R$.
    We say a sequence of elements $\underline{y}=y_1,y_2,\ldots,y_m\in Q$ a \emph{system of parameters at $Q$ over $R$}
    if $\underline{y}$ is a system of parameters of the Noetherian local ring $T:=S_Q/PS_Q$.
    We say an element $y\in Q$ is \emph{a parameter at $Q$ over $R$} if it is part of a system of parameters at $Q$ over $R$.

A sequence of elements $\underline{y}=y_1,y_2,\ldots,y_m\in Q$ is part of a system of parameters at $Q$ over $R$ if and only if $\dim T/\underline{y}T=\dim T-m$,
if and only if for all $1\leq i_1<\ldots< i_l\leq m$ we have $\dim T/(y_{i_1},\ldots,y_{i_l})\leq\dim T-l$, cf. \citestacks{00KW}.    
\end{Def}

\begin{Lem}\label{lem:CombineSOP}
    Let $R$ be a ring and $S$ an essentially finitely generated $R$-algebra.
    Let $Q\in\Spec S, P=Q\cap R$.
    Let $\underline{x}=x_1,\ldots,x_n\in P$ be part of a system of parameters of $R_P$ and let $\underline{y}=y_1,y_2,\ldots,y_m\in Q$ be part of a system of parameters at $Q$ over $R$.

    If $R_P$ is Noetherian and $R_P\to S_Q$ is flat, then $\underline{x},\underline{y}$ is part of a system of parameters of $S_Q$.
\end{Lem}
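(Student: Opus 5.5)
The plan is to reduce to the local Noetherian situation and count dimensions. The map $R_P\to S_Q$ is local and flat, but $S_Q$ need not be Noetherian, so the first task is to make sense of "system of parameters of $S_Q$". Since $S$ is essentially finitely generated over $R$, $S_Q$ is a localization of a finitely generated $R_P$-algebra. With $R_P$ Noetherian, this makes $S_Q$ a Noetherian local ring by the Hilbert basis theorem. So we may replace $R$ by $R_P$ and $S$ by $S_Q$ and assume both are Noetherian local with $R\to S$ flat local and closed fiber $T=S/PS$.

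By the criterion in Definition \ref{def:relativeSOP} (applied now to $S_Q$ itself, cf. \citestacks{00KW}), it suffices to show
\[
\dim S/(\underline{x},\underline{y})S=\dim S-n-m.
\]
The inequality "$\geq$" is automatic, since killing $n+m$ elements drops dimension by at most $n+m$. For "$\leq$" we use the flat dimension formula for a flat local map of Noetherian local rings (\citestacks{00ON}): $\dim S=\dim R+\dim T$.

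Next, set $\overline{R}=R/\underline{x}R$ and $\overline{S}=S/(\underline{x},\underline{y})S$. First note that $\underline{y}$ is part of a system of parameters of $T$, so $\dim T/\underline{y}T=\dim T-m$. Then $R/\underline{x}R\to S/\underline{x}S$ is flat local, since flatness is preserved by base change. Its closed fiber is $S/PS=T$, because $\underline{x}\subseteq P$.

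The one point needing care is that $\overline{S}$ is not a flat base change once we kill $\underline{y}$. To avoid this, I will not use flatness for $\overline{S}$. Instead I will use the general inequality $\dim B\leq \dim A+\dim B/\fm_AB$, valid for any local homomorphism of Noetherian local rings (\citestacks{00OM}). Apply it to $\overline{R}\to\overline{S}$, whose closed fiber is $T/\underline{y}T$. This gives
\[
\dim\overline{S}\leq\dim\overline{R}+\dim T/\underline{y}T=(\dim R-n)+(\dim T-m)=\dim S-n-m,
\]
using that $\underline{x}$ is part of a system of parameters of $R$ and the flat dimension formula. This is the desired inequality, so $\underline{x},\underline{y}$ is part of a system of parameters of $S_Q$.

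I expect no real obstacle here. The only subtle point is where flatness enters: only in computing $\dim S$, while the upper bound uses the non-flat inequality.
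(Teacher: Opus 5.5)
Your proof is correct and is essentially the paper's argument: you get the upper bound $\dim S_Q/(\underline{x},\underline{y})\leq(\dim R_P-n)+(\dim T-m)$ from \citestacks{00OM}, and you combine it with the flat dimension formula $\dim S_Q=\dim R_P+\dim T$ from \citestacks{00ON}. You also check explicitly that $S_Q$ is Noetherian, as a localization of a finitely generated $R_P$-algebra, which the paper uses without comment.
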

\begin{proof}
    We have $\dim R_P/\underline{x}R_P=\dim R_P-n$ and $\dim T/\underline{y}T=\dim T-m$ by assumptions, where $T=S_Q/PS_Q$.
    Hence $\dim S_Q/(\underline{x},\underline{y})\leq \dim R_P-n+\dim T-m$ \citestacks{00OM}.
    On the other hand, $\dim S_Q=\dim R_P+\dim T$ by flatness \citestacks{00ON}.
    Therefore
    $\underline{x},\underline{y}$ is part of a system of parameters of $S_Q$.
\end{proof}

\begin{Lem}\label{lem:baseChangeSOP}
Let
    \[\begin{CD}
       R@>{\varphi}>> S\\
       @VVV @VVV\\
       R'@>>> S'
    \end{CD}\]
be a cocartesian diagram of rings in which $\varphi$ is essentially finitely generated.
Let $Q'\in\Spec S$ and let $Q=Q'\cap S$.
A sequence of elements $\underline{y}=y_1,y_2,\ldots,y_m\in Q$ is part of a system of parameters at $Q$ over $R$ if and only if it is part of a system of parameters at $Q'$ over $R'$.
\end{Lem}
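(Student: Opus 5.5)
The plan is to reduce everything to the Noetherian fiber rings and compare dimensions through a flat local homomorphism. (I read the statement with $Q'\in\Spec S'$ and $Q=Q'\cap S$, which is evidently what is meant.) Put $P'=Q'\cap R'$, so that $P=Q\cap R$ is the image of $P'$ in $\Spec R$. Set $T=S_Q/PS_Q$ and $T'=S'_{Q'}/P'S'_{Q'}$. Both are Noetherian local rings, being essentially of finite type over the fields $\kappa(P)$ and $\kappa(P')$ respectively.

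\textbf{Step 1: identify $T'$ as a flat local extension of $T$.} Since the square is cocartesian,
\[
S'\otimes_{R'}\kappa(P') \cong (S\otimes_R\kappa(P))\otimes_{\kappa(P)}\kappa(P').
\]
Hence $T'$ is a localization of $T\otimes_{\kappa(P)}\kappa(P')$ at a prime lying over the maximal ideal $QT$ of $T$. This gives a flat local homomorphism $T\to T'$, flat because it is a base change of a field extension followed by a localization.

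\textbf{Step 2: compute its closed fiber.} The closed fiber $F:=T'/QT'$ is a localization of $\kappa(Q)\otimes_{\kappa(P)}\kappa(P')$. Since $\kappa(Q)$ is a finitely generated field extension of $\kappa(P)$, this tensor product is Noetherian, so $F$ is a Noetherian local ring with finite dimension $\dim F$.

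\textbf{Step 3: apply the flat dimension formula twice.} By \citestacks{00ON} applied to $T\to T'$,
\[
\dim T'=\dim T+\dim F.
\]
Now base change along $T\to T/\underline{y}T$. The map $T/\underline{y}T\to T'/\underline{y}T'$ is again flat local, and its closed fiber is the same ring $F$. Applying \citestacks{00ON} again gives
\[
\dim T'/\underline{y}T'=\dim T/\underline{y}T+\dim F.
\]
Subtracting, we get $\dim T'-\dim T'/\underline{y}T'=\dim T-\dim T/\underline{y}T$.

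\textbf{Step 4: conclude.} By the criterion recorded in Definition~\ref{def:relativeSOP}, $\underline{y}$ is part of a system of parameters at $Q$ over $R$ exactly when this difference on the $T$ side equals $m$. Likewise, it is part of a system of parameters at $Q'$ over $R'$ exactly when the difference on the $T'$ side equals $m$. Step 3 shows the two differences agree, which proves the lemma. Note that each $y_i$ lies in $Q\subseteq Q'$, so $\underline{y}$ makes sense on both sides.

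\textbf{Main obstacle.} The step needing the most care is Step 1 together with Step 2: correctly identifying $T'$ as a localization of $T\otimes_{\kappa(P)}\kappa(P')$ and confirming that the closed fiber $F$ is Noetherian of finite dimension. Once that is in place, the dimension formula in Step 3 is routine.
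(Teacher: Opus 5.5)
Your proof is correct and follows the paper's argument: the paper likewise observes that $S'_{Q'}/P'S'_{Q'}$ is a localization of $S\otimes_R\kappa(P')$, hence flat over $S_Q/PS_Q$, and concludes by \citestacks{00ON}. You simply make explicit the two applications of the dimension formula (to $T\to T'$ and to $T/\underline{y}T\to T'/\underline{y}T'$, which have the same closed fiber) that the paper leaves implicit, and your reading $Q'\in\Spec S'$ is the intended one.
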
 
\begin{proof}
Let $P'=Q'\cap R'$ and let $P=P'\cap R$.
    Observe that $S'_{Q'}/P'S'_{Q'}$ is a localization of $S'\otimes_{R'}\kappa(P')=S\otimes_R \kappa(P')$, which is flat over $S\otimes_R \kappa(P)$ as the field extension $\kappa(P')/\kappa(P)$ is flat.
    Therefore $S'_{Q'}/P'S'_{Q'}$ is flat over $S_Q/PS_Q$, and our result follows from \citestacks{00ON}.
\end{proof}

\subsection{Flatness of BCM modules}
It is classical that a balanced BCM module over a regular local ring is flat \cite[(6.7)]{HH92-BCMisFlat}. In our situation we have
\begin{Thm}\label{thm:BCMisflat}
    Let $(V,\fm_V)$ be a valuation ring and let $(A,\fm_A)$ be a local ring essentially finitely presented over $V$.
    Let $M$ be an $A$-module.
    
    Assume that $\operatorname{w.dim}A<\infty$. Then the following are equivalent.
    \begin{enumerate}[label=$(\roman*)$]
        \item\label{BCM=flat:BCM} For every sequence $\underline{x}$ in $\fm_A$ that is part of a system of parameters at $\fm_A$ over $V$ and every $0\neq \pi\in V$,
        $\underline{x},\pi$ is a possibly improper regular sequence in $M$.
        \item\label{BCM=flat:flat} $M$ is flat over $A$.
    \end{enumerate}
\end{Thm}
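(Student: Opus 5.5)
The plan follows the Hochster--Huneke argument for balanced big Cohen--Macaulay modules over regular local rings \cite[(6.7)]{HH92-BCMisFlat}. The slicing of Step \ref{step:slice} in the proof of Theorem \ref{thm:RegQCopen} plays the role of a regular system of parameters. Since $\operatorname{w.dim}A<\infty$, $A$ is a normal domain \cite[Cor.~4.3]{Bertin-def-Regular}. By Discussion \ref{discu:DomainsOverPrufer} I replace $V$ by its image in $A$, so that $A$ is flat and essentially finitely presented over $V$. I take it as understood that \ref{BCM=flat:BCM} concerns nonzero $\pi$ in this image; fiber systems of parameters do not change by Lemma \ref{lem:baseChangeSOP}.

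For \ref{BCM=flat:flat}$\Rightarrow$\ref{BCM=flat:BCM}, let $\fp=\fm_A\cap V$. The fiber $A/\fp A$ is Cohen--Macaulay \cite[Th.~2.9]{Knaf-RLR-over-Prufer}, so any part $\underline{x}$ of a system of parameters at $\fm_A$ over $V$ is regular on it. By \citestacks{0470}, $\underline{x}$ is then regular on $A$ and $A/\underline{x}A$ is flat over $V$. Hence $\underline{x},\pi$ is a regular sequence on $A$, and tensoring the Koszul complexes with the flat module $M$ preserves their acyclicity. So $\underline{x},\pi$ is a possibly improper regular sequence on $M$.

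For \ref{BCM=flat:BCM}$\Rightarrow$\ref{BCM=flat:flat}, I first choose $\underline{x}=x_1,\ldots,x_m$ as in Step \ref{step:slice}. This is a full system of parameters at $\fm_A$ over $V$, regular on $A$, with $W:=A/\underline{x}A$ a valuation ring whose value group is a finite-index extension of that of $V$. The goal is then $\operatorname{Tor}^A_i(M,N)=0$ for $i>0$ and all coherent $N$, which suffices since finitely presented modules test flatness. The argument has three steps.
\begin{enumerate}[label=(\alph*)]
\item Since $\underline{x}$ is regular on both $A$ and $M$, the Koszul complex gives $\operatorname{Tor}^A_i(M,W)=0$ for $i>0$. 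Base change then gives $\operatorname{Tor}^A_i(M,L)=\operatorname{Tor}^W_i(M/\underline{x}M,L)$ for every $W$-module $L$.
\item $M/\underline{x}M$ is flat over $W$, because it is torsion-free. Indeed, a nonzero $w\in W$ satisfies $w^e=\pi u$ with $\pi\in V$ and $u\in W^\times$, by finite index of the value groups, and $\pi$ is regular on $M/\underline{x}M$ by \ref{BCM=flat:BCM}. Using also that $\pi$ is regular on $A/\underline{x}A$, the same base change shows $\operatorname{Tor}^A_i(M,L)=0$ for $i>0$ whenever $L$ is killed by $(\underline{x},\pi)$.
\item Let $t>0$ be maximal with $\operatorname{Tor}_t^A(M,N)\neq0$ for some coherent $N$; it exists because $\operatorname{w.dim}A<\infty$ and coherent modules have finite free resolutions. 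Then $\operatorname{Tor}_t^A(M,-)$ is left exact on $\Coh(A)$. Consider $M\otimes^L_A(N\otimes^L_A K(\underline{x},\pi))$, where $K(\underline{x},\pi)$ is the Koszul complex on $\underline{x},\pi$ (concentrated in degrees $0,\ldots,m+1$). On one hand, its top homology $H_{t+m+1}$ is $\{u\in\operatorname{Tor}_t^A(M,N)\mid (\underline{x},\pi)u=0\}$. On the other hand, $N\otimes^L K$ has coherent homology killed by $(\underline{x},\pi)$, so by (b) and a spectral sequence the complex has no homology in degrees above $m+1<t+m+1$. Hence no nonzero element of $\operatorname{Tor}_t^A(M,N)$ is killed by $(\underline{x},\pi)$, for every nonzero $\pi$, and also after replacing $\underline{x}$ by powers $\underline{x}^k$, which satisfy the same hypotheses.
\end{enumerate}

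The main obstacle is the last step: producing a nonzero element of $\operatorname{Tor}^A_t(M,N)$ killed by some $(\underline{x}^k,\pi)$. In the Noetherian case this is automatic, because the module is supported at $\fm_A$. Here it is not finitely generated, and the radical of $(\underline{x}^k,\pi)$ is $\fm_A$ only when $\fp V_\fp$ is principal. My first attempt is to use left exactness, choosing $N$ minimal: replace $N$ by a coherent quotient $N/(\underline{x}^k,\pi)N$ or by a coherent submodule, via the exact sequences $0\to N[y]\to N\to N\xrightarrow{\ }N/yN\to0$, so as to force $\operatorname{Tor}_t$ to be killed by such an ideal. Failing that, I would localize. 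Condition \ref{BCM=flat:BCM} should pass to $A_\fQ$ for $\fQ$ in a neighbourhood, by Lemma \ref{lem:CombineSOP} and spreading out the regular sequence as in Step \ref{step:slice}. I would then argue by induction on the fiber dimension, so that $\operatorname{Tor}_t^A(M,N)$ is supported on $V(\fm_VA)$ and only the $\pi$-direction remains, which is handled by letting $\pi$ range over all of $\fm_V$.
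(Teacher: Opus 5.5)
\paragraph{The gap.}
Your proof of (ii)$\Rightarrow$(i) and your steps (a), (b) are fine and agree with the paper; flatness of $M/\underline{x}M$ over the valuation ring $A/\underline{x}A$ via torsion-freeness is exactly the paper's argument. The Koszul computation in (c) is also correct. But the argument stops exactly where the real content lies, and what you have assembled cannot close it. Steps (a)--(c) use hypothesis (i) only for the fixed sequence $\underline{x}$, its powers, and elements of $V$, and that is not enough. Here is an example:
\begin{itemize}
\item Take $V$ a DVR with uniformizer $\varpi$, $A=V[y]_{(\varpi,y)}$, $\underline{x}=y$, and $M=A\oplus k_V(y)$, where $k_V(y)$ is an $A$-module via $A\to k_V[y]_{(y)}\subseteq k_V(y)$.
\item Then $y^k,\pi$ is a possibly improper $M$-regular sequence for all $k\geq 1$ and all $0\neq\pi\in V$.
\item Also $\operatorname{Tor}^A_i(M,L)=0$ for $i>0$ whenever $yL=0$, since $L_y=0$. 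So everything in (a)--(c) holds.
\item Yet $\operatorname{Tor}^A_1(M,A/\varpi A)\cong k_V(y)\neq 0$, and no nonzero element of it is killed by any $(y^k,\pi)$, because $y$ acts invertibly on it.
\end{itemize}
Here (i) fails because $\varpi$ itself (the empty sequence followed by $\varpi$) is not $M$-regular, a case your argument never uses. So no choice or manipulation of $N$ involving only $\underline{x}^k$ and $\pi$ (your first idea) can succeed. You must know that $\operatorname{Tor}^A_t(M,N)$ is supported at $\fm_A$, i.e.\ that $M$ is flat on the punctured spectrum, and that requires (i) at other primes.

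\paragraph{What the second idea is missing.}
Your second idea points the right way, but it lacks the two ingredients the paper supplies.

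First, the induction must be on $\dim A$, after reducing to $V$ of finite rank (\S\ref{subsec:ReducedToSmallV}, Lemma \ref{lem:baseChangeSOP}, \citestacks{05UU}). Induction on fiber dimension does not reach primes such as $(\fp,y)A$ in $A=V[y]_{(\fm_V,y)}$ with $\fp\subsetneq\fm_V$, which have the same fiber dimension as $\fm_A$. Moreover, (c) needs support in $V(\fm_VA+\underline{x}A)=\{\fm_A\}$, not merely in $V(\fm_VA)$.

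Second, and this is the key technical point, you must show that (i) passes to $A_\fQ$ for $\fQ\subsetneq\fm_A$. This is Lemma \ref{lem:LiftSOP}. Given a part $\underline{y}$ of a system of parameters at $\fQ$ over $V$, one needs a part $\underline{z}$ of a system of parameters at $\fm_A$ over $V$ with $(z_1,\ldots,z_j)A_\fQ=(y_1,\ldots,y_j)A_\fQ$ for all $j$. The paper builds it, after arranging Cohen--Macaulay equidimensional fibers, via primary decomposition in the generic fiber, prime avoidance and Davis' theorem. Neither tool you cite gives this:
\begin{itemize}
\item Lemma \ref{lem:CombineSOP} needs a Noetherian base.
\item Spreading out $\underline{x}$ as in Step \ref{step:slice} only controls the one sequence $\underline{x}$.
\end{itemize}

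\paragraph{How it closes.}
Once induction gives $M_f$ flat over $A_f$ for all $f\in\fm_A$, the paper finishes directly from your (a)--(b) via the flatness criterion \citestacks{0H7P}. Your (c) would also close at that point: each nonzero $u\in\operatorname{Tor}^A_t(M,N)$ then has $\sqrt{\operatorname{Ann}(u)}=\fm_A$. So $u$ is killed by some $(\underline{x}^k,\pi^n)$ with $0\neq\pi\in\fm_V$ (or by $(\underline{x}^k)$ alone if $\fm_V=0$), which contradicts (c).
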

\begin{proof}
As seen in Discussion \ref{discu:NorRegloci}, $A$ is an integral domain.
We may assume $V\to A$ is flat local (cf. Discussion \ref{discu:DomainsOverPrufer}).
$\overline{A}:=A/\fm_V A$ is a (Noetherian) Cohen--Macaulay local ring \cite[Th.~2.9]{Knaf-RLR-over-Prufer}. 
 
 To show \ref{BCM=flat:flat} implies \ref{BCM=flat:BCM},
 we may assume $M=A$.
 We know $\underline{x}$ is a regular sequence in $\overline{A}$ as $\overline{A}$ is Cohen--Macaulay, hence $\underline{x}$ is a regular sequence in $A$ and $A/\underline{x}A$ is flat over $V$ by the local criterion for flatness \citestacks{0470}.
 Therefore $\underline{x},\pi$ is a regular sequence in $A$ (improper if $\pi\not\in\fm_V$).

 Assume \ref{BCM=flat:BCM}. By \S\ref{subsec:ReducedToSmallV}, Lemma \ref{lem:baseChangeSOP}, and \citestacks{05UU},
 in proving \ref{BCM=flat:flat} 
 we may assume $V$ has finite rank, in particular $\dim A<\infty$.
 %, and there exists $\pi\in V$ so that $\fm_V=\sqrt{\pi V}$.
 We perform induction on $\dim A$.
 By \citetwostacks{05IR}{045U}, $A$ is the localization of a flat finitely presented $V$-algebra $B$ such that the fibers of $V\to B$ are Cohen--Macaulay.
 Lemma \ref{lem:LiftSOP} below and the induction hypothesis then ensure that $M_\fp$ is flat over $A_\fp$ for all non-maximal $\fp\in\Spec A$.
  Therefore $M_f$ is flat over $A_f$ for all $f\in\fm_A$.
 
 Let $m=\dim \overline{A}$ and let $x_1,\ldots,x_m$ be elements in $\fm_A$ that are linearly independent in $\fm_A\overline{A}/\fm^2_A\overline{A}$ and form a regular sequence in $\overline{A}$.
    Then $\underline{x}$ is a regular sequence in $A$ and $A/\underline{x}A$ is flat over $V$ \citestacks{0470}.
    By \cite[Th.~2]{Knaf-RLR-over-Prufer},
    $\operatorname{w.dim}(A/\underline{x}A)<\infty$, hence $A/\underline{x}A$ is a valuation ring 
    whose value group is a finite index extension of that of $V$
    (\cite[Cor.~4.3]{Bertin-def-Regular} and Lemma \ref{lem:QFNormal=Val}).
By \ref{BCM=flat:BCM}, $M/\underline{x}M=M\otimes^L_{A}A/\underline{x}A$ is flat over $V$, hence flat over  $A/\underline{x}A$.
Therefore $M$ is flat \citestacks{0H7P}.
\end{proof}
The following lemma is the key technical point in the proof of the theorem above.
The statement and proof are inspired by \cite[Lem.~A.2]{MurFinj}.
\begin{Lem}\label{lem:LiftSOP}
    Let $V$ be a valuation ring and let $B$ be a flat finitely presented $V$-algebra.
    Assume that the fibers of $V\to B$ are Cohen--Macaulay.
    Let $\fq\subseteq\fQ$ be prime ideals of $B$, and let $\underline{y}=y_1,\ldots,y_n\in\fq$ be part of a system of parameters at $\fq$ over $V$.
    Then there exists $\underline{z}=z_1,\ldots,z_n\in\fQ$, part of a system of parameters at $\fQ$ over $V$,
    such that $(z_1,\ldots,z_j)B_\fq=(y_1,\ldots,y_j)B_\fq$ for all $1\leq j\leq n$.
\end{Lem}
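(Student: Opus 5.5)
The plan is induction on $j$, building $z_1,\ldots,z_j$ one at a time by prime avoidance. Write $\fp=\fq\cap V\subseteq\fP=\fQ\cap V$.

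\emph{Translating the conditions.} Since the fibers of $V\to B$ are Cohen--Macaulay, a sequence in $\fQ$ is part of a system of parameters at $\fQ$ over $V$ if and only if it is a regular sequence in $B_\fQ/\fP B_\fQ$. By the local flatness criterion \citestacks{0470}, it is then a regular sequence in $B_\fQ$, and the quotient of $B_\fQ$ by it is flat over $V$. The same holds at $\fq$ for $\underline{y}$.

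\emph{Inductive step.} Suppose $z_1,\ldots,z_{j-1}\in\fQ$ have been chosen: they are part of a system of parameters at $\fQ$ over $V$, and $(z_{<j})B_\fq=(y_{<j})B_\fq$. Put $\overline{T}=B_\fQ/(\fP B_\fQ+(z_{<j}))$. It is Cohen--Macaulay, hence unmixed. So $z_j\in\fQ$ is a parameter on $\overline{T}$ exactly when $z_j$ avoids the finitely many primes $\fr_1,\ldots,\fr_r\subseteq\fQ$ of $B$ that are minimal over $\fP B+(z_{<j})$. I will take
\[
z_j=s\,y_j+w,\qquad s\in B\setminus\fq,\quad w\in(z_{<j})B.
\]
This choice automatically gives $(z_{\leq j})B_\fq=(y_{\leq j})B_\fq$. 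Now split the $\fr_i$ into two kinds.

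For the $\fr_i$ not containing $\fq$, I have freedom in $s$. I choose $s$ by standard prime avoidance in the form used for lifting systems of parameters (as in \cite[Lem.~A.2]{MurFinj}): take $s$ in the product of those $\fr_i$ not containing $\fq$ but outside $\fq$, and adjust $w$ inside $(z_{<j})$. One checks that $s\,y_j+w$ then avoids all $\fr_i$, provided $y_j\notin\fr_i$ whenever $\fr_i\supseteq\fq$.

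\emph{Key claim: if $\fr=\fr_i\supseteq\fq$, then $y_j\notin\fr$.} This is where I expect the main difficulty. Suppose instead $y_j\in\fr$.
\begin{itemize}
\item By the translation step, $z_{<j}$ is a regular sequence on $B_\fr$, and $N:=B_\fr/(z_{<j})$ is flat over $V$.
\item Since $\fr$ is minimal over $\fP B+(z_{<j})$, the closed fiber $N/\fP N$ is Artinian, so its dimension is $0$.
\item The point $\fq$ is a generalization of $\fr$, and $N_\fq=B_\fq/(y_{<j})B_\fq$. Its fiber over $\fp$ is $B_\fq/(\fp B_\fq+(y_{<j}))$.
\item By upper semicontinuity of fiber dimension for the finitely presented map $V\to B/(z_{<j})$ (Chevalley, cf.\ \citestacks{05F7}), applied after spreading $N$ out to a finitely presented algebra near $\fr$, the fiber of $N_\fq$ over $\fp$ also has dimension $0$.
\item But $y_j$ is a parameter of $B_\fq/(\fp B_\fq+(y_{<j}))$, which forces that ring to have dimension at least $1$. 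This is a contradiction.
\end{itemize}
The delicate points are the spreading out and the use of semicontinuity at a point lying over a different prime of $V$ (over $\fp$ rather than $\fP$). I would first try to settle both with \citestacks{05F7} (or the equidimensionality statements around \citestacks{045U}) applied to $V\to B/(z_{<j})$.

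Granting the claim, prime avoidance produces the required $z_j$, and induction on $j$ finishes the proof.
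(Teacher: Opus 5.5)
There is a genuine gap, and it is in the step you call standard, not in the key claim. Every $\fr_i$ is minimal over $\fP B+(z_{<j})$, so it contains $w\in(z_{<j})B$. Hence $z_j=s\,y_j+w$ lies in $\fr_i$ if and only if $s\in\fr_i$ or $y_j\in\fr_i$. Three consequences follow:
\begin{itemize}
\item $w$ gives you no freedom at all.
\item Choosing $s$ in the product of the $\fr_i$ not containing $\fq$ puts $z_j$ \emph{into} those primes, which is the opposite of what you want.
\item Your ansatz can only succeed if $y_j\notin\fr_i$ for \emph{every} $\fr_i$, including those not containing $\fq$.
\end{itemize}
That last requirement fails in general. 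Take $V=k$ a field, $B=k[x,y,u]/(xy)$, $\fQ=(x,y,u)$, $\fq=(y,u)$, and $y_1=xu$. In $B_\fq$ the element $x$ is a unit and $y=0$, so $B_\fq\cong k[x,u]_{(u)}$ and $y_1$ is a parameter at $\fq$. But every $s\,y_1$ lies in the minimal prime $(x)$ of $B$, and $\dim B_\fQ/(x)=2=\dim B_\fQ$. So no $z_1$ of your form is a parameter at $\fQ$. A valid choice is $z_1=xu+y$: it avoids both $(x)$ and $(y)$, and $z_1B_\fq=y_1B_\fq$ because $y$ dies in $B_\fq$.

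The missing idea is to perturb $y$ by elements of $B$ that become divisible by $y^2$ (or vanish) in $B_\fq$ without being multiples of $y$ in $B$. The paper does this as follows.
\begin{itemize}
\item It reduces to $n=1$ by passing to $B/\underline{z}$ after a principal localization near $\fQ$; this quotient is still flat with Cohen--Macaulay fibers.
\item It localizes so that all fibers of $V\to B$ are equidimensional of dimension $d$, and sets $J=yB_\fq\cap B$.
\item It takes $z=y+b$ with $b\in J^2\cap\fQ$, so that $z=y(1+yc)$ in $B_\fq$ and hence $zB_\fq=yB_\fq$.
\item By Davis's prime avoidance for cosets, such a $z$ outside all minimal primes $\fN_l$ of $\fP B$ exists once $J^2\cap\fQ\not\subseteq\fN_l$ for every $l$.
\end{itemize}
Proving that last non-containment is the real content. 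Since $B/J\subseteq B_\fq/yB_\fq$ is flat over $V$, $J$ is an intersection of primary ideals (all inside $\fq$) whose radicals $\fa$ have $B/\fa$ flat over $V$. The fibers of $V\to B/\fa$ are equidimensional of dimension $<d$, because $y$ is a parameter at $\fq$. Meanwhile $B/\fN_l\otimes_V\kappa(\fP)$ has dimension $d$, so no such $\fa$ lies in any $\fN_l$. Together with $\fQ\not\subseteq\fN_l$, this gives the claim.

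Your fiber-dimension argument for the case $\fr\supseteq\fq$ is essentially sound; in fact it shows no such $\fr$ exists. But it covers only the special case $\fq\subseteq\fN_l$ of the paper's statement, and the primes not containing $\fq$ are where the work lies.
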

\begin{proof}
Let $\fp=\fq\cap V$ and $\fP=\fQ\cap V$.
    Note that any such choice of $\underline{z}$ is a regular sequence in $B_\fQ/\fP B_\fQ$,
    hence a regular sequence in $B_\fQ$ and $B_\fQ/\underline{z}B_\fQ$ is flat over $V$ \citestacks{0470}.
    As $B_\fQ/((\underline{z})+\fP) B_\fQ$ is also Cohen--Macaulay, we know after a principal localization near $\fQ$, $V\to B/\underline{z}B$ is flat with Cohen--Macaulay fibers, see \citetwostacks{05IR}{045U}.
Therefore, an easy induction tells us it suffices to prove the result in the case $n=1$, and we write $y=y_1$.
By part of the same reasoning we know $B_\fq/yB_\fq$ is flat over $V$.

By \citestacks{00RL} we may assume all fibers of $V\to B$ are equidimensional of the same dimension $d$.
Consider $J=yB_\fq\cap B$.
Then $B/J\subseteq B_\fq/yB_\fq$, hence $B/J$ is flat over $V$.
Let $K=\Frac V$.
Then $J=JB_K\cap B$ as $B/J$ is flat over $V$.
As $B_K$ is Noetherian, it now follows from primary decomposition of $JB_K\subseteq B_K$ that 
\[
J=\fr_1\cap \fr_2\cap\ldots\cap\fr_s
\]
where $\fr_i$ are primary ideals of $B$ that satisfy $\sqrt{\fr_i}=\sqrt{\fr_i}B_K\cap B$, 
hence $B/\sqrt{\fr_i}$ is flat over $V$.
As $J=JB_\fq\cap B$ we may assume $\fr_i\subseteq\fq$ for all $i$.
Similarly, as $B/\fP B$ is flat over $V/\fP$,
we have
\[
\sqrt{\fP B}=\fN_1\cap \fN_2\cap\ldots\cap\fN_t
\]
where $\fN_j$ are minimal prime divisors of $\fP B$.

By \citestacks{00QK}, all fibers of $V\to B/\sqrt{\fr_i}$ are equidimensional of the same dimension $d_i$.
As $y\in J\subseteq \sqrt{\fr_i}\subseteq\fq$ is a parameter at $\fq$ over $V$, we see $d_i<d$.
It therefore follows that $\sqrt{\fr_i}\not\subseteq\fN_j$ for any $i,j$,
as $B/\fN_j\otimes_{V}\kappa(\fP)$ has dimension $d$.

Note that we also have $\fQ\not\subseteq\fN_j$ for any $j$, because otherwise $\fQ=\fN_j$ and $\Spec(B_\fQ)\to\Spec V$ would be injective by Lemma \ref{lem:QFNormal=Val}, so $\dim (B_\fq/\fp B_\fq)=0$ and $y$ could not exist.
Therefore $J^2\cap \fQ\not\subseteq\fN_j$ for any $j$ as $\fN_j$ is prime.
By prime avoidance and a theorem of Davis \cite[Th.~124]{kaplansky}, there exists $b\in J^2\cap \fQ$ so that $z:=y+b\not\in \fN_j$ for any $j$,
so $z\in\fQ$ is a parameter at $\fQ$ over $V$.
Finally, the choice of $J$ tells us $bB_\fq\subseteq y^2B_\fq$, so $z=y(1+yc)\in B_\fq$ for some $c\in B_\fq$, therefore $zB_\fq=yB_\fq$ as $1+yc$ is invertible.
\end{proof}

Theorem \ref{thm:BCMisflat} has the following consequence via standard dimension shifting.
\begin{Cor}\label{cor:BCMisTorDim1}
    Let $(V,\fm_V)$ be a valuation ring and let $(A,\fm_A)$ be a local ring essentially finitely presented over $V$.
    Let $M$ be an $A$-module.
    
    Assume that $\operatorname{w.dim}A<\infty$
    and assume that every sequence $\underline{x}$ in $\fm_A$ that is part of a system of parameters at $\fm_A$ over $V$ is a possibly improper regular sequence in $M$.
    Then $\operatorname{w.dim}_AM\leq 1$.
\end{Cor}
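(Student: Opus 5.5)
We dimension-shift by one step. Choose a surjection $F=A^{(I)}\to M$ from a free $A$-module and let $N$ be its kernel, so that $0\to N\to F\to M\to 0$ is exact. Since $\operatorname{Tor}^A_{i+1}(M,-)\cong\operatorname{Tor}^A_i(N,-)$ for $i\geq 1$, it suffices to show that $N$ is flat. By Theorem \ref{thm:BCMisflat}, flatness of $N$ is equivalent to the following: for every sequence $\underline{x}$ in $\fm_A$ that is part of a system of parameters at $\fm_A$ over $V$, and every $0\neq\pi\in V$, the sequence $\underline{x},\pi$ is a possibly improper regular sequence on $N$. We may assume, as in the proof of Theorem \ref{thm:BCMisflat}, that $A$ is a domain flat over $V$.

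Fix such $\underline{x}=x_1,\ldots,x_n$ and $\pi$. Writing $\underline{x}_{\le j}=x_1,\ldots,x_j$, the free module $F$ satisfies condition \ref{BCM=flat:BCM} by the implication \ref{BCM=flat:flat}$\Rightarrow$\ref{BCM=flat:BCM}. We compare Koszul homology in the long exact sequences. The key is the sequence
\[0\to N/\underline{x}_{\le j}N\to F/\underline{x}_{\le j}F\to M/\underline{x}_{\le j}M\to 0,\]
which we show is exact by induction on $j$. Exactness at $j$ means $\operatorname{Tor}_1(A/\underline{x}_{\le j},M)=0$. Since $\underline{x}_{\le j}$ is a regular sequence on $A$ (Theorem \ref{thm:BCMisflat} with $M=A$), $A/\underline{x}_{\le j}$ is resolved by the Koszul complex. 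By hypothesis $\underline{x}_{\le j}$ is regular on $M$, so the higher Koszul homology of $M$ vanishes and hence $\operatorname{Tor}_{\ge1}(A/\underline{x}_{\le j},M)=0$. This gives exactness for every $j\le n$.

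Regularity of $x_{j+1}$ on $N/\underline{x}_{\le j}N$ then follows by the snake lemma, because $N/\underline{x}_{\le j}N$ is a submodule of $F/\underline{x}_{\le j}F$, on which $x_{j+1}$ is a nonzerodivisor. Likewise $\pi$ is a nonzerodivisor on $N/\underline{x}N\subseteq F/\underline{x}F$, since $\pi,\ldots$ is regular on $F$ after $\underline{x}$.

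The possible improperness needs no separate argument. Theorem \ref{thm:BCMisflat}\ref{BCM=flat:BCM} only asks for possibly improper sequences, and being a nonzerodivisor passes to submodules regardless of whether the quotient vanishes. Hence $N$ satisfies \ref{BCM=flat:BCM}, so $N$ is flat and $\operatorname{w.dim}_AM\le1$.

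The only real point is the vanishing of $\operatorname{Tor}_1(A/\underline{x}_{\le j},M)$, which relies on $\underline{x}$ being a genuine regular sequence on $A$. That regularity is supplied by the Cohen--Macaulayness of the closed fiber together with the local flatness criterion.
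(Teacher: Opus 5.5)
Your proof is correct and takes the same route as the paper: dimension-shift along $0\to N\to F\to M\to 0$ and apply Theorem \ref{thm:BCMisflat} to the kernel $N$. The paper does not spell out why $N$ satisfies condition \ref{BCM=flat:BCM}; your argument supplies that check. Since $\underline{x}_{\le j}$ is regular on both $A$ and $M$, you get $\operatorname{Tor}_1^A(A/\underline{x}_{\le j}A,M)=0$, so $N/\underline{x}_{\le j}N\subseteq F/\underline{x}_{\le j}F$, and the required nonzerodivisor conditions pass to this submodule.
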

\begin{proof}
    Let $F$ be a flat $A$-module mapping surjectively to $M$.
    Then $\ker(F\to M)$ is flat by Theorem \ref{thm:BCMisflat}.
\end{proof}

\subsection{Construction of BCM algebras}\label{subsec:ConstructBCM}
We give our construction in the case we have a local domain essentially finitely presented flat over a valuation ring of positive or mixed characteristic.
This case is enough for the direct summand and perfectoid Kunz's theorems below.
\begin{Discu}\label{discu:BCMby+}
    Let $(V,\fm_V)\to (A,\fm_A)$ be an essentially finitely presented flat local map of local rings.
    Assume that $V$ is a valuation ring, and assume there exists a prime number $p$ not invertible in $V$.
    
    Write $V=\bigcup_{\lambda\in\Lambda}R_{\lambda}$,
    where the union is filtered and all $R_{\lambda}$ are excellent Noetherian local rings (for example essentially finitely generated over $\bZ$) and the transition maps are local.

    Assume $A$ is an integral domain.
    As seen in Discussion \ref{discu:DomainsOverPrufer},
    $A=B_\fq$ for some integral domain $B$ flat and finitely presented over $V$.
    By \citestacks{02JO}, there exists a $\lambda_0\in \Lambda$ and a flat finitely presented $R_{\lambda_0}$-algebra $B_{\lambda_0}$ whose base change to $V$ is $B$.
    Let $B_{\lambda}=B_{\lambda_0}\otimes_{R_{\lambda_0}}R_{\lambda}$.
    As each $R_{\lambda}$ is a subring of $V$ and as $B_{\lambda_0}$ is flat over $R_{\lambda_0}$,
    we see $B=\bigcup_{\lambda\geq\lambda_0}B_{\lambda}$,
    in particular every $B_{\lambda}$ is an integral domain.
    Therefore $A=\bigcup_{\lambda\geq\lambda_0}A_{\lambda}$, where $A_\lambda=(B_{\lambda})_{\fq\cap B_\lambda}$.
    We also write $\fm_\lambda$ for the maximal ideal of $A_\lambda$.

    We consider the absolute integral closure of the rings $A_\lambda$ inside a fixed algebraic closure of $\Frac A$.
    This gives us $A^+=\bigcup_{\lambda\geq\lambda_0}A_{\lambda}^+$.
    We therefore have a commutative diagram
    \[
    \begin{CD}
        R_\lambda @>>> A_\lambda @>>> A_{\lambda}^+ @>>> \widehat{A_{\lambda}^+}^p\\
        @VVV @VVV @VVV @VVV\\
        V @>>> A @>>> A^+ @>>> \colim_{\lambda\geq\lambda_0}\widehat{A_{\lambda}^+}^p,
    \end{CD}
    \]
    where $\widehat{(-)}^p$ is the $p$-adic completion.

\end{Discu}
\begin{Discu}\label{discu:BCMby+CONCLUSIONS}
Continuing Discussion \ref{discu:BCMby+},
    let $C=\colim_{\lambda\geq\lambda_0}\widehat{A_{\lambda}^+}^p$.
    Consider a sequence $\underline{x}$ in $\fm_A$ that is part of a system of parameters at $\fm_A$ over $V$ (Definition \ref{def:relativeSOP}),
    and any element $\pi\in\fm_V$.
    There exists an index $\lambda_1$ so that $\underline{x}$ is in the subring $A_{\lambda_1}$ and that $\pi\in R_{\lambda_1}$.
    As $A$ is a localization of $V\otimes_{R_{\lambda}}A_{\lambda}$,
    it follows from Lemma \ref{lem:baseChangeSOP} that $\underline{x}$ is part of a system of parameters at $\fm_\lambda$ over $R_\lambda$ for all $\lambda\geq\lambda_1$,
    so Lemma \ref{lem:CombineSOP} tells us $\underline{x},\pi$ is part of a system of parameters in $A_\lambda$.

    By \cite[Cor.~2.10]{BMPSTWW}, all permutations of the sequence $\underline{x},\pi$ are a regular sequence in $\widehat{A_{\lambda}^+}^p$ for all $\lambda\geq\lambda_1$,
    hence all permutations of the sequence $\underline{x},\pi$ are a regular sequence in $C$.
    Furthermore, $\fm_A=\bigcup_{\lambda\geq\lambda_0}\fm_\lambda$,
    so $$C/\fm_AC=\colim_{\lambda\geq\lambda_0}\widehat{A_{\lambda}^+}^p/\fm_\lambda\widehat{A_{\lambda}^+}^p=\colim_{\lambda\geq\lambda_0}{A_{\lambda}^+}/\fm_\lambda{A_{\lambda}^+}=A^+/\fm_A A^+\neq 0,$$
    where we used that $p\in\fm_\lambda$ (as $p\in\fm_V$) and that the fiber $\Spec(A^+/\fm_A A^+)$ is nonempty \citestacks{00GQ}.
\end{Discu}

\begin{Rem}
    We have taken full advantage of the fact that the maps $A_\lambda\to A$ are injective, so we have full functoriality of the absolute integral closure, and $C$ is well-defined.
    We could also use different BCM algebras for each $A_{\lambda}$ with no functoriality at all, by replacing the colimit with an ultralimit.
    Further discussion in this direction may appear in future work.
\end{Rem}

\section{Direct summand and Kunz's theorems}\label{sec:SplinterandKunz}

\subsection{The direct summand theorem over valuation rings}

Recall
\begin{Def}[cf. {\cite[Def.~2.1]{Lyu-Splinter}}]\label{def:Splinters}
    A ring $R$ is a \emph{splinter} if every finite ring map $R\to S$ that induces a surjective map on spectra is pure, \emph{i.e.}, the $R$-module map $R\to S$ is universally injective \citestacks{058I}.
\end{Def}

The following result seems to be well-known.
\begin{Lem}\label{lem:Splinter=R+pure}
    Let $R$ be an integral domain.
    Then $R$ is a splinter if and only if the map $R\to R^+$ is pure,
    where $R^+$ is an absolute integral closure of $R$.
\end{Lem}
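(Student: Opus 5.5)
We prove the two implications separately, using that $R^+$ is the filtered union of the finite $R$-subalgebras $S\subseteq R^+$, and that purity is preserved under filtered colimits.

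\emph{Splinter implies $R\to R^+$ pure.} Write $R^+=\bigcup_i S_i$, where $S_i$ runs over the finitely generated $R$-subalgebras of $R^+$. Each $S_i$ is finite over $R$, since it is generated by finitely many integral elements. Each $S_i$ is an integral extension domain of $R$, so $\Spec S_i\to\Spec R$ is surjective by lying over. Hence $R\to S_i$ is pure by the splinter hypothesis. For any $R$-module $N$, we have $N\otimes_R R^+=\colim_i N\otimes_R S_i$. The maps $N\to N\otimes_R S_i$ are injective, and filtered colimits are exact, so $N\to N\otimes_R R^+$ is injective. Therefore $R\to R^+$ is pure.

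\emph{$R\to R^+$ pure implies splinter.} Let $R\to S$ be finite with $\Spec S\to\Spec R$ surjective. Since $(0)\in\Spec R$ is hit, there is a prime $\fq$ of $S$ with $\fq\cap R=0$. Then $R\to S/\fq$ is an injective finite map of domains, and $\Frac(S/\fq)$ is finite over $\Frac R$. We embed $\Frac(S/\fq)$ into the algebraic closure used to define $R^+$. The image of $S/\fq$ is integral over $R$, so it lands in $R^+$. This gives an $R$-algebra factorization
\[R\to S\to S/\fq\to R^+.\]
The composite is pure by hypothesis. If a composite $R\to S\to R^+$ is universally injective, then so is $R\to S$, since for each $N$ the map $N\to N\otimes_R R^+$ factors through $N\to N\otimes_R S$. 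Hence $R\to S$ is pure.

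The only nontrivial point is the second direction: we must produce a prime of $S$ over $(0)$, which surjectivity provides, and then realize $S/\fq$ inside the fixed $R^+$. Different algebraic closures give isomorphic absolute integral closures over $R$, so this choice causes no loss. Neither step is a serious obstacle.
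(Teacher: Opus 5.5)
Your proof is correct and follows the paper's argument essentially step for step. For the forward direction you write $R^+$ as a filtered union of finite subalgebras, which are surjective on spectra by lying over, and pass purity to the colimit. For the converse you choose $\fq$ over $(0)$ and factor through $R\to S\to S/\fq\hookrightarrow R^+$. The details you add, namely exactness of filtered colimits and embedding $\Frac(S/\fq)$ over $\Frac R$ into the fixed algebraic closure, are exactly what the paper leaves implicit.
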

\begin{proof}
Assume $R$ is a splinter.
    As $R^+$ is integral over $R$, it is the filtered union of subalgebras $S$ finite over $R$,
    and $\Spec S\to\Spec R$ is surjective as $R\to S$ is injective and finite \citestacks{00GQ}.
    As filtered colimits are exact and commute with tensor products,
    we see $R\to R^+$ is pure, cf. \citestacks{058J}.

    Conversely, assume $R\to R^+$ is pure.
    Let $R\to S$ be a finite ring map that induces a surjective map on spectra.
    Let $\fq\in\Spec S$ be above $0\in\Spec R$.
    Then $S/\fq$ is isomorphic to a subalgebra of $R^+$,
    hence $R\to S$ is pure as the composition $R\to S\to S/\fq\to R^+$ is.
\end{proof}

\begin{Thm}\label{thm:RLRisSplinter}
    Let $V$ be a valuation ring and let $A$ be a local ring essentially finitely presented over $V$.
    If $\operatorname{w.dim}A<\infty$,
    then $A$ is a splinter.
\end{Thm}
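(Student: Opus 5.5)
By Lemma \ref{lem:Splinter=R+pure} it suffices to show that $A$ is an integral domain and that $A\to A^+$ is pure. $A$ is a domain by Discussion \ref{discu:NorRegloci}, since finite weak global dimension forces normality. By Discussion \ref{discu:DomainsOverPrufer} we may replace $V$ by its image in $A$ and assume $V\to A$ is flat local and essentially finitely presented. The argument then splits according to whether some prime $p$ is non-invertible in $V$.

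\textbf{Case 1: some prime $p$ is not invertible in $V$.} Here I would use the algebra $C=\colim_{\lambda}\widehat{A_\lambda^+}^p$ of Discussions \ref{discu:BCMby+} and \ref{discu:BCMby+CONCLUSIONS}. Those discussions show that for every partial system of parameters $\underline{x}$ at $\fm_A$ over $V$ and every $\pi\in\fm_V$, the sequence $\underline{x},\pi$ is regular on $C$. For $\pi$ a unit the sequence is improper, and the conditions on $\underline{x}$ alone come from the case $\pi\in\fm_V$, or from the fact that $\underline{x}$ is a regular sequence in $A$ with $A/\underline{x}A$ flat over $V$. So condition \ref{BCM=flat:BCM} of Theorem \ref{thm:BCMisflat} holds, and $C$ is flat over $A$. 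Since $C/\fm_AC\neq0$ (Discussion \ref{discu:BCMby+CONCLUSIONS}), $C$ is faithfully flat over $A$. The map $A\to C$ factors through $A^+$, so $A\to A^+$ is pure: for any $A$-module $N$, the map $N\to N\otimes_A C$ is injective and factors through $N\otimes_A A^+$.

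\textbf{Case 2: $V$ contains $\bQ$.} Then $A$ is a $\bQ$-algebra and normal, and I would use the trace. For a finite extension of normal domains $A\subseteq S$, the map $\frac{1}{[L:K]}\operatorname{Tr}$ sends $S$ into $A$, because $A$ is integrally closed and the trace of an integral element is integral. This gives an $A$-linear splitting $S\to A$. By Lemma \ref{lem:Splinter=R+pure}, working with finite subextensions of $A^+$ normalized, each $A\to S$ splits, so $A\to A^+$ is pure as a filtered colimit of split maps.

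\textbf{Main obstacle.} The substance is the flatness of the BCM algebra $C$, which is precisely the content of Theorem \ref{thm:BCMisflat}; with that theorem available, the proof reduces to checking its hypotheses. The one delicate point is that \ref{BCM=flat:BCM} requires the sequence $\underline{x},\pi$ to be regular for every nonzero $\pi\in V$, not only for $\pi\in\fm_V$. The case $\pi\notin\fm_V$ is handled by the improper-sequence convention as above.
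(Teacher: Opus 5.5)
Your proposal is correct and is essentially the paper's proof. The $\bQ$-algebra case goes by the trace splitting, which is what the paper's citation of Hochster's lemmas amounts to; otherwise you get faithful flatness of $C$ from Theorem \ref{thm:BCMisflat} and Discussion \ref{discu:BCMby+CONCLUSIONS}, purity of $A\to A^+$ from the factorization $A\to A^+\to C$, and conclude by Lemma \ref{lem:Splinter=R+pure}.

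One quibble: your alternative justification of $C$-regularity of $\underline{x}$ (regularity on $A$ plus flatness of $A/\underline{x}A$ over $V$) says nothing about $C$ before its flatness is known. What actually does the job is your other route: take any nonzero $\pi\in\fm_V$, or, when $V$ is a field, drop $\pi$ in Discussion \ref{discu:BCMby+CONCLUSIONS}.
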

\begin{proof}
    By \cite[Cor.~4.3]{Bertin-def-Regular}, $A$ is a normal domain,
    so the result is true if $A$ contains $\bQ$, see for example \cite[Lemmas 2 and 3]{Hoc73-splinter}.
    We may now assume $A$ does not contain $\bQ$.
    We may assume $V\to A$ is flat and local.
    As $A$ does not contain $\bQ$,
    there exists a prime number $p$ not invertible in $V$.
    We may therefore construct the algebra $C$ as in Discussion \ref{discu:BCMby+}.

    As $\operatorname{w.dim}A<\infty,$
    Theorem \ref{thm:BCMisflat} and
 Discussion \ref{discu:BCMby+CONCLUSIONS} tells us $C$ is faithfully flat over $A$.
    Therefore $A\to C$ is pure \citestacks{05CK},
    hence so is $A\to A^+$ as we have a factorization $A\to A^+\to C$.
    We conclude by Lemma \ref{lem:Splinter=R+pure}.
\end{proof}

\begin{Ques}
    Is every coherent local ring, regular in the sense of Bertin \cite{Bertin-def-Regular}, a splinter?
\end{Ques}

\subsection{Perfectoid Kunz's thoerem over valuation rings}
In \cite{Kunz-Bhatt-Ma} the relation between regularity of a Noetherian ring and perfetoid algebras over the ring is discussed. To get perfectoid rings we need to take completion.
\begin{Lem}[cf. {\cite[Th.~2.9]{BMPSTWW}}]\label{lem:RegSeqStaysInCompletion}
    Let $R$ be a ring, $x_1,\ldots,x_m,z\in R$.
    %Assume that $z\in\sqrt{yR}$.

    Let $M$ be an $R$-module such that all permutations of $x_1,\ldots,x_m,z$ are  $M$-regular sequences.
    Then all permutations of $x_1,\ldots,x_m,z$ are $\widehat{M}^z$-regular sequences, where $\widehat{M}^z$ is the $z$-adic completion of $M$.
\end{Lem}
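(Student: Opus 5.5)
The plan is to reduce everything to statements about the quotients $M/z^nM$, and then pass to the inverse limit using Mittag-Leffler.

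\emph{Reduction.} Fix a permutation, and suppose $z$ sits in position $k$. Write $\underline{x}_S$ for the $x_i$ preceding $z$ and $\underline{x}_T$ for those following it. Since $z$ is $M$-regular, $M$ is $z$-torsion free. Hence $\widehat{M}^z=\lim_n M/z^nM$, and $\widehat{M}^z/z^n\widehat{M}^z=M/z^nM$. It then suffices to prove three things.
\begin{enumerate}[label=$(\alph*)$]
\item $\underline{x}_S$ is $\widehat{M}^z$-regular, and $\widehat{M}^z/(\underline{x}_S)\widehat{M}^z\cong\widehat{N}^z$ with $N=M/(\underline{x}_S)M$.
\item $z$ is regular on $\widehat{N}^z$.
\item $\underline{x}_T$ is regular on $\widehat{N}^z/z\widehat{N}^z=N/zN=M/(\underline{x}_S,z)M$.
\end{enumerate}
Item $(c)$ is immediate from the hypothesis applied to the original permutation. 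The same identification shows that the successive quotients of $\widehat{M}^z$ agree with those of $M$ at every stage from $z$ onward. Consequently any nonvanishing requirement on $\widehat{M}^z/(\underline{x}_S,z,\underline{x}_T)\widehat{M}^z$ follows from the one for $M$.

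\emph{Consequences of permutability.} For any subset $S$ and any order of $\underline{x}_S$, both $\underline{x}_S,z$ and $z,\underline{x}_S$ are $M$-regular, as initial segments of suitable permutations. From $\underline{x}_S,z$ regular, $z$ is regular on $M/(\underline{x}_S)M$, so $N$ and its quotients by further $x$'s are $z$-torsion free. From $z,\underline{x}_S$ regular I get that $z^n,\underline{x}_S$ is regular: powers of a regular element preserve regularity of the sequence, via the standard filtration $0\to M/zM\to M/z^{n}M\to M/z^{n-1}M\to 0$ and induction. Therefore $\underline{x}_S$ is regular on $M/z^nM$ for all $n$.

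\emph{Inductive step for $(a)$ and $(b)$.} Induct along $\underline{x}_S=x_{i_1},\ldots,x_{i_l}$, with $N_j=M/(x_{i_1},\ldots,x_{i_j})M$. Assume $\widehat{M}^z/(x_{i_1},\ldots,x_{i_{j-1}})\cong\widehat{N_{j-1}}^z$, where $N_{j-1}$ is $z$-torsion free. The sequences
\[0\to N_{j-1}/z^n\xrightarrow{x_{i_j}}N_{j-1}/z^n\to N_j/z^n\to 0\]
are exact by the previous paragraph, using that $z$ is regular on $N_{j-1}$ to identify $N_{j-1}/(x_{i_j},z^n)$ with $N_j/z^n$. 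The transition maps of these inverse systems are surjective, so $\lim^1$ vanishes by Mittag-Leffler. Taking limits gives that $x_{i_j}$ is injective on $\widehat{N_{j-1}}^z$ with cokernel $\widehat{N_j}^z$, and $N_j$ is again $z$-torsion free. Finally, $(b)$ holds because $z$ is injective on each term $N/z^{n+1}\xrightarrow{z}N/z^{n}$, and passing to the limit yields injectivity on $\widehat{N}^z$.

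The main obstacle, mild here, is identifying $\widehat{M}^z/(\underline{x}_S)\widehat{M}^z$ with $\widehat{N}^z$. This requires the exactness of the limit, i.e.\ the Mittag-Leffler argument, together with the $z$-torsion-freeness of every intermediate quotient. The latter is exactly where the hypothesis on \emph{all} permutations, rather than just one, is used.
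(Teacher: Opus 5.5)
Your proof is correct, but it gets there by a different mechanism than the paper. The paper works with elements and Cauchy sequences. It first shows that $z$ is regular on $\widehat{M}^z$. It then proves the identity $x_1\widehat{M}^z\cap z^c\widehat{M}^z=z^cx_1\widehat{M}^z$, using that $z^c,x_1$ is $M$-regular. From this it gets that $x_1$ is a nonzerodivisor on $\widehat{M}^z$ and that $x_1\widehat{M}^z$ is closed, so $\widehat{M}^z/x_1\widehat{M}^z$ is the completion of $M/x_1M$. Finally it inducts on $m$ for permutations starting with $x_1$, uses symmetry for all permutations not starting with $z$, and treats those starting with $z$ via $\widehat{M}^z/z\widehat{M}^z=M/zM$.

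You instead fix one permutation and split it at $z$. You replace the Cauchy-sequence bookkeeping by exactness of $\lim$ on the systems $0\to N_{j-1}/z^n\to N_{j-1}/z^n\to N_j/z^n\to 0$, which have surjective transition maps. The essential input is the same as the paper's: the $x$'s preceding $z$ are regular on every $M/z^nM$, via regularity of $z^n,\underline{x}_S$. Your route is more structural and avoids both the closedness identity and the induction-plus-symmetry step. The paper's route stays at the level of elements and yields closedness of $x_1\widehat{M}^z$ as a by-product.

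Two small fixes.
\begin{itemize}
\item In $(b)$ the arrow is backwards: multiplication by $z$ from $N/z^{n+1}N$ to $N/z^nN$ is not injective. What you want is that $z\colon N/z^nN\to N/z^{n+1}N$ is injective, which uses that $z$ is regular on $N$. These maps form a morphism from $(N/z^nN)_n$ to $(N/z^{n+1}N)_n$, and the induced map on the common limit $\widehat{N}^z$ is multiplication by $z$. Left exactness of $\lim$ then gives $(b)$. Equivalently, taking limits of $0\to N/z^{n-1}N\xrightarrow{z}N/z^nN\to N/zN\to 0$ gives $(b)$ and $\widehat{N}^z/z\widehat{N}^z=N/zN$ at once.
\item The $z$-torsion-freeness of the intermediate $N_j$ is not used in the Mittag-Leffler step. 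Surjectivity of the transition maps, and $\widehat{N}^z=\lim_n N/z^nN$, hold for any module. So the real use of ``all permutations'' is to get $z,\underline{x}_S$ regular, and hence $\underline{x}_S$ regular on each $M/z^nM$. Torsion-freeness is needed only for $N=M/(\underline{x}_S)M$ in $(b)$, and that already follows from the given permutation.
\end{itemize}
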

\begin{proof}
    First, let us show $z$ is a regular element on $\widehat{M}^z$.
    Let $a_n$ be a sequence in $M$ so that $\lim_n za_n=0$ in the $z$-adic topology.
    Then for fixed $N$ and large $n$ we have $za_n\in z^NM$, so $a_n\in z^{N-1}M$ as $z$ is a regular element on $M$.
    Therefore $\lim_n a_n=0$, so $z$ is a regular element on $\widehat{M}^z$.
    This finishes the proof in the case $m=0$.
    We may therefore assume, by induction on $m$,
    that our results hold for all smaller $m$.

    Next, %let us show $x_1\widehat{M}^z$ is closed in $\widehat{M}^z$.
    %By completeness, it suffices to show the $z$-adic topology on $x_1\widehat{M}^z$ is the same as that induced from $\widehat{M}^z$.
    %Hence it suffices to 
    We show for all $c$,
    \begin{align}\label{Completion:artinrees}
        x_1\widehat{M}^z\cap z^c\widehat{M}^z=z^cx_1\widehat{M}^z.
    \end{align}
    As ``$\supseteq$'' is trivial we only need to show ``$\subseteq$.''
    A typical element in $x_1\widehat{M}^z$ %\cap z^c\widehat{M}^z$ 
    is of the form $a=x_1\lim_n a_n=\lim_n x_1a_n$, where $a_n\in M$ form a Cauchy sequence.
    If $a\in z^c \widehat{M}^z$,
    then $a=\lim_n z^cb_n$ for some Cauchy sequence $(b_n)_n$.
    Passing to subsequences we may assume $x_1a_n-z^cb_n\in z^{n+c}M$,
    in particular $x_1a_n\in z^cM$.
    As $z,x_1$ is an $M$-regular sequence by assumption, so is $z^c,x_1$ \citestacks{07DV},
    hence $a_n\in z^cM,$
    say $a_n=z^ca'_n$.
    As $z$ is a nonzerodivisor on $M$ and as $(a_n)_n$ is a Cauchy sequence,
    $(a'_n)_n$ is a Cauchy sequence.
    Therefore $a=z^cx_1\lim_n a'_n\in z^cx_1\widehat{M}^z$, as desired.
    Moreover, the same argument tells us if $a=0$,
    then for every $c$, $a_n\in z^c M$ for all large $n$,
    in other words, $\lim_n a_n=0$,
    so $x_1$ is a nonzerodivisor on $\widehat{M}^z$.

    By \eqref{Completion:artinrees}, $x_1\widehat{M}^z$ is closed in $\widehat{M}^z$, hence
    $\widehat{M}^z/x_1\widehat{M}^z$ is  the $z$-adic completion of $M/x_1M$.
    By the induction hypothesis,
    we now know that every permutation of $x_1,\ldots,x_m,z$ that still starts with $x_1$ is an $\widehat{M}^z$-regular sequence.
    By symmetry,
    every permutation of $x_1,\ldots,x_m,z$ that does not start with $z$ is an $\widehat{M}^z$-regular sequence.
    On the other hand, as $z$ is a nonzerodivisor on $\widehat{M}^z$ and as $\widehat{M}^z/z\widehat{M}^z=M/zM$ \citestacks{05GG},
    every permutation of $x_1,\ldots,x_m,z$ that starts with $z$ is an $\widehat{M}^z$-regular sequence,
    and the result follows.
\end{proof}

Now we can show the following characterization.
\begin{Thm}\label{thm:PerfdKunz}
        Let $(V,\fm_V,k_V)$ be a valuation ring  and let $(A,\fm_A,k_A)$ be an essentially finitely presented local $V$-algebra of residue characteristic $p$.

    Then the following are equivalent.
    \begin{enumerate}[label=$(\roman*)$]
        \item\label{KunzPFD-fwgdim} $\operatorname{w.dim} A<\infty$.
        %\item\label{KunzPFD-RLR} For every finitely generated ideal $I$ of $A$, $\operatorname{w.dim}_AA/I<\infty$; that is, $A$ is regular in the sense of Bertin \cite[D\'ef.~3.5]{Bertin-def-Regular}.
        \item\label{KunzPFD-plus-flat} $A$ is an integral domain and 
        \begin{align*}
            \operatorname{w.dim}_A\widehat{A^+}^p=\begin{cases}
                0 & \text{the image of } V_{\fm_A\cap V}\text{ in }A\text{ is }p\text{-adically separated}\\
                %\footnote{Equivalently, the radical of the ideal generated by $p$ has height $\leq 1$.}\\
                1 & \text{otherwise}
            \end{cases}
        \end{align*}
        %is a  perfectoid algebra flat over $A$.
        \item\label{KunzPFD-plus-ffd} $A$ is an integral domain and  $\operatorname{w.dim}_A\widehat{A^+}^p<\infty$.
        \item\label{KunzPFD-some-perf-ffd} There exists an $A$-algebra $S$ which is perfectoid,
        and an $S$-module $U$
        that satisfies $U/\sqrt{\fm_A S}U\neq 0$ and $\operatorname{w.dim}_AU<\infty$. 
    \end{enumerate}
\end{Thm}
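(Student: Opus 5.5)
We prove (i)$\Rightarrow$(ii)$\Rightarrow$(iii)$\Rightarrow$(iv)$\Rightarrow$(i). The first three implications use the BCM machinery of \S\ref{sec:BCM}. The last follows the strategy of \cite{Kunz-Bhatt-Ma}.

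For (i)$\Rightarrow$(ii), note first that $A$ is a normal domain by \cite[Cor.~4.3]{Bertin-def-Regular}. We may assume $V\to A$ is flat and local, replacing $V$ by its image localized at $\fm_A\cap V$; this replacement is harmless because the separatedness condition is phrased in terms of that image. Form $C=\colim_\lambda\widehat{A_\lambda^+}^p$ as in Discussion \ref{discu:BCMby+}. The natural map $C\to\widehat{A^+}^p$ is the first thing to understand. I would compare $\widehat{A^+}^p$ with $C$ via Lemma \ref{lem:RegSeqStaysInCompletion}. By Discussion \ref{discu:BCMby+CONCLUSIONS}, every permutation of $\underline{x},\pi$ is $C$-regular, and one may take $\pi=p$ after choosing $\pi$ suitably. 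Lemma \ref{lem:RegSeqStaysInCompletion}, applied with $z=p$, then transfers this regularity to $\widehat{C}^p$, and $\widehat{C}^p=\widehat{A^+}^p$ since $A^+/p^n=C/p^n$. The remaining issue is regularity of $\pi$ for a general $0\neq\pi\in V$.
\begin{enumerate}
\item If $\pi$ divides a power of $p$, regularity of $\pi$ follows from that of $p$.
\item If $\pi\notin\sqrt{pV}$, then $\pi$ is invertible modulo... rather, $p$ lies in $\bigcap_n\pi^nV$. In that case $\widehat{A^+}^p$ is an algebra over $V/\bigcap_n p^nV$, so every element of $\bigcap_n p^n V$ kills it.
\end{enumerate}
This produces the dichotomy in the statement. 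If the image of $V$ is $p$-adically separated, every nonzero $\pi$ is handled by the first case, so Theorem \ref{thm:BCMisflat} gives flatness. Otherwise we only get regularity of the $\underline{x}$, and Corollary \ref{cor:BCMisTorDim1} gives $\operatorname{w.dim}\leq 1$. Equality with $1$ then holds because a nonzero element of $\bigcap_n p^nV$ is a nonzerodivisor on $A$ but kills $\widehat{A^+}^p$, so the module is not flat. Also $\widehat{A^+}^p\neq0$, since $A^+/pA^+\neq 0$.

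The implication (ii)$\Rightarrow$(iii) is trivial. For (iii)$\Rightarrow$(iv), take $S=U=\widehat{A^+}^p$. This ring is perfectoid by the standard fact that the $p$-completion of an absolutely integrally closed domain is perfectoid. Moreover $U/\sqrt{\fm_AS}U$ is a nonzero quotient of $A^+/\fm_AA^+\neq 0$.

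The main obstacle is (iv)$\Rightarrow$(i). I would reduce to the Noetherian theorem of \cite{Kunz-Bhatt-Ma} via \S\ref{subsec:ReducedToSmallV} or Discussion \ref{discu:BCMby+}. Write $A=\bigcup A_\lambda$ with $A_\lambda$ Noetherian local (essentially of finite type over excellent $R_\lambda$), and choose the presentation so that $A_\lambda\to A$ is flat: use a finite-rank $V_\lambda$ in the valuation case, or spread out flatly. The obstruction is that $\operatorname{w.dim}_AU<\infty$ does not obviously descend to $A_\lambda$, since $A$ is not flat over $A_\lambda$ in the Noetherian approximation. The first thing I would try avoids descent altogether by running the Bhatt--Iyengar--Ma argument directly over the coherent ring $A$. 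Their argument shows that perfectoidness forces $\operatorname{Tor}^A_i(k_A,S)$ to be ``almost'' controlled, and that the Frobenius on $S/p$ gives $k_A\otimes^L_A U$ an unbounded-below contradiction unless $\operatorname{pd}_Ak_A$-type invariants are finite. Via Theorem \ref{thm:ALLwdimFacts}\ref{wdim:testk}, finite weak global dimension of the coherent local ring $A$ amounts to checking $\operatorname{Tor}^A_{\gg0}(k_A,-)=0$ on coherent modules. I therefore expect to need the coherent analogue of their key lemma: if $\operatorname{Tor}^A_{\gg0}(A/I,U)=0$ for all finitely generated $I$, and $U$ is a module over a perfectoid ring with the nonvanishing condition, then $A$ is regular in Bertin's sense.
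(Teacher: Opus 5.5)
Your arguments for (i)$\Rightarrow$(ii)$\Rightarrow$(iii)$\Rightarrow$(iv) are essentially the paper's. You build $C$, pass regularity of $\underline{x},p$ to the completion by Lemma \ref{lem:RegSeqStaysInCompletion}, then apply Theorem \ref{thm:BCMisflat} or Corollary \ref{cor:BCMisTorDim1}. Your identification $\widehat{C}^p=\widehat{A^+}^p$ is correct and slightly sharper than the paper's direct-summand remark. Your non-flatness argument via a nonzero element of $\bigcap_n p^nV$ is equivalent to the paper's injectivity argument. There are two slips. First, your case (2) is misdescribed: $\pi\notin\sqrt{pV}$ forces $\pi\mid p$, which is case (1). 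The correct split is $\pi\notin\bigcap_np^nV$ (then $\pi\mid p^n$ for some $n$) versus $\pi\in\bigcap_np^nV$ (then $\pi$ kills $\widehat{A^+}^p$). Second, if $p=0$ in $A$ you cannot argue through regularity of $p$. There $\widehat{A^+}^p=A^+=C$, and flatness comes directly from Discussion \ref{discu:BCMby+CONCLUSIONS} and Theorem \ref{thm:BCMisflat}.

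The genuine gap is (iv)$\Rightarrow$(i). You name an obstacle and a hoped-for ``coherent analogue'' but give no argument. The obstacle disappears once you notice that Noetherian approximating rings are never needed.

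\emph{Reduction.} Use the approximation of \S\ref{subsec:ReducedToSmallV} by finite-rank, non-Noetherian valuation rings $V_\lambda$. The maps $A_\lambda:=(B_\lambda)_{\fm_A\cap B_\lambda}\to A$ are faithfully flat, so (iv) descends to each $A_\lambda$. Indeed, flatness gives $\operatorname{w.dim}_{A_\lambda}U\leq\operatorname{w.dim}_AU$, and $U/\sqrt{\fm_{A_\lambda}S}U$ surjects onto $U/\sqrt{\fm_AS}U\neq0$. Conversely, (i) for all $A_\lambda$ gives finite tor dimension for every coherent $A$-module (cf. \cite[Th.~6.2.2]{Coherent-Rings}), hence (i) for $A$ by Discussion \ref{discu:NorRegloci}.

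\emph{What finite rank buys.} It supplies exactly what a direct attack over $A$ lacks: $\Spec A$ is a Noetherian space, so $\fm_A=\sqrt{\fa}$ for a finitely generated ideal $\fa$. This fails already for $A=V$ when $\fm_V$ is the union of the smaller primes.

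\emph{The perfectoid step.} By \cite[Lem.~3.7]{Kunz-Bhatt-Ma}, $J:=\sqrt{\fa S}\supseteq\fm_AS$ has finite flat dimension over the perfectoid ring $S$. Then apply \cite[Th.~2.1]{Kunz-Bhatt-Ma}, whose proof does not use Noetherianity, to $U$ with $U/JU\neq0$ and $\operatorname{w.dim}_AU<\infty$. This shows $k_A\otimes^L_AM$ is bounded for every coherent $M$, and Theorem \ref{thm:ALLwdimFacts}\ref{wdim:testk} converts this into finite tor dimension.

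So the perfectoid input you need is the finite flat dimension of $\sqrt{\fa S}$ for finitely generated $\fa$, not a Frobenius-based unboundedness argument. The lemma you propose, with hypotheses on $\operatorname{Tor}^A(A/I,U)$ for all finitely generated $I$, is not the statement that does the work.
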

We see \ref{KunzPFD-plus-flat} trivially implies \ref{KunzPFD-plus-ffd},
and \ref{KunzPFD-plus-ffd} implies \ref{KunzPFD-some-perf-ffd} as $\widehat{A^+}^p$ is perfectoid \cite[Example 3.8(2)]{Kunz-Bhatt-Ma}
  and as $\widehat{A^+}^p/\fm_A\widehat{A^+}^p=A^+/\fm_A A^+\neq 0$, so we can take $S=U=\widehat{A^+}^p$.
It remains to show \ref{KunzPFD-some-perf-ffd} implies \ref{KunzPFD-fwgdim} and \ref{KunzPFD-fwgdim} implies \ref{KunzPFD-plus-flat}.

\begin{proof}[Proof of \ref{KunzPFD-some-perf-ffd} implies \ref{KunzPFD-fwgdim}]
Write $V=\bigcup_{\lambda}V_\lambda$ and $A=\bigcup_{\lambda\geq\lambda_0}(B_{\lambda})_{\fm_A \cap B_{\lambda}}$ as in \S\ref{subsec:ReducedToSmallV},
so every $(B_{\lambda})_{\fm_A \cap B_{\lambda}}$ satisfies \ref{KunzPFD-some-perf-ffd}.
If we can show every $(B_{\lambda})_{\fm_A \cap B_{\lambda}}$ satisfies \ref{KunzPFD-fwgdim},
then every coherent $A$-module has finite tor dimension as the transition maps are flat, see \cite[Th.~6.2.2]{Coherent-Rings},
so $\operatorname{w.dim}A<\infty$, see Discussion \ref{discu:NorRegloci}.
Therefore, we may replace $V$ by $V_\lambda$ and $A$ by $(B_\lambda)_{\fm_A\cap B_\lambda}$
to assume $V$ has finite rank,
in particular
$\fm_A=\sqrt{\fa}$ for some finitely generated ideal $\fa$ of $A$ (as $\Spec A$ is a Noetherian topological space),
and $\sqrt{\fm_A S}=\sqrt{\fa S}$.

By \cite[Lem.~3.7]{Kunz-Bhatt-Ma},
$J:=\sqrt{\fa S}\supseteq\fm_AS$ has finite tor dimension over $S$,
and $U/JU\neq 0$ by assumption.
As $\operatorname{w.dim}_AU<\infty$,
it follows from \cite[Th.~2.1]{Kunz-Bhatt-Ma}\footnote{The statement includes the assumption $R$ is Noetherian, which is not necessary for the proof.} that $k_A\otimes^L_A M$ is bounded for all $M\in\Coh(A)$.
By Theorem \ref{thm:ALLwdimFacts}\ref{wdim:testk},
    $M$ has finite tor dimension,
    as desired.
\end{proof}

\begin{proof}[Proof of \ref{KunzPFD-fwgdim} implies \ref{KunzPFD-plus-flat}]
As seen in Discussion \ref{discu:NorRegloci}, $A$ is an integral domain.
We may assume $V\to A$ is flat local (cf. Discussion \ref{discu:DomainsOverPrufer}).

 Let $C$ be as in Discussion \ref{discu:BCMby+}.
 As seen in the proof of Theorem \ref{thm:RLRisSplinter},
 $C$ is faithfully flat over $A$,
 and $C=A^+$ if $p=0$ in $A$,
 so we are done in the characteristic $p$ case.

 Assume $p\neq 0\in A$.
 By construction, the canonical map $A^+\to \widehat{A^+}^p$ factors through $A^+\to C$,
 hence $\widehat{A^+}^p$ is a direct summand of $\widehat{C}^p$.
 Let us now show 
 \begin{align}\label{tordimC}
\operatorname{w.dim}_A\widehat{C}^p\leq\begin{cases}
                0 & V \text{ is } p \text{-adically separated}\\
                1 & \text{otherwise}
            \end{cases}
        \end{align}
        By Discussion \ref{discu:BCMby+CONCLUSIONS} and Lemma \ref{lem:RegSeqStaysInCompletion},
 for a sequence $\underline{x}$ in $\fm_A$ that is part of a system of parameters at $\fm_A$ over $V$,
 (every permutation of) $\underline{x},p$ is a $\widehat{C}^p$-regular sequence.
 We see $\operatorname{w.dim}_A\widehat{C}^p\leq1$ by Corollary \ref{cor:BCMisTorDim1}.
 If the valuation ring $V$ is  $p$-adically separated,
 then every nonzero element $\pi\in V$ divides a power of $p$,
 hence $\underline{x},\pi$ is a $\widehat{C}^p$-regular sequence.
 Thus $\widehat{C}^p$ is flat over $A$ by Theorem \ref{thm:BCMisflat}.
 \eqref{tordimC} is therefore proved.

 As $\widehat{A^+}^p$ is a direct summand of $\widehat{C}^p$,
 we see it remains to show if $\widehat{A^+}^p$ is flat over $A$ then $V$ is $p$-adically separated.
 If $\widehat{A^+}^p$ is flat over $A$,
 then it is faithfully flat over  $A$ as $\widehat{A^+}^p/\fm_A\widehat{A^+}^p\neq 0$.
 Hence $A\to \widehat{A^+}^p$ is injective, and so is $V\to \widehat{A^+}^p$,
 hence $V$ is $p$-adically separated, as desired.
\end{proof} 

\subsection{Kunz's theorem over valuation rings}
Specializing to characteristic $p>0$,
we have the parallel statement of Kunz's Theorem \cite{Kunz1969-reg-F-flat} in our case.
\begin{Thm}\label{thm:KunzALL}
    Let $(V,\fm_V,k_V)$ be a valuation ring of characteristic $p>0$ and let $(A,\fm_A,k_A)$ be an essentially finitely presented local $V$-algebra.
    Then the following are equivalent.
    \begin{enumerate}[label=$(\roman*)$]
        \item\label{KunzALL-fwgdim} $\operatorname{w.dim} A<\infty$.
        %\item\label{Kunz-RLR} For every finitely generated ideal $I$ of $A$, $\operatorname{w.dim}_A A/I<\infty$; that is, $A$ is regular in the sense of Bertin \cite[D\'ef.~3.5]{Bertin-def-Regular}.
        \item\label{KunzALL-all-F-flat} $F^e_*A$ is flat over $A$ for all $e$.
        \item\label{KunzALL-some-F-flat} $F^e_*A$ is flat over $A$ for some $e>0$.
        \item\label{KunzALL-perf-flat} The map $A\to A_{perf}=\colim_e F^e_*A$ is flat.
        \item\label{KunzALL-plus-flat}
        $A$ is an integral domain and the map $A\to A^+$ is flat.
        \item\label{KunzALL-some-perf-ffd} There exists an $A$-algebra $S$ which is perfect,
        and an $S$-module $U$
        that satisfies $U/\sqrt{\fm_A S}U\neq 0$ and $\operatorname{w.dim}_AU<\infty$. 
    \end{enumerate}
\end{Thm}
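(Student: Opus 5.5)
The plan is to follow the cycle of implications suggested in the introduction. The only nontrivial step is \ref{KunzALL-fwgdim} implies \ref{KunzALL-all-F-flat}. All the others are formal or come from Theorem \ref{thm:PerfdKunz} and the construction in Discussion \ref{discu:BCMby+}.

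\emph{Formal implications.} Trivially \ref{KunzALL-all-F-flat} implies \ref{KunzALL-some-F-flat}. For \ref{KunzALL-some-F-flat} implies \ref{KunzALL-perf-flat}, I would first note that $F^e_*A$ flat gives $F^{ne}_*A$ flat for all $n$ by composing. Then $A_{perf}=\colim_n F^{ne}_*A$ is a filtered colimit of flat modules, hence flat. For \ref{KunzALL-perf-flat} implies \ref{KunzALL-some-perf-ffd}, take $S=U=A_{perf}$, which is perfect. It remains to check $U/\sqrt{\fm_A S}U\neq 0$. Since $\sqrt{\fm_A A_{perf}}$ is the ideal of elements with some power in $\fm_A A_{perf}$, the quotient $A_{perf}/\sqrt{\fm_AA_{perf}}$ equals $(k_A)_{perf}\neq 0$. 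Similarly, for \ref{KunzALL-plus-flat} implies \ref{KunzALL-some-perf-ffd}, take $S=U=A^+$, which is perfect in characteristic $p$, with $A^+/\sqrt{\fm_AA^+}\neq 0$ by lying over \citestacks{00GQ}.

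For \ref{KunzALL-some-perf-ffd} implies \ref{KunzALL-fwgdim}, I would reuse verbatim the proof of \ref{KunzPFD-some-perf-ffd} implies \ref{KunzPFD-fwgdim} in Theorem \ref{thm:PerfdKunz}. Perfect $\bF_p$-algebras are perfectoid, and \cite[Lem.~3.7, Th.~2.1]{Kunz-Bhatt-Ma} apply to them. The reduction to finite rank $V$ via \S\ref{subsec:ReducedToSmallV} goes through unchanged. For \ref{KunzALL-fwgdim} implies \ref{KunzALL-plus-flat}, this is exactly the characteristic $p$ case in the proof of \ref{KunzPFD-fwgdim} implies \ref{KunzPFD-plus-flat}. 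There $C=A^+$ when $p=0$ in $A$, and $C$ is faithfully flat over $A$ by Theorem \ref{thm:BCMisflat} and Discussion \ref{discu:BCMby+CONCLUSIONS}.

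\emph{Main step: \ref{KunzALL-fwgdim} implies \ref{KunzALL-all-F-flat}.} I would deduce flatness of $F^e_*A$ from flatness of $A^+$ by a direct summand argument. Since $\operatorname{w.dim}A<\infty$, $A$ is a normal domain. Because $A$ is reduced, $F^e$ identifies $F^e_*A$ with the subring $A^{1/p^e}\subseteq A^+$. Moreover $A^{1/p^e}\cong A$ as rings, so $A^{1/p^e}$ has finite weak dimension and is essentially finitely presented over $V^{1/p^e}\cong V$, also a valuation ring. Its absolute integral closure is $A^+$.

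The previous step, applied to $A^{1/p^e}$, shows that $A^{1/p^e}\to A^+$ is faithfully flat. In particular it is pure, which in fact only needs Theorem \ref{thm:RLRisSplinter}. The delicate point is the passage from ``$A\to A^+$ flat'' to ``$A\to A^{1/p^e}$ flat'', because purity alone does not transmit flatness. I would argue with Tor. Take $M$ an $A$-module and $i>0$. Since $A^{1/p^e}\to A^+$ is faithfully flat,
\[
\Tor_i^A(M,A^{1/p^e})\otimes_{A^{1/p^e}}A^+\cong\Tor_i^A(M,A^+),
\]
and the right side vanishes because $A\to A^+$ is flat. Faithful flatness then forces $\Tor_i^A(M,A^{1/p^e})=0$. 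Hence $F^e_*A$ is flat for every $e$.

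The obstacle I expect is verifying that $A^{1/p^e}$ satisfies the hypotheses of the previous step. It must be essentially finitely presented over a valuation ring and have finite weak global dimension. Both properties transport along the isomorphism $F^e\colon A\cong A^{1/p^e}$, with $V$ replaced by $V^{1/p^e}$. I would also check that Discussion \ref{discu:BCMby+} applies, which requires flatness over the image of $V^{1/p^e}$; this holds by Discussion \ref{discu:DomainsOverPrufer}.
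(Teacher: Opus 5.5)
Your proposal is correct. The formal implications agree with the paper, which gets (i)$\Leftrightarrow$(v)$\Leftrightarrow$(vi) from Theorem~\ref{thm:PerfdKunz} and uses $S=U=A_{perf}$ for (iv)$\Rightarrow$(vi). Your proof of the one substantial implication, (i)$\Rightarrow$(ii), takes a genuinely different route.

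\textbf{The paper's route.} It proves (i)$\Rightarrow$(ii) directly. After reducing to $V$ of finite rank, it inducts on $\dim A$. When $\dim A/\fm_VA=0$, $A$ is a valuation ring and torsion-free modules are flat. Otherwise it picks $t\in\fm_A\setminus\fm_A^2$ and uses Knaf's theorem to get $\operatorname{w.dim}A/tA<\infty$. It then applies the induction hypothesis to $A/tA$ and to $A_t$. Using the finite free algebra $A[Y]/(Y^{p^e}-t)$ and the local flatness criterion, it shows $F^e_*A/tF^e_*A$ is flat over $A/tA$, and glues via \citestacks{0H7N}.

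\textbf{Your route.} You pass through (v) instead. With $A\subseteq A^{1/p^e}\subseteq A^+$, $A\to A^+$ flat and $A^{1/p^e}\to A^+$ faithfully flat, the identity $\Tor_i^A(M,A^{1/p^e})\otimes_{A^{1/p^e}}A^+\cong\Tor_i^A(M,A^+)=0$ forces $F^e_*A\cong A^{1/p^e}$ to be flat over $A$. This descent step is sound. You also do not need to re-check that $A^{1/p^e}$ satisfies the hypotheses of the theorem. Since $A^+$ is a perfect domain, $x\mapsto x^{1/p^e}$ is an automorphism of $A^+$ carrying $A\to A^+$ onto $A^{1/p^e}\to A^+$, so faithful flatness transports directly.

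\textbf{Comparison.} Your argument is shorter and shows that Kunz's direction is a formal consequence of the flatness of $A^+$. The cost is that it rests on the deep input behind (v): Theorem~\ref{thm:BCMisflat}, together with the Hochster--Huneke/BCM input that $A_\lambda^+$ is big Cohen--Macaulay, entering through Discussion~\ref{discu:BCMby+CONCLUSIONS}. The paper's inductive proof avoids big Cohen--Macaulay algebras entirely. It uses only Knaf's results and elementary flatness criteria, in the spirit of Kunz's original argument.
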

\begin{proof}
    %By Theorem \ref{thm:Kunz}, \ref{KunzALL-fwgdim}\ref{KunzALL-all-F-flat}\ref{KunzALL-some-F-ffd} are equivalent.
    By Theorem \ref{thm:PerfdKunz},
    \ref{KunzALL-fwgdim}\ref{KunzALL-plus-flat}\ref{KunzALL-some-perf-ffd} are equivalent.
    We also have, trivially,  
    \ref{KunzALL-all-F-flat} implies  \ref{KunzALL-some-F-flat} implies \ref{KunzALL-perf-flat}.
    %\ref{KunzALL-all-F-flat} implies \ref{KunzALL-perf-flat} and 
    Note that \ref{KunzALL-perf-flat} implies \ref{KunzALL-some-perf-ffd} as we can take $S=U=A_{perf}$; the condition $U/\sqrt{\fm_A S}U\neq 0$ follows from the fact $\Spec S\to \Spec A$ is a homeomorphism.
    Therefore, it remains to show \ref{KunzALL-fwgdim} implies \ref{KunzALL-all-F-flat}.

    Assume \ref{KunzALL-fwgdim}.
    As seen in Discussion \ref{discu:NorRegloci}, $A$ is a normal domain.
We may assume $V\to A$ is flat local (cf. Discussion \ref{discu:DomainsOverPrufer}).
Write $V=\bigcup_{\lambda}V_\lambda$ and $A=\bigcup_{\lambda\geq\lambda_0}(B_{\lambda})_{\fm_A \cap B_{\lambda}}$ as in \S\ref{subsec:ReducedToSmallV},
we know every $(B_\lambda)_{\fm_A\cap B_\lambda}$ satisfies \ref{KunzALL-fwgdim} by flatness,
and \ref{KunzALL-all-F-flat} for $A$ follows from \ref{KunzALL-all-F-flat} for  all $(B_\lambda)_{\fm_A\cap B_\lambda}$ as $A=\bigcup_{\lambda\geq\lambda_0} (B_\lambda)_{\fm_A\cap B_\lambda}$.
Therefore, we may replace $V$ by $V_\lambda$ and $A$ by $(B_\lambda)_{\fm_A\cap B_\lambda}$
to assume $\dim A<\infty$.
We perform induction on $\dim A$ and assume that the result is true in all smaller dimensions.

Let $\overline{A}=A/\fm_VA$, an essentially finitely generated local $k_V$-algebra.
 If $\dim \overline{A}=0$, then
$A$ is a valuation ring (Lemma \ref{lem:QFNormal=Val}).
In particular, $A\to F^e_*A$ is flat for all $e$ by Theorem \ref{thm:ModAndAlgOverV}\ref{OverV:torsion-free-is-flat}.
So we may assume %$\overline{A}$ is not finite over $k_V$, hence 
$\dim \overline{A}>0$.
Therefore $\fm_A\overline{A}\neq \fm_A^2\overline{A}$ by Nakayama's Lemma,
so in particular
we can take an element $t\in \fm_A$ that is not in $\fm_A^2$. %and is a nonzerodivisor in $\overline{A}$.
Let $B=A/tA$.
By \cite[Th.~2]{Knaf-RLR-over-Prufer}, $B$ satisfies \ref{KunzALL-fwgdim}.
As $B$ is a proper quotient of $A$ and $A$ is an integral domain, $\dim B<\dim A$,
so by induction we know $B\to F^e_*B$ is flat for all $e$.

For an arbitrary $e$, consider the commutative diagram of rings
\[\begin{CD}
A@>>> A'@>{\varphi}>> F^e_*A    \\
@. @V{\psi}VV @VVV\\
@. B@>>> F^e_*B
\end{CD}\]
where $A'=A[Y]/(Y^{p^e}-t)$ is a finite free $A$-algebra, $\varphi$ maps  $Y$ to $F^e_*t$,
and $\psi$ maps $Y$ to zero.
It is clear that $\psi$ is surjective with kernel $YA'$ and the square is cocartesian.
As $t$ is a regular element in $A$,
it is a regular element in $A'$ and thus so is $Y$;
we also have $F^e_*t$ is a regular element in $F^e_*A$.
As $B\to F^e_*B$ is flat,
we see from the local criterion \citestacks{051C} that $F^e_*A/F^e_*t^{p^e}F^e_*A=F^e_*A/tF^e_*A$ is flat over $A'/Y^{p^e}A'=A'/tA'$,
hence flat over $A/tA$.

Finally, the principal localization $A_t$ has finite weak global dimension and $\dim A_t<\dim A$ as $A$ is local.
By induction applied to the localizations of $A_t$, $A_t\to F^e_*A_t$ is flat.
We conclude that $A\to F^e_*A$ is flat by \citestacks{0H7N}.
\end{proof}

\section{Cotangent complexes, I}\label{sec:Cotangent1}
We use cohomological conventions.
Therefore, the Andr\'e--Quillen homology ($H_n(A,B,W)$ in \cite{Andre-cotangent-complex} and $\operatorname{D}_n(B\mid A,W)$ in \cite{avramov-ci}) is written as $H^{-n}(L_{B/A}\otimes^L_B W)$.
\subsection{Regular local rings have nice cotangent complexes}
The following result is due to Gabber--Romero \cite[Th.~6.5.8(ii) and Cor.~6.5.21]{GR-almost-ring-theory}.
\begin{Thm}\label{thm:Val-Has-Flat-L}
    Let $k$ be a perfect field and let $V$ be a valuation ring containing $k$.
    Then the cotangent complex $L_{V/k}$ has tor-amplitude in $[0,0]$;
    in other words, $L_{V/k}$ is quasi-isomorphic to a flat $V$-module placed in degree $0$.
\end{Thm}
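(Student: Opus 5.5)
The plan is to pass to a filtered colimit of ``small'' valuation rings where the cotangent complex can be computed explicitly. Tor-amplitude in $[0,0]$ is equivalent to $H^{-i}(L_{V/k}\otimes^L_V N)=0$ for all $i\geq 1$ and all $V$-modules $N$. Cotangent complexes commute with filtered colimits of algebras. Tor against a module also commutes with filtered colimits.

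\textbf{Step 1: reduction to finite transcendence degree.} Write $V=\bigcup_\lambda V_\lambda$ with $V_\lambda=V\cap K_\lambda$, where $K_\lambda$ runs over subfields of $\Frac V$ finitely generated over $k$. Each $V_\lambda$ is a valuation ring of finite rank, as in \S\ref{subsec:ReducedToSmallV}. Then $L_{V/k}=\colim L_{V_\lambda/k}\otimes_{V_\lambda}V$. Since $V_\lambda\to V$ is flat (Theorem \ref{thm:ModAndAlgOverV}\ref{OverV:torsion-free-is-flat}), it suffices to show each $L_{V_\lambda/k}$ is flat in degree $0$. So we may assume $\Frac V$ is finitely generated over $k$.

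\textbf{Step 2: the case $V$ is essentially of finite type over $k$.} If $V$ is essentially of finite type over the perfect field $k$, then $V$ is a normal domain of dimension $\leq 1$ essentially of finite type. Either it is a field, or it is a DVR, hence regular and therefore essentially smooth over $k$ because $k$ is perfect. In either case $L_{V/k}\simeq\Omega_{V/k}$ is flat (indeed free).

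\textbf{Step 3: general $V$ with $\Frac V$ finitely generated over $k$.} This is the main obstacle, since $V$ need not be essentially of finite type over $k$ (e.g.\ non-discrete rank one valuations). Here I would use local uniformization-type approximation: write $V$ as a filtered colimit of local rings $R_i$ essentially of finite type over $k$ with $V=\colim R_i$. If each $R_i$ could be taken regular, then $L_{R_i/k}=\Omega_{R_i/k}$ would be free, and the colimit would be flat in degree $0$. Such regular $R_i$ are not available in positive characteristic in general.

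The route I would actually attempt instead is the Gabber--Romero approach. First show $H^{-1}$ and higher vanish after $\otimes^L\kappa$ for residue fields by factoring $k\to V$ as $k\to k(T_1,\dots,T_r)\to\Frac V$ and analyzing $V$ over the valuation ring of the transcendence basis. The remaining contributions come from algebraic extensions, for which $L$ is controlled by valuation theory: defectless or perfect-like behaviour, together with Frobenius surjectivity on the perfection in characteristic $p$, where $L_{V_{perf}/k}=0$. The flatness of $H^0=\Omega_{V/k}$ would then follow from torsion-freeness over $V$ via Theorem \ref{thm:ModAndAlgOverV}\ref{OverV:torsion-free-is-flat}.

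In practice I expect this step to require citing the detailed analysis in \cite{GR-almost-ring-theory}, and I would defer to it.
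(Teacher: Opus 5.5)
In substance this matches the paper. The paper gives no argument of its own for this statement; it simply attributes it to \cite[Th.~6.5.8(ii) and Cor.~6.5.21]{GR-almost-ring-theory}, which is exactly where you end up for Step~3. Your Steps~1 and~2 are correct: the $V_\lambda$ have finite rank, $V_\lambda\to V$ is flat, $L$ commutes with filtered colimits, and a DVR essentially of finite type over a perfect field is essentially smooth. But they leave the entire difficulty to the citation, so the proposal is a proof only modulo the same reference.

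You should not treat the heuristic route outlined in Step~3 as a workable plan, because one of its steps fails. Vanishing of $H^{-i}(L_{V/k}\otimes^L_V\kappa(\fp))$ for $i\geq 1$ at every prime $\fp$, together with torsion-freeness of $H^0=\Omega_{V/k}$, does not give tor-amplitude in $[0,0]$. Over a valuation ring with idempotent maximal ideal, residue fields do not detect cohomology of complexes that are not pseudo-coherent. For example, let $V$ have rank one and non-discrete value group, let $K=\Frac V$, and let $M=(K/\fm_V)[1]$.
\begin{itemize}
\item $M\otimes^L_VK=0$, since $K/\fm_V$ is torsion.
\item $M\otimes^L_Vk_V=0$, since $(K/\fm_V)\otimes_Vk_V=K/\fm_VK=0$ and $\Tor_1^V(K/\fm_V,k_V)\subseteq\fm_V/\fm_V^2=0$. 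Higher $\Tor$ vanish because $\operatorname{w.dim}V\leq 1$.
\item $H^0(M)=0$, so $M$ passes both of your tests.
\item Yet $\Tor_1^V(K/\fm_V,V/aV)=a^{-1}\fm_V/\fm_V\neq 0$ for $0\neq a\in\fm_V$.
\end{itemize}
A residue-field test of this kind only works once you have a priori structure. Compare the proof of Theorem~\ref{thm:Regular-Has-Flat-L}, where $L_{A/k}$ is first exhibited as the cone of a map from a finite free module to a flat module.

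Also, the obstruction you name (no regular $R_i$ in positive characteristic) disappears for an absolutely integrally closed valuation ring $V^+$ dominating $V$. By Theorem~\ref{thm:AbsClosedVisIndRegular} with $T=k$, $V^+$ is a filtered colimit of regular local rings essentially of finite type over the perfect field $k$. These are essentially smooth, so $L_{V^+/k}$ is flat in degree $0$. Descending along the faithfully flat map $V\to V^+$ through the triangle $L_{V/k}\otimes^L_VV^+\to L_{V^+/k}\to L_{V^+/V}$ then amounts to showing that $L_{V^+/V}$ has tor-amplitude in $[-1,0]$. That is again genuine valuation theory of the sort supplied by \cite{GR-almost-ring-theory}, not a formal step.
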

It is not difficult to generalize this result to finitely presented algebras.
\begin{Thm}\label{thm:Regular-Has-Flat-L-special}
    Let $k$ be a perfect field and let $V$ be a valuation ring containing $k$.
    Let $A$ be an essentially finitely presented local $V$-algebra.
    If $\operatorname{w.dim}A<\infty$,
    then the cotangent complex $L_{A/k}$ has tor-amplitude in $[0,0]$.
\end{Thm}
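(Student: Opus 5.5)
The plan is to use the transitivity triangle for $k\to V\to A$,
\[
L_{V/k}\otimes^L_V A\to L_{A/k}\to L_{A/V},
\]
and show that both outer terms have tor-amplitude in $[0,0]$ as $A$-complexes. Tor-amplitude in $[0,0]$ is stable under extensions in a triangle, so this suffices. First reduce: by Discussion \ref{discu:NorRegloci}, $A$ is a domain, and by Discussion \ref{discu:DomainsOverPrufer} we may replace $V$ by its image in $A$. This image is again a valuation ring containing $k$, and after the replacement $V\to A$ is flat, local and essentially finitely presented. By Theorem \ref{thm:Val-Has-Flat-L}, $L_{V/k}$ is a flat $V$-module in degree $0$, so $L_{V/k}\otimes^L_VA$ is a flat $A$-module in degree $0$. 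It therefore remains to show that $L_{A/V}$ has tor-amplitude in $[0,0]$.

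For $L_{A/V}$, write $A$ as a localization of $P/I$, where $P=V[T_1,\dots,T_n]$ and $I$ is finitely generated (finite presentation). The natural expectation is that $V\to A$ is a \emph{smooth-like} map in the following sense: $A$ is flat over $V$, and each fiber $A\otimes_V\kappa(\fp)$ is a Noetherian local ring essentially of finite type. Its regularity is not clear in general, though, because $k$ is perfect but $\kappa(\fp)$ need not be. So I would not argue fiberwise directly. Instead I would use a cleaner route: show $H^{-1}(L_{A/k}\otimes^L_A k_A)=0$ and $H^{-i}(L_{A/k}\otimes^L_A-)=0$ via a limit argument.

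Concretely, use \S\ref{subsec:ReducedToSmallV} to write $A=\bigcup_\lambda A_\lambda$ with $A_\lambda=(B_\lambda)_{\fm_A\cap B_\lambda}$, where $B_\lambda$ is flat finitely presented over finite rank $V_\lambda\ni k$ and the transition maps are flat. Each $A_\lambda$ has finite weak dimension by flat descent (Theorem \ref{thm:ALLwdimFacts}\ref{wdim:flatDescent}). Cotangent complexes commute with filtered colimits, and a filtered colimit of complexes of tor-amplitude $[0,0]$ again has tor-amplitude $[0,0]$ (flatness is preserved by filtered colimits, and $L_{A/k}\simeq\colim L_{A_\lambda/k}\otimes_{A_\lambda}A$ up to base-change bookkeeping). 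So we may assume $V$ has finite rank and $\dim A<\infty$.

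Then I would induct on $\dim A$, mimicking the proof of Theorem \ref{thm:KunzALL}:
\begin{enumerate}[label=$(\alph*)$]
\item If $\dim A/\fm_VA=0$, then $A$ is a valuation ring containing $k$ (Lemma \ref{lem:QFNormal=Val}), and Theorem \ref{thm:Val-Has-Flat-L} applies directly.
\item Otherwise, choose $t\in\fm_A\setminus\fm_A^2$ regular on the fiber, as in Step \ref{step:slice}. Then $B=A/tA$ has finite weak dimension \cite[Th.~2]{Knaf-RLR-over-Prufer} and smaller dimension. The triangle $L_{A/k}\otimes^L_AB\to L_{B/k}\to L_{B/A}$, with $L_{B/A}\simeq (t)/(t^2)[1]\cong B[1]$, is then used together with induction on $B$ and on the punctured localizations $A_t$.
\end{enumerate}
The tor-amplitude statement is then detected by testing against $k_A$ (the coherent analogue of Theorem \ref{thm:ALLwdimFacts}\ref{wdim:testk}) on the punctured spectrum and modulo $t$. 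This is glued by a local flatness criterion like \citestacks{0H7N}.

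\textbf{Main obstacle.} The delicate point is the gluing step in (b). From $L_{B/k}$ flat in degree $0$ and $L_{B/A}=B[1]$, we get that $L_{A/k}\otimes^L_AB$ has tor-amplitude in $[0,0]$ over $B$, provided the connecting map $H^{-1}(L_{B/A})=(t)/(t^2)\to H^0(L_{A/k}\otimes B)=\Omega_{A/k}\otimes B$, sending $t\mapsto dt$, is injective with flat cokernel. I would prove injectivity by showing $dt\neq0$ in $\Omega_{A/k}\otimes k_A$, using perfectness of $k$ and $t\notin\fm_A^2$; this is the analogue of the Jacobian criterion. This is the step I expect to require the most care, since $k_A$ can be imperfect over $k$.

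Granting it, $L_{A/k}\otimes^L_A A/tA$ and $(L_{A/k})_t$ both have tor-amplitude $[0,0]$. Since $L_{A/k}$ is pseudo-coherent up to the flat piece coming from $V$ and $t$ is a nonzerodivisor, this forces $L_{A/k}$ to have tor-amplitude $[0,0]$.
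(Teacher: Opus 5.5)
Your eventual argument follows the paper's proof exactly: reduce to finite rank via \S\ref{subsec:ReducedToSmallV}, induct on $\dim A$, settle $\dim A/\fm_VA=0$ by Lemma \ref{lem:QFNormal=Val} and Theorem \ref{thm:Val-Has-Flat-L}, cut by $t\in\fm_A\setminus\fm_A^2$ using \cite[Th.~2]{Knaf-RLR-over-Prufer}, and glue $A/tA$ with $A_t$. None of the problems below is a real gap, but three parts of the write-up need correcting.

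\textbf{The opening reduction to $L_{A/V}$ is false.} It is not merely unproven, so drop it entirely. Take $V=k[\pi]_{(\pi)}$ and $A=(V[x,y]/(xy-\pi))_{(x,y)}\cong k[x,y]_{(x,y)}$. Then $\operatorname{w.dim}A=2$. Yet $L_{A/V}\simeq\Omega_{A/V}=(A\,dx\oplus A\,dy)/A(y\,dx+x\,dy)$ has generic rank $1$ and needs two generators at the closed point, so it is not flat. The closed fiber $k[x,y]_{(x,y)}/(xy)$ is Cohen--Macaulay but not regular, which is exactly the problem you sensed.

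\textbf{What you call the main obstacle is automatic.} With $L_{B/A}\simeq(tA/t^2A)[1]$, the rotated Jacobi--Zariski triangle reads
\[
tA/t^2A\longrightarrow L_{A/k}\otimes^L_AB\longrightarrow L_{B/k}\longrightarrow +1.
\]
Both outer terms have tor-amplitude in $[0,0]$ over $B$: the first is free of rank one, and the third is flat by the induction hypothesis. Hence so does the middle one. In your terms:
\begin{itemize}
\item injectivity of $t\mapsto dt$ into $\Omega_{A/k}\otimes_AB$ is forced by $H^{-1}(L_{B/k})=0$;
\item the cokernel is $\Omega_{B/k}$, which is flat by induction.
\end{itemize}
No Jacobian-type argument is needed. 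Moreover, $dt\neq 0$ in $\Omega_{A/k}\otimes_Ak_A$ alone would not give injectivity over $B$. Perfectness of $k$ is used only in the base case, through Gabber--Romero.

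\textbf{Do not try to ``test against $k_A$'' in the gluing step.} $L_{A/k}$ is not pseudo-coherent in general (e.g.\ $A=V$ a field of infinite transcendence degree over $k$). So Theorem \ref{thm:ALLwdimFacts}\ref{wdim:testk} is unavailable, and the pseudo-coherence of $L_{A/V}$ does not rescue it. The test is also unnecessary. The induction already gives full tor-amplitude statements for:
\begin{itemize}
\item $L_{A/k}\otimes^L_AA/tA$ over $A/tA$;
\item $(L_{A/k})_t=L_{A_t/k}$ over $A_t$, checked at the primes of $A_t$, whose local rings have smaller dimension.
\end{itemize}
Since $t$ is a nonzerodivisor in the domain $A$ and $L_{A/k}$ lies in $D^{\leq 0}$, \citestacks{0H85} concludes, as in the paper.
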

\begin{proof}
    By \S\ref{subsec:ReducedToSmallV}, Theorem \ref{thm:ALLwdimFacts}\ref{wdim:flatDescent}, and the compatibility of the cotangent complex with filtered colimits,
    we may assume $V$ has finite rank, in particular $\dim A<\infty$.
    We perform induction on $\dim A$.

    As seen in Discussion \ref{discu:NorRegloci}, $A$ is an integral domain.
We may assume $V\to A$ is flat local (cf. Discussion \ref{discu:DomainsOverPrufer}).
Let $\fm_V$ (resp. $\fm_A$) be the maximal ideal of $V$ (resp. $A$) and let $\overline{A}=A/\fm_V A$.
If $\dim \overline{A}=0$ then $A$ is a valuation ring (Lemma \ref{lem:QFNormal=Val}),
so we win by Theorem \ref{thm:Val-Has-Flat-L}.
We may therefore assume $\dim \overline{A}>0$.
Since $\overline{A}$ is Noetherian, Nakayama's Lemma tells us $\fm_A\overline{A}\neq \fm_A^2\overline{A}$, in particular there exists an $a\in \fm_A\setminus\fm_A^2$.
Let $B=A/aA$, so $\operatorname{w.dim}B<\infty$ \cite[Th.~2]{Knaf-RLR-over-Prufer}.
Consider the Jacobi--Zariski triangle
\[\begin{CD}
    L_{A/k}\otimes_A^L B@>>> L_{B/k}@>>> L_{B/A}@>>>+1.
\end{CD}\]
As $A$ is an integral domain we have $L_{B/A}=(aA/a^2A)[1]$. Therefore we have a distinguished triangle
\[\begin{CD}
    aA/a^2A@>>> L_{A/k}\otimes_A^L B@>>> L_{B/k}@>>>+1.
\end{CD}\]
By the  induction hypothesis, $L_{B/k}$ has tor-amplitude in $[0,0]$, hence so does $L_{A/k}\otimes_A^L B$.
On the other hand, $L_{A/k}\otimes_A^L A_a=L_{A_a/k}$ also has tor-amplitude in $[0,0]$ by the  induction hypothesis.
Therefore $L_{A/k}$ has tor-amplitude in $[0,0]$ by \citestacks{0H85}.
\end{proof}

We can generalize Theorem \ref{thm:Regular-Has-Flat-L-special} a bit more.
Note that in the theorem below we do not require $V$ to contain $k$.
\begin{Thm}\label{thm:Regular-Has-Flat-L}
Let $(V,\fm_V,k_V)$ be a valuation ring,
$(A,\fm_A,k_A)$ an essentially finitely presented $V$-algebra.
    Assume $A$ contains a field $k$ of characteristic $p\geq 0$.
    
    Assume that $p>0$ and $\operatorname{w.dim}(A\otimes_k F)<\infty$ for every finite extension $F/k$ inside $k^{1/p}$,
    or that $p=0$ and $\operatorname{w.dim}A<\infty$.
    Then the cotangent complex $L_{A/k}$ has tor-amplitude in $[0,0]$.
\end{Thm}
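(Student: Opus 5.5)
Reduce to Theorem \ref{thm:Regular-Has-Flat-L-special} by passing to the perfection of $k$.

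Let $k^{perf}$ be the perfect closure of $k$; in characteristic $0$, $k^{perf}=k$. Write $k^{perf}=\colim_F F$ over finite subextensions $F/k$ of $k^{perf}$. For $p>0$, one has $F\subseteq k^{1/p^e}$ for some $e$.

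\textbf{Step 1: the characteristic $0$ case.} If $p=0$, then $k$ is perfect. The prime field $\bQ$ is contained in $V$: $A$ contains $\bQ$, and we may replace $V$ by its image in $A$ (Discussion \ref{discu:DomainsOverPrufer}), which is a valuation ring containing $\bQ$. So Theorem \ref{thm:Regular-Has-Flat-L-special} gives that $L_{A/\bQ}$ has tor-amplitude $[0,0]$. The Jacobi--Zariski triangle $L_{k/\bQ}\otimes^L_k A\to L_{A/\bQ}\to L_{A/k}$ finishes this case. Indeed, $L_{k/\bQ}=\Omega_{k/\bQ}$ is a flat $k$-module in degree $0$, since $k/\bQ$ is a filtered colimit of smooth algebras. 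The cone of a map between flat modules in degree $0$ has tor-amplitude in $[-1,0]$. To kill $H^{-1}$, I would use injectivity of $\Omega_{k/\bQ}\otimes_k A\to\Omega_{A/\bQ}$ after tensoring with any $A$-module. This needs care, and may instead require running the inductive proof of Theorem \ref{thm:Regular-Has-Flat-L-special} with $k$ directly as base.

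Alternative for every $p$: rerun the proof of Theorem \ref{thm:Regular-Has-Flat-L-special} with $k$ in place of the perfect field. The cutting-down induction via Jacobi--Zariski and \citestacks{0H85} never used perfectness. Perfectness entered only at the base case, where $A$ is a valuation ring, through Theorem \ref{thm:Val-Has-Flat-L}.

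\textbf{Step 2: base change to $k^{perf}$ when $p>0$.} Set $A_F=A\otimes_k F$ and $A_\infty=A\otimes_k k^{perf}=\colim_F A_F$. Each $A_F$ is finite free over $A$ and local, since $F/k$ is purely inseparable. By hypothesis $\operatorname{w.dim}A_F<\infty$ for $F\subseteq k^{1/p}$. I need this for all finite $F\subseteq k^{perf}$, so I would iterate along the tower $k\subseteq k^{1/p}\subseteq k^{1/p^2}\subseteq\cdots$. The step from $k^{1/p}$ to $k^{1/p^2}$ is the Frobenius-twisted version of the hypothesis: under the isomorphism $k\cong k^{1/p}$, $A^{1/p}\otimes_{k^{1/p}}F^{1/p}$ is regular. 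Relating this to $A\otimes_k F'$ is the main obstacle. I would try Theorem \ref{thm:KunzALL}: $A\to F_*A$ is flat, so $A_F\to A^{1/p}\otimes_{k^{1/p}}F^{1/p}$ is flat. Combined with flat descent, Theorem \ref{thm:ALLwdimFacts}\ref{wdim:flatDescent}, this should bound $\operatorname{w.dim}$ at every level of the tower.

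\textbf{Step 3: conclude.} Each $A_F$ is essentially finitely presented over a valuation ring (base change of $V$, or of its image) and contains the field $F$. Passing to the colimit over $F$, $L_{A_\infty/k^{perf}}=\colim_F L_{A_F/F}$. Since $L_{A_\infty/k^{perf}}=L_{A/k}\otimes_A A_\infty$ and $A\to A_\infty$ is faithfully flat, tor-amplitude descends. It therefore suffices to show $L_{A_F/F}$ has flat cohomology in degree $0$ compatibly in the limit. Here I would run the inductive argument of Theorem \ref{thm:Regular-Has-Flat-L-special} on $A_\infty$ over the perfect field $k^{perf}$. $A_\infty$ is a filtered union of rings of finite weak dimension with flat transitions, and the valuation-ring base case becomes a colimit of valuation rings, which is covered by Theorem \ref{thm:Val-Has-Flat-L}.
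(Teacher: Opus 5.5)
Your Step 2 is a genuine gap, and the whole case $p>0$ rests on it. The map $A_F\to A^{1/p}\otimes_{k^{1/p}}F^{1/p}$ you write down is just the absolute Frobenius of $A_F$, since its target is $(A\otimes_kF)^{1/p}$. So its flatness is Theorem \ref{thm:KunzALL} for $A_F$, and it says nothing about $A\otimes_kF'$ for a finite $F'\subseteq k^{1/p^2}$ not contained in $k^{1/p}$.

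To reach $A\otimes_kF'$ you would compare it with a ring known to have finite weak dimension. The natural choice is $(A\otimes_kF)^{1/p}=A^{1/p}\otimes_{k^{1/p}}F^{1/p}$ for a finite $F\subseteq k^{1/p}$ with $F'\subseteq F^{1/p}$. The natural map $A\otimes_kF'\to A^{1/p}\otimes_{k^{1/p}}F^{1/p}$ factors through a base change of the relative Frobenius $A\otimes_kk^{1/p}\to A^{1/p}$, so flat descent would need that map to be flat. In the Noetherian setting, flatness of the relative Frobenius characterizes geometric regularity over $k$ (Radu, Andr\'e). It is therefore essentially what you are trying to prove, and flatness of absolute Frobenii gives no handle on it. Indeed, in the paper the conclusion of your Step 2 only comes out a posteriori, from this theorem combined with Theorems \ref{thm:Regular-Has-Flat-L-m-not-fg} and \ref{thm:Regular-Has-Flat-L-m-fg}.

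Granting Step 2, your Step 3 would be fine, and simpler than you make it: apply Theorem \ref{thm:Regular-Has-Flat-L-special} to each $A_F$ over $\bF_p$, pass to the colimit, and use $L_{k^{perf}/\bF_p}=0$.

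Your ``alternative for every $p$'' does not rescue $p>0$ either. The base case is a valuation ring containing an imperfect field, which Theorem \ref{thm:Val-Has-Flat-L} does not cover. Moreover, the hypothesis is not inherited by $A/aA$. For example, take $k=V=\bF_p(t)$, $A=k[x]_{(x^p-t)}$ and $a=x^p-t\in\fm_A\setminus\fm_A^2$. Then $A$ and $A\otimes_kk^{1/p}$ are DVRs, so $A$ satisfies the hypothesis. Yet $A/aA=k(t^{1/p})$ has $H^{-1}(L_{(A/aA)/k})\neq 0$. In characteristic $0$ the alternative does work: every field is perfect, so the base case is Theorem \ref{thm:Val-Has-Flat-L} with $A$ itself as the valuation ring. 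You must still adapt the finite-rank reduction, since $k$ need not lie in $V$.

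The missing idea is that one never needs to go beyond $k^{1/p}$.
\begin{itemize}
\item The paper first reduces to $k$ finitely generated over its prime field $k_0$. For $p>0$ it uses the subfields $\bF_p(\alpha)$, with $\alpha$ a finite subset of a $p$-basis; their transition maps are separable and they inherit the hypothesis.
\item Then $L_{k/k_0}\otimes_kA$ is finite free in degree $0$, and $L_{A/k_0}$ is flat in degree $0$ by Theorem \ref{thm:Regular-Has-Flat-L-special}. Since $A$ is local, the map between them is universally injective with flat cokernel as soon as it is injective modulo $\fm_A$. So it suffices to show $H^{-1}(L_{A/k}\otimes^L_Ak_A)=0$. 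This settles the point you flagged as needing care in Step 1; for $p=0$ the vanishing follows from separability of $k_A$ over $k$ and over $\bQ$.
\item For $p>0$, set $A_1=A\otimes_kk^{1/p}$. By \cite[Th.~7.26]{Andre-cotangent-complex}, $H^{-1}(L_{A/k}\otimes^L_Ak_{A_1})$ is identified with $\colim_F H^{-2}(L_{k_{A_F}/A_F})\otimes_{k_{A_F}}k_{A_1}$, over finite $F\subseteq k^{1/p}$. The key input is that the map $\Omega_{k/\bF_p}\otimes_kk^{1/p}\to\Omega_{k^{1/p}/\bF_p}$ is zero.
\item Each term vanishes because $k_{A_F}$ is separable over $\bF_p$ and $L_{A_F/\bF_p}$ is flat in degree $0$ by Theorem \ref{thm:Regular-Has-Flat-L-special} applied to $A_F$.
\end{itemize}
This is exactly why the hypothesis only involves $F\subseteq k^{1/p}$.
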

\begin{proof}
We may replace $V$ by a localization of the image of $V$ in $A$ (Discussion \ref{discu:DomainsOverPrufer}) to assume $V\to A$ is flat and local, in particular $V$ contains a field.

First, we reduce to the case $k$ is finitely generated (over the prime field).
If $p=0$ then let $(k_{\alpha})_\alpha$ be the collection of all finitely generated subfields of $k$;
if $p>0$ then fix a $p$-basis $\Lambda$ of $k$ and let $(k_{\alpha})_\alpha$ be the collection of subfields of the form $\bF_p(\alpha)$ where $\alpha$ is a finite subset of $\Lambda$.
Then the family $(k_{\alpha})_\alpha$ is filtered, the inclusion maps are separable, and $L_{k/\cup_\alpha k_\alpha}=0$.
Then $k_\alpha\to A$ satisfies the same assumptions as, when $p>0$, for a finite extension $F_\alpha$ of $k_\alpha$ in $k_\alpha^{1/p}$, $F_\alpha\otimes_{k_\alpha}k$ is a finite extension of $k$ in $k^{1/p}$.
Moreover $H^i(L_{A/k}\otimes_A^LW)=\colim_\alpha H^i(L_{A/k_\alpha}\otimes_A^LW)$ for all $A$-modules $W$.
Therefore it suffices to show the result for $k_\alpha\to A$, hence we may assume $k$ is finitely generated.

Let $k_0$ be the prime subfield of $k$ and examine the Jacobi--Zariski triangle
\[\begin{CD}
    L_{k/k_0}\otimes_k^L A@>>> L_{A/k_0}@>>> L_{A/k}@>>>+1.
\end{CD}\]
As $k_0$ is perfect and $k$ is finitely generated we know $L_{k/k_0}$ is represented by a finite free module in degree $0$.
We also know $L_{A/k_0}$ is represented by a flat module in degree $0$ by Theorem \ref{thm:Regular-Has-Flat-L-special}, since $k_0$ is automatically contained in $V$.
By \cite[Prop.~3.4.36]{Epstein-Datta-Tucker-all-chars-OHM-RUSH} %(an alternative version of \citestacks{00ME})
it therefore suffices to show $H^{-1}(L_{A/k}\otimes_A^Lk_A)=0$.
If $p=0$, then $k_A/k$ is separable, so %this follows 
from the Jacobi--Zariski triangle $L_{A/k}\otimes_A^Lk_A\to L_{k_A/k}\to L_{k_A/A}\to +1$
we get $H^{-1}(L_{A/k}\otimes_A^Lk_A)=H^{-2}(L_{k_A/A})$.
As $k_A/k_0$ is also separable we have
$0=H^{-1}(L_{A/k_0}\otimes_A^Lk_A)=H^{-2}(L_{k_A/A})$, as desired. %and $H^{-1}(L_{A/k_0}\otimes_A^Lk_A)=0$
We may therefore assume $p>0$.

Let $A_F=A\otimes_k F$ for a field extension $F/k$ and let $k_{A_F}$ be the residue field of $A_F$.
We write $A_1$ for $A_{k^{1/p}}$ and $k_{A_1}$ for the residue field of $A_1$.
We need to show $H^{-1}(L_{A/k}\otimes_A^Lk_A)=0$,
which is equivalent to $H^{-1}(L_{A/k}\otimes_A^Lk_{A_1})=0$.
By 
\cite[Th.~7.26]{Andre-cotangent-complex} we have
\[
H^{-1}(L_{A/k}\otimes_A^Lk_{A_1})=H^{-2}(L_{k_{A_1}/A_1})=\underset{F\subseteq k^{1/p}\text{ finite}}{\operatorname{colim}} H^{-2}(L_{k_{A_F}/A_F})\otimes_{k_{A_F}}k_{A_1}.
\]
Therefore, it suffices to show $H^{-2}(L_{k_{A_F}/A_F})=0$ for all $F\subseteq k^{1/p}$ finite.
Since $\operatorname{w.dim}A_F<\infty$ and $k_{A_F}$ is separable over $k_0=\bF_p$ we have, similar to the $p=0$ case,
$0=H^{-1}(L_{A_F/k_0}\otimes_{A_F}^Lk_{A_F})=H^{-2}(L_{k_{A_F}/A_F})$,
as desired.
\end{proof}

 \begin{Rem}
     Let us give a quick explanation of the isomorphism $H^{-1}(L_{A/k}\otimes_A^Lk_{A_1})=H^{-2}(L_{k_{A_1}/A_1})$ from
     \cite[Th.~7.26]{Andre-cotangent-complex}.
     We are looking at the Jacobi--Zariski triangle (noting that $L_{A/k}\otimes_A^LA_1=L_{A_1/k^{1/p}}$)
     \[\begin{CD}
     L_{A/k}\otimes_A^L k_{A_1}@>>> L_{k_{A_1}/k^{1/p}}@>>> L_{k_{A_1}/A_1}@>>>+1.
 \end{CD}\]
As $L_{k_{A_1}/k^{1/p}}$ is in $D^{[-1,0]}$ it suffices to show the canonical map $H^{-1}(L_{A/k}\otimes_A^L k_{A_1})\to H^{-1}(L_{k_{A_1}/k^{1/p}})$ is zero.
This map factors through the canonical map
$H^{-1}(L_{k_A/k}\otimes_{k_A}^L k_{A_1})\to H^{-1}(L_{k_{A_1}/k^{1/p}})$, which in turn fits into the commutative diagram with exact rows
\[
\begin{CD}
    0@>>>H^{-1}(L_{k_A/k}\otimes_{k_A}^L k_{A_1})@>>> \Omega_{k/\bF_p}\otimes_k k_{A_1}\\
    @. @VVV @VVV\\
    0@>>>H^{-1}(L_{k_{A_1}/k^{1/p}})@>>> \Omega_{k^{1/p}/\bF_p}\otimes_{k^{1/p}} k_{A_1},
\end{CD}
\]
hence it suffices to show the canonical map $\Omega_{k/\bF_p}\otimes_k k^{1/p}\to \Omega_{k^{1/p}/\bF_p}$ is zero.
This is trivial as every element of $k$ is a $p$th power in $k^{1/p}$.
 \end{Rem}

\subsection{Converse -- first case}
We treat the first case of the converse to Theorem \ref{thm:Regular-Has-Flat-L}.
The other case will be treated later with a different method, Theorem \ref{thm:Regular-Has-Flat-L-m-fg}.
\begin{Thm}\label{thm:Regular-Has-Flat-L-m-not-fg}
    Let $(V,\fm_V,k_V)$ be a valuation ring,
$(A,\fm_A,k_A)$ an essentially finitely presented $V$-algebra.
    Assume $A$ contains a field $k$.
    
    Assume $\fm_V$ is \emph{not} finitely generated, and assume
    $H^{-1}(L_{A/k}\otimes^L_Ak_A)=0$.
    Then for every finite extension $F/k$, $\operatorname{w.dim}A_F<\infty$,
    where $A_F=A\otimes_k F$.
\end{Thm}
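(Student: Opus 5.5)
Because $\fm_V$ is not finitely generated, I plan to use the factorization $A\to\overline{A}=A/\fm_VA$. Following Discussion \ref{discu:DomainsOverPrufer}, I first replace $V$ by the localization of its image in $A$, so that $V\to A$ is flat and local; $\fm_V$ stays non-finitely-generated, because the maximal ideal of a localized quotient valuation ring still equals its own square (Lemma \ref{lem:SquareIsPrimeInVal}, together with the fact that in a valuation ring the maximal ideal is either principal or idempotent). The hypothesis $H^{-1}(L_{A/k}\otimes^L_Ak_A)=0$ survives base change along $k\to F$: we have $L_{A_F/F}=L_{A/k}\otimes_AA_F$, and $k_A\otimes_kF\to k_{A_F'}$ is a finite field extension on reduced quotients. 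Thus for every local ring $A'$ of $A_F$, which is again essentially finitely presented over the valuation ring $V\otimes$-image, we get $H^{-1}(L_{A'/F}\otimes k_{A'})=0$, since Tor-amplitude behaviour of $H^{-1}$ at the residue field is checked by flat base change of fields. It therefore suffices to treat $F=k$, i.e.\ to show $\operatorname{w.dim}A<\infty$.

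The key point is the idempotent maximal ideal. Since $\fm_V=\fm_V^2$, we get $\fm_VA\otimes_Ak_A=\fm_V\otimes_Vk_V\otimes k_A=0$, because $\fm_V/\fm_V^2=0$. Now consider the Jacobi--Zariski triangle for $k\to A\to\overline{A}$ tensored with $k_A$:
\[
L_{A/k}\otimes^L_Ak_A\to L_{\overline{A}/k}\otimes^L_{\overline{A}}k_A\to L_{\overline{A}/A}\otimes^L_{\overline{A}}k_A\to+1.
\]
As $A$ is flat over $V$, $L_{\overline{A}/A}=L_{k_V/V}\otimes_V A$. By Gabber--Romero (Theorem \ref{thm:Val-Has-Flat-L} and its relative analogue), $H^{-1}(L_{k_V/V})=\fm_V/\fm_V^2=0$, and I expect $H^{-2}(L_{k_V/V}\otimes k)$ to vanish as well; the idempotence of $\fm_V$ should make $L_{k_V/V}$ concentrated near degree zero. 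I would justify this by writing $\fm_V$ as a filtered union of principal ideals $\pi V$, so that $k_V=\colim V/\pi V$ in an appropriate sense, and then computing $L_{k_V/V}\otimes k_V$ via $L_{(V/\pi)/V}=\pi V/\pi^2V[1]$ and the transition maps, which are zero on these modules. Granting this, $H^{-1}(L_{\overline{A}/k}\otimes k_A)=0$. Since $\overline{A}$ is a Noetherian local ring essentially of finite type over the field $k_V\supseteq k$, André's classical theorem quoted in the introduction applies and shows that $\overline{A}$ is geometrically regular over $k$, and in particular regular.

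Finally, Theorem \ref{thm:ALLwdimFacts}\ref{wdim:flatAscent}, applied to the flat local map $V\to A$ with $\operatorname{w.dim}V\le1$ and $\overline{A}$ regular, gives $\operatorname{w.dim}A<\infty$. The same argument applies to each $A_F$, because $\overline{A_F}=\overline{A}\otimes_kF$ is regular by geometric regularity, so we can even bypass the reduction in the first step. I expect the main obstacle to be the vanishing of $H^{-2}(L_{k_V/V}\otimes^L k_A)$ and of $H^{-1}$, i.e.\ the precise colimit computation that shows the contributions of $\pi V/\pi^2V$ die in the limit when $\fm_V$ is not finitely generated. This is exactly where the hypothesis is used, and it is where the principal case fails, since there $H^{-1}=\fm_V/\fm_V^2\neq0$.
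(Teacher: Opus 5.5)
There is a genuine gap at your first step. Discussion~\ref{discu:DomainsOverPrufer} lets you replace $V$ by (a localization of) its image so that $V\to A$ becomes flat only when $A$ is an integral domain. Nothing in the hypotheses gives you that; that $A$ is a domain is part of what the theorem proves.

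Every later step depends on this flatness: the identification $L_{\overline{A}/A}=L_{k_V/V}\otimes_VA$, and the final appeal to Theorem~\ref{thm:ALLwdimFacts}\ref{wdim:flatAscent}. Without flatness you can still show that $\overline{A}=A/\fm_VA$ is regular, since $H^{-1}(L_{\overline{A}/A}\otimes^L k_A)=\fm_VA/\fm_A\fm_VA=0$ when $\fm_V=\fm_V^2$. But regularity of $\overline{A}$ alone is not enough. Suppose $V$ contains a field and $0\neq a\in\fm_V$, and set $A=(V[x]/(ax))_{(\fm_V,x)}$. Then $\overline{A}=k_V[x]_{(x)}$ is regular, yet $ax=0$ with $a,x\neq 0$ in $A$. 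So $A$ is not a domain, and $\operatorname{w.dim}A=\infty$.

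What is missing is a second use of the hypothesis to force $\Tor_1^V(k_V,A)=0$. The paper does this as follows.
\begin{enumerate}[label=$(\alph*)$]
\item It uses the exact sequence of \cite[Prop.~15.18]{Andre-cotangent-complex} (\citestacks{09DB}). This is the correct replacement for your base-change formula, and it carries an extra term $\Tor_1^V(k_V,A)\otimes_Ak_A$.
\item Since $L_{k_V/V}=\fm_V/\fm_V^2[1]=0$, that sequence gives $H^{-2}(L_{\overline{A}/A}\otimes^L k_A)\cong\Tor_1^V(k_V,A)\otimes_Ak_A$, and also $H^{-1}(L_{\overline{A}/A}\otimes^L k_A)=0$.
\item In the Jacobi--Zariski sequence for $k\to A\to\overline{A}$, this $H^{-2}$ sits between two zeros. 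On one side is $H^{-2}(L_{\overline{A}/k}\otimes^L k_A)$, which vanishes because $\overline{A}$ is geometrically regular over $k$. On the other is $H^{-1}(L_{A/k}\otimes^L k_A)=0$, which is the hypothesis. Hence $\Tor_1^V(k_V,A)\otimes_Ak_A=0$.
\item $\Tor_1^V(k_V,A)$ is a finitely generated $A$-module: write $A=P/J$ and compute $k_V\otimes^L_VA=\overline{P}\otimes^L_PA$, where $\overline{P}=P/\fm_VP$ is Noetherian. So Nakayama gives $\Tor_1^V(k_V,A)=0$.
\item Therefore $\operatorname{w.dim}_A\overline{A}\le 1$, and Theorem~\ref{thm:ALLwdimFacts}\ref{wdim:quotient} finishes.
\end{enumerate}

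The step you flag as the main obstacle is in fact immediate. Because $\fm_V$ is a filtered union of principal ideals generated by nonzerodivisors, $L_{k_V/V}=\fm_V/\fm_V^2[1]$, which is $0$. The real work is the $\Tor_1$ term above.

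Two smaller inaccuracies:
\begin{enumerate}[label=$(\alph*)$]
\item $k$ need not lie in $k_V$. This is harmless, since Andr\'e's theorem applies to any Noetherian local $k$-algebra.
\item Localizing $V$ at a non-maximal prime can make its maximal ideal principal. So your justification that $\fm_V$ stays non-finitely generated is valid only once $V\to A$ is already local.
\end{enumerate}
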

\begin{proof}
By Theorem \ref{thm:ALLwdimFacts}\ref{wdim:flatDescent}\ref{wdim:flatAscent} it suffices to show $\operatorname{w.dim}A_F<\infty$
for every finite purely inseparable extension $F/k$.
$A_F$ is a local ring essentially finitely presented over $V$,
and $L_{A_F/F}\otimes^L_{A_F}k_{A_F}=L_{A/k}\otimes^L_{A}A_F\otimes^L_{A_F}k_{A_F}=L_{A/k}\otimes^L_{A}k\otimes^L_{k}k_{A_F}$, where $k_{A_F}$ is the residue field of $A_F$.
Therefore $H^{-1}(L_{A_F/F}\otimes^L_{A_F}k_{A_F})=0$.
Consequently, it suffices to show $\operatorname{w.dim}A<\infty$.

We may assume $V\to A$ local.
Let $\overline{A}=A/\fm_V A$.
    By \cite[Prop. 15.18]{Andre-cotangent-complex} or \citestacks{09DB}, there exists an exact sequence
    \[
    \begin{CD}
        H^{-2}(L_{k_V/V}\otimes_{k_V}^L k_A)@>>> H^{-2}(L_{\overline{A}/A}\otimes_{\overline{A}}^L k_A)@>>>\Tor_1^V(k_V,A)\otimes_A k_A@>>>\\ H^{-1}(L_{k_V/V}\otimes_{k_V}^L k_A)@>>> H^{-1}(L_{\overline{A}/A}\otimes_{\overline{A}}^L k_A)@>>> 0.
    \end{CD}
    \]

    Note that as $\fm_V$ is a filtered union of principal ideals we have $L_{k_V/V}=\fm_V/\fm_V^2[1]$.
    Since $\fm_V$ is not finitely generated, $\fm_V=\fm_V^2$, so $L_{k_V/V}=0$.
    We therefore have
    \begin{align}
       H^{-2}(L_{\overline{A}/A}\otimes_{\overline{A}}^L k_A)&\simeq\Tor_1^V(k_V,A)\otimes_A k_A, \label{H2=tor1}\\
       H^{-1}(L_{\overline{A}/A}\otimes_{\overline{A}}^L k_A)&=0. \label{H1=0}
    \end{align}
Consider the Jacobi--Zariski long exact sequence
\[\begin{CD}
        %H^{-2}(L_{A/k}\otimes_{A}^L k_A)
        .@.
        H^{-2}(L_{\overline{A}/k}\otimes_{\overline{A}}^L k_A)@>>> 
        H^{-2}(L_{\overline{A}/A}\otimes_{\overline{A}}^L k_A)@>>>\\ H^{-1}(L_{A/k}\otimes_{A}^L k_A)@>>> H^{-1}(L_{\overline{A}/k}\otimes_{\overline{A}}^L k_A)@>>> H^{-1}(L_{\overline{A}/A}\otimes_{\overline{A}}^L k_A).
    \end{CD}
    \]
\eqref{H1=0} and the assumption $H^{-1}(L_{A/k}\otimes^L_Ak_A)=0$ tells us $H^{-1}(L_{\overline{A}/k}\otimes^L_{\overline{A}}k_A)=0$.
Therefore the Noetherian local ring $\overline{A}$ is regular and $H^{-2}(L_{\overline{A}/k}\otimes^L_{\overline{A}}k_A)=0$, cf. \cite[Th.~16.18]{Andre-cotangent-complex}.
Now the sequence tells us $H^{-2}(L_{\overline{A}/A}\otimes_{\overline{A}}^L k_A)=0$,
which gives $\Tor_1^V(k_V,A)\otimes_A k_A=0$ by \eqref{H2=tor1}.

At this point, we note that $\Tor_1^V(k_V,A)$ is a finitely generated $A$-module.
Indeed, $A=P/J$ where $P$ is a localization of a polynomial $V$-algebra and $J$ is a finitely generated ideal;
therefore, writing $\overline{P}=P/\fm_V P$, we have $k_V\otimes^L_VA=\overline{P}\otimes^L_PA\in D^{\leq 0}_{\Coh}(\overline{P})$ (cf. \S\ref{subsec:DofCohRing}).
Therefore $\Tor_1^V(k_V,A)$ is a finitely generated $\overline{P}$-module, hence a finitely generated $A$-module.
Consequently, $\Tor_1^V(k_V,A)=0$ by Nakayama's Lemma.
As $\operatorname{w.dim}V\leq 1$,
we get $\overline{A}=k_V\otimes^L_VA$ and $\operatorname{w.dim}_A\overline{A}\leq 1$.
Since $\overline{A}$ is regular, we see $\operatorname{w.dim}A<\infty$ from Theorem \ref{thm:ALLwdimFacts}\ref{wdim:quotient}.
\end{proof}
\section{Approximation of valuation rings by regular local rings}\label{sec:approximation-regular-v}
In this section, we discuss a much finer approximation of valuation rings by Noetherian rings than we used in \S\ref{subsec:ConstructBCM}.
We will consider valuation rings $V$ that are  a filtered colimit of (Noetherian) regular local rings $R_{\lambda}$,
and descend objects over $V$ to objects over some $R_{\lambda}$ preserving homological properties.

Some fundamental ideas here are similar to those in the forthcoming joint work \cite{Lyu-Bogdan-approximation}, or earlier versions of it, where we discuss systematic approximation of a \emph{Noetherian} local ring by other Noetherian local rings.

In future work, the author will consider further homological properties of finitely presented algebras over valuation rings, in which Noetherian approximation as in this section will be further developed and will play a critical role.

\subsection{Valuation rings as filtered colimits of regular local rings}
The key result follows from Gabber's local uniformization theorem \cite[Exp.~VII, Th.~1.1]{Gabber-ILO},
cf. \cite[Cor.~3.4]{formal-lifting-excellence-Gabber} and \cite[\S4.2]{ADVal}.
We remind the reader that the words ``quasi-excellent,'' ``excellent,'' and ``regular'' for a local ring are reserved for Noetherian local rings.

\begin{Thm}\label{thm:AbsClosedVisIndRegular}
Let $T\to V$ be a local map of local rings.
Assume that $T$ is quasi-excellent and that $V$ is an absolutely integrally closed valuation ring.
Then $V=\colim_{\lambda\in\Lambda}R_{\lambda}$,
where $\Lambda$ is a directed set, every $R_{\lambda}$ is a regular local ring essentially finitely generated over $T$,
and the transition maps are local.
If $T\to V$ is injective, then we may arrange that $R_{\lambda}\to V$ is injective for all $\lambda\in\Lambda$.
\end{Thm}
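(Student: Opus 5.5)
The plan is to write $V$ as the filtered union of its finitely generated $T$-subalgebras localized at their centers, and then to resolve each of these by Gabber's local uniformization.

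\emph{Reduction.} Since $V$ is absolutely integrally closed, its fraction field $K$ is algebraically closed. First assume $T\to V$ is injective; the general case follows by replacing $T$ with its image. The image is a quotient of $T$, hence still quasi-excellent, and the map stays local. Then
\[
V=\bigcup_{S}S_{\fm_V\cap S},
\]
where $S$ ranges over the finitely generated $T$-subalgebras of $V$. Each $S_{\fm_V\cap S}$ is a local domain essentially of finite type over $T$, hence quasi-excellent, and is dominated by $V$. Call such rings ``models''. The set of all subrings of $V$ that are regular local, essentially finitely generated over $T$, and dominated by $V$ is partially ordered by inclusion. Dominance makes every inclusion among them local. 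So it suffices to show that this set is directed and that its union is $V$. Both statements follow from one claim.

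\emph{Claim.} For every model $S_{\fm_V\cap S}$, there is a regular local ring $R\subseteq V$, essentially finitely generated over $T$, with $S\subseteq R$ and $\fm_V\cap R=\fm_R$.

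The claim gives directedness: given two regular local rings $R_1,R_2$ in the set, apply it to the model generated by the generators of both. It gives exhaustiveness by applying it to $T[v]$ for each $v\in V$.

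\emph{Proof of the claim.} This is where I would use Gabber's local uniformization \cite[Exp.~VII, Th.~1.1]{Gabber-ILO}. For a quasi-excellent Noetherian local domain $A$ and a valuation ring $V$ of $\Frac(A)$ dominating it, the theorem provides a finite extension $L$ of $\Frac(A)$, an extension $V_L$ of the valuation to $L$, and a regular local ring $R$ essentially of finite type over $A$ with fraction field $L$, dominated by $V_L$ and dominating $A$. In general this requires an alteration (a finite field extension), and this is exactly where absolute integral closedness of $V$ enters.

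Take $A=S_{\fm_V\cap S}$ and the valuation ring $V\cap\Frac(A)$. Since $K$ is algebraically closed, the finite extension $L$ embeds in $K$ over $\Frac(A)$. Since $V$ is integrally closed in $K$ and the extensions of a valuation to $L$ are conjugate, the embedding can be chosen so that $V_L=V\cap L$. The ring $R$ then sits inside $V\cap L\subseteq V$ and is dominated by $V$, which proves the claim.

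I expect the main obstacle to be the bookkeeping in Gabber's theorem. One has to check that the statement yields a regular local ring essentially of finite type over $A$, and not merely over a completion or a henselization. One also has to check that the ring is actually a subring of the given valuation ring after the embedding. Here I would cite the formulation in \cite[Cor.~3.4]{formal-lifting-excellence-Gabber} or \cite[\S4.2]{ADVal}, which states precisely that an absolutely integrally closed valuation ring is a filtered union of regular local rings essentially of finite type over a quasi-excellent base.

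For the non-injective case, the $R_\lambda$ are essentially finitely generated over the image $\overline{T}$ of $T$, hence over $T$. The injectivity of $R_\lambda\to V$ holds by construction.
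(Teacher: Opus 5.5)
Your proposal is correct and follows the paper's route. The paper gives no separate argument: it deduces the statement directly from Gabber's local uniformization \cite[Exp.~VII, Th.~1.1]{Gabber-ILO}, citing the same formulations \cite[Cor.~3.4]{formal-lifting-excellence-Gabber} and \cite[\S4.2]{ADVal} you point to, and your reduction to local rings of finitely generated $T$-subalgebras of $V$, with the alteration absorbed by the algebraically closed field $\Frac V$, is the standard unpacking of that citation. (One wording point: for non-normal $L$ the extensions of the valuation to $L$ need not be $\operatorname{Aut}(L/\Frac(A))$-conjugate, so justify choosing $L\hookrightarrow \Frac V$ with $V_L=V\cap L$ by passing to a normal closure of $L$ in $\Frac V$ and using conjugacy of extensions there.)
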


Similarly, in equal characteristic zero, the following result follows essentially from  \cite{Zariski-local-uniformization}, or the stronger \cite{Hironaka}.

\begin{Thm}\label{thm:VoverQisIndRegular}
    Let $V$ be a valuation ring containing $\bQ$.
    Then $V=\colim_{\lambda\in\Lambda}R_{\lambda}$,
where $\Lambda$ is a directed set, every $R_{\lambda}$ is a regular local ring essentially finitely generated over $\bQ$,
and the transition maps are local.
\end{Thm}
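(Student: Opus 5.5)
The plan is to exhibit $V$ as a directed union of regular local rings essentially of finite type over $\bQ$. We use local uniformization in characteristic zero, in the same way Theorem \ref{thm:AbsClosedVisIndRegular} is deduced from Gabber's theorem.

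First, write $K=\Frac V$. Let $\Lambda_0$ be the set of finitely generated $\bQ$-subalgebras $D\subseteq V$, directed by inclusion. For each $D$, put $D_{\fm_V\cap D}$, a local domain essentially of finite type over $\bQ$ and dominated by $V$. The union of these rings is $V$, but they are not regular. To fix this, apply Zariski's local uniformization \cite{Zariski-local-uniformization}, or Hironaka's resolution \cite{Hironaka}, to the valuation of $K_D:=\Frac D$ determined by $V\cap K_D$. This valuation is centered on $\Spec D$ at $\fm_V\cap D$. Local uniformization produces a finitely generated $D$-algebra $D'\subseteq V\cap K_D$ such that $(D')_{\fm_V\cap D'}$ is regular. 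Concretely, one takes the center on a resolved model $X'\to\Spec D$ and an affine chart around that center, which lies in $V\cap K_D$ because the valuation is centered there. The hypothesis of characteristic zero is used exactly here: no restriction on rank or residue field is needed, since Zariski's theorem handles arbitrary valuations of function fields over a field of characteristic zero.

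Second, let $\Lambda$ be the set of pairs $(D',\fm_V\cap D')$ with $D'\subseteq V$ finitely generated over $\bQ$ and $R_{D'}:=(D')_{\fm_V\cap D'}$ regular, ordered by inclusion of $D'$. The maps $R_{D'}\to R_{D''}$ are local because both rings are dominated by $V$. It remains to check two things. The first is directedness. Given $D'_1,D'_2$, apply the first step to the algebra $D'_1D'_2$ to obtain a regular $R_{D'_3}$ with $D'_3\supseteq D'_1D'_2$. The second is that the union is $V$. Any $v\in V$ lies in some $D$, and hence in the regular $R_{D'}$ constructed from it. Since $R_{D'}\subseteq V$, the colimit is the union, and it equals $V$.

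The main obstacle is precisely the input from local uniformization. We need the form in which the regular model dominates the given $D$ and whose local ring at the center is contained in $V$. This requires the version for valuations of arbitrary rank on a function field over $\bQ$, which Zariski's theorem supplies. Alternatively, one can take an embedded resolution of $\Spec D$ by Hironaka and use that the center of a valuation on a regular scheme has a regular local ring. With that input, everything else is formal.
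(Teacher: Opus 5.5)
Your proposal is correct and follows the paper's route: the paper simply asserts that the result follows from Zariski's local uniformization (or Hironaka's resolution). You supply the standard deduction the paper leaves implicit. You uniformize each finitely generated $\bQ$-subalgebra $D\subseteq V$ inside $V\cap\Frac D$, index by those $D'$ whose localization at $\fm_V\cap D'$ is regular, and check directedness, locality of the inclusions, and that the union is $V$; all of these checks are sound.
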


\subsection{Regular local rings and valuation rings}
The following observation is important in the proof of Theorem \ref{thm:TorIndependentForIndRegularV} below
in the case $R_{\lambda}\to R$ not injective; otherwise it is trivial.

\begin{Lem}\label{lem:descend-injectivity}
    Let $A\to B$ be a map of rings, where
    $A$ is a Noetherian integral domain and 
    %$A$ is a regular local ring and 
    $B$ is reduced. 
    Let $\varphi:\cF\to \cG$ be a morphism of coherent $\cO_{\bP^N_{A}}$-modules.

    Assume both $\cF$ and $\cG$ are flat over $A$ and that $\varphi_B:\cF\otimes_A B\to \cG\otimes_A B$ is an injective morphism of $\cO_{\bP^N_{B}}$-modules.
    Then $\varphi$ is injective.
\end{Lem}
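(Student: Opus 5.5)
The plan is to reduce to a single fiber of $\bP^N_A\to\Spec A$, use the local criterion for flatness there, and then spread the conclusion out by torsion-freeness and properness. We may assume $B\neq 0$, since injectivity of $\varphi_B$ carries no information when $B=0$. Let $\cK=\ker\varphi$; it is a coherent $\cO_{\bP^N_A}$-module since $\bP^N_A$ is Noetherian. The goal is to show $\cK=0$.

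\emph{Step 1: reduce to a fiber.} Pick a minimal prime $P$ of $B$. As $B$ is reduced, $B_P=\kappa(P)$ is a field. Since localization is exact, $\varphi_{B_P}=\varphi_{\kappa(P)}$ is injective. Let $\fp=P\cap A$. The map $\kappa(\fp)\to\kappa(P)$ is a field extension, hence faithfully flat, so $\varphi_{\kappa(\fp)}:\cF\otimes_A\kappa(\fp)\to\cG\otimes_A\kappa(\fp)$ is injective on the fiber $\bP^N_{\kappa(\fp)}$.

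\emph{Step 2: local criterion.} Let $x$ be any point of $\bP^N_A$ lying over $\fp$. Then $A_\fp\to\cO_{\bP^N_A,x}$ is a local map of Noetherian local rings, and $\cF_x,\cG_x$ are finite modules. Since $\cG_x$ is flat over $A_\fp$ and $\varphi_x\otimes\kappa(\fp)$ is injective, the fiberwise criterion for flatness (\citestacks{046Y}, cf. \citestacks{00MK}) shows that $\varphi_x$ is injective, with cokernel flat over $A_\fp$. Hence $\cK_x=0$ for every $x$ in the fiber over $\fp$, so $\operatorname{Supp}\cK$ is disjoint from that fiber.

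\emph{Step 3: globalize.} Because $\cF$ is flat over the domain $A$, the subsheaf $\cK\subseteq\cF$ is $A$-torsion-free. So if $\cK\neq 0$, then $\cK\otimes_A\Frac(A)\neq 0$, and $\operatorname{Supp}\cK$ contains a point lying over the generic point of $\Spec A$. This support is closed, and $\bP^N_A\to\Spec A$ is proper, so its image is a closed subset of $\Spec A$ containing the generic point. That image is therefore all of $\Spec A$, and in particular contains $\fp$. This contradicts Step 2, so $\cK=0$.

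The main point to get right is Step 1: one cannot tensor the injection $\varphi_B$ directly with a residue field of $B$. The argument works by localizing at a minimal prime, which uses exactly that $B$ is reduced. Step 2 is a direct application of the local criterion, and Step 3 is routine given torsion-freeness and properness.
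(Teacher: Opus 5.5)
Your proof is correct and follows essentially the paper's argument. Both localize $B$ at a minimal prime, descend injectivity to $\kappa(\fp)$ by faithful flatness, apply the fiberwise criterion (\citestacks{00ME} in the paper) at stalks over $\fp$, and globalize using that $\ker\varphi$ is $A$-torsion-free and that $\bP^N_A\to\Spec A$ is proper. The only difference is packaging: the paper base changes to $A_\fp$ and checks closed points, all of which lie over $\fp A_\fp$, while you argue directly with the support of $\ker\varphi$.

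One small correction: $B\neq 0$ is not a without-loss-of-generality reduction but a tacit hypothesis of the lemma. For $B=0$ the map $\cO_{\bP^N_A}\to 0$ is a counterexample, and the paper also uses $B\neq 0$ silently when it picks a minimal prime of $B$.
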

\begin{proof}
    Let $\fq$ be a minimal prime of $B$ and let $\fp=\fq\cap A$.
    As $\cF$ is flat over $A$ and as $A\to A_\fp$ is injective, we may apply the base change $A\to A_\fp$ and $B\to B_\fq$ to assume that $A$ is local, $B$ is a field, and $A\to B$ is local.
    Then $B$ is faithfully flat over the residue field $k$ of $A$, so we may assume $B=k$.
    
    Let $x\in \bP^N_A$ be a closed point, which lies above the maximal ideal $\fm\in\Spec A$.
    The map $\varphi_x:\cF_x\to \cG_x$ of stalks is a map between finite $\cO_{\bP^N_A,x}$-modules flat over $A$.
    The assumption tells us the mod $\fm$ map $\cF_x/\fm \cF_x\to \cG_x/\fm \cG_x$ is injective.
    Hence $\varphi_x$ is injective by \citestacks{00ME}.
    Since $x$ was arbitrary, $\varphi$ is injective.
    %Since $A\to B$ is local, there exists a closed point $y\in \bP^N_B$ lying above $x$, and $C:=\cO_{\bP^N_A,x}\to \cO_{\bP^N_B,y}=:D$ 
    %is a local map of local rings where $C$ is an integral domain.
    %As $\cF$ is flat over $A$, we may apply the base change $A\to A_\fp$ and $B\to B_\fq$ to assume that $\dim B=0$,
    %complete intersection morphism of Noetherian local rings by \cite[Lem.~2.2.5]{Lyu-Bogdan-approximation}.
    %Finally, let $M=[\cF_x\xrightarrow{\varphi_x}\cG_x]\in D^{[-1,0]}_{\Coh}(C)$.
    %As $\cF$ and $\cG$ are flat over $A$ and as $\varphi_B$ is injective, we have $M\otimes^L_{C}D\in D^{[0,0]}(D)$, hence $M\in D^{[0,0]}(C)$ \cite[Lem.~2.1.5]{Lyu-Bogdan-approximation}.
    %As this is true for all closed points $x$, we see $\varphi$ is injective.
\end{proof}

We consider the following
\begin{Situ}\label{situ:approximationOf1scheme1module}
    Let $\Lambda$ be a directed set with a minimal element $0$,
    $(R_{\lambda})_{\lambda}$ be a direct system of local maps of local rings with colimit $R$.
    Let $X_0$ be an $R_0$-scheme of finite presentation, $\cF_0$ a finitely presented $\cO_{X_0}$-module.
    Denote by $X_{\lambda}$ (resp. $X$) the base change of $X_0$ to $R_\lambda$ (resp. $R$),
    and by $\cF_{\lambda}$ (resp. $\cF$) the base change of $\cF_0$ to $X_\lambda$ (resp. $X$).
    Denote by $v_\lambda:X\to X_{\lambda}$ (resp. $v_{\lambda',\lambda}:X_{\lambda'}\to X_{\lambda}$) the projection map (resp. transition map).
\end{Situ}

\begin{Thm}\label{thm:TorIndependentForIndRegularV}
In Situation \ref{situ:approximationOf1scheme1module},
assume that $R$ is a valuation ring and that every $R_{\lambda}$ is a regular local ring.
Then there exists $\lambda\in\Lambda$ so that for all $\lambda'\geq\lambda$,
the rings $R_{\lambda'}$ and $R$ are Tor-independent with both $X_{\lambda}$ and $\cF_{\lambda}$ over $R_{\lambda}$.
\end{Thm}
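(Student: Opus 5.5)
The plan is to resolve $\cF$ over $X$ by a two-term complex of $R$-flat finitely presented modules, descend that complex to some $R_\lambda$, and use Lemma \ref{lem:descend-injectivity} to keep it a resolution after base change to every $R_{\lambda'}$. Tor-independence is local on $X_\lambda$, and $X_0$ is quasi-compact, so we first cover $X_0$ by finitely many affine opens. We may therefore assume $X_0=\Spec B_0$ is affine and $\cF_0=\widetilde{M_0}$; at the end we take the maximum of the finitely many indices obtained. Write $B$ and $M$ for the base changes to $R$. For the structure sheaf the statement is the special case $\cF_0=\cO_{X_0}$, so it suffices to treat $\cF$. Since
\[
\Tor_i^{R_\lambda}(R,M_\lambda)=\colim_{\lambda'\geq\lambda}\Tor_i^{R_\lambda}(R_{\lambda'},M_\lambda),
\]
Tor-independence with $R$ follows from Tor-independence with all $R_{\lambda'}$. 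We therefore only treat the $R_{\lambda'}$.

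\emph{Step 1 (a flat resolution over $R$).} By Theorem \ref{thm:ModAndAlgOverV}, $B$ is coherent. Choose a surjection $B^n\to M$ and let $K$ be its kernel. Then $K$ is finitely presented over $B$, since $B$ is coherent. As a submodule of $B^n$, it is torsion-free over the valuation ring $R$ whenever $B$ is $R$-flat.

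\emph{Step 2 (descent).} By standard limit arguments (\citestacks{05N9}, \citestacks{02JO}, and descent of flatness in the limit), there is a $\lambda$ with a finitely presented $B_\lambda$-module $K_\lambda$ and a map $\varphi_\lambda:K_\lambda\to B_\lambda^n$ with the following properties. Enlarging $\lambda$, we arrange that $K_\lambda$ and $B_\lambda$ are flat over $R_\lambda$, that $\varphi_\lambda\otimes R=(K\hookrightarrow B^n)$, and that $\coker\varphi_\lambda=M_\lambda$. Then for $\lambda'\geq\lambda$ we get
\[
\Tor_i^{R_\lambda}(R_{\lambda'},M_\lambda)=0 \quad\text{for } i\geq 2,
\qquad
\Tor_1^{R_\lambda}(R_{\lambda'},M_\lambda)=\ker(\varphi_{\lambda'}).
\]
So everything reduces to proving that $\varphi_{\lambda'}=\varphi_\lambda\otimes R_{\lambda'}$ is injective for every $\lambda'\geq\lambda$.

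The flatness of $B$ in Step 1 needs care. If $X_0$ is not flat, we instead resolve $\cO_X$ itself first. That is, we apply the same argument to a presentation of $B$ as a quotient of a polynomial ring, whose structure sheaf is flat, and then to $M$ viewed as a module over that polynomial ring. This reduces to the case where the ambient scheme is an affine space over $R_0$.

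\emph{Step 3 (injectivity, main obstacle).} Here we apply Lemma \ref{lem:descend-injectivity} with $A=R_{\lambda'}$, a regular local ring and hence a Noetherian domain, and $B=R$, which is reduced. The map $\varphi_{\lambda'}$ is between $A$-flat coherent modules and becomes injective over $R$. The difficulty is that the lemma is stated on $\bP^N_A$, where closed points lie over $\fm_A$, while our modules live on an affine space. I would first try to rerun the proof of the lemma directly. After localizing at $\fp=\fq\cap A$ for the minimal prime $\fq=0$ of $R$, the kernel of $\varphi$ is a submodule of an $A$-flat coherent module. By \citestacks{0312}, its associated points lie over associated points of $A_\fp$, that is, over the generic point. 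Hence it suffices to check injectivity at points lying over the closed point of $\Spec A_\fp$ and closed in that fiber, and there the argument with \citestacks{00ME} applies, because injectivity holds on the fiber after the faithfully flat extension $\kappa(\fp)\to\Frac R$. The delicate point is ensuring that every associated point of $\ker\varphi_{\lambda'}$ specializes inside its fiber to such a point. If this is hard to arrange, the fallback is to compactify: embed the affine space in $\bP^N_{R_0}$, extend $K_0$ and $B_0^n$ and the map to coherent sheaves on $\bP^N_{R_0}$ (\citestacks{01PI}), and re-establish flatness of the extensions after enlarging $\lambda$. Lemma \ref{lem:descend-injectivity} then applies verbatim.
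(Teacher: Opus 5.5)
\textbf{Verdict: genuine gap in Step 3.} Your overall strategy matches the paper's: a two-term resolution by $R$-flat coherent sheaves, descended to some $R_\lambda$, with injectivity of every further base change coming from Lemma \ref{lem:descend-injectivity}. Steps 1--2, the colimit reduction for Tor-independence with $R$, and the reduction to an affine space over $R_0$ are all fine. The problem is Step 3.

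\textbf{Your primary route fails.} Rerunning the lemma on affine space cannot work, because the affine analogue of Lemma \ref{lem:descend-injectivity} is false. Take $A=\bZ_{(p)}$, $B=\bF_p$, $\cF=\widetilde{A[T]/(pT-1)}$ on $\bA^1_A$, and $\cG=0$. Then $\cF\cong\bQ$ is $A$-flat and $\cF\otimes_AB=0$, so $\varphi_B$ is injective, but $\varphi$ is not. This is exactly your ``delicate point.'' A closed point of $\bA^N_{A_\fp}$ lying over the generic point of $\Spec A_\fp$ (here $(pT-1)$) need not specialize to the closed fiber. So an associated point of the kernel can be invisible to the local criterion \citestacks{00ME}, and this cannot be ``arranged'' on the affine chart. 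The lemma's proof uses properness of $\bP^N_A$ precisely to guarantee that every closed point lies over the closed point of $\Spec A$.

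\textbf{Your fallback, as stated, does not go through.} Compactifying is the right idea, but there are two problems.
\begin{itemize}
\item Enlarging $\lambda$ cannot ``re-establish'' flatness. \citestacks{05LY} only descends flatness that already holds after base change to $R$, and an arbitrary coherent extension of $K_\lambda$ to $\bP^N_{R_\lambda}$ need not become $R$-flat; for instance, add a skyscraper at a point at infinity.
\item The lemma needs $\varphi\otimes R$ to be injective on all of $\bP^N_R$, whereas you only know injectivity on $\bA^N_R$. An extended map can acquire a kernel supported on the hyperplane $H_\infty$ at infinity, and such a kernel can even be $R$-flat. For example, $\cO^n\oplus\cO_{H_\infty}\to\cO^n$ restricts to an isomorphism on the affine chart.
\end{itemize}

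\textbf{The missing idea is to compactify before resolving,} as the paper does. After reducing to $X_0=\bA^N_{R_0}$:
\begin{itemize}
\item Extend $\cF_0$ to a coherent sheaf on $\bP^N_{R_0}$ (\citestacks{0G41}).
\item Choose a surjection $\cE_0\to\cF_0$ from a vector bundle, and set $\cK=\ker(\cE\to\cF)$ on $\bP^N_R$.
\item Then $\cK$ is a subsheaf of a vector bundle, hence $R$-flat, and it injects into $\cE$ on all of $\bP^N_R$.
\item After descending $\cK\hookrightarrow\cE$ to a map $\psi_\lambda$ with flat source, Lemma \ref{lem:descend-injectivity} applies verbatim to every $\psi_{\lambda'}$.
\item The uniqueness part of \citestacks{01ZR} identifies $\coker\psi_{\lambda'}$ with $\cF_{\lambda'}$ for large $\lambda'$.
\end{itemize}

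If you prefer to keep your order, it can also be repaired. Descend and extend $K\to B^n$ at a Noetherian level, base change to $R$, and replace the extension by its image in $\cO^n_{\bP^N_R}$. This image is coherent, since $\bP^N_R$ has coherent affine rings, and it is $R$-flat with injective inclusion. Then descend that subsheaf again, and identify its restriction to $\bA^N$ with $K_{\lambda'}$ by uniqueness.
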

\begin{proof}
Passing to an affine open cover of $X_0$ may assume $X_0$ affine,
hence a closed subscheme of $\bA^N_{R_0}$.
Replacing $X_0$ and $\cF_0$ by
$\bA^N_{R_0}$ and $\cO_{X_0}\oplus \cF_0$ we may assume $X_0=\bA^N_{R_0}$.
By \citestacks{0G41}, $\cF_0$ extends to a coherent sheaf on $\bP^N_{R_0}$, so we may assume $X_0=\bP^N_{R_0}$.

Assume $X_0=\bP^N_{R_0}$, so Tor-independence with $X_\lambda$ is trivial.
    As $\cF_0$ is coherent there exists a surjective morphism $\cE_0\to\cF_0$ where $\cE_0$ is a vector bundle.
    Denote by $\cE_{\lambda}$ (resp. $\cE$) the base change of $\cE_0$ to $X_\lambda$ (resp. $X$).

    Let $\cK=\ker(\cE\to \cF)$. By Theorem \ref{thm:ModAndAlgOverV}\ref{OverV:fp-is-coherent} $\cK$ is finitely presented.
    By \citestacks{01ZR}, there exists $\lambda\in\Lambda$ and a morphism $\psi_{\lambda}:\cK_{\lambda}\to \cE_{\lambda}$ in $\Coh(\bP^N_{R_\lambda})$ whose base change to $R$ is the inclusion map $\cK\to\cE$.
    By Theorem \ref{thm:ModAndAlgOverV}\ref{OverV:torsion-free-is-flat}, $\cK$ is flat over $V$,
    so we may assume $\cK_{\lambda}$ is flat over $R_{\lambda}$ \citestacks{05LY}.
    Since the vector bundle $\cE_{\lambda}$ is also flat over $R_{\lambda}$,
    we see from Lemma \ref{lem:descend-injectivity} that $\psi_{\lambda}$ is injective,
    and the same is true by the same reasoning for all further base changes $\psi_{\lambda'}:\cK_{\lambda'}\to \cE_{\lambda'}$ to $R_{\lambda'}\ (\lambda'\geq \lambda)$.
    This tells us that $Lv^*_{\lambda',\lambda}\coker(\psi_{\lambda})%\otimes^L_{R_{\lambda}}R_{\lambda'}
    =\coker(\psi_{\lambda'})$ and $Lv^*_{\lambda}\coker(\psi_{\lambda})=\cF$,
    so $\coker(\psi_{\lambda'})$ is Tor-independent with $R_{\lambda''}\ (\lambda''\geq\lambda')$ and $R$.
    Finally, the uniqueness part of \citestacks{01ZR} tells us $\coker(\psi_{\lambda'})\cong\cF_{\lambda'}$ for all large $\lambda'$, as desired.
\end{proof}

\begin{Cor}\label{cor:SeparateCohomology}
    In Situation~\ref{situ:approximationOf1scheme1module}, let $M_0\in D^-_{\Coh}(X_0)$ and set $M_\lambda = Lv_{\lambda, 0}^* M_0\in D^-_{\Coh}(X_\lambda)$ and $M= Lv_{0}^* M_0 \in D^-_{\Coh}(X)$. Then, for any integer $c$, there is an element $\lambda \in \Lambda$ such that the natural morphisms
\[
v_{\lambda', \lambda}^*\big( \mathcal{H}^c(M_\lambda) \big)\to \mathcal{H}^c(M_{\lambda'}) \text{ and } v_{\lambda}^* \big(\mathcal{H}^c(M_{\lambda})\big) \to \mathcal{H}^c(M)
\]
are isomorphism for any $\lambda'\geq \lambda$.
\end{Cor}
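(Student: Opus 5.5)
Throughout I work under the hypotheses of Theorem \ref{thm:TorIndependentForIndRegularV}: $R$ is a valuation ring and every $R_\lambda$ is a regular local ring. In particular each $X_\lambda$ is Noetherian, and $D^-_{\Coh}$ makes sense on every $X_\lambda$ and on $X$ (see \S\ref{subsec:DofCohRing}). The plan is to first reduce to a bounded complex, and then peel off cohomology sheaves from the top, using Theorem \ref{thm:TorIndependentForIndRegularV} once for each sheaf. Since the conclusion is local and $X_0$ is quasi-compact, the index obtained for each sheaf is a single $\lambda$, with no cover to worry about.

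\emph{Truncation.} The functors $Lv^*_{\lambda',\lambda}$ and $Lv^*_\lambda$ are right t-exact. Consider the triangle $\tau^{\leq c-2}M_0\to M_0\to\tau^{\geq c-1}M_0\to$. Pulling back the first term lands in $D^{\leq c-2}$, so it does not affect $\mathcal{H}^c$ or $\mathcal{H}^{c-1}$ after pullback. Hence $\mathcal{H}^c(M_\lambda)=\mathcal{H}^c(Lv^*_{\lambda,0}\tau^{\geq c-1}M_0)$, compatibly with all the transition maps. We may therefore replace $M_0$ by $N_0:=\tau^{\geq c-1}M_0$. This complex is bounded, with coherent cohomology in degrees $[c-1,b]$. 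Afterwards I prove the stronger statement below by induction on the number $b-a+1$ of degrees in the amplitude $[a,b]$.

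\emph{Stronger statement.} For a bounded $N_0$ with coherent cohomology in $[a,b]$, there is a $\lambda$ such that for all $\lambda'\geq\lambda$ and every $j$:
\begin{enumerate}[label=(\alph*)]
\item every $\mathcal{H}^j(N_\lambda)$ is Tor-independent with $R_{\lambda'}$ and with $R$ over $R_\lambda$;
\item $\mathcal{H}^j(N_{\lambda'})=v^*_{\lambda',\lambda}\mathcal{H}^j(N_\lambda)$ and $\mathcal{H}^j(N)=v^*_\lambda\mathcal{H}^j(N_\lambda)$.
\end{enumerate}

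\emph{Induction step.} Let $b$ be the top degree. By right t-exactness, $\mathcal{H}^b(N_\mu)=v^*_{\mu,0}\mathcal{H}^b(N_0)$ for every $\mu$, and likewise on $X$.
\begin{enumerate}[label=(\roman*)]
\item Apply Theorem \ref{thm:TorIndependentForIndRegularV} to the coherent sheaf $\mathcal{H}^b(N_0)$. This gives $\lambda_1$ such that $\cG_{\lambda_1}:=\mathcal{H}^b(N_{\lambda_1})$ is Tor-independent with every $R_{\lambda'}$ ($\lambda'\geq\lambda_1$) and with $R$. Thus $Lv^*\cG_{\lambda_1}[-b]=v^*\cG_{\lambda_1}[-b]$ for all further pullbacks.
\item Pull back the triangle $\tau^{<b}N_{\lambda_1}\to N_{\lambda_1}\to\cG_{\lambda_1}[-b]\to$ from $\lambda_1$. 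The third term stays a sheaf in degree $b$, and the first term stays in $D^{<b}$. So the pullback of $\tau^{<b}N_{\lambda_1}$ is exactly $\tau^{<b}N_{\lambda'}$ (resp. $\tau^{<b}N$).
\item Reindex Situation \ref{situ:approximationOf1scheme1module} by $\Lambda_{\geq\lambda_1}$. This is legitimate, since every hypothesis persists after reindexing. The complex $\tau^{<b}N_{\lambda_1}$ has amplitude $[a,b-1]$, so the induction hypothesis applies to it and gives $\lambda\geq\lambda_1$ satisfying (a) and (b) for it.
\item Combining (i)–(iii), conditions (a) and (b) hold for $N$ itself at the index $\lambda$.
\end{enumerate}

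\emph{Base case.} If the amplitude is a single degree, $N_0$ is a sheaf placed in that degree. Step (i) alone then suffices.

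Applying the stronger statement with $j=c$ yields the corollary. I expect the only genuinely delicate point to be step (ii). One must check that no lower-degree cohomology of the pulled-back $N_{\lambda'}$ is created or destroyed, which is exactly why the top sheaf is made Tor-independent first. This is also the only place where Theorem \ref{thm:TorIndependentForIndRegularV} is needed; everything else is formal t-structure bookkeeping.
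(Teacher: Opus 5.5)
\textbf{There is a genuine gap: your induction measure is not preserved under derived pullback.} You note that $Lv^*_{\lambda',\lambda}$ and $Lv^*_\lambda$ are only right t-exact, but in step (iii) you then assert that $\tau^{<b}N_{\lambda_1}$ has amplitude $[a,b-1]$. That is false in general.

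The complex $N_{\lambda_1}=Lv^*_{\lambda_1,0}N_0$ acquires $\mathcal{T}or$-sheaves below degree $a$. If $X_0$ is Tor-independent with $R_{\lambda_1}$, they reach down to degree $a-\dim R_0$; otherwise $N_{\lambda_1}$ can be unbounded below. A concrete case: take $R_0=k[x,y]_{(x,y)}$, let $R_1$ be the local ring at the origin of the chart $k[x,y/x]$ of the blow-up, and take $X_0=\Spec R_0$ and $N_0=k$ in degree $0$. Then $\mathcal{H}^{-1}(N_1)\cong R_1/xR_1\neq 0$.

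So the length $b-a+1$ need not drop; it can even grow, and the induction does not close. Your base case fails for the same reason. For a sheaf $N_0$ in degree $b$, step (i) controls only $\mathcal{H}^b$. It says nothing about whether the lower sheaves $\mathcal{H}^j(N_{\lambda'})$, $j<b$, stabilize, yet your claim (b) ``for every $j$'' requires this. That version would need possibly infinitely many cohomology sheaves to stabilize at a single index, and nothing in your argument controls this.

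\textbf{The repair} is to tie the cutoff to $c$, not to the amplitude of $N_0$. Induct on $b-c$, where $b$ is the current top degree. Restrict (b) to $j\geq c$. After step (ii), either re-truncate with $\tau^{\geq c-1}$ or simply ignore everything below $c$. When $b=c$, right t-exactness finishes the argument.

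This is exactly the paper's proof. It works in $D^-$ throughout with no boundedness, and takes $b$ minimal such that some $M_\lambda\in D^{\le b}$. If $b=c$, right t-exactness concludes. If $b>c$, it applies Theorem \ref{thm:TorIndependentForIndRegularV} to $\mathcal{H}^b(M_\lambda)$ and replaces $M_0$ by $\tau^{<b}M_\lambda$. That theorem gives Tor-independence of both $X_\lambda$ and the sheaf. You also need both in your step (i), to identify $Lv^*$ with $-\otimes^L_{R_\lambda}R_{\lambda'}$ on that sheaf.
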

\begin{proof}
    Let $b:=b(M_\bullet)$ be the minimal element of $\bZ \cup \{-\infty\}$ such that 
    there exists
    some $\lambda\in \Lambda$ so that $M_\lambda\in D^{\leq b}_{\Coh}(X_\lambda)$.
    If $b<c$, then the contention is trivial. Therefore, we can assume that $b\geq c$ and assume that the statement is true for all $M_0$ with $b(M_\bullet)<b$. 
    By our choice of $b$, there exists $\lambda\in \Lambda$ such that $M_{\lambda'}\in D^{\leq b}_{\Coh}(X_{\lambda'})$ and $M \in D^{\leq b}_{\Coh}(X)$ for every $\lambda'\geq \lambda$. In particular,
    \[
    v_{\lambda', \lambda}^*\big( \mathcal{H}^b(M_\lambda) \big)\to \mathcal{H}^b(M_{\lambda'}) \text{ and } v_{\lambda}^* \big(\mathcal{H}^b(M_{\lambda})\big) \to \mathcal{H}^b(M)
    \]
    are isomorphisms. If $b=c$, this finishes the proof. So we can assume that $b>c$. Now Theorem \ref{thm:TorIndependentForIndRegularV} implies that, after enlarging $\lambda$, we can also assume that 
    \[
        Lv_{\lambda', \lambda}^*\big( \mathcal{H}^b(M_\lambda) \big)\to \mathcal{H}^b(M_{\lambda'}) \text{ and } Lv_{\lambda}^* \big(\mathcal{H}^b(M_{\lambda})\big) \to \mathcal{H}^b(M)
    \]
    are isomorphisms as well. Therefore, the distinguished triangles $\tau^{<b} M_{\lambda'} \to M_{\lambda'} \to \mathcal{H}^b(M_{\lambda'})[-b]$ for $\lambda'\ge \lambda$ and $\tau^{<b} M \to M \to \mathcal{H}^b(M)[-b]$ imply that the natural maps
    \[
            Lv_{\lambda', \lambda}^*\big( \tau^{<b}M_\lambda \big)\to \tau^{<b}M_{\lambda'} \text{ and } Lv_{\lambda}^* \big(\tau^{<b}M_{\lambda}\big) \to \tau^{<b}M
    \]
    are isomorphisms for $\lambda'\ge\lambda$. Since $c<b$ by our assumption, we can replace $\Lambda$ with $\Lambda_{\geq\lambda}$ and $M_0$ with $\tau^{<b}M_\lambda$ to assume that $b(M_\bullet)<b$. 
    We win by induction.
    %This case has been already handled by the induction hypothesis. 
\end{proof}

\begin{Thm}\label{thm:descend-any-Hc}
Let $R$ be an excellent regular local ring, $V$ is a valuation ring, and let $R\to V$ be a local map.
Let $X$ be an $R$-scheme of finite type and $Y=X\times_{\Spec R}\Spec V$.
Denote by $w$ the projection morphism $Y\to X$.
Let $y\in Y$ and let $x=v(y)\in X$.
Assume that $X$ and $V$ are Tor-independent over $R$.

    Let $M\in D_{\Coh}(\cO_{X,x})$.
    For any integer $c$,
    if $H^c(M\otimes_{\cO_{X,x}}^L \cO_{Y,y})=0$,
    %\in D^{[a,b]}(\cO_{Y,y})$ for some integers $a\leq b$,
    then $H^c(M)=0$.
    %$M\in D^{[a,b]}(\cO_{X,x})$.
\end{Thm}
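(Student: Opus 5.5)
The plan is to enlarge $V$ to an absolutely integrally closed valuation ring, write it as a filtered colimit of regular local rings essentially of finite type over $R$, use Corollary \ref{cor:SeparateCohomology} to make $H^c$ stabilize at a finite stage, and then descend vanishing from that Noetherian stage back to $\cO_{X,x}$ by a smooth-plus-Koszul factorization.

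\emph{Reductions.} Let $W$ be an absolute integral closure of $V$, i.e.\ a valuation ring dominating $V$ with algebraically closed fraction field. Then $W$ is torsion-free, hence flat over $V$ by Theorem \ref{thm:ModAndAlgOverV}\ref{OverV:torsion-free-is-flat}, and $V\to W$ is local. So $X$ and $W$ are still Tor-independent over $R$. Pick $y'\in Y'=X\times_R\Spec W$ over $y$. Then $\cO_{Y,y}\to\cO_{Y',y'}$ is faithfully flat, so
\[
H^c(M\otimes^L_{\cO_{X,x}}\cO_{Y',y'})=H^c(M\otimes^L_{\cO_{X,x}}\cO_{Y,y})\otimes\cO_{Y',y'}=0 .
\]
Hence we may assume $V$ is absolutely integrally closed.

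We will mostly work with $M\in D^-_{\Coh}$, spreading $M$ out to a neighbourhood $X_0$ of $x$ so that Situation \ref{situ:approximationOf1scheme1module} applies. Since $H^c$ of both sides only sees a bounded window of $M$, we expect to reduce to this case by truncating. I am not fully sure the truncation is harmless on the $\otimes^L$ side for non-flat $\cO_{Y,y}$, so this reduction needs checking.

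\emph{Approximation.} By Theorem \ref{thm:AbsClosedVisIndRegular} with $T=R$ (excellent), $V=\colim R_\lambda$ with $R_\lambda$ regular local, essentially of finite type over $R$, and local transition maps. Adjoin $R$ as the minimal index $0$ and set $X_\lambda=X\times_R\Spec R_\lambda$, $M_\lambda=Lv^*_{\lambda,0}M$. Tor-independence of $X$ and $V$ gives $M\otimes^L_{\cO_{X,x}}\cO_{Y,y}=(Lw^*M)_y$. Corollary \ref{cor:SeparateCohomology}, which rests on Theorem \ref{thm:TorIndependentForIndRegularV}, then gives an index $\lambda$ with $v_\lambda^*\mathcal H^c(M_\lambda)\cong\mathcal H^c(Lw^*M)$. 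Let $x_\lambda$ be the image of $y$. Then $\mathcal H^c(M_\lambda)_{x_\lambda}$ is a finite module over the local ring $\cO_{X_\lambda,x_\lambda}$, and $\cO_{X_\lambda,x_\lambda}\to\cO_{Y,y}$ is local. Its base change vanishes, so Nakayama gives $H^c(M\otimes^L\cO_{X_\lambda,x_\lambda})=0$.

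\emph{Descent from $R_\lambda$ to $R$.} Factor $R\to R_\lambda$ as $R\to P\to R_\lambda$, where $P$ is a localization of a polynomial ring over $R$ and $P\to R_\lambda$ is surjective. Since $P$ and $R_\lambda$ are both regular local, the kernel is generated by part of a regular system of parameters $f_1,\dots,f_s$ of $P$. The map $\cO_{X,x}\to\cO_{X_P,x_P}$ is flat and local, so vanishing of $H^c$ descends along it. It therefore suffices to go from $N$ to $N\otimes^L K(f)$ one element at a time. From the triangle $N\xrightarrow{f}N\to N\otimes^L P/f$ we get that $H^c(N\otimes^L P/f)=0$ forces $f$ to act surjectively on the finitely generated module $H^c(N)$, so $H^c(N)=0$ by Nakayama. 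Iterating over $f_s,\dots,f_1$ and then applying flat descent gives $H^c(M)=0$.

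\emph{Main obstacle.} The hard step is the stabilization. A vanishing colimit $\colim_\lambda H^c(M_\lambda)$ does not force vanishing at any finite stage. This is exactly where Tor-independence and Corollary \ref{cor:SeparateCohomology} enter, and it is also where the boundedness of $M$ must be handled with care.
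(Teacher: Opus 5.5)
Your outline follows the paper: pass to an absolutely integrally closed $V$, write $V=\colim R_\lambda$, stabilize $\mathcal{H}^c$ with Corollary \ref{cor:SeparateCohomology}, then descend along the complete intersection map $R\to R_\lambda$. Your Nakayama step along the local map $\cO_{X_\lambda,x_\lambda}\to\cO_{Y,y}$ is fine. The stabilization is not the hard step, however. The descent step has a genuine gap.

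\textbf{The gap.} In Situation \ref{situ:approximationOf1scheme1module}, $X_\lambda=X\times_R\Spec R_\lambda$ is the ordinary fibre product, so $M_\lambda=M\otimes^L_{\cO_X}(\cO_X\otimes_R R_\lambda)$. On the other hand, put $N=M\otimes_{\cO_{X,x}}\cO_{X_P,x_P}$. Since $K(f;\cO_{X_P})=\cO_X\otimes_R K(f;P)$ and $K(f;P)$ is a flat resolution of $R_\lambda$ over $R$, you get $N\otimes^L K(f_1,\ldots,f_s)\simeq M\otimes^L_{\cO_X}(\cO_X\otimes^L_R R_\lambda)$.

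These two objects agree only if $f_1,\dots,f_s$ is a regular sequence on $\cO_{X_P,x_P}$. That means $X$ and $R_\lambda$ must be Tor-independent over $R$ near $x_\lambda$, and nothing gives you this. The hypothesis is Tor-independence with $V$, and $\Tor^R_i(\cO_X,V)=\colim_\lambda\Tor^R_i(\cO_X,R_\lambda)=0$ does not force the individual terms to vanish.

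For bounded $M$, your argument never uses the Tor-independence hypothesis at all, and the statement is false without it. Take $k$ algebraically closed, $R=k[t]_{(t)}$, $V=R_\lambda=k$, and $X=\Spec R[b]/(tb)$, with $x,y$ the points $t=b=0$. Let $A=\cO_{X,x}$ and $M=[A\xrightarrow{b}A]$ in degrees $-1,0$.
\begin{itemize}
\item Then $\cO_{Y,y}=A/tA=k[b]_{(b)}$, so $H^{-1}(M\otimes^L_A\cO_{Y,y})=0$.
\item But $H^{-1}(M)=\operatorname{ann}_A(b)=tA\neq 0$.
\item Here $X$ is not Tor-independent with $V$, since $\operatorname{ann}_A(t)=bA\neq0$.
\item In your proof this is exactly where things break. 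With $P=R$ and $f=t$, one has $H^{-1}(\operatorname{cone}(t\colon M\to M))\neq 0$, because $t$ is not surjective on $tA$.
\end{itemize}

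\textbf{The fix.} Use the paper's first reduction. Since $X$ and $V$ are Tor-independent over $R$, embed $X$ locally into $\bA^n_R$ and regard $M$ as a complex over $\cO_{\bA^n_R,x}$. This changes nothing, because $\cO_X\otimes^L_{\cO_{\bA^n_R}}\cO_{\bA^n_V}=\cO_X\otimes^L_R V=\cO_Y$. Now $X$ is smooth over $R$ and $\cO_{X_P}$ is flat over $P$. So $f_1,\dots,f_s$ is $\cO_{X_P,x_P}$-regular and $X_\lambda\to X$ is lci. Your Koszul-plus-Nakayama argument then goes through verbatim; this is the lci descent the paper cites as \cite[Lem.~3.1]{Lyu-dual-complex-lift}.

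The same hypothesis settles the truncation you flagged. $N\otimes^L_{\cO_{X,x}}\cO_{Y,y}$ is a localization of $N\otimes^L_R V$, and $R$ has finite global dimension. So the functor has amplitude in $[-\dim R,0]$, and replacing $M$ by $\tau^{\geq c}\tau^{\leq c+\dim R}M$ changes neither $H^c$.
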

\begin{proof}
As $X$ and $V$ are Tor-independent over $R$, we may shrink $X$ and replace $X$ by an affine space over $R$ to assume $X$ is smooth over $R$.
    As an extension of valuation rings is  faithfully flat, we may assume $V$ is absolutely integrally closed.
    As $R$ has finite global dimension,
    Tor-indepence tells us
    the functor $-\otimes_{\cO_{X,x}}^L \cO_{Y,y}$ has bounded cohomological dimension, so we may assume $M\in D^b_{\Coh}(\cO_{X,x})$.
    %as $\tau_{\geq c}M\otimes_{\cO_{X,x}}^L \cO_{Y,y}=M\otimes_{\cO_{X,x}}^L \cO_{Y,y}$ for all $c<a$.
    Shrinking $X$, we may also assume $M$ is the stalk of an object $K\in D^b_{\Coh}(X)$, cf. \cite[Lem.~2.2]{Lyu-dual-complex-lift}.
    Furthermore, as $N:=Lw^*K$ is pseudo-coherent, $\mathcal{H}^c(N)$ is finitely presented (\S\ref{subsec:DofCohRing}) and its stalk at $y$ is zero, so we may assume $\mathcal{H}^c(N)=0$.

Write $V=\colim_{\gamma\in\Gamma}R_{\gamma}$ as in Theorem \ref{thm:AbsClosedVisIndRegular} with $T=R$. 
Let $Y_{\gamma}=X\times_{\Spec R}\Spec R_{\gamma}$ and denote by $w_\gamma$ the projection morphism $Y_{\gamma}\to X$.
 Denote by $v_\gamma:Y\to Y_{\gamma}$ (resp. $v_{\gamma',\gamma}:Y_{\gamma'}\to Y_{\gamma}$) the projection map (resp. transition map).
Let $N_{\gamma}=Lw_{\gamma}^*K$.
By Corollary \ref{cor:SeparateCohomology},
we may assume $v_{\gamma',\gamma}^*\mathcal{H}^c(N_{\gamma})=\mathcal{H}^c(N_{\gamma'})$ for all $\gamma'\geq\gamma$ and $v_{\gamma}^*\mathcal{H}^c(N_{\gamma})=\mathcal{H}^c(N)=0$.
By the uniqueness part of \citestacks{01ZR} we see for large $\gamma$ we have $\mathcal{H}^c(N_{\gamma})=0$.
Finally, %as $R\to S_{\gamma}$ is lci by 
$w_\gamma$ is lci by \cite[(5.4)]{avramov-ci},
as $X$ is smooth over $R$,
so  $\mathcal{H}^c(N_{\gamma})=0$ implies $\mathcal{H}^c(K)_{p}=0$ for all $p\in w_{\gamma}(Y_{\gamma})$, cf. \cite[Lem.~3.1]{Lyu-dual-complex-lift}.
In particular, $\mathcal{H}^c(K)_{x}=0$, that is, $H^c(M)=0$.
\end{proof}

\begin{Rem}
    Using techniques in \cite{Lyu-Bogdan-approximation}, we can generalize Theorem \ref{thm:descend-any-Hc} to the case $R$ is any regular local ring, not necessarily excellent.
    Moreover, the general case  of Theorem \ref{thm:descend-any-Hc}  follows from the case  $R$ is essentially finitely generated over $\bZ$, 
    and in this case, 
    Theorem \ref{thm:AbsClosedVisIndRegular} follows from \cite{deJong-alterations}.
\end{Rem}

\subsection{Regular sequences}
As a quick application we show that the following characterization of regular sequence, well-known for Noetherian rings, holds in our setup.
\begin{Thm}\label{thm:Reg=H1reg}
    Let $(A,\fm_A)$ be a local ring essentially finitely presented over a valuation ring $V$.
    Let $\underline{x}=x_1,x_2,\ldots,x_n$ be a sequence in $\fm_A$.
    Let $M$ be a coherent $A$-module.
    
    Then $\underline{x}$ is an $M$-regular sequence if and only if $\underline{x}$ is an $M$-Koszul-regular sequence, if and only if $\underline{x}$ is an $M$-$H_1$-regular sequence.
    In particular, a permutation of an $M$-regular sequence is an $M$-regular sequence.
\end{Thm}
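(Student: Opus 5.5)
The general implications regular $\Rightarrow$ Koszul-regular $\Rightarrow$ $H_1$-regular hold for any ring and any module. So the plan is to prove that $H_1$-regular implies regular. Since $M$ is coherent and the Koszul complex $K(\underline{x};M)$ has coherent terms, $K(\underline{x};M)\in D^b_{\Coh}(A)$ (\S\ref{subsec:DofCohRing}).

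I would first reduce to a Noetherian situation. Write $A$ as a localization of a finitely presented $V$-algebra. Using \S\ref{subsec:ReducedToSmallV}, pass to faithfully flat descents $A_\lambda\to A$ over finite rank $V_\lambda$, with $\underline{x}$, $M$ and $K(\underline{x};M)$ descending. Regularity of the sequence and vanishing of $H_1$ can be checked after the faithfully flat map $A_\lambda\to A$, and both descend (with compatibility for large $\lambda$ by \cite[Th.~6.2.2]{Coherent-Rings}). This step may not suffice on its own, however. Instead I would base change further along a faithfully flat extension $V\to W$ with $W$ absolutely integrally closed. By Theorem~\ref{thm:AbsClosedVisIndRegular} applied with $T$ a localization of $\bZ$ (or the prime field), $W=\colim R_\gamma$ with the $R_\gamma$ regular local, excellent, and essentially of finite type over $T$. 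By Situation~\ref{situ:approximationOf1scheme1module}, $A$, $M$ and $\underline{x}$ then descend to a Noetherian local ring $A_\gamma$ essentially of finite type over $R_\gamma$ with coherent $M_\gamma$. Theorem~\ref{thm:TorIndependentForIndRegularV} makes $A_\gamma$ and $M_\gamma$ Tor-independent with $W$ over $R_\gamma$, so that $M_\gamma\otimes^L_{A_\gamma}A_W=M_W$ and $K(\underline{x};M_\gamma)\otimes^L K=K(\underline{x};M_W)$ after localizing at a point.

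Next I would transfer the $H_1$ condition. The hypothesis gives $H^{-1}(K(\underline{x};M))=0$, and flat base change to $W$ preserves it. Theorem~\ref{thm:descend-any-Hc}, applied with $c=-1$ and the object $K(\underline{x};M_\gamma)\in D_{\Coh}$, then gives $H_1(\underline{x};M_\gamma)=0$ locally at the relevant point. In a Noetherian local ring with $\underline{x}$ in the maximal ideal and $M_\gamma$ finite, $H_1=0$ implies that $\underline{x}$ is $M_\gamma$-regular; this is the classical rigidity of the Koszul complex. It remains to ascend: the $M_\gamma$-regularity of $\underline{x}$ must give $M$-regularity. Tor-independence gives $M_\gamma/(x_1,\dots,x_i)M_\gamma\otimes^L(\text{base})$ concentrated in degree $0$, so multiplication by $x_{i+1}$ stays injective after base change to $A_W$. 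Faithful flatness of $A\to A_W$ then yields $M$-regularity.

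The \textbf{main obstacle} is the descent step. One must ensure the point $y$ of Theorem~\ref{thm:descend-any-Hc} corresponds to the maximal ideal, and that Tor-independence holds simultaneously for the ring and for all the quotients $M/(x_1,\dots,x_i)M$. I would handle the quotients by applying Theorem~\ref{thm:TorIndependentForIndRegularV} finitely many times, once to each of these coherent sheaves, enlarging $\gamma$ each time. The final statement about permutations is immediate, because Koszul-regularity is permutation invariant.
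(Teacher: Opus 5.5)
Your proposal is correct, and its first half matches the paper's. That half reduces to ``$H_1$-regular implies regular,'' passes by faithfully flat base change to an absolutely integrally closed $W=\colim_\gamma R_\gamma$ (Theorem~\ref{thm:AbsClosedVisIndRegular}), descends the data with Tor-independence from Theorem~\ref{thm:TorIndependentForIndRegularV}, and finishes with Noetherian rigidity of the Koszul complex. Two small points of precision: strictly, it is $B\otimes_V W$, $N\otimes_V W$ and the $x_i$ that descend to $R_\gamma$, not $A$ and $M$; and your preliminary finite-rank reduction is not needed.

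The two routes differ in how the Noetherian models are connected back to $A$.

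The paper never fixes a single index. It first shrinks $\Spec B$ so that the coherent module $H^{-1}(K)$ vanishes. Then Corollary~\ref{cor:SeparateCohomology} and the uniqueness part of \citestacks{01ZR} give $H^{-1}(K_\lambda)=0$ for \emph{all} large $\lambda$. The Noetherian case makes $\underline{x}$ regular on every $(N_\lambda)_{\fq\cap B_\lambda}$. Regularity then passes to $M=\colim_\lambda(N_\lambda)_{\fq\cap B_\lambda}$ for free, since filtered colimits are exact.

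You instead work at one index $\gamma$. You descend the vanishing pointwise through Theorem~\ref{thm:descend-any-Hc}, which is itself proved from Corollary~\ref{cor:SeparateCohomology} plus an lci descent argument. You then have to ascend along the single, non-flat map $\cO_{X,x}\to\cO_{Y,y}$. That is exactly why you need Tor-independence with $W$ of each quotient $N_\gamma/(x_1,\ldots,x_i)N_\gamma$: its $\Tor_1$ is the obstruction to $x_{i+1}$ staying injective after base change. A final faithfully flat descent from $\cO_{Y,y}$ to $A$ completes the argument.

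These steps are sound:
\begin{itemize}
\item finitely many enlargements of $\gamma$ suffice;
\item Tor-independence obtained at one index persists to all larger ones;
\item $N_{\gamma'}/(x_1,\ldots,x_i)N_{\gamma'}$ is the base change of the level-$\gamma$ quotient, by right exactness.
\end{itemize}

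In short, your argument buys modularity, since it reuses Theorem~\ref{thm:descend-any-Hc} as a black box. The paper's colimit trick is lighter: it needs neither the extra Tor-independence bookkeeping for the quotients nor the lci descent hidden inside Theorem~\ref{thm:descend-any-Hc}.
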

\begin{proof}
The ``in particular'' statement is trivial as a permutation of an $M$-$H_1$-regular sequence is an $M$-$H_1$-regular sequence.
    From \citetwostacks{062F}{0CEM} we see it suffices to show an $M$-$H_1$-regular sequence is an $M$-regular sequence.
    By flat descent we may assume $V$ is absolutely integrally closed.

    Let $B$ be a finitely presented $V$-algebra with a prime ideal $\fq$ so that $B_\fq=A$.
    Localize near $\fq$ we may assume $M=N_\fq$ for some coherent $B$-module $N$ \citestacks{05N7}.
    Write $V=\colim_{\lambda\in\Lambda}R_\lambda$ as in Theorem \ref{thm:AbsClosedVisIndRegular} for $T=\bZ$.
    We may assume that $\Lambda$ has a minimal element $0$,
    $B$ and $N$ comes from a finitely presented $R_0$-algebra $B_0$ and a finite $R_0$-module $N_0$,
    and we may assume $B_0,N_0$ are Tor-independent with every $R_\lambda$ and $V$ by Theorem \ref{thm:TorIndependentForIndRegularV}.
    Then $N_\lambda:=N_0\otimes^L_{B_0}B_{\lambda}$ is a coherent $B_{\lambda}$-module and $N=\colim_{\lambda}N_\lambda$.
    We may also assume that every $x_i$ is the image of some element in $B_0$,
    which we also denote by $x_i$ by abuse of notations.

    Let $K_0$ be the Koszul complex of $\underline{x}$ with respect to $N_0$.
    Then $K_{\lambda}:=K_0\otimes^L_{B_0}B_{\lambda}$ (resp. $K:=K_0\otimes^L_{B_0}B$) is the Koszul complex of $\underline{x}$ with respect to $N_\lambda$ (resp. $N$).
    Note that $H^{-1}(K)_\fq=0$ by our assumption, and $H^{-1}(K)$ is coherent as $K\in D^b_{\Coh}(B)$ (\S\ref{subsec:DofCohRing}).
    Therefore after localing near $\fq$ we may assume $H^{-1}(K)=0$.
    From Corollary \ref{cor:SeparateCohomology} (and the uniqueness part of \citestacks{01ZR}) we then get $H^{-1}(K_{\lambda})=0$ for all large $\lambda$, in other words $\underline{x}$ is $(N_{\lambda})_{\fq\cap B_{\lambda}}$-$H_1$-regular.
    From the Noetherian case \citestacks{09CC} we see $\underline{x}$ is $(N_{\lambda})_{\fq\cap B_{\lambda}}$-regular.
    As this is true for all large $\lambda$ we see $\underline{x}$ is $M$-regular,
    since $M=\colim_{\lambda} (N_{\lambda})_{\fq\cap B_{\lambda}}$.
\end{proof}

\begin{Cor}
In Situation \ref{situ:approximationOf1scheme1module},
assume that $R$ is a valuation ring and that every $R_{\lambda}$ is a regular local ring.  
Let $x\in X$ and write $x_\lambda=v_\lambda(x)$.
Let $\underline{y}$ be a sequence in $\cO_{X_0,x_0}$.
Then $\underline{y}$ is a regular sequence in $\cF_x$ if and only if $\underline{y}$ is a regular sequence in $(\cF_\lambda)_{x_\lambda}$ for all large $\lambda$.
\end{Cor}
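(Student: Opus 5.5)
The plan is to prove both directions by reducing to the Noetherian rings $\cO_{X_\lambda,x_\lambda}$ and comparing Koszul homology, following the proof of Theorem \ref{thm:Reg=H1reg}. First fix the setup. By Theorem \ref{thm:TorIndependentForIndRegularV}, after enlarging $0$ we may assume that $X_\lambda$ and $\cF_\lambda$ are Tor-independent with $R_{\lambda'}$ and with $R$ over $R_\lambda$, for all $\lambda'\geq\lambda$. Let $K_0$ be the Koszul complex of $\underline{y}$ on $\cF_0$ near $x_0$, regarded as an object of $D^b_{\Coh}$ on an affine neighborhood. Then its derived pullbacks $K_\lambda=Lv_{\lambda,0}^*K_0$ and $K=Lv_0^*K_0$ are the Koszul complexes of $\underline{y}$ on $\cF_\lambda$ and on $\cF$.

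For the direction ($\Leftarrow$), suppose $\underline{y}$ is regular on $(\cF_\lambda)_{x_\lambda}$ for all large $\lambda$. Tor-independence gives $\cF_x=\colim_\lambda(\cF_\lambda)_{x_\lambda}$, and the same holds for each quotient by $(y_1,\dots,y_i)$, since the localizations are filtered colimits. Regularity then passes to the colimit. Injectivity of multiplication by $y_{i+1}$ is preserved by filtered colimits. For nontriviality, $\cF_x/\underline{y}\cF_x=\colim_\lambda(\cF_\lambda)_{x_\lambda}/\underline{y}$, and the transition maps are faithfully flat local maps tensored with nonzero modules, so each term injects into the colimit and the colimit is nonzero.

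For the direction ($\Rightarrow$), suppose $\underline{y}$ is $\cF_x$-regular. Then $H^{-1}(K)_x=0$. Since $H^{-1}(K)$ is coherent (\S\ref{subsec:DofCohRing}), we can shrink $X_0$ around $x_0$ to get $H^{-1}(K)=0$ on a neighborhood. Corollary \ref{cor:SeparateCohomology} with $c=-1$, together with the uniqueness part of \citestacks{01ZR}, then gives $\mathcal{H}^{-1}(K_\lambda)=0$ near $x_\lambda$ for all large $\lambda$. So $\underline{y}$ is $H_1$-regular on $(\cF_\lambda)_{x_\lambda}$, and the Noetherian case \citestacks{09CC} upgrades this to regularity. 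Regularity requires the quotient to be nonzero, but $y_i$ lie in the maximal ideal, so by Nakayama it suffices that $(\cF_\lambda)_{x_\lambda}\neq0$, which holds because $\cF_x\neq0$ is a base change of it.

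The subtle point is that the sequence is allowed to be improper, i.e. some $y_i$ may be units at $x$. In that case the quotient is zero and both sides fail together, which keeps the equivalence consistent. If regular sequences are required to lie in the maximal ideal, the first step is to note that $y_i\in\fm_x$ if and only if $y_i\in\fm_{x_\lambda}$, since the maps on local rings are local. The main obstacle is making sure the shrinking of neighborhoods in the ($\Rightarrow$) direction is compatible with Situation \ref{situ:approximationOf1scheme1module}. This should work by replacing $X_0$ with a principal open of some $X_{\lambda}$ that contains $x_\lambda$ and re-indexing.
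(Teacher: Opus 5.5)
Your route is the paper's. The paper's one-line proof says regularity is detected by Koszul $H_1$ on both sides (Theorem \ref{thm:Reg=H1reg} and \citestacks{09CC}). It then points to the last paragraph of the proof of Theorem \ref{thm:Reg=H1reg}, which is exactly your combination: Corollary \ref{cor:SeparateCohomology} applied to $\mathcal{H}^{-1}$ of the Koszul complex, the Noetherian criterion, and passage to the colimit $\cF_x=\colim_\lambda(\cF_\lambda)_{x_\lambda}$.

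One step is justified incorrectly. In ($\Leftarrow$) you claim the transition maps are faithfully flat, so that each $(\cF_\lambda)_{x_\lambda}/\underline{y}$ injects into the colimit. Nothing in Situation \ref{situ:approximationOf1scheme1module} makes $R_\lambda\to R_{\lambda'}$ or $R_\lambda\to R$ flat. In the intended examples (Theorem \ref{thm:AbsClosedVisIndRegular}) they are typically local blowups such as $k[s,t]_{(s,t)}\to k[s,t/s]_{(s,t/s)}$, which are not flat. A module also need not inject into its base change.

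The conclusion you need, $\cF_x/\underline{y}\cF_x\neq 0$, follows instead from Nakayama. Since $\cO_{X_\lambda,x_\lambda}\to\cO_{X,x}$ is local,
\[
(\cF_x/\underline{y}\cF_x)\otimes_{\cO_{X,x}}\kappa(x)\cong \bigl((\cF_\lambda)_{x_\lambda}/(\fm_{x_\lambda},\underline{y})(\cF_\lambda)_{x_\lambda}\bigr)\otimes_{\kappa(x_\lambda)}\kappa(x)\neq 0 .
\]

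Relatedly, the formula $\cF_x=\colim_\lambda(\cF_\lambda)_{x_\lambda}$ does not come from Tor-independence. It is just base change commuting with filtered colimits. Tor-independence is what you need in ($\Rightarrow$), to identify $Lv_0^*K_0$ with the Koszul complex on $\cF$.

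The same Nakayama argument also removes your worry about shrinking neighborhoods in ($\Rightarrow$). For large $\lambda$, Corollary \ref{cor:SeparateCohomology} gives $\mathcal{H}^{-1}(K_\lambda)_{x_\lambda}\otimes_{\cO_{X_\lambda,x_\lambda}}\cO_{X,x}\cong\mathcal{H}^{-1}(K)_x=0$. Since $\mathcal{H}^{-1}(K_\lambda)_{x_\lambda}$ is a finite module over a Noetherian local ring and the map is local, it vanishes directly, with no re-indexing or use of the uniqueness part of \citestacks{01ZR}.
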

\begin{proof}
    As regular sequence can be detected from the Koszul complex (Theorem \ref{thm:Reg=H1reg} and \citestacks{09CC}) the result follows,
    see the last paragraph of the proof of Theorem \ref{thm:Reg=H1reg}. 
\end{proof}

\section{Refinement of Kunz's Theorem over valuation rings}\label{sec:KunzRefined}
\begin{Thm}\label{thm:Kunz-KL-over-val-ring}
    Let %$(V,\fm_V,k_V)$ 
    $V$ be a local ring of characteristic $p$ and let $(A,\fm)$ be an essentially finitely presented local $V$-algebra.
    Assume that either  $V$ is a valuation ring, or that $V$ is Noetherian.

Let $M\in D^b_{\Coh}(A)$.
    Assume that there exists an infinite sequence of integers $1\leq e_1<e_2<...$ such that $M\otimes^L_AF^{e_m}_*A$ is bounded for every $m$.
    Then $M$ has finite tor dimension over $A$.
\end{Thm}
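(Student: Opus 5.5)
The plan is to reduce to the Noetherian case, where the statement is a known refinement of Kunz's theorem (Koh--Lee \cite{Koh-Lee-exactness-complex-Kunz-theorem}, together with its module version in the spirit of Herzog/Avramov--Miller). For $V$ Noetherian, $A$ is Noetherian local. I would quote the Noetherian statement: if $M\in D^b_{\Coh}(A)$ and $M\otimes^L_AF^{e}_*A$ is bounded for some $e\gg0$, then $M$ has finite flat dimension. The threshold on $e$ should depend only on $A$, via a Frobenius-test-exponent type bound on $\fm^{[p^e]}$ against $\fm^{s}$ for $s$ larger than the amplitude of a truncated minimal free resolution. The hypothesis supplies infinitely many admissible $e_m$, so one of them exceeds the threshold. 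So the real work is the valuation ring case.

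For $V$ a valuation ring, I would first pass to $V^+$. An extension of valuation rings is faithfully flat, so Theorem~\ref{thm:ALLwdimFacts}\ref{wdim:testk} and flat base change let finite tor dimension of $M$ descend from $A\otimes_VV^+$ (localized). I still have to check that boundedness of $M\otimes^L F^{e}_*(-)$ is preserved. Since $V^+$ is perfect, $F^e_*(A\otimes_VV^+)$ is a base change of $F^e_*A$ along $F^e_*V\to F^e_*V^+$, which is flat. I believe this works but it needs a careful line.

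Next, by Theorem~\ref{thm:AbsClosedVisIndRegular} with $T=\bF_p$, write $V=\colim R_\lambda$ with each $R_\lambda$ regular local and essentially of finite type over $\bF_p$, hence $F$-finite and excellent. Descend $A$ to $A_\lambda$ and $M$ to $M_\lambda\in D^b_{\Coh}(A_\lambda)$. By Theorem~\ref{thm:TorIndependentForIndRegularV} and Corollary~\ref{cor:SeparateCohomology}, for large $\lambda$ the data are Tor-independent over $R_\lambda$ and $M_\lambda\otimes^L_{A_\lambda}A=M$.

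The key step, and the one I expect to be the main obstacle, is transferring boundedness of $M\otimes^L_AF^{e}_*A$ to boundedness of $M_\lambda\otimes^L_{A_\lambda}F^e_*A_\lambda$ for a single suitable $e$. The difficulty is that $F^e_*A_\lambda\otimes_{A_\lambda}A\neq F^e_*A$; it differs by the relative Frobenius of $R_\lambda\to V$.

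To handle this, I would fix one $e=e_m$ beyond the Noetherian threshold for $A_\lambda$. I would then write $F^e_*A$ as a base change of $F^e_*A_\lambda$ along $F^e_*R_\lambda\to F^e_*V$, using Tor-independence of $A_\lambda$ with $V$ (and Frobenius-twisted copies) over $R_\lambda$. This should identify $M\otimes^L_AF^e_*A$ with $(M_\lambda\otimes^L_{A_\lambda}F^e_*A_\lambda)\otimes^L_{F^e_*A_\lambda}F^e_*A$, where the latter is the base change of a coherent complex on the $F$-finite scheme over $R_\lambda$ to $V$. Theorem~\ref{thm:descend-any-Hc}, applied to $F^e_*A_\lambda$ viewed over the excellent regular ring $F^e_*R_\lambda\cong R_\lambda$, then descends vanishing of each $H^c$ for $c\ll0$. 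This gives boundedness of $M_\lambda\otimes^L F^e_*A_\lambda$, because $F$-finiteness puts it in $D^-_{\Coh}$.

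A subtlety is that the threshold depends on $\lambda$. I would address this by choosing $\lambda$ first and then $e=e_m$ large, which is allowed since the sequence $e_m$ is infinite. Finally, the Noetherian case gives that $M_\lambda$ has finite tor dimension over $A_\lambda$. Base change along $A_\lambda\to A$ then gives finite tor dimension of $M$, via Theorem~\ref{thm:ALLwdimFacts}\ref{wdim:testk}.
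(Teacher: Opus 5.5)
Your proposal is correct and follows essentially the same route as the paper. Both arguments reduce to absolutely integrally closed $V$ by faithfully flat base change, and both approximate $V$ by regular local rings $R_\lambda$, using Tor-independence from Theorem~\ref{thm:TorIndependentForIndRegularV}. Both then descend boundedness of $M\otimes^L_AF^{e_m}_*A$ with Theorem~\ref{thm:descend-any-Hc} applied over $F^{e_m}_*R_\lambda\to F^{e_m}_*V$, and finish with the Noetherian (Koh--Lee) case and base change.

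The differences are cosmetic:
\begin{enumerate}[label=$(\roman*)$]
\item The paper first reduces to $M$ a module, via a triangle with a perfect complex. This makes spreading out $M$ cleaner than descending a bounded complex.
\item The descent works for every $e_m$ with $\lambda$ fixed, so the $\lambda$-dependent threshold you worry about never arises.
\item Coherence of $M_\lambda\otimes^L_{A_\lambda}F^e_*A_\lambda$ over $F^e_*A_\lambda$ holds automatically, without invoking $F$-finiteness.
\end{enumerate}
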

\begin{proof}
First, as $M\in D^b_{\Coh}(A)$,
there exists a distinguished triangle
\[
\begin{CD}
    F@>>> M@>>> C@>>> +1
\end{CD}
\]
where $F$ is perfect and $C$ is concentrated in a single degree.
Therefore we may always assume $M$ is a module.

If $V$ is Noetherian, then $A$ is Noetherian, so the result follows from \cite[Prop.~2.6]{Koh-Lee-exactness-complex-Kunz-theorem}.
We may therefore assume $V$ is a valuation ring. We will reduce the problem to the case $V$ is Noetherian, which concludes the proof.

    If $B$ is a faithfully flat $A$-algebra, then $F^e_*B$ is a faithfully flat $(F^e_*A)$-algebra for all $e$.
    The statement is therefore preserved along the base change $A\to B$, so we may assume $V$ is absolutely integrally closed.

    In this case $V=\colim_{\lambda\in\Lambda}R_{\lambda}$ as in Theorem \ref{thm:AbsClosedVisIndRegular} with $T=\bF_p$;
    by %\citestacks{02JO} and 
    Theorem \ref{thm:TorIndependentForIndRegularV} we may assume $\Lambda$ has a minimal element $0$ and that there exists a finitely generated $R_0$-algebra $B_0$ that is tor-independent with $V$ over $R_0$ such that $A$ is a localization of $B_0\otimes_{R_0}V$.
We can spread the module $M$ out to a principal localization of $B_0\otimes_{R_0}V$ by \citestacks{05N7},
    so we may assume, after replacing $\Lambda$ by $\Lambda_{\geq \lambda}$ for some $\lambda$ and $B_\lambda$ by a principal localization,
     that $M$ comes from some finite $B_0$-module $N_0$.
     %$N\in D^b_{\Coh}(A_0\otimes_{R_0}V)$.
     By Theorem \ref{thm:TorIndependentForIndRegularV}, after replacing $\Lambda$ by $\Lambda_{\geq \lambda}$ for some $\lambda$,
     %$N$ is the base change of some $N_0\in D^b_{\Coh}(A_0)$ (same argument as in the proof of \cite[Lem.~2.5.1]{Lyu-Bogdan-approximation}).
     %In particular, 
     we may assume $M=N_0\otimes_{B_0}^LA$.

     At this point, we can apply Theorem \ref{thm:descend-any-Hc} to $R=F^{e_m}_*R_0,V=F^{e_m}_*V$,
     $X=\Spec( F^{e_m}_*B_0)$,
     $\cO_{X,x}=F^{e_m}_*(B_0)_{\fm\cap B_0},\cO_{Y,y}=F^{e_m}_*A$ (for a suitable choice of $y$),
     %, possible as $A$ is a localization of $A_0\otimes_{R_0}V$,
     %so $F^{e_m}_*A$ is a localization of $F^{e_m}_*A_0\otimes_{F^{e_m}_*R_0}F^{e_m}_*V$),
     and $M=N_0\otimes^L_{B_0}F^{e_m}_*(B_0)_{\fm\cap B_0}$.
     It follows that $N_0\otimes^L_{B_0}F^{e_m}_*(B_0)_{\fm\cap B_0}$ is bounded for every $m$.
     On the other hand, if $N_0\otimes^L_{B_0}(B_0)_{\fm\cap B_0}$ has finite tor dimension over $(B_0)_{\fm\cap B_0}$,
     then $M$ has finite tor dimension over $A$ by base change, as $M=N_0\otimes_{B_0}^LA$, see \citestacks{066L}.
     %for every $m$, if $N_0\otimes^L_{A_0}F^{e_m}_*A_0$  has finite tor dimension over $F^{e_m}_*A_0$, then it follows from base change that $M\otimes^L_A F^{e_m}_*A$ has finite tor dimension over $F^{e_m}_*A$.
     Therefore, we have reduced the statement to the case $V=R_0$ is Noetherian, as desired.
\end{proof}

\begin{Cor}\label{cor:OneFeFiniteTorDim=Regular}
    Let $V$ be a valuation ring containing $\bF_p$, and let $A$ be an essentially finitely presented local $V$-algebra.
    If $\operatorname{w.dim}_AF^e_*A<\infty$ for some $e\in\bZ_{>0}$, then $\operatorname{w.dim}A<\infty$.
\end{Cor}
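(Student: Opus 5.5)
The plan is to reduce to Theorem \ref{thm:Kunz-KL-over-val-ring}. We will show that every coherent $A$-module $M$ has finite tor dimension, which suffices by Discussion \ref{discu:NorRegloci} and Theorem \ref{thm:ALLwdimFacts}\ref{wdim:testk}. Indeed, $\operatorname{w.dim}A<\infty$ is equivalent to every finitely generated ideal, hence every coherent module, having finite projective dimension, because of the uniform bound from \cite[Th.~1]{Knaf-RLR-over-Prufer}. To use Theorem \ref{thm:Kunz-KL-over-val-ring} with $M\in D^b_{\Coh}(A)$, we must check that $M\otimes^L_A F^{e_m}_*A$ is bounded for an infinite increasing sequence $e_m$. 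The natural choice is $e_m=me$.

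The key step is the boundedness of $M\otimes^L_AF^{me}_*A$, which we prove by induction on $m$. Set $d=\operatorname{w.dim}_AF^e_*A<\infty$. We use the factorization $A\to F^e_*A\to F^{2e}_*A\to\cdots$, where each map $F^{(m-1)e}_*A\to F^{me}_*A$ is the $(m-1)e$-fold Frobenius pushforward of $A\to F^e_*A$. Since Frobenius pushforward is restriction of scalars along a ring isomorphism onto its image, it follows that $F^{me}_*A$ has tor dimension $d$ over the ring $F^{(m-1)e}_*A$. Then
\[
M\otimes^L_AF^{me}_*A\simeq (M\otimes^L_AF^{(m-1)e}_*A)\otimes^L_{F^{(m-1)e}_*A}F^{me}_*A .
\]
If $M\otimes^L_AF^{(m-1)e}_*A$ lies in $D^{[-a,0]}$, then the right side lies in $D^{[-a-d,0]}$: a bounded complex tensored with a module of finite tor dimension $d$ stays bounded, which we see by d\'evissage on the truncation triangles. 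This induction starts at $m=0$ with $M$ itself, which is a module and hence bounded.

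Theorem \ref{thm:Kunz-KL-over-val-ring} then applies with $e_m=me$ for $m\geq1$, and gives that $M$ has finite tor dimension. Taking $M=A/I$ for finitely generated ideals $I$ shows that $A$ is regular in Bertin's sense. The uniform bound of Knaf then gives $\operatorname{w.dim}A<\infty$; alternatively, one can apply this to the residue field via Theorem \ref{thm:ALLwdimFacts}\ref{wdim:testk}, though $k_A$ need not be coherent, so the Bertin/Knaf route is safer.

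The main obstacle is only bookkeeping: identifying the tor dimension of $F^{me}_*A$ over $F^{(m-1)e}_*A$ with $d$. This is a transport of structure along the Frobenius isomorphism $A\cong F^{(m-1)e}_*A$ as abstract rings. Otherwise all the difficulty is absorbed in Theorem \ref{thm:Kunz-KL-over-val-ring}.
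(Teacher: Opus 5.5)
Your proposal is correct and matches the paper's argument: the paper also uses that $F^{(m-1)e}_*A\to F^{me}_*A$ is abstractly isomorphic to $A\to F^e_*A$ to get $\operatorname{w.dim}_AF^{me}_*A\leq m\operatorname{w.dim}_AF^e_*A$. It then applies Theorem \ref{thm:Kunz-KL-over-val-ring} to all coherent $A$-modules and concludes via Discussion \ref{discu:NorRegloci}, and your induction on the boundedness of $M\otimes^L_AF^{me}_*A$ is just this bound written out.
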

\begin{proof}
    We have $\operatorname{w.dim}_AF^{me}_*A\leq m\operatorname{w.dim}_AF^e_*A$ for all $m\in\bZ_{>0}$
    as the ring maps $F^{(m-1)e}_*A\to F^{me}_*A$ are abstractly isomorphic to $A\to F^e_*A$.
    By Theorem \ref{thm:Kunz-KL-over-val-ring},
    all $M\in\Coh(A)$ have finite tor dimension, so $\operatorname{w.dim}A<\infty$, see Discussion \ref{discu:NorRegloci}.
\end{proof}

\section{Cotangent complexes, II}\label{sec:Cotangent2}
\subsection{Regular immersions}
\begin{Thm}\label{thm:lci=Lin-1}
    Let $V$ be a valuation ring, $(A,\fm_A,k_A)$ an essentially finitely presented local $V$-algebra.
    Let $I$ be a finitely generated ideal of $A$.
    The following are equivalent.
    \begin{enumerate}[label=$(\roman*)$]
        \item\label{RegImm:AllAreRegSeq} Every minimal set of generators of $I$ is a regular sequence in $A$.
        \item $I$ is generated by a regular sequence in $A$.
        \item $L_{(A/I)/A}$ has tor-amplitude in $[-1,-1]$.
        \item\label{RegImm:H2=0} $H^{-2}(L_{(A/I)/A}\otimes^L_{A/I}k_A)=0$.
    \end{enumerate}
\end{Thm}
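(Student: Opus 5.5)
The cycle of easy implications is (i)$\Rightarrow$(ii)$\Rightarrow$(iii)$\Rightarrow$(iv); the substantive direction is (iv)$\Rightarrow$(i). For (i)$\Rightarrow$(ii), take any minimal generating set. For (ii)$\Rightarrow$(iii), if $I=(\underline{x})$ with $\underline{x}$ a regular sequence, then $A/I$ is resolved by the Koszul complex. The standard computation (e.g.\ \citestacks{08SJ}) gives $L_{(A/I)/A}\simeq I/I^2[1]$, and $I/I^2$ is free over $A/I$ on the images of $\underline{x}$; this computation is valid for any ring. Then (iii)$\Rightarrow$(iv) is immediate, since tor-amplitude in $[-1,-1]$ forces $H^{-2}$ to vanish after any derived base change.

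For (iv)$\Rightarrow$(i), fix a minimal generating set $\underline{x}=x_1,\ldots,x_n$ of $I$. The plan is to reduce to the Noetherian statement via the approximation of \S\ref{sec:approximation-regular-v}. First, by Theorem \ref{thm:Reg=H1reg} it suffices to show $\underline{x}$ is $H_1$-regular, i.e.\ that $H^{-1}$ of the Koszul complex $K(\underline{x};A)$ vanishes. Hypothesis (iv) and the conclusion are both insensitive to faithfully flat base change $V\to W$ with $W$ absolutely integrally closed: the cotangent complex commutes with flat base change, minimality of $\underline{x}$ is read off in $I\otimes k_A$, and the Koszul homology descends. So we may assume $V$ is absolutely integrally closed and write $V=\colim R_\lambda$ as in Theorem \ref{thm:AbsClosedVisIndRegular} with $T=\bZ$. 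We then spread out $A=(B_0\otimes_{R_0}V)_{\fq}$ and $\underline{x}$ over some $B_0$. By Theorem \ref{thm:TorIndependentForIndRegularV} we arrange that $B_0$ and $B_0/(\underline{x})$ are Tor-independent with $V$ and with all $R_\lambda$ over $R_0$; this requires $B_0/\underline{x}B_0$ to be coherent, which holds.

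Tor-independence gives base change for the cotangent complex: $L_{(B_0/\underline{x})/B_0}\otimes^L_{B_0/\underline x}A/I\simeq L_{(A/I)/A}$. Let $A_0=(B_0)_{\fq\cap B_0}$ and $\overline{A}_0=A_0/\underline{x}A_0$. The object $L_{\overline{A}_0/A_0}$ lies in $D^-_{\Coh}$ (\cite[Th.~2.5.1]{Coherent-Rings} via a simplicial resolution in the Noetherian setting), so Theorem \ref{thm:descend-any-Hc} applies with $c=-2$ and $M=L_{\overline{A}_0/A_0}\otimes^L k$. The complication is that $k_A$ is not a base change of the residue field of $A_0$. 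To handle this, use instead the complex $L_{\overline{A}_0/A_0}$ itself together with the fact that vanishing of $H^{-2}$ after $\otimes^L k_A$, combined with vanishing of $H^{<-2}$-type information, is a Nakayama-type statement over the coherent ring $A/I$. Concretely, I would first show $H^{-2}(L_{(A/I)/A})=0$ by this Nakayama argument, using that $H^{-1}=I/I^2$ and that $H^{-2}(L\otimes^L k_A)$ surjects onto $H^{-2}(L)\otimes k_A$ once the lower terms are controlled. Then Theorem \ref{thm:descend-any-Hc} gives $H^{-2}(L_{\overline{A}_0/A_0})=0$ at $\fq\cap B_0$, and by Noetherian theory, namely Andr\'e's characterization (\cite[Th.~6.25]{Andre-cotangent-complex}: vanishing of $H^{-2}$ at the closed point makes $I_0$ a complete intersection), $\underline{x}A_0$ is generated by a regular sequence. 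Since $\underline{x}$ stays minimal in $A_0$ after enlarging $\lambda$, in a Noetherian local ring every minimal generating set of such an ideal is regular, so $\underline{x}$ is $A_0$-regular. Flat-type base change along the Tor-independent map then gives $H^{-1}(K(\underline{x};A))=H^{-1}(K(\underline{x};A_0))\otimes A=0$, concluding via Theorem \ref{thm:Reg=H1reg}.

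The main obstacle is the descent step, i.e.\ getting from $H^{-2}(L_{(A/I)/A}\otimes^L k_A)=0$ to the vanishing of $H^{-2}$ of the descended complex in the Noetherian approximation. The issue is that Theorem \ref{thm:descend-any-Hc} tests vanishing of a single cohomology after base change to $\cO_{Y,y}$, not after reduction to a residue field. If the Nakayama route fails, I would instead apply Theorem \ref{thm:descend-any-Hc} to $M=L_{\overline{A}_0/A_0}\otimes^L_{\overline{A}_0}\overline{A}_0/\fm_0^{\,?}$, or reduce (iv) via the flat resolution $\underline{x}$-Koszul comparison, in which $H^{-2}(L\otimes k)$ is identified with $\ker(H_1(K)\otimes k\to\cdots)$ as in Andr\'e's Noetherian proof. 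That identification can be run directly over the coherent ring $A$ using Theorem \ref{thm:ALLwdimFacts}\ref{wdim:testk}, and would avoid descent entirely.
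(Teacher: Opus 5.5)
Your outline matches the paper's everywhere except at the one step that matters, and there it has a genuine gap. The easy implications, the reduction to absolutely integrally closed $V$, Tor-independence via Theorem~\ref{thm:TorIndependentForIndRegularV}, base change of the cotangent complex via \citestacks{08QQ}, and the final transfer of regularity are all fine. (The paper finishes instead by noting that $\underline{x}$ is regular in every $(B_\lambda)_{\fq\cap B_\lambda}$ and passing to the colimit.)

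The obstacle you flag does not exist. Write $\kappa_0$ for the residue field of $A_0$ and $\overline{A}_0=A_0/\underline{x}A_0$. Tor-independent base change gives $L_{(A/I)/A}\simeq L_{\overline{A}_0/A_0}\otimes^L_{\overline{A}_0}A/I$. Since $A_0\to A$ is local, $\overline{A}_0\to k_A$ factors through $\kappa_0$, so $L_{(A/I)/A}\otimes^L_{A/I}k_A\simeq (L_{\overline{A}_0/A_0}\otimes^L_{\overline{A}_0}\kappa_0)\otimes_{\kappa_0}k_A$. You do not need $k_A$ to be a base change of $\kappa_0$. You only need $k_A/\kappa_0$ to be a field extension, hence faithfully flat. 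Then (iv) immediately gives $H^{-2}(L_{\overline{A}_0/A_0}\otimes^L_{\overline{A}_0}\kappa_0)=0$, which is exactly the hypothesis of \cite[Th.~6.25]{Andre-cotangent-complex}. This is what the paper does; neither a Nakayama argument nor Theorem~\ref{thm:descend-any-Hc} is needed.

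The detour you propose instead does not work as written.

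\emph{The comparison map runs the other way.} For $L=L_{(A/I)/A}\in D^{\le -1}$ the natural map is $H^{-2}(L)\otimes k_A\to H^{-2}(L\otimes^L k_A)$, and its kernel is the image of $\Tor_2^{A/I}(I/I^2,k_A)$. So before Nakayama gives $H^{-2}(L)=0$, you must show $I/I^2$ is free. That does follow from (iv), since (iv) forces $\Tor_1^{A/I}(I/I^2,k_A)=0$, but your sketch does not say it.

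\emph{The descended statement is too weak for Andr\'e.} Even granting $H^{-2}(L_{(A/I)/A})=0$ and descending it via Theorem~\ref{thm:descend-any-Hc} to $H^{-2}(L_{\overline{A}_0/A_0})=0$, Andr\'e's criterion does not apply. Vanishing of $H^{-2}$ of the cotangent complex itself does not force a complete intersection, even for Noetherian local rings. Take $R=k[x,y]_{(x,y)}/(xy)$ and $I=xR$. The Jacobi--Zariski triangle for $k[x,y]\to R\to R/I$ shows $L_{(R/I)/R}$ is the two-term complex $R/I\xrightarrow{y}R/I$ in degrees $-2,-1$. Hence $H^{-2}(L_{(R/I)/R})=0$, while $H^{-2}(L_{(R/I)/R}\otimes^L k)=k\neq 0$, and indeed $x$ is a zero-divisor.

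Repairing this would require also descending freeness of $I_0/I_0^2$ over $\overline{A}_0$. That again comes down to the field-extension observation above, at which point the detour is unnecessary. The fallback options in your last paragraph are too vague to supply this step.
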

\begin{proof}
In view of \citestacks{08SJ}, the only nontrivial implication is \ref{RegImm:H2=0} implies \ref{RegImm:AllAreRegSeq}.   
Assume \ref{RegImm:H2=0}.
By flat base change \citestacks{08QQ} and descent \citestacks{00LM}, in showing \ref{RegImm:AllAreRegSeq}
we may assume $V$ is absolutely integrally closed.

Let $B$ be a finitely presented $V$-algebra with a prime ideal $\fq$ so that $B_\fq=A$,
and let $J$ be a finitely generated ideal of $B$ so that $JA=I$.
    %Localize near $\fq$ we may assume $M=N_\fq$ for some coherent $B$-module $N$.
    Write $V=\colim_{\lambda\in\Lambda}R_\lambda$ as in Theorem \ref{thm:AbsClosedVisIndRegular}.
    We may assume that $\Lambda$ has a minimal element $0$,
    $B$ and $J$ comes from a finitely presented $R_0$-algebra $B_0$ and a finitely generated ideal $J_0$ of $B_0$,
    and we may assume $B_0,B_0/J_0$ are Tor-independent with every $R_\lambda$ and $V$ by Theorem \ref{thm:TorIndependentForIndRegularV}.
    %Then $N_\lambda:=N_0\otimes^L_{B_0}B_{\lambda}$ is a coherent $B_{\lambda}$-module and $N=\colim_{\lambda}N_\lambda$.
    %We may also assume that every $x_i$ is the image of some element in $B_0$,
    %which we also denote by $x_i$ by abuse of notations.
    Let $B_{\lambda}=B_0\otimes_{R_0}R_\lambda$ and let $J_{\lambda}=JB_\lambda$.
    Then \citestacks{08QQ} applies by Tor-independence and \ref{RegImm:H2=0} can be rewritten as
    $H^{-2}(L_{(B_\lambda/J_{\lambda})/B_\lambda}\otimes^L_{B_\lambda/J_{\lambda}}k_A)=0$.
    As $k_A/\kappa(\fq\cap B_{\lambda})$ is a field extension
    we have $H^{-2}(L_{(B_\lambda/J_{\lambda})/B_\lambda}\otimes^L_{B_\lambda/J_{\lambda}}\kappa(\fq\cap B_{\lambda}))=0$.
    The Noetherian case of the theorem \cite[Th.~6.25]{Andre-cotangent-complex}
    tells us every minimal set of generators of $J_\lambda(B_\lambda)_{\fq\cap B_{\lambda}}$ is a regular sequence in $(B_\lambda)_{\fq\cap B_{\lambda}}$.

    Finally, for every minimal set $\underline{x}$ of generators of $I$,
    we may replace $\Lambda$ by some $\Lambda_{\geq\lambda}$ to assume $\underline{x}$ comes from a sequence $\underline{y}$ in $J_0$.
    We may replace $J_0$ by the ideal generated by $\underline{y}$ to assume that $\underline{y}$ generate $J_0$.
    The image of $\underline{y}$ in every $(B_\lambda)_{\fq\cap B_{\lambda}}$ is automatically a  minimal set   of generators of $J_\lambda(B_\lambda)_{\fq\cap B_{\lambda}}$,
    therefore a regular sequence in $(B_\lambda)_{\fq\cap B_{\lambda}}$. 
    As $A=\colim_\lambda (B_\lambda)_{\fq\cap B_{\lambda}}$ we get \ref{RegImm:AllAreRegSeq}.
\end{proof}

\begin{Thm}\label{thm:Regular-Has-Flat-L-m-fg}
    Let $(V,\fm_V,k_V)$ be a valuation ring,
$(A,\fm_A,k_A)$ an essentially finitely presented $V$-algebra.
    Assume $A$ contains a field $k$.
    
    Assume $\fm_V$ \emph{is} finitely generated, and assume
    $H^{-1}(L_{A/k}\otimes^L_Ak_A)=0$.
    Then for every finite extension $F/k$, $\operatorname{w.dim}A_F<\infty$,
    where $A_F=A\otimes_k F$.
\end{Thm}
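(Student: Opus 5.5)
The plan mirrors Theorem \ref{thm:Regular-Has-Flat-L-m-not-fg}, but uses the nonvanishing $L_{k_V/V}$ together with Theorem \ref{thm:lci=Lin-1}.

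\emph{Reduction to $F=k$.} The first paragraph of the proof of Theorem \ref{thm:Regular-Has-Flat-L-m-not-fg} applies verbatim: the hypothesis $H^{-1}(L_{A_F/F}\otimes^L k_{A_F})=0$ passes to $A_F$ for finite purely inseparable $F/k$, and the general case follows by Theorem \ref{thm:ALLwdimFacts}\ref{wdim:flatDescent}\ref{wdim:flatAscent}. So it suffices to show $\operatorname{w.dim}A<\infty$. We may also assume $V\to A$ is flat and local (Discussion \ref{discu:DomainsOverPrufer}); this is harmless because the hypothesis concerns only $A$ and $k$. If $\fm_V=0$ then $V$ is a field, $A$ is Noetherian, and the claim is the classical result \cite[Th.~16.18]{Andre-cotangent-complex}. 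Otherwise $\fm_V=\pi V$ for some nonzero $\pi$.

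\emph{Main argument.} Set $\overline{A}=A/\pi A$. By flatness, $\pi$ is a nonzerodivisor on $A$. Hence $L_{\overline{A}/A}=(\pi A/\pi^2A)[1]$ and $H^{-2}(L_{\overline{A}/A}\otimes k_A)=0$. The Jacobi--Zariski sequence for $k\to A\to\overline{A}$, tensored with $k_A$, reads
\[
H^{-1}(L_{A/k}\otimes^L_A k_A)\to H^{-1}(L_{\overline{A}/k}\otimes^L_{\overline{A}} k_A)\to \pi A/\pi^2A\otimes_A k_A\xrightarrow{\ \delta\ } H^{0}(L_{A/k}\otimes^L_A k_A).
\]
The first term vanishes by hypothesis. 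So $H^{-1}(L_{\overline{A}/k}\otimes k_A)$ equals $\ker\delta$, which is at most one-dimensional. Here $\delta$ sends the class of $\pi$ to $d\pi\otimes 1$ in $\Omega_{A/k}\otimes k_A$.

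The Noetherian local ring $\overline{A}$ is essentially of finite type over the field $k_V$. I would like to conclude that $\overline{A}$ is regular, but the possibly nonzero kernel is an obstacle. I will handle it with Theorem \ref{thm:lci=Lin-1}, as follows. Choose a presentation of $\overline{A}$ as $P/\mathfrak{a}$, with $P$ a localization of a polynomial ring over $A$ and hence essentially smooth over $A$. Transitivity for $k\to P\to\overline{A}$ and for $A\to P\to\overline{A}$, combined with the vanishing of $H^{-1}(L_{A/k}\otimes k_A)$, should show that the map
\[
H^{-2}(L_{\overline{A}/P}\otimes k_A)=H^{-2}(L_{\overline{A}/A}\otimes k_A)\longrightarrow H^{-1}(L_{A/k}\otimes k_A)
\]
can be controlled. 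The concrete goal is to show that $\overline{A}$ is a complete intersection over $k_V$ and that its $H^{-2}$ vanishes. Applying the regular-immersion criterion, Theorem \ref{thm:lci=Lin-1}, to $I=\fm_A$ would then be the cleanest route. Indeed, if $H^{-2}(L_{k_A/A}\otimes k_A)=0$ then $\fm_A$ is generated by a regular sequence, and $A$ has finite weak dimension by a Koszul resolution of $k_A$ together with Theorem \ref{thm:ALLwdimFacts}\ref{wdim:testk}.

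So the plan is to prove $H^{-2}(L_{k_A/A})=0$ (this requires $\fm_A$ finitely generated, which holds since $\fm_V=\pi V$ and $\overline{A}$ is Noetherian). Consider the triangle for $k\to A\to k_A$:
\[
H^{-2}(L_{k_A/k})\to H^{-2}(L_{k_A/A})\to H^{-1}(L_{A/k}\otimes k_A)=0.
\]
It therefore suffices to show that $H^{-2}(L_{k_A/k})=0$. That group vanishes for any field extension $k_A/k$, since the cotangent complex of a field extension has amplitude $[-1,0]$. Hence $H^{-2}(L_{k_A/A})=0$, and Theorem \ref{thm:lci=Lin-1} gives that $\fm_A$ is generated by a regular sequence. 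Then $k_A$ has a finite free Koszul resolution, and Theorem \ref{thm:ALLwdimFacts}\ref{wdim:testk} yields $\operatorname{w.dim}A<\infty$, as every coherent module $M$ has $k_A\otimes^L M$ bounded.

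The main obstacle is the step from ``$\fm_A$ generated by a regular sequence'' to finite weak global dimension. This passage needs $\operatorname{w.dim}_A M=\sup\{c : \Tor_c(k_A,M)\neq0\}$ for coherent $M$, which is exactly Theorem \ref{thm:ALLwdimFacts}\ref{wdim:testk}, together with the uniform bound from Discussion \ref{discu:NorRegloci}. I would double-check that this is legitimate when $\fm_A$ is finitely generated. The use of Theorem \ref{thm:lci=Lin-1} carries the real content, namely the refined approximation of \S\ref{sec:approximation-regular-v}.
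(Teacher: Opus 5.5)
Your final argument is exactly the paper's proof: the Jacobi--Zariski triangle for $k\to A\to k_A$ gives $H^{-2}(L_{k_A/A})=0$; Theorem \ref{thm:lci=Lin-1} then applies to $I=\fm_A$, which is finitely generated because $\fm_V$ is and $A/\fm_VA$ is Noetherian; and the finite Koszul resolution of $k_A$ finishes. The paper concludes with Theorem \ref{thm:ALLwdimFacts}\ref{wdim:quotient} rather than \ref{wdim:testk}, which is immaterial since the Koszul complex already bounds $\operatorname{w.dim}$ uniformly, so the detour through $\overline{A}=A/\pi A$ and the presentation $P$ can be deleted. One caveat, which also affects the paper's ``we may assume $V\to A$ is local'': replacing $V$ by $V_{\fm_A\cap V}$ can destroy finite generation of the maximal ideal, so your justification that ``the hypothesis concerns only $A$ and $k$'' is not right as stated; in that case you should appeal to Theorem \ref{thm:Regular-Has-Flat-L-m-not-fg} instead.
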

\begin{proof}
Same as in the proof of Theorem \ref{thm:Regular-Has-Flat-L-m-not-fg} we may assume $V\to A$ is local and it suffices to show $\operatorname{w.dim} A<\infty.$
As $A/\fm_V A$ is Noetherian we have $\fm_A$ finitely generated.
As $k_A/k$ is a field extension, $L_{k_A/k}$ has tor-amplitude in $[-1,0]$,
hence the assumption $H^{-1}(L_{A/k}\otimes^L_Ak_A)=0$
 gives $H^{-2}(L_{k_A/A})=0$,
 via the Jacobi--Zariski distinguished triangle $L_{A/k}\otimes^L_Ak_A\to L_{k_A/k}\to L_{k_A/A}\to +1$.
 As $\fm_A$ is finitely generated, it is generated by a regular sequence by Theorem \ref{thm:lci=Lin-1},
 so $\operatorname{w.dim}A<\infty$ by Theorem \ref{thm:ALLwdimFacts}\ref{wdim:quotient}.
\end{proof}
 \subsection{Geometrically regular fibers}
 We discuss a ``relative'' version of Theorems \ref{thm:Regular-Has-Flat-L-m-not-fg} and \ref{thm:Regular-Has-Flat-L-m-fg}.
\begin{Thm}\label{thm:RegularMapsByL}
    Let \[\begin{CD}
        (V,\fm_V,k_V) @>>> (W,\fm_W,k_W)\\
        @VVV @VVV\\
        (A,\fm_A,k_A)@>>> (B,\fm_B,k_B)
    \end{CD}\]
be a commutative diagram of local maps of local rings, where $V$ and $W$ are valuation rings and the vertical arrows are essentially finitely presented.
Assume $H^{-1}(L_{B/A}\otimes_B^Lk_B)=0$.
Then %the following hold.
    for every finite extension $F/k_A$, $B_F=B\otimes_A F$ is essentially finitely presented over a valuation ring, and $\operatorname{w.dim}B_F<\infty$.
    %\item $\Tor_1^A(k_A,B)=0$.
\end{Thm}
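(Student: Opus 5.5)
The plan is to reduce to the absolute statements, Theorems \ref{thm:Regular-Has-Flat-L-m-not-fg} and \ref{thm:Regular-Has-Flat-L-m-fg}, applied to the fiber. Set $B_0=B\otimes_A k_A = B/\fm_AB$; this is a local ring containing the field $k_A$.

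First we show $B_0$ is essentially finitely presented over a valuation ring. Since $V\to A$ is local, $\fm_V\subseteq \fm_A$, so $B_0$ is a quotient of $B/\fm_V B$, which is essentially finitely presented over $W/\fm_V W$. This is not obviously a valuation ring, so instead we pick a valuation ring mapping to $B_0$. The image of $W$ in $B_0$ is a quotient $W/\fp$ with $\fp=\ker(W\to B_0)$ a prime: it is the kernel of the map to the residue field $k_B$ intersected back, hence radical, hence prime by Lemma \ref{lem:RadicalIsPrimeInVal}. Thus $B_0$ is essentially finitely generated over the valuation ring $W/\fp$. As in Discussion \ref{discu:DomainsOverPrufer}, after replacing $B_0$ by a domain quotient only when needed, finite presentation should follow from flatness over the image together with coherence. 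If this is delicate, I would instead use that $\fm_A$ is finitely generated modulo $\fm_V$, since $A/\fm_VA$ is Noetherian, together with $\fm_V$ being a union of principal ideals, to write $B_0$ as a filtered colimit. The same applies to $B_F=B_0\otimes_{k_A}F$ for finite $F/k_A$, which is local because $F/k_A$ is finite and the relevant extension is purely inseparable after first handling separable parts by Theorem \ref{thm:ALLwdimFacts}\ref{wdim:flatAscent}, \ref{wdim:flatDescent}. Note that $B\otimes_A F=B_0\otimes_{k_A}F$.

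Second, we transfer the cotangent hypothesis. Base change gives $L_{B_0/k_A}\simeq L_{B/A}\otimes^L_A k_A$ only under Tor-independence, which we do not have. So we instead use the Jacobi--Zariski triangle for $A\to B\to B_0$:
\[
L_{B/A}\otimes^L_B B_0\to L_{B_0/A}\to L_{B_0/B}\to +1,
\]
and the triangle for $A\to k_A\to B_0$:
\[
L_{k_A/A}\otimes^L_{k_A}B_0\to L_{B_0/A}\to L_{B_0/k_A}\to+1 .
\]
After tensoring with $k_B$, the goal is $H^{-1}(L_{B_0/k_A}\otimes^L k_B)=0$. From the first triangle, $H^{-1}(L_{B_0/A}\otimes k_B)$ receives $H^{-1}(L_{B/A}\otimes k_B)=0$ and maps injectively into $H^{-1}(L_{B_0/B}\otimes k_B)=\fm_AB/\fm_A^2B\otimes k_B$, computed modulo higher terms. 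Comparing with the second triangle, $H^{-1}(L_{B_0/k_A}\otimes k_B)$ sits between $H^{-1}(L_{B_0/A}\otimes k_B)$ and $H^{-2}(L_{k_A/A})\otimes k_B\to$, and $H^{-2}(L_{k_A/A})$ maps onto $\fm_A/\fm_A^2$ in degree $-1$. The point is that the composite $\fm_A/\fm_A^2\otimes k_B\to \fm_AB/\fm_A^2 B\otimes k_B$ is surjective. Chasing the diagram then kills $H^{-1}(L_{B_0/k_A}\otimes^L k_B)$. This is exactly the Noetherian argument in \cite{Andre-cotangent-complex} (the ``fiber'' lemma), and it is formal, so it applies verbatim.

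Third, we conclude. Now $B_0$ is a local $k_A$-algebra essentially finitely presented over a valuation ring with $H^{-1}(L_{B_0/k_A}\otimes^L k_B)=0$. By Theorem \ref{thm:Regular-Has-Flat-L-m-not-fg} or Theorem \ref{thm:Regular-Has-Flat-L-m-fg}, according to whether the maximal ideal of that valuation ring is finitely generated, we get $\operatorname{w.dim}(B_0\otimes_{k_A}F)<\infty$ for all finite $F/k_A$, which is the claim. The main obstacle I expect is the first step: exhibiting $B_F$ as essentially finitely presented over some valuation ring. The quotient $W\to B_0$ need not make $B_0$ flat over the image, so one may need to pass to a reduced or domain quotient and show this does not affect the cotangent hypothesis. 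Alternatively, one can choose the valuation ring as $(W/\fp)$ localized suitably and invoke \citestacks{0GSE}.
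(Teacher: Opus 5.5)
\textbf{The gap is in your first step.} This step is the only non-formal input, and you leave it unresolved. Your justification that $\fp=\ker(W\to B_0)$ is prime is wrong. The kernel of $W\to k_B$ is $\fm_W$, but $\ker(W\to B_0)$ is the preimage of $\fm_AB$ in $W$, which is in general smaller and need not be radical. For instance, with $V=A=\bZ_p$ and $W=B=\bZ_p[\sqrt p]$ it is $pW$.

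Under the present hypotheses the kernel does turn out to be prime, but only because $B_0$ is a domain. That fact is a consequence of the theorem (finite weak dimension forces a domain), so using it here is circular. The appeal to Discussion \ref{discu:DomainsOverPrufer} has the same circularity: flatness over the image of $W$, and hence finite presentation, needs $B_0$ to be torsion-free over that image, e.g. a domain. Passing to a domain quotient changes the ring you must control. The filtered-colimit remark does not produce a valuation ring over which $B_0$ is essentially finitely presented.

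\textbf{The missing idea} is that you never need the image of $W$ in $B_0$.
\begin{itemize}
\item Since $A/\fm_VA$ is Noetherian, $\fm_A=\fm_VA+(a_1,\ldots,a_n)$. So $B_0$, and hence $B_F$, is finitely presented over $B/\fm_VB$.
\item If $\fm_V$ is finitely generated, $B/\fm_VB$ is finitely presented over $B$, hence essentially finitely presented over $W$ itself.
\item If $\fm_V$ is not finitely generated, then $\fm_V=\fm_V^2$. So $\fm_VW$ is an idempotent proper ideal of $W$, hence prime by Lemma \ref{lem:SquareIsPrimeInVal} (or zero). Then $B/\fm_VB=B\otimes_W W/\fm_VW$ is essentially finitely presented over the valuation ring $W/\fm_VW$.
\end{itemize}
This dichotomy is exactly the paper's argument.

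The rest of your proposal is fine. Your chase with the two Jacobi--Zariski triangles correctly proves the surjection $H^{-1}(L_{B/A}\otimes^L_Bk_B)\to H^{-1}(L_{B_0/k_A}\otimes^L_{B_0}k_B)$, which the paper instead quotes from \cite[Prop.~7.31]{Andre-cotangent-complex}. One correction to your write-up: the relevant term is $H^{-1}(L_{k_A/A})=\fm_A/\fm_A^2$, not an $H^{-2}$ term, and it is $H^0(L_{k_A/A})=0$ that makes $H^{-1}(L_{B_0/A}\otimes^L k_B)\to H^{-1}(L_{B_0/k_A}\otimes^L k_B)$ onto. The final appeal to Theorems \ref{thm:Regular-Has-Flat-L-m-not-fg} and \ref{thm:Regular-Has-Flat-L-m-fg} matches the paper. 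Those theorems already cover every finite $F/k_A$, so your discussion of whether $B_F$ is local is unnecessary.
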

\begin{proof}
    By \cite[Prop.~7.31]{Andre-cotangent-complex} or \citestacks{09DB},
    we have a surjective map $H^{-1}(L_{B/A}\otimes_B^Lk_B)\to H^{-1}(L_{B_{k_A}/k_A}\otimes_{B_{k_A}}^Lk_B)$.
    %Therefore $H^{-1}(L_{B_{k_A}/k_A}\otimes_{B_{k_A}}^Lk_B)=0$,
    %so for every $F/k_A$ finite,
    %$H^{-1}(L_{B_{F}/F}\otimes_{B_F}^Lk_{B_F})=0$,
    %where $k_{B_F}$ is the residue field of $B_F$.
    %For the prime field $k_0\subseteq F$ we then have $H^{-1}(L_{B_{F}/k_0}\otimes_{B_F}^Lk_{B_F})=0$, as $L_{F/k_0}$ has tor-amplitude in $[0,0]$.
    By Theorems \ref{thm:Regular-Has-Flat-L-m-not-fg} and \ref{thm:Regular-Has-Flat-L-m-fg},
    we only need to prove $B_F$ is essentially finitely presented over a valuation ring.
    As $B_F$ is essentially finitely presented over $B/\fm_A B$,
    hence over $B/\fm_V B$ as $A/\fm_V A$ is Noetherian,
    %to prove \ref{RegMaps:fiberGeomReg} 
    it suffices to show $B/\fm_V B$ is essentially finitely presented over a valuation ring.

    If $\fm_V$ is not finitely generated, then  $\fm_V=\fm_V^2$.
    Now $\fm_V W$ is a prime ideal by Lemma \ref{lem:SquareIsPrimeInVal}.
    In this case, $B/\fm_V B$ is essentially finitely presented over $W/\fm_V W$, as desired.
    Now assume $\fm_V$ is finitely generated.
    Then $B/\fm_V B$ is finitely presented over $B$, hence essentially finitely presented over $W$,
     we win.
\end{proof}

\begin{Rem}
    What is the ultimate source of the exact sequences \cite[Prop. 15.18 and 7.31]{Andre-cotangent-complex} (or \citestacks{09DB}) used in the proof of Theorems \ref{thm:Regular-Has-Flat-L-m-not-fg} and \ref{thm:RegularMapsByL}?
    In fact, they come from the Jacobi--Zariski sequence associated with the maps $C\xrightarrow{\varphi} B\otimes_A^LC\xrightarrow{\psi} B\otimes_A C$ of connective $\mathbb{E}_{\infty}$-rings (or animated rings),
    where $L_\varphi=L_{B/A}\otimes_A^LC$, and $L_\psi$ is controlled by \cite[Th.~7.4.3.1]{HA}: the nonzero homotopy of $L_\psi$ at the smallest (homological) degree is $\operatorname{Tor}_k^A(B,C)$  in  degree $k+1$, provided $\operatorname{Tor}_j^A(B,C)=0$ for $0<j<k$ and $\operatorname{Tor}_k^A(B,C)\neq 0$.
\end{Rem}

\begin{Rem}
    In contrast to the Noetherian case (cf. \cite[Th.~16.18]{Andre-cotangent-complex}), the condition in Theorem \ref{thm:RegularMapsByL} does not imply flatness of $B$ over $A$.
    In fact, consider the simplest example $A=V$ and $B=W=k_V$, where $\fm_V$ is not finitely generated.
    Then $L_{B/A}=0$ as seen in the proof of Theorem \ref{thm:Regular-Has-Flat-L-m-not-fg},
    but $A\to B$ is not flat.
\end{Rem}

\section{Vanishing theorems}\label{sec:VanishingTh}

\subsection{Vanishing in equal characteristic zero}
We prove a version of the Kodaira's vanishing theorem in our setup.
We have not defined or studied dualizing complexes and upper shriek in our setup, so we use an ad hoc notion.
\begin{Thm}\label{thm:VanishingOverQ}
    Let $V$ be a valuation ring of residue characteristic $0$,
    and let $X$ be a closed subscheme of $\bP^r_V$ for some $r$.
    Assume that $X$ is flat over $\Spec V$ of relative dimension $d$
    and assume that $\operatorname{Reg}(X)=X$.
    %$\operatorname{w.dim}\cO_{X,x}<\infty$ for all $x\in X$.

    Let $\omega_X=\mathcal{E}xt^{r-d}_{\bP^r_V}(\cO_X,\cO_{\bP^r_V}(-r-1))$.
    Then $\omega_X$ is an invertible sheaf, and for every ample invertible sheaf $\cL$ on $X$ and every $j>0$ we have $H^j(X,\omega_X\otimes\cL)=0$.
\end{Thm}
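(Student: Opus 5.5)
Reduce to the case where $V$ is a filtered colimit of regular local rings $R_\lambda$ essentially of finite type over $\bQ$ (Theorem \ref{thm:VoverQisIndRegular}). Descend $X$ to a flat projective $X_\lambda\subseteq\bP^r_{R_\lambda}$. Then show that $X_\lambda$ is regular near the image of $X$, and apply Kodaira-type vanishing (Grauert--Riemenschneider / Kawamata--Viehweg for proper morphisms in characteristic zero) on the regular Noetherian scheme $X_\lambda$.

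\textbf{Preliminary reductions.} Since $V$ has residue characteristic $0$ it contains $\bQ$. Both cohomology and $\mathcal{E}xt$ commute with the flat base change $V\to W$ to an extension of valuation rings. By Theorem \ref{thm:ALLwdimFacts}\ref{wdim:flatAscent}, regularity of $X_W$ can be arranged when the residue field extension is trivial, e.g. $W$ a completion or henselization. So we may keep $V$ as is.

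\textbf{Descent.} Write $V=\colim R_\lambda$ by Theorem \ref{thm:VoverQisIndRegular}. Descend $X$ to closed subschemes $X_\lambda\subseteq\bP^r_{R_\lambda}$ and $\cL$ to $\cL_\lambda$. By Theorem \ref{thm:TorIndependentForIndRegularV} we may take $X_\lambda$ flat over $R_\lambda$ (after enlarging $\lambda$, using \citestacks{05LY}) and Tor-independent of $V$ and of all $R_{\lambda'}$. Ampleness descends by standard limit arguments. By Corollary \ref{cor:SeparateCohomology}, applied to $M_0=R\mathcal{H}om(\cO_{X_0},\cO(-r-1))$, the sheaf $\omega_X$ is the pullback of $\omega_{X_\lambda}:=\mathcal{E}xt^{r-d}(\cO_{X_\lambda},\cO_{\bP^r_{R_\lambda}}(-r-1))$ for large $\lambda$. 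Flat base change $X\to X_\lambda$ then gives $H^j(X,\omega_X\otimes\cL)=H^j(X_\lambda,\omega_{X_\lambda}\otimes\cL_\lambda)\otimes_{R_\lambda}V$, and it suffices to kill the latter cohomology, possibly after localizing $R_\lambda$ and shrinking to a neighbourhood of the image of the closed fibre.

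\textbf{Regularity of $X_\lambda$ (main obstacle).} We need $X_\lambda$ regular along the image of $X$. For $x\in X$ with image $x_\lambda$, the map $\cO_{X_\lambda,x_\lambda}\to\cO_{X,x}$ is flat and local. A finitely generated ideal, e.g. the maximal ideal of $\cO_{X_\lambda,x_\lambda}$, extends to a finitely presented module over the coherent regular ring $\cO_{X,x}$, which has finite tor dimension. The plan is to use Theorem \ref{thm:descend-any-Hc} with $M=k(x_\lambda)$: the vanishing of $H^{-c}(k(x_\lambda)\otimes^L\cO_{X,x})$ for $c>\operatorname{w.dim}\cO_{X,x}$ descends to vanishing of $\Tor_c(k(x_\lambda),k(x_\lambda))$. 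Hence $\cO_{X_\lambda,x_\lambda}$ is regular. Theorem \ref{thm:descend-any-Hc} is stated under excellence, but its Remark (or the fact that $R_\lambda$ is essentially of finite type over $\bQ$) covers this case. By Theorem \ref{thm:RegQCopen}, $\operatorname{Reg}$ is open and quasi-compact, so regularity holds on an open neighbourhood $U_\lambda\supseteq$ image of $X$. The difficulty is that the bound must be uniform in $x$. I would use the uniform bound on $\operatorname{w.dim}$ from Discussion \ref{discu:NorRegloci} together with quasi-compactness and the constructible topology, as in Step \ref{step:ConstrIsEnough}, to get a single $\lambda$.

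\textbf{Vanishing on $X_\lambda$.} Now $X_\lambda\to\Spec R_\lambda$ is flat projective and regular near the image, and $R_\lambda$ is regular. Replacing $\Spec R_\lambda$ by an open neighbourhood of the image of $\Spec V$ and using properness (the closed fibre maps into $U_\lambda$), $X_\lambda$ is regular with Cohen--Macaulay fibres. Its relative dualizing sheaf is then $\omega_{X_\lambda}$ as defined, since $\mathcal{E}xt^{i}$ vanishes for $i\neq r-d$ and the sheaf is invertible. Hence $\omega_X$ is invertible. Because $R_\lambda$ is regular, $\omega_{X_\lambda}\otimes\pi^*\omega_{R_\lambda}^{-1}$ is the absolute canonical sheaf, and $\omega_{R_\lambda}$ is trivial on a local ring. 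Relative Kodaira vanishing in characteristic $0$ for the projective morphism $X_\lambda\to\Spec R_\lambda$ from a regular scheme, proved via resolution and Kawamata--Viehweg (or Grauert--Riemenschneider), gives $R^j\pi_*(\omega_{X_\lambda}\otimes\cL_\lambda)=0$ for $j>0$. Base changing to $V$ concludes the proof.
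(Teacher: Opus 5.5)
Your outline matches the paper's: approximate $V$ by regular local rings $R_\lambda$ essentially of finite type over $\bQ$, descend to a flat projective $X_\lambda$, show $X_\lambda$ is regular, apply Kawamata--Viehweg there, and base change back. The gap is in the step you yourself call the main obstacle, regularity of $X_\lambda$, which is not correctly argued.

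\textbf{Flatness fails.} The map $\cO_{X_\lambda,x_\lambda}\to\cO_{X,x}$ is \emph{not} flat in general. For $x$ in the closed fibre, flatness would make $R_\lambda\to\cO_{X,x}$ flat, hence $R_\lambda\to V$ flat by descent along the faithfully flat map $V\to\cO_{X,x}$. A flat local map from a Noetherian local domain to a valuation ring forces the principal ideals of $R_\lambda$ to be totally ordered, i.e.\ $\dim R_\lambda\le 1$. Were the map flat, regularity would descend at once and nothing else would be needed.

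\textbf{Misapplied theorem.} Applying Theorem \ref{thm:descend-any-Hc} with $M=k(x_\lambda)$ only returns $H^{-c}(k(x_\lambda))=0$, which is vacuous. Moreover, $\operatorname{w.dim}\cO_{X,x}$ bounds Tor over $\cO_{X,x}$, not $\Tor_c^{\cO_{X_\lambda,x_\lambda}}(k(x_\lambda),\cO_{X,x})$. To make that route work you would have to apply it to $k(x_\lambda)\otimes^L k(x_\lambda)$. That in turn requires boundedness of $k(x_\lambda)\otimes^L_{\cO_{X_\lambda,x_\lambda}}\cO_{X,x}$, which is precisely the ingredient you are missing.

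\textbf{Non-issues.} The ``uniformity in $x$'' problem does not exist, since nothing forces $\lambda$ to depend on $x$. Also, Theorem \ref{thm:RegQCopen} concerns schemes over valuation rings, not $X_\lambda$; the regular locus of $X_\lambda$ is open by excellence.

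\textbf{The missing idea.} This is how the paper argues:
\begin{enumerate}
\item Since $R_\lambda$ is regular, $\operatorname{w.dim}_{R_\lambda}V<\infty$.
\item As $\cO_{X_\lambda,x_\lambda}$ is flat over $R_\lambda$, base change and localization give $\operatorname{w.dim}_{\cO_{X_\lambda,x_\lambda}}\cO_{X,x}<\infty$.
\item Together with $\operatorname{w.dim}\cO_{X,x}<\infty$, this yields $\operatorname{w.dim}_{\cO_{X_\lambda,x_\lambda}}\kappa(x)<\infty$.
\item Since $\kappa(x)$ is a nonzero free $\kappa(x_\lambda)$-module, $\kappa(x_\lambda)$ has finite flat dimension, so $\cO_{X_\lambda,x_\lambda}$ is regular.
\end{enumerate}
This works for any fixed $\lambda$ with $X_\lambda$ flat, and for every closed point $x_\lambda$. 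Such a point lies over the closed point of $\Spec R_\lambda$ by properness, and is hit by some $x\in X$ because the closed fibre of $X$ surjects onto that of $X_\lambda$. So $X_\lambda$ is regular outright, with no constructible-topology argument.

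\textbf{Two smaller corrections.} First, $X\to X_\lambda$ is not flat, so you cannot write $H^j(X,-)=H^j(X_\lambda,-)\otimes_{R_\lambda}V$. Instead use Tor-independent base change, $R\Gamma(X,\omega_X\otimes\cL)=R\Gamma(X_\lambda,\omega_{X_\lambda}\otimes\cL_\lambda)\otimes^L_{R_\lambda}V$, together with the fact that $\otimes^L$ preserves $D^{\le 0}$. Second, before identifying $\omega_{X_\lambda}$ with the dualizing sheaf, check that $X_\lambda\to\Spec R_\lambda$ has relative dimension $d$. The Cohen--Macaulay fibres split $X_\lambda$ into open and closed pieces by fibre dimension. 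The pieces of dimension $\neq d$ miss the closed fibre, so they are empty by properness.
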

\begin{proof}
    Write $V=\colim_{\lambda\in\Lambda}R_\lambda$ as in Theorem \ref{thm:VoverQisIndRegular}.
    Note that $X$ is of finite presentation over $V$ \citestacks{053G}.
    We may assume that $\Lambda$ has a minimal element $0$ and
    that $X$ and $\cL$ come from a closed subscheme $X_0$ of $\bP^r_{R_0}$ and an ample invertible sheaf $\cL_0$ on $X_0$, see \citetwostacks{0B8W}{09MT}.
    By \citestacks{05LY} we may assume $X_0$ is flat over $R_0$.

    Let $x_0\in X_0$ be an arbitrary closed point.
    As $X_0\to \Spec R_0$ is proper and as $R_0\to V$ is local,
    there exists a closed point $x\in X$ lying over $x_0$.
    Then $\cO_{X,x}$ is a localization of $\cO_{X_0,x_0}\otimes_{R_0} V$.
    As $\cO_{X_0,x_0}$ is flat over $R_0$ and as $\operatorname{w.dim}_{R_0}V<\infty$ (since $R_0$ is a regular local ring),
    it follows from \citetwostacks{066M}{066K} that $\operatorname{w.dim}_{\cO_{X_0,x_0}}\cO_{X,x}<\infty$.
    As $\operatorname{w.dim}\cO_{X,x}<\infty$, another application of \citestacks{066K} shows $\operatorname{w.dim}_{\cO_{X_0,x_0}}\kappa(x)<\infty$.
    But $\kappa(x)/\kappa(x_0)$ is a field extension, so  $\operatorname{w.dim}_{\cO_{X_0,x_0}}\kappa(x_0)<\infty$,
    hence $\cO_{X_0,x_0}$ is a regular local ring.
    As $x_0$ was arbitrary, we see $X_0$ is regular.

Write $f_0$ for the morphism $X_0\to\Spec R_0$.
As $X_0$ and $R_0$ are regular and as $f_0$ is flat we see all fibers of $f_0$ are Cohen--Macaulay,
hence $X_0=W\sqcup W'$, where $W\to \Spec R_0$ is of relative dimension $d$ and at no point of $W'$ the fiber dimension over $\Spec R_0$ is $d$, see \citestacks{02NM}.
It is then clear that $W'\times_{\Spec R_0}\Spec V=\emptyset$, so $W'=\emptyset$ by properness and $f_0$ has relative dimension $d$.
It now follows that
$\omega_{X_0}:=\mathcal{E}xt^{r-d}_{\bP^r_{R_0}}(\cO_{X_0},\cO_{\bP^r_{R_0}}(-r-1))$
is the unique nonzero cohomology sheaf of $f_0^!R_0$, \emph{i.e.}, the dualizing sheaf of $X_0$ compatible with $R_0$.
As $R_0$ is essentially finitely generated over $\bQ$, the classical Kawamata--Viehweg vanishing theorem \cite[Th.~1-2-3]{KMM87-MMP} tells us $H^j(X_0,\omega_{X_0}\otimes\cL_0)=0$ for all $j>0$.

As $X_0$ and $\bP^r_{R_0}$ are flat over $R_0$,
we have $L(\bP^r_{V}\to \bP^r_{R_0})^*\cO_{X_0}=\cO_X$,
so by \citestacks{0A6A} we have $$L(\bP^r_{V}\to \bP^r_{R_0})^*R\mathcal{H}om_{\cO_{\bP^r_{R_0}}}(\cO_{X_0},\cO_{\bP^r_{R_0}}(-r-1))=R\mathcal{H}om_{\cO_{\bP^r_V}}(\cO_X,\cO_{\bP^r_V}(-r-1)).$$
As $\omega_{X_0}$ is an invertible sheaf on $X_0$ and the only nonzero cohomology sheaf of $R\mathcal{H}om_{\cO_{\bP^r_{R_0}}}(\cO_{X_0},\cO_{\bP^r_{R_0}}(-r-1))$,
it now follows that 
$(X\to X_{0})^*\omega_{X_0}=\omega_X$,
in particular $\omega_X$ is an invertible sheaf.
Hence \citestacks{07VK} tells us $R\Gamma(X_0,\omega_{X_0}\otimes\cL_0)\otimes^L_{R_0}V=R\Gamma(X,\omega_{X}\otimes\cL)$.
As $R\Gamma(X_0,\omega_{X_0}\otimes\cL_0)\in D^{\leq 0}$,
we conclude that $R\Gamma(X,\omega_{X}\otimes\cL)\in D^{\leq 0}$,
as desired.
\end{proof}

\subsection{Product of local rings}
We make some preparation for vanishing in large residue characteristics.
The intuition comes from the consideration of ultraproduct of local rings, which are simultaneously localizations and quotients of (infinite) direct product of local rings, cf. \cite{SchoutensBOOK} and \cite[Appendix A]{Lyu-thesis}.
\begin{Lem}\label{lem:ProdofLocalRings}
    Let $(R_\alpha,\fm_\alpha,k_\alpha)_{\alpha\in\Gamma}$ be a family of local rings,
    and let $\mathcal{R}=\prod_\alpha R_\alpha$.
    Then the following hold.
    \begin{enumerate}[label=$(\roman*)$]
        \item\label{prodlocal:Jac} $\prod_\alpha \fm_\alpha$ is the Jacobson radical of $\mathcal{R}$,
        and $V(\prod_\alpha \fm_\alpha)=\Max(\mathcal{R})$.
        \item\label{prodlocal:Principal} Let $M_\alpha$ be the preimage of $\fm_\alpha$ in $\mathcal{R}$.
        Then $M_\alpha\in\Max(\mathcal{R})$ and $\mathcal{R}_{M_\alpha}=R_\alpha$.
        Moreover, the singleton $\{M_\alpha\}$ is a principal open subset of $\Max(\mathcal{R})=\Spec(\prod_\alpha k_\alpha)$.
\item\label{prodlocal:dense} The set of all $M_\alpha$ is dense in the constructible topology of $\Max(\mathcal{R})=\Spec(\prod_\alpha k_\alpha)$.
    \end{enumerate}
\end{Lem}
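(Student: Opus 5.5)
For \ref{prodlocal:Jac}, I will use the unit criterion in the product. An element $a=(a_\alpha)\in\mathcal{R}$ is a unit exactly when every $a_\alpha$ is a unit in $R_\alpha$. Hence $J:=\prod_\alpha\fm_\alpha$ has the property that $1+ab$ is a unit for all $a\in J$ and $b\in\mathcal{R}$, because each component $1+a_\alpha b_\alpha$ lies in $1+\fm_\alpha$. Therefore $J$ is contained in the Jacobson radical.

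For the reverse inclusion, take $a\notin J$, so some component $a_\beta$ is a unit. Put $b_\alpha=-a_\alpha^{-1}$ if $\alpha=\beta$ and $b_\alpha=0$ otherwise. Then $1+ab$ has $\beta$-component $0$, so it is not a unit, and $a$ is not in the Jacobson radical.

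It remains to see $V(J)=\Max(\mathcal{R})$. The inclusion $\supseteq$ is immediate since $J$ is the Jacobson radical. For $\subseteq$, note that $\mathcal{R}/J=\prod_\alpha k_\alpha$ is a product of fields, hence von Neumann regular, so every prime of it is maximal. A prime of $\mathcal{R}$ containing $J$ is therefore maximal, and $\Max(\mathcal{R})=\Spec(\prod_\alpha k_\alpha)$.

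For \ref{prodlocal:Principal}, let $e_\alpha\in\mathcal{R}$ be the idempotent with $1$ in the $\alpha$-slot and $0$ elsewhere. Then $\mathcal{R}_{e_\alpha}=\mathcal{R}e_\alpha=R_\alpha$, so $D(e_\alpha)\cong\Spec R_\alpha$. Now $M_\alpha$ is the preimage of $\fm_\alpha$ under the projection, so it is maximal with $e_\alpha\notin M_\alpha$. It corresponds to the closed point of $\Spec R_\alpha$, giving $\mathcal{R}_{M_\alpha}=(R_\alpha)_{\fm_\alpha}=R_\alpha$. Finally, $D(e_\alpha)\cap\Max(\mathcal{R})$ consists of the maximal ideals of $R_\alpha$, which is only $\fm_\alpha$. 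So $\{M_\alpha\}=D(e_\alpha)\cap\Max(\mathcal{R})$ is principal open in $\Max(\mathcal{R})$, equivalently $D(\bar e_\alpha)$ in $\Spec(\prod k_\alpha)$.

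For \ref{prodlocal:dense}, I will work in $S:=\prod_\alpha k_\alpha$. Every finitely generated ideal of $S$ is generated by an idempotent, since $S$ is von Neumann regular. Consequently every basic constructible set $V(I)\cap D(f)$ has the form $D(\epsilon)$ for an idempotent $\epsilon$: take $\epsilon=(1-\epsilon_I)\epsilon_f$, where $\epsilon_I$ generates $I$ and $\epsilon_f$ is the idempotent with $fS=\epsilon_fS$. Idempotents of $S$ are indicator functions of subsets $T\subseteq\Gamma$, and $D(\epsilon)$ is nonempty exactly when $\epsilon\neq0$, i.e. $T\neq\emptyset$. For any $\alpha\in T$ the ideal $M_\alpha$ does not contain $\epsilon$, so $M_\alpha\in D(\epsilon)$. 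Thus every nonempty constructible open meets $\{M_\alpha\}$, which proves density.

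The only step that requires care is the von Neumann regular reduction in \ref{prodlocal:dense}; the remaining steps are routine.
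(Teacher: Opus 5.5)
Your proof is correct and follows essentially the same route as the paper: for (ii) you split off the factor $R_\alpha$ via the idempotent $e_\alpha$, and for (iii) you pass to $\prod_\alpha k_\alpha$, where every basic constructible set is a principal open $D(\epsilon)$ with $\epsilon$ an idempotent. The only minor difference is in (i): you prove $\operatorname{Jac}(\mathcal{R})\subseteq\prod_\alpha\fm_\alpha$ with an explicit test element, whereas the paper obtains both parts of (i) at once from $\prod_\alpha\fm_\alpha\subseteq\operatorname{Jac}(\mathcal{R})$ together with $\dim\prod_\alpha k_\alpha=0$.
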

\begin{proof}
    We know $\prod_\alpha \fm_\alpha$ is contained in the Jacobson radical of $\mathcal{R}$ by \citestacks{0AME}.
    It follows from \citetwostacks{092G}{092F} that $\dim\prod_\alpha k_\alpha=0$, showing \ref{prodlocal:Jac}.
    Writing $\mathcal{R}=R_{\alpha}\times\prod_{\beta\neq\alpha}R_\beta$ we get \ref{prodlocal:Principal}.
    To show \ref{prodlocal:dense},
    we may assume $R_\alpha=k_\alpha$ for every $\alpha$.
    Then every element of $\mathcal{R}$ is the product of a unit and an idempotent,
    so every principal open is defined by an idempotent.
    Therefore principal opens themselves form a Boolean algebra, hence there are no other constructible subsets.
    As every nonempty principal open contains at least one $M_\alpha$,
    we get \ref{prodlocal:dense}.
\end{proof}

\begin{Cor}\label{cor:CharofProd}
    Notations and assumptions as in Lemma \ref{lem:ProdofLocalRings}.
    Assume that for every prime $p$ there are only finitely many $\alpha$ with $\operatorname{char}(k_\alpha)=p$.
    Then for every $M\in\Max(\mathcal{R})\setminus\{M_\alpha\mid \alpha\in\Gamma\}$,
    we have $\operatorname{char}(\mathcal{R}/M)=0$.
\end{Cor}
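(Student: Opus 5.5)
The plan is to show that for each prime $p$ the prime $p$ becomes a unit in $\mathcal{R}/M$. Equivalently, $p\notin M$, and we would check this for every prime number $p$. Since $\mathcal{R}/M$ is a field, $p\notin M$ forces $p$ to be invertible there. Doing this for all $p$ gives $\operatorname{char}(\mathcal{R}/M)=0$.

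Fix a prime $p$ and let $\Gamma_p=\{\alpha\mid \operatorname{char}(k_\alpha)=p\}$, which is finite by hypothesis. First, for $\alpha\notin\Gamma_p$ the image of $p$ in $k_\alpha$ is nonzero, so $p$ is a unit in $R_\alpha$, because $R_\alpha$ is local. Next, let $e\in\mathcal{R}$ be the idempotent with $e_\alpha=1$ for $\alpha\in\Gamma_p$ and $e_\alpha=0$ otherwise. Then $u:=p(1-e)+e$ has every coordinate a unit: the $\alpha$-coordinate is $p$ for $\alpha\notin\Gamma_p$ and $1$ for $\alpha\in\Gamma_p$. Hence $u$ is a unit of $\mathcal{R}$.

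It then suffices to show $e\in M$. Indeed, if $e\in M$, then $u\equiv p(1-e)\equiv p \pmod M$ because $pe\in M$, so $p$ is a unit modulo $M$. To get $e\in M$, write $e=\sum_{\beta\in\Gamma_p}e_\beta$, where $e_\beta$ is the idempotent supported at $\beta$; this sum is finite because $\Gamma_p$ is finite. For each $\beta$, the element $e_\beta$ lies in every maximal ideal except $M_\beta$. This follows from Lemma \ref{lem:ProdofLocalRings}\ref{prodlocal:Principal}, since $D(e_\beta)\cap\Max(\mathcal{R})=\{M_\beta\}$: a maximal ideal not containing $e_\beta$ contains $1-e_\beta$, hence contains $\prod_{\gamma\ne\beta}R_\gamma$, so it is the preimage of a maximal ideal of $R_\beta$, namely $M_\beta$. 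Since $M\neq M_\beta$ for all $\beta$, we get $e_\beta\in M$ for all $\beta\in\Gamma_p$, hence $e\in M$.

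The only slightly delicate point is the identification of maximal ideals not containing $e_\beta$. This reduces to the decomposition $\mathcal{R}=R_\beta\times\prod_{\gamma\neq\beta}R_\gamma$ already used in the lemma, so I expect no real obstacle. Finiteness of $\Gamma_p$ is essential: it is what makes $e$ a finite sum of the $e_\beta$.
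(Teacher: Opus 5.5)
Your proof is correct, but it takes a more hands-on route than the paper. The paper argues topologically. For a fixed prime $p$, it removes the finitely many $M_{\alpha_1},\ldots,M_{\alpha_s}$ with $\operatorname{char}(k_{\alpha_i})=p$. The remaining set $(V(p)\cap\Max(\mathcal{R}))\setminus\{M_{\alpha_1},\ldots,M_{\alpha_s}\}$ is constructible in $\Max(\mathcal{R})$ by Lemma \ref{lem:ProdofLocalRings}\ref{prodlocal:Principal} and contains no $M_\alpha$. It is therefore empty by the constructible density of the $M_\alpha$, Lemma \ref{lem:ProdofLocalRings}\ref{prodlocal:dense}.

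You instead exhibit an explicit unit $u=p(1-e)+e$. This shows directly that $V(p)\cap\Max(\mathcal{R})\subseteq V(1-e)\cap\Max(\mathcal{R})=\{M_\beta\mid\beta\in\Gamma_p\}$. Your argument needs only the product decomposition behind part \ref{prodlocal:Principal}, and it bypasses the Boolean-algebra argument for part \ref{prodlocal:dense} altogether.

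Your version is self-contained and gives a concrete witness. The paper's version is a one-line instance of the density-plus-compactness pattern it reuses in the proof of Theorem \ref{thm:VanishingForLargep}. There it is applied to constructible sets such as $\pi(\mathcal{X}\setminus\mathcal{W})$, where no explicit idempotent is readily available.
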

\begin{proof}
    Let $p$ be a prime number. 
    Our assumption says that for some $s$ the constructible subset $(V(p)\cap\Max(\mathcal{R}))\setminus\{M_{\alpha_1},\ldots,M_{\alpha_s}\}$ of $\Max(R)$ contains no $M_\alpha$,
    so it is empty by Lemma \ref{lem:ProdofLocalRings}\ref{prodlocal:dense}, showing that $p\not\in M$.
\end{proof}

The following result follows from \cite[Lem.~6.3.6 and 6.3.7]{Coherent-Rings} in view of \cite[Cor.~4.2.19]{Coherent-Rings}.
We give an elementary proof.
\begin{Thm}\label{thm:ProdofVsemihereditary}
    Let $\{V_\alpha\}_{\alpha\in \Gamma}$ be a family of valuation rings and let $\mathcal{V}=\prod_\alpha V_\alpha$.
    Then $\mathcal{V}$ is semihereditary, that is, every finitely generated ideal of $\mathcal{V}$ is projective.
    %Then every finitely generated ideal of the direct product $\mathcal{V}:=\prod_\alpha V_\alpha$ is coherent, and $\mathcal{V}_\fp$ is a valuation ring for all $\fp\in\Spec\mathcal{V}$.
    %In other words, $\mathcal{V}$ is semihereditary \cite[Cor.~4.2.19]{Coherent-Rings},
    %hence all essentially finitely presented algebras over $\mathcal{V}$ are coherent \cite[Cor.~4.2.19]{Coherent-Rings}
\end{Thm}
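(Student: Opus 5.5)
The plan is to prove directly that every finitely generated ideal of $\mathcal{V}=\prod_\alpha V_\alpha$ is projective, using only that each $V_\alpha$ is a valuation ring. Let $I=(f_1,\ldots,f_n)\subseteq\mathcal{V}$, and write $f_i=(f_{i,\alpha})_\alpha$.

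First, in each $V_\alpha$ the ideal $I_\alpha=(f_{1,\alpha},\ldots,f_{n,\alpha})$ is principal, since its generators are totally ordered by divisibility. So choose an index $i(\alpha)$ with $I_\alpha=f_{i(\alpha),\alpha}V_\alpha$; when $I_\alpha=0$ take any $i(\alpha)$. Then partition $\Gamma=\Gamma_1\sqcup\cdots\sqcup\Gamma_n$ according to the value of $i(\alpha)$. Let $e_i\in\mathcal{V}$ be the idempotent that equals $1$ on $\Gamma_i$ and $0$ elsewhere, so $\mathcal{V}=\prod_i e_i\mathcal{V}$ and $I=\bigoplus_i e_iI$.

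Second, I will show that $e_iI=e_if_i\mathcal{V}$. For $j\neq i$ and $\alpha\in\Gamma_i$, pick $c_{j,\alpha}\in V_\alpha$ with $f_{j,\alpha}=c_{j,\alpha}f_{i,\alpha}$. The element $c_j=(c_{j,\alpha})\in e_i\mathcal{V}$ then satisfies $e_if_j=c_jf_i$. This is the one place where the product structure matters: divisibility witnesses can be chosen coordinatewise with no bound on them, since the valuation rings guarantee $c_{j,\alpha}\in V_\alpha$.

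Third, it remains to see that a principal ideal $a\mathcal{V}$ of $\mathcal{V}$ is projective. Let $\epsilon$ be the idempotent equal to $1$ exactly at those $\alpha$ with $a_\alpha\neq0$. Each $V_\alpha$ is a domain, so the annihilator of $a$ is $(1-\epsilon)\mathcal{V}$, and hence $a\mathcal{V}\cong\mathcal{V}/\operatorname{Ann}(a)\cong\epsilon\mathcal{V}$, which is a direct summand of $\mathcal{V}$. Applying this to $a=e_if_i$ and summing over $i$ shows that $I$ is projective.

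I expect no real obstacle here. The only points needing care are the choice functions $i(\alpha)$ and $c_{j,\alpha}$, which use the axiom of choice over the possibly infinite index set $\Gamma$, and the handling of the coordinates where $I_\alpha=0$. Those coordinates cause no trouble, because in that case $f_{j,\alpha}=0$ for all $j$ and we may set $c_{j,\alpha}=0$.
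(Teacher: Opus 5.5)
Your proof is correct and follows essentially the same route as the paper. Both arguments partition $\Gamma$ according to which generator $f_{i\alpha}$ generates the coordinate ideal, use the resulting idempotents to make the ideal principal on each factor, and then split further according to whether the coordinates vanish; your annihilator idempotent $\epsilon$, giving $a\mathcal{V}\cong\epsilon\mathcal{V}$, is exactly the paper's final partition of each $\Gamma_i$ into pieces where the ideal is zero or generated by a nonzerodivisor.
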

\begin{proof}
    Let $f_i=(f_{i\alpha})_\alpha\ (1\leq i\leq s)$ be a finite collection of elements of $\mathcal{V}$.
    As each $V_\alpha$ is a valuation ring we see $f_{i\alpha}\ (1\leq i\leq s)$ generates a principal ideal of $V_\alpha$, so $(f_{1\alpha},\ldots,f_{s\alpha})V_\alpha =f_{i\alpha}V_\alpha$ for some $i=i(\alpha)$.
    We can then write $\Gamma$ as a finite union $\Gamma=\Gamma_1\cup\ldots\cup\Gamma_s$
    so that $(f_{1\alpha},\ldots,f_{s\alpha})V_\alpha =f_{i\alpha}V_\alpha$ for all $\alpha\in\Gamma_i$.
    Replacing $\Gamma_i$ with $\Gamma_i\setminus\cup_{j<i}\Gamma_{j}$ we may assume the union is disjoint,
    so $\mathcal{V}=\prod_{i}\prod_{\alpha\in\Gamma_i}V_\alpha$,
    and we have $(f_1,\ldots,f_s)\prod_{\alpha\in\Gamma_i}V_\alpha=f_i\prod_{\alpha\in\Gamma_i}V_\alpha$.
    This shows that, Zariski-locally,
    $(f_1,\ldots,f_s)\mathcal{V}$ is principal.
    Further partition $\Gamma_i$ according to $f_{i\alpha}=0$ or not, we see, Zariski-locally,
    $(f_1,\ldots,f_s)\mathcal{V}$ is zero or generated by a nonzerodivisor.
    Therefore $(f_1,\ldots,f_s)\mathcal{V}$ is projective.
\end{proof}

\begin{Cor}
    \label{cor:ProdofVunifCoherentAndLocallyValRing}
    Let $\{V_\alpha\}_{\alpha\in \Gamma}$ be a family of valuation rings and let $\mathcal{V}=\prod_\alpha V_\alpha$.
    Then $\mathcal{V}_\fp$ is a valuation ring for every $\fp\in\Spec\mathcal{V}$, and every essentially finitely presented $\mathcal{V}$-algebra is coherent.
\end{Cor}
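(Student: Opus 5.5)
The corollary has two claims: the localizations of $\mathcal{V}$ are valuation rings, and essentially finitely presented $\mathcal{V}$-algebras are coherent. For the first, fix $\fp\in\Spec\mathcal{V}$. By Theorem \ref{thm:ProdofVsemihereditary}, every finitely generated ideal of $\mathcal{V}$ is projective, so it is locally free. The proof of that theorem gives more: Zariski-locally each such ideal is either zero or generated by a nonzerodivisor. Hence in the local ring $\mathcal{V}_\fp$ every finitely generated ideal is principal, being free of rank $\le 1$.

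It remains to see that $\mathcal{V}_\fp$ is a domain. Take $a,b\in\mathcal{V}$ with $ab=0$ in $\mathcal{V}_\fp$; after multiplying by a unit of $\mathcal{V}_\fp$ we may assume $ab=0$ in $\mathcal{V}$. Let $e\in\mathcal{V}$ be the idempotent with $e_\alpha=1$ exactly when $a_\alpha\neq 0$. Each $V_\alpha$ is a domain, so $b_\alpha=0$ whenever $a_\alpha\neq 0$. This gives $a(1-e)=0$ and $be=0$. Since $e$ is idempotent, exactly one of $e$, $1-e$ lies outside $\fp$, and that one is a unit in $\mathcal{V}_\fp$. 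Therefore $a=0$ or $b=0$ in $\mathcal{V}_\fp$. So $\mathcal{V}_\fp$ is a local domain whose finitely generated ideals are principal, that is, a valuation ring.

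For coherence, I would first reduce to finitely presented algebras, since a localization of a coherent ring is coherent. For a finitely presented algebra $\mathcal{V}[T_1,\ldots,T_n]/J$, it suffices to show that the polynomial ring $\mathcal{V}[T_1,\ldots,T_n]$ is coherent: a quotient of a coherent ring by a finitely generated ideal is coherent. Since $\mathcal{V}$ is semihereditary, the known theorem that polynomial rings in finitely many variables over a semihereditary ring are coherent \cite[Cor.~7.3.4]{Coherent-Rings} applies, possibly combined with \cite[Cor.~4.2.19]{Coherent-Rings} as the paper indicates.

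The main obstacle is this coherence step. Coherence is not a local property, so it cannot simply be deduced from Theorem \ref{thm:ModAndAlgOverV}\ref{OverV:fp-is-coherent} applied to each localization $\mathcal{V}_\fp$. If the cited result needs its hypotheses verified more explicitly, I would run the Pr\"ufer argument directly. A finitely generated ideal of $\mathcal{V}[\underline{T}]$ is torsion-free over $\mathcal{V}$ locally, hence flat by Theorem \ref{thm:ModAndAlgOverV}\ref{OverV:torsion-free-is-flat} applied at each $\fp$. Finite presentation of finitely generated flat modules is then uniform over $\Spec\mathcal{V}$, by the argument of \citestacks{053G} applied in the reduced case where flatness is checked pointwise.
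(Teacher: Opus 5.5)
Your proof is correct and follows the paper's route. The paper cites \cite[Cor.~4.2.19]{Coherent-Rings} for the localizations being valuation rings, which you instead verify by hand, correctly, via the clopen decomposition and the idempotent argument; for coherence it cites \cite[Cor.~7.3.4, Th.~2.4.1, Th.~2.4.2]{Coherent-Rings}, exactly as in your main line.

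Drop the fallback in your last paragraph, though, because its principle is false. Flatness checked at each $\mathcal{V}_\fp$ does not give finite presentation: for an infinite product of fields $\mathcal{V}=\prod_\alpha k_\alpha$, the module $\mathcal{V}/\bigoplus_\alpha k_\alpha$ is finite and flat but not finitely presented.
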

\begin{proof}
    See \cite[Cor.~4.2.19 and 7.3.4]{Coherent-Rings},
     as well as \cite[Th.~2.4.1 and 2.4.2]{Coherent-Rings}.
\end{proof}

\subsection{Vanishing in large residue characteristics}
We are able to deduce a version of Kodaira's vanishing theorem over a valuation ring of large residue characteristic relative to a ``bounded family.''
Note that even for Noetherian $V$, the proof involves non-Noetherian rings, namely other localizations of $\mathcal{V}$.
\begin{Thm}\label{thm:VanishingForLargep}
Let $r\in\bZ_{>0}$ and let $H\in\bQ[t]$.
Let $d=\deg H$.
There exists a constant $C=C(r,H)$ with the following property.

    Let $V$ be a valuation ring of residue characteristic $p>C$,
    and let $X$ be a closed subscheme of $\bP^r_V$.
    Assume that $X$ is flat over $\Spec V$ of relative dimension $d$ and Hilbert polynomial $H$,
    and assume that $\operatorname{Reg}(X)=X$.
    %$\operatorname{w.dim}\cO_{X,x}<\infty$ for all $x\in X$.

    Let $\omega_X=\mathcal{E}xt^{r-d}_{\cO_{\bP^r_V}}(\cO_X,\cO_{\bP^r_V}(-r-1))$.
    Then %for every ample invertible sheaf $\cL$ on $X$ and every $j>0$ we have 
    $H^j(X,\omega_X(1))=0$ for all $j>0$.
\end{Thm}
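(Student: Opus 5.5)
We argue by contradiction via the product construction of \S\ref{sec:VanishingTh}. Suppose no such $C$ exists. Then there are valuation rings $V_\alpha$ ($\alpha\in\Gamma=\bZ_{>0}$) of residue characteristics $p_\alpha\to\infty$, with each prime occurring only finitely often, and closed subschemes $X_\alpha\subseteq\bP^r_{V_\alpha}$ satisfying all hypotheses of the statement, such that $H^{j_\alpha}(X_\alpha,\omega_{X_\alpha}(1))\neq0$ for some $j_\alpha>0$. Since $j_\alpha\le r$, we may pass to a subfamily and assume $j_\alpha=j$ is constant. Set $\mathcal{V}=\prod_\alpha V_\alpha$.

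Because the Hilbert polynomial $H$ is fixed, the schemes $X_\alpha$ correspond to points of a single Hilbert scheme $\operatorname{Hilb}^H_{\bP^r}$, which is projective over $\bZ$. Hence each $X_\alpha$ is cut out by equations of degree at most $n(r,H)$, and a bounded number of them, by Gotzmann regularity. We may then take the coefficients coordinatewise and define $\mathcal{X}\subseteq\bP^r_{\mathcal{V}}$ as the closed subscheme cut out by the corresponding equations with coefficients in $\mathcal{V}$. The intended property is that $\mathcal{X}$ is flat and finitely presented over $\mathcal{V}$ with fibers of Hilbert polynomial $H$. To get this, I would show that the ideals in each fixed degree $N\ge n$ are free of the correct rank over $\mathcal{V}$; this works because products of free modules of bounded rank are free, and the ranks are given by $H$. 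Consequently $\mathcal{X}\otimes_{\mathcal{V}}V_\alpha=X_\alpha$. By Corollary \ref{cor:ProdofVunifCoherentAndLocallyValRing}, every finitely presented $\mathcal{V}$-algebra is coherent, so $\omega_{\mathcal{X}}$ and cohomology behave well. The sheaf $\omega_{\mathcal{X}}$ is then defined by the same $\mathcal{E}xt$ formula.

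Now take $M\in\Max(\mathcal{V})\setminus\{M_\alpha\}$. Such an $M$ exists because the $M_\alpha$ are principal opens and $\Max(\mathcal{V})$ is infinite and compact. By Corollary \ref{cor:CharofProd}, $\mathcal{V}_M$ is a valuation ring of residue characteristic $0$, and by Corollary \ref{cor:ProdofVunifCoherentAndLocallyValRing} it is indeed a valuation ring. Write $X_M=\mathcal{X}\otimes\mathcal{V}_M$. The next step is to show $\operatorname{Reg}(X_M)=X_M$. Here I would try to use that $\operatorname{Reg}$ is a closed-point condition expressible via bounded weak dimension: Knaf gives a uniform bound, and finite weak dimension of coherent local rings can be tested on $k\otimes^L$. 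A cleaner alternative is to consider $\mathcal{X}$ near the $M$-fiber and use compactness of the constructible topology together with the density of the $M_\alpha$ (Lemma \ref{lem:ProdofLocalRings}\ref{prodlocal:dense}) to transfer regularity. Either way, I need that the non-regular locus of $\mathcal{X}$ is closed and has constructible image in $\Spec\mathcal{V}$ avoiding every $M_\alpha$. This is the step I expect to be the main obstacle: it needs a uniform bound, depending only on $(r,H)$, for the projective dimension of residue fields of the $X_\alpha$, for example via a uniform bound on minimal resolutions of $\kappa(x)$ over $\cO_{X_\alpha,x}$. Such bounds exist for bounded families by Noetherian approximation: approximate $\mathcal{X}$ over finitely generated subrings and use that the regular locus of a finite type $\bZ$-scheme is open, so its complement has constructible image.

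Once $X_M$ is known to be regular, Theorem \ref{thm:VanishingOverQ} gives $H^j(X_M,\omega_{X_M}(1))=0$. The remaining step is cohomology and base change. The complex $R\Gamma(\mathcal{X},\omega_{\mathcal{X}}(1))$ is pseudo-coherent over the coherent ring $\mathcal{V}$, and its formation commutes with the flat localizations $\mathcal{V}\to\mathcal{V}_M$ and $\mathcal{V}\to V_\alpha=\mathcal{V}_{M_\alpha}$. The base change of $\omega$ is compatible as in the proof of Theorem \ref{thm:VanishingOverQ}, since localization is flat. Thus $H^j(\mathcal{X},\omega_{\mathcal{X}}(1))$ is a finitely presented $\mathcal{V}$-module that vanishes at $M$, so it vanishes on a principal open neighborhood of $M$. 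That neighborhood is constructible and nonempty, so by Lemma \ref{lem:ProdofLocalRings}\ref{prodlocal:dense} it contains some $M_\alpha$. This gives $H^j(X_\alpha,\omega_{X_\alpha}(1))=0$, a contradiction.
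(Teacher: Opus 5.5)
Two parts of your proposal are sound alternatives to the paper. Building $\mathcal{X}$ from a Gotzmann bound works: products of the split free summands $I_{\alpha,N}\subseteq V_\alpha[x_0,\ldots,x_r]_N$, all of rank $\binom{r+N}{r}-H(N)$, are again split free summands of that rank. This can replace the paper's use of \cite[Th.~1.3]{Bha16-prod} on the Hilbert scheme. Your last step uses only pseudo-coherence of $R\Gamma(\mathcal{X},\omega_{\mathcal{X}}(1))$ over the coherent ring $\mathcal{V}$, plus flat base change to $\mathcal{V}_M$ and to $\mathcal{V}_{M_\alpha}=V_\alpha$. That route even spares you the paper's proof that $\omega_{\mathcal{X}}$ is invertible.

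However, the step you flag as the main obstacle, $\operatorname{Reg}(X_M)=X_M$, is a genuine gap, and your suggested remedy does not work. Nothing available makes the non-regular locus of $\mathcal{X}$ closed with constructible image: $\mathcal{V}$ is not a valuation ring, and even over valuation rings $\operatorname{Reg}$ can fail to be open (Remark~\ref{rem:RegNotOpenVal}). Noetherian models over finitely generated subrings $R\subseteq\mathcal{V}$ carry no information about finite weak dimension of $\cO_{\mathcal{X},x}$. The maps $R\to\mathcal{V}$ need not be flat, and the models can be singular even when $\mathcal{X}=\bP^r_{\mathcal{V}}$. Bounding projective dimensions of residue fields does not help either, since those residue fields need not be coherent and so do not spread out over $\mathcal{X}$.

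The missing idea is to quantify over coherent sheaves on $\mathcal{X}$ rather than over points.
\begin{itemize}
\item Fix $\cF\in\Coh(\mathcal{X})$. Ampleness of $\cO_{\mathcal{X}}(1)$ gives an exact sequence $0\to\cK\to\cE_d\to\cdots\to\cE_0\to\cF\to 0$ with the $\cE_i$ vector bundles. Here $\cK$ is coherent because $\mathcal{X}$ has coherent rings.
\item The locus where $\cK$ is not locally free is cut out by finitely generated Fitting ideals, so it is constructible and closed.
\item Its image under $\pi$ is constructible by Chevalley. It contains no $M_\alpha$, because $\operatorname{w.dim}\cO_{X_\alpha,x}\le d+1$ by \cite[Th.~1]{Knaf-RLR-over-Prufer}.
\item By Lemma~\ref{lem:ProdofLocalRings}\ref{prodlocal:dense} this image misses $\Max(\mathcal{V})$, so the locus is empty by properness of $\pi$.
\item Every coherent $\cO_{\mathcal{X},x}$-module extends to a coherent sheaf on $\mathcal{X}$. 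Hence $\operatorname{w.dim}\cO_{\mathcal{X},x}\le d+1$ for all $x$, and in particular $\operatorname{Reg}(X_M)=X_M$.
\end{itemize}

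You also omit another hypothesis of Theorem~\ref{thm:VanishingOverQ}: $X_M$ must have relative dimension $d$. The Hilbert polynomial does not give this, since it cannot see lower-dimensional components. The paper obtains it by the same density argument, applied to the complement of the open, quasi-compact locus where $\pi$ is Cohen--Macaulay of relative dimension $d$.
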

\begin{proof}
    As $X\subseteq \bP^r_V$,
    we have $H^j(X,\cF)=0$ for all $j>r$ and all quasi-coherent $\cF$.
    Therefore, it suffices to find $C=C(r,H,j)$ for a fixed $j$.

Assume, to the contrary, that such a $C$ does not exist.
Then for each $n\in\bZ_{>0}$, there exists a valuation ring $(V_n,\fm_n)$ of residue characteristic $>n$,
a closed subscheme $X_n$ of $\bP^r_{V_n}$ flat over $\Spec V_n$ of relative dimension $d$ and Hilbert polynomial $H$,
satisfying $\operatorname{Reg}(X_n)=X_n$
and $H^j(X_n,\omega_{X_n}(1))\neq 0$,
where $\omega_{X_n}=\mathcal{E}xt^{r-d}_{\cO_{\bP^r_{V_n}}}(\cO_{X_n},\cO_{\bP^r_{V_n}}(-r-1))$.
Note that $X_n$ is of finite presentation over $V_n$ by flatness \citestacks{053G}.

    Let $\mathcal{H}$ be the Hilbert scheme of $\bP^r_\bZ$ with Hilbert polynomial $H$.
    Then $\mathcal{H}$ is projective over $\bZ$ (see \cite[Chap.~5]{FGA-explained}; properness \citestacks{0DPH}, or even qcqs, is sufficient) and each $X_n$ comes from a morphism $\varphi_n:\Spec V_n\to \mathcal{H}$.
    By \cite[Th.~1.3]{Bha16-prod}, letting $\mathcal{V}=\prod_n V_n$,
    there is a (unique) morphism $\varphi:\Spec\mathcal{V}\to \mathcal{H}$ compatible with all $\varphi_n$.
    Then $\varphi$ gives rise to a closed subscheme $\mathcal{X}$ of $\bP^r_{\mathcal{V}}$ such that $\mathcal{X}\times_{\Spec\mathcal{V}}\Spec V_n= X_n$.
    Moreover, $\mathcal{X}$ is flat of finite presentation over $\mathcal{V}$ with Hilbert polynomial $H$.
    By Corollary \ref{cor:ProdofVunifCoherentAndLocallyValRing}, $\cO_{\mathcal{X}}(\mathcal{U})$ is coherent for all affine opens $\mathcal{U}$ of $\mathcal{X}$.
    We will establish two more properties of $\mathcal{X}$ and then derive a contradiction.

    First, we claim that the flat morphism $\pi:\mathcal{X}\to\Spec\mathcal{V}$ has relative dimension $d$ (\emph{i.e.} there are no connected components of smaller dimension).
    To see this, by \citetwostacks{045U}{02NM},
    the locus $\mathcal{W}$ of $\mathcal{X}$ that $\pi$ is Cohen--Macaulay of relative dimension $d$ is open.
    Since the construction of $\mathcal{W}$ is compatible with base change, Noetherian approximation \citetwostacks{01ZM}{05LY} tells us $\mathcal{W}$ is quasi-compact,
    in other words, $\mathcal{X}\setminus\mathcal{W}$ is a constructible closed subset.
    Write $M_n$ for the preimage of $\fm_n$ in $\mathcal{V}$.
    As $X_n\to \Spec V_n$ has relative dimension $d$ and has Cohen--Macaulay fibers \cite[Th.~2.9]{Knaf-RLR-over-Prufer},
    we see $M_n\not\in\pi(\mathcal{X}\setminus\mathcal{W})$ for all $n$.
    Note that $\pi(\mathcal{X}\setminus\mathcal{W})$ is constructible by Chevalley's Theorem \citestacks{054K}.
    By Lemma \ref{lem:ProdofLocalRings}\ref{prodlocal:dense},
    $\Max(\mathcal{V})\cap\pi(\mathcal{X}\setminus\mathcal{W})=\emptyset$.
    Since $\pi$ is proper, we see $\mathcal{X}\setminus\mathcal{W}=\emptyset$,
    as desired.

    Next, we claim that $\operatorname{w.dim}\cO_{\mathcal{X},x}\leq d+1$ for all $x\in \mathcal{X}$.
    Since every coherent $\cO_{\mathcal{X},x}$-module comes from a coherent sheaf in an affine open neighborhood \citestacks{01ZR} and therefore a coherent sheaf on $\mathcal{X}$ \citestacks{0G41},
    it suffices to show every $\mathcal{F}\in\Coh(\mathcal{X})$ has a resolution by vector bundles of length $\leq d+1$.
    As $\cO_{\mathcal{X}}(1)$ is ample there exists an exact sequence
    \[
    0\rightarrow\mathcal{K}\rightarrow\mathcal{E}_d\rightarrow\ldots\rightarrow\mathcal{E}_0\rightarrow\mathcal{F}\rightarrow0
    \]
    of coherent sheaves where each $\mathcal{E}_k$ is a vector bundle of rank $r_k$.
    %of the form $\cO_{\mathcal{X}}(-m_k)^{\oplus r_k}$.
    Let $\mathcal{U}_r$ be the locus where $\mathcal{K}$ is locally free of rank $r$.
    Then $\mathcal{U}_r$ is a quasi-compact open, as its complement is the union of  closed subschemes cut out by the finitely generated (coherent) ideals $\operatorname{Fit}_0(\operatorname{Fit}_{r-1}(\mathcal{K}))$ and $\operatorname{Fit}_{r}(\mathcal{K})$ \citestacks{07ZD}.
    Here we used the fact that the 
    Fitting ideals of a finitely presented module is finitely generated and the zeroth Fitting ideal cuts out the support, see \citestacks{07ZA}.
    Therefore, the locus $\mathcal{U}$ where $\mathcal{K}$ is locally free is the quasi-compact open $\mathcal{U}_0\cup\ldots\cup\mathcal{U}_{r_d}$.
    Since $\operatorname{w.dim}\cO_{\mathcal{X},x}\leq d+1$ for all $x\in \mathcal{X}$ above any $M_n$ (see \cite[Th.~1]{Knaf-RLR-over-Prufer}),
    we see $M_n\not\in\pi(\mathcal{X}\setminus\mathcal{U})$,
    so similar to the previous paragraph we get $\mathcal{U}=\mathcal{X}$.

    Finally, consider $\omega_{\mathcal{X}}=\mathcal{E}xt^{r-d}_{\cO_{\bP^r_{\mathcal{V}}}}(\cO_{\mathcal{X}},\cO_{\bP^r_{\mathcal{V}}}(-r-1))\in\Coh(\mathcal{X})$.
    For every $M\in \Max(\mathcal{V})\setminus\{M_1,M_2,\ldots\}$,
    we know $\mathcal{V}_M$ is a valuation ring of residue characteristic zero (Corollaries \ref{cor:ProdofVunifCoherentAndLocallyValRing} and \ref{cor:CharofProd}).
    Hence $\omega_{\mathcal{X}}$ restricts to an invertible sheaf on $\mathcal{X}\times_{\Spec\mathcal{V}}\Spec\mathcal{V}_M$ and $H^j(\mathcal{X},\omega_{\mathcal{X}}(1))_M=0$, see Theorem \ref{thm:VanishingOverQ};
    here we use  $\operatorname{w.dim}\cO_{\mathcal{X},x}\leq d+1$ for all $x\in \mathcal{X}$,
    hence the same is true for $\mathcal{X}\times_{\Spec\mathcal{V}}\Spec\mathcal{V}_M$.
    Letting $\mathcal{Z}$ be the constructible closed subset of $\mathcal{X}$ cut out by %the first Fitting ideal of 
    $\operatorname{Fit}_1(\omega_{\mathcal{X}})$,
    we see $\pi(\mathcal{Z})\cap \Max(\mathcal{V})\subseteq \{M_1,M_2,\ldots\}$.\footnote{Using \cite[Th.~2]{Knaf-RLR-over-Prufer} and Lemma \ref{lem:QFNormal=Val}, it can be shown directly that the ideal of $X_n$ is locally generated by a regular sequence of length $r-d$, so $\omega_{X_n}$ is invertible; then $\pi(\mathcal{Z})\cap \Max(\mathcal{V})=\emptyset$, making the reduction following unnecessary.}
    %Indeed, any truncation of $L_{X_n/\bP^r_{V_n}}$ is a perfect complex, so the same technique as in the proof of Theorem \ref{thm:lci=Lin-1} together with \cite[Th.~B]{Briggs-Iyenger-Cotangent-Complex} imply that $X_n\to \bP^r_{V_n}$ is a regular immersion.}
    Note that $\pi(\mathcal{Z})\cap \Max(\mathcal{V})$ and every $\{M_n\}$  is constructible in $\Max(\mathcal{V})$ (\citestacks{054K} and Lemma \ref{lem:ProdofLocalRings}\ref{prodlocal:Principal}),
    hence by compactness \citestacks{0901} $\pi(\mathcal{Z})\cap \Max(\mathcal{V})\subseteq \{M_1,M_2,\ldots,M_n\}$ for some $n$.
    Therefore, we may remove finitely many $V_n$ to assume $\pi(\mathcal{Z})\cap \Max(\mathcal{V})=\emptyset$, so $\mathcal{Z}=\emptyset$ by properness and thus $\omega_\mathcal{X}$ is invertible.
    It now follows that $R\Gamma(\mathcal{X},\omega_\mathcal{X})$ is a perfect object in $D(\mathcal{V})$ whose formation commutes with arbitrary base change \citestacks{0B91}.
    In particular, $T:=H^j(\mathcal{X},\omega_\mathcal{X})$ is a coherent $\mathcal{V}$-module and $T_{M}=0$ for all $M\in \Max(\mathcal{V})\setminus\{M_1,M_2,\ldots\}$, but $T_{M_n}\neq 0$ for all $n$.
    As $\operatorname{Supp}(T)$ is constructible closed subset \citestacks{051B} and as each $\{M_n\}$ is constructible in $\Max(\mathcal{V})$ (Lemma \ref{lem:ProdofLocalRings}\ref{prodlocal:Principal}) this is a contradiction to compactness \citestacks{0901}.
\end{proof}

\begin{Rem}\label{rem:VanishingEasyForSmooth}
    If, instead of $\operatorname{Reg}(X)=X$, we assume $X\to \Spec V$ is smooth, then Theorem \ref{thm:VanishingForLargep} is trivial, and we may even allow $V$ to be arbitrary local rings.
    Indeed, let $\mathcal{H}_0$ be locus of $\mathcal{H}$ over which the universal family $\pi_0:\mathcal{Y}_0\subseteq \bP^r_{\mathcal{H}_0}\to \mathcal{H}_0$ is smooth of relative dimension $d$,
    an open subscheme of $\mathcal{H}$.
    We can similarly define $\omega_{\mathcal{Y}_0/\mathcal{H}_0}=\mathcal{E}xt^{r-d}_{\cO_{\bP^r_{\mathcal{H}_0}}}(\cO_{Y_0},\cO_{\bP^r_{\mathcal{H}_0}}(-r-1))$, the unique nonzero cohomology sheaf of $\pi_0^!\cO_{\mathcal{H}_0}$.
    Then $\omega_{\mathcal{Y}_0/\mathcal{H}_0}$ is an invertible sheaf, so
    $T_0:=R\pi_{0*}(\omega_{\mathcal{Y}_0/\mathcal{H}_0}(1))$ is a perfect object in $D(\mathcal{H}_0)$ whose formation commutes with arbitrary base change.
    Therefore $T_0\otimes^L_{\cO_{\mathcal{H}_0}}\kappa(s)\in D^{\leq 0}$, for all $s\in \mathcal{H}_0$ lying above $0\in\Spec\bZ$, by the classical Kawamata--Viehweg. 
It follows from Nakayama's Lemma $\tau^{>0}T_0$ restricts to $0$ on $\mathcal{H}_0\times_{\Spec \bZ}\Spec\bQ$, so some nonzero integer annihilates all $\tau^{>0}T_0$, which implies the result by base change,
    as the image of $\Spec V$ in $\mathcal{H}$ is contained in $\mathcal{H}_0$.

    In particular, Theorem \ref{thm:VanishingForLargep} is trivial if we restrict to the case where $V$ is a perfect field.
    When $V$ is a(n imperfect) field, we can show $X$ is smooth when $\operatorname{char}V$ is large with respect to $r$ and $H$, using an analogous argument to the proof of Theorem \ref{thm:VanishingForLargep}.
    However, the author does not have an easier proof for Theorem \ref{thm:VanishingForLargep} for non-fields, e.g. $V=\bZ_p$.
\end{Rem}

The same method yields
\begin{Thm}\label{thm:VanishingForLargep-fixdegree}
Let $r,m,n,d\in\bZ_{>0}$.
There exists a constant $C=C(r,m,n)$ with the following property.

    Let $V$ be a valuation ring of residue characteristic $p>C$,
    and let $X$ be a closed subscheme of $\bP^r_V$ defined by at most $m$ homogeneous polynomials of degree at most $n$.
    Assume that $X$ is flat over $\Spec V$ of relative dimension $d$,
    and assume that $\operatorname{Reg}(X)=X$.

    Let $\omega_X=\mathcal{E}xt^{r-d}_{\cO_{\bP^r_V}}(\cO_X,\cO_{\bP^r_V}(-r-1))$.
    Then %for every ample invertible sheaf $\cL$ on $X$ and every $j>0$ we have 
    $H^j(X,\omega_X(1))=0$ for all $j>0$.
\end{Thm}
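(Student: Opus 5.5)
The plan is to reduce to Theorem~\ref{thm:VanishingForLargep} by showing that only finitely many Hilbert polynomials occur. Fix $r,m,n$. The parameter $d$ is determined by the Hilbert polynomial ($d=\deg H$) and satisfies $d\leq r$. It therefore suffices to show the following: there is a finite set $\mathcal{P}=\mathcal{P}(r,m,n)\subseteq\bQ[t]$ such that every closed subscheme $X\subseteq\bP^r_V$, flat over a valuation ring $V$ and cut out by at most $m$ homogeneous polynomials of degree at most $n$, has Hilbert polynomial in $\mathcal{P}$. Granting this, set $C(r,m,n)=\max_{H\in\mathcal{P}}C(r,H)$, with $C(r,H)$ from Theorem~\ref{thm:VanishingForLargep}; the conclusion then follows directly from that theorem. (The $d$ in the statement must equal $\deg H$, so no extra dependence on $d$ arises.)

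To produce $\mathcal{P}$, I will compute the Hilbert polynomial on the generic fiber. Since $X$ is flat over $V$, its Hilbert polynomial equals that of the generic fiber $X_K\subseteq\bP^r_K$, where $K=\Frac V$. Now $X_K$ is defined over the field $K$ by at most $m$ forms of degree at most $n$. The relevant family is parametrized by the affine space $\mathcal{A}$ of tuples $(f_1,\ldots,f_m)$ of forms of degrees $n_1,\ldots,n_m\leq n$; there are finitely many such degree patterns, and we pad with zero forms to get exactly $m$ forms. Over $\mathcal{A}$ we have the universal closed subscheme $\mathcal{Y}\subseteq\bP^r_{\mathcal{A}}$, of finite presentation over $\bZ$. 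By generic flatness and Noetherian induction, $\mathcal{A}$ has a finite stratification into locally closed pieces over which $\mathcal{Y}$ is flat. On each connected component of each stratum the Hilbert polynomial is constant, so only finitely many Hilbert polynomials occur among fibers over points of $\mathcal{A}$. The fiber over the $K$-point given by the equations of $X_K$ is exactly $X_K$.

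The main obstacle is making this stratification argument honest: it requires Noetherianity of $\mathcal{A}$ over $\bZ$, which holds, and constancy of the Hilbert polynomial in flat projective families over connected bases, which is standard. An alternative I would try if this is awkward is to repeat the proof of Theorem~\ref{thm:VanishingForLargep} directly. Take counterexamples $X_k$ over $V_k$, descend the defining equations to $\mathcal{V}=\prod_k V_k$, and let $\mathcal{X}$ be the closed subscheme cut out by the product equations. One would then need to replace the Hilbert scheme step (\cite[Th.~1.3]{Bha16-prod}) by this direct construction. However, flatness of $\mathcal{X}$ over $\mathcal{V}$ is unclear in that approach, because $\mathcal{X}$ need not be the flat closure. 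This is why I prefer the reduction to finitely many $H$.
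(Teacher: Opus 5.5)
Your proof is correct, but it is organized differently from the paper's. The paper does not use Theorem~\ref{thm:VanishingForLargep} as a black box; it reruns that proof with a new parameter space. After fixing the degree pattern, $X$ comes from a morphism $\Spec V\to\bA^N_\bZ$ to the coefficient space. The universal subscheme $\mathcal{Z}\subseteq\bA^N_\bZ\times\bP^r_\bZ$ is then replaced by its universal flattening $Z\to H$ (\citestacks{05UH}), which is of finite type over $\bZ$ and hence qcqs. The earlier proof only needed the Hilbert scheme to be qcqs (for \cite[Th.~1.3]{Bha16-prod}) and to carry a flat universal family, so it goes through verbatim. In particular, the family over $\mathcal{V}=\prod_k V_k$ is flat by construction. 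This is exactly how the paper resolves the flatness problem you flagged in your alternative approach.

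Your route instead isolates the boundedness input. The Hilbert polynomial of $X$ is that of its generic fiber. Up to a field extension, which does not change Hilbert polynomials, that fiber is a fiber of the universal family over the Noetherian scheme $\bA^N_\bZ$. Generic flatness, Noetherian induction, and local constancy of Hilbert polynomials in flat projective families then leave only finitely many possibilities. So $C(r,m,n)=\max_{H\in\mathcal{P}}C(r,H)$ works; $\deg H=d$ is automatic for nonempty $X$, and $X=\emptyset$ is trivial.

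The two arguments are close cousins. For a projective family the universal flattening splits according to the Hilbert polynomial of the fibers, so its quasi-compactness is essentially your finiteness of $\mathcal{P}$. Your version is more modular and needs only generic flatness rather than the existence of universal flattenings. The paper's version avoids Hilbert polynomials entirely and shows the product method works over any qcqs parameter space carrying a flat universal family.
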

\begin{proof}
    %Identical to the proof of Theorem \ref{thm:VanishingForLargep}, except that the Hilbert scheme $\mathcal{H}$ is replaced by a finite disjoint union of finite products of affine spaces parameterizing the homogeneous polynomials defining $X$.
Without loss of generality, we may assume $X$ is defined by $m$ homogeneous polynomials of fixed degrees $n_1,\ldots,n_m\leq n$.
Then there exists an affine space $\bA^N_\bZ$ parametrizing such polynomials and $X$ comes from a morphism $\varphi:\Spec V\to \bA^N_\bZ$.
More precisely, there exists a closed subscheme $\mathcal{Z}\subseteq\bA^N_\bZ\times \bP^r_\bZ$
such that $X\subseteq\bP^r_V$ is the base change of $\mathcal{Z}$ along $\varphi\times\operatorname{id}:\bP^r_V\to \bA^N_\bZ\times \bP^r_\bZ$.
As we only consider flat $X$ we may therefore replace $\mathcal{Z}\subseteq\bA^N_\bZ\times \bP^r_\bZ$ by $Z\subseteq H\times \bP^r_\bZ$
where $Z\to H$ is the universal flattening of $\mathcal{Z}\to\bA^N_\bZ$ \citestacks{05UH}.
Now the proof of Theorem \ref{thm:VanishingForLargep} works the same way with this $\mathcal{H}$ in place of the Hilbert scheme.
\end{proof} 
\printbibliography
\end{document}